\theoremstyle{plain}
\newtheorem{thm}{Theorem}[section]
\newtheorem{lem}[thm]{Lemma}
\newtheorem{defin}[thm]{Definition}
\newtheorem{definition}[thm]{Definition}
\newtheorem{prop}[thm]{Proposition}
\theoremstyle{definition}
\newtheorem*{definition*}{Definition}
\newtheorem{conjecture}{Conjecture}
\newtheorem*{notation*}{Notation}
\newtheorem{rem}{Remark}
\newtheorem{claim}[thm]{Claim}
\newtheorem{notation}[thm]{Notation}
\newtheorem{cor}[thm]{Corollary}
\newcommand{\friend}{friends}
\newcommand{\op}{{\rm OP}}
\newcommand{\Id}{{\rm Id}}
\newcommand{\red}{{\rm red}}
\newcommand{\tred}{{\widetilde{\rm red}}}
\newcommand{\J}{{J}}
\newcommand{\bx}{{\bf x}}
\newcommand{\by}{{\bf y}}
\newcommand{\bz}{{\bf z}}
\newcommand{\bzero}{{\bf 0}}
\newcommand{\one}{{\boldsymbol{1}}}
\newcommand{\CH}{{\mathcal H}}
\newcommand{\future}[1]{{}}
\newcommand{\N}{{\mathbb N}}
\newcommand{\PP}{{\mathbb P}}
\newcommand{\BR}{{\mathcal P}}
\newcommand{\GR}{{\mathcal Q}}
\begin{document}

\title{A prime system with many self-joinings}
\author{Jon Chaika}
\address{University of Utah, Salt Lake City, UT 84112}
\email{chaika@math.utah.edu}
\author{Bryna Kra}
\address{Northwestern University, Evanston, IL 60208}
\email{kra@math.northwestern.edu}

\begin{abstract}
We construct a rigid, rank 1, prime transformation that is not quasi-simple and  whose self-joinings form a Poulsen simplex. This seems to be the first example of a prime system whose self-joinings form a Poulsen simplex. 
 \end{abstract}

\thanks{The first author was partially supported by 
NSF grants DMS-135500 and DMS-452762, the Sloan foundation, Poincar\'e chair, and Warnock chair 
and the second author by NSF grant DMS-1800544. The authors thank M.~Lemanczyk for helpful comments and questions. The authors also thank the referees for careful reading, corrections of inaccuracies, and care in guiding us towards more helpful explanations.}
\maketitle

\section{Introduction}
A natural question is to find indecomposable structures, and we study this question in the setting of measurable dynamics.  
More precisely, we consider a
 measure preserving dynamical system $(Z,\mathcal{M},\mu,T)$, where $Z$ is a set 
  endowed with a $\sigma$-algebra $\mathcal{M}$, $\mu$ is a probability measure on the 
  measure space $(Z,\mathcal{M})$, and $T\colon Z\to Z$ is a measurable transformation that preserves the measure $\mu$.  Throughout this article, we assume that  $(Z,\mathcal{M},\mu)$ is  a 
  (non-atomic) Lebesgue space. A \emph{factor} of a 
measure preserving system $(Z, \mathcal{M}, \mu, T)$ is a measure preserving system $(Z', \mathcal M ', \mu', T')$ and a 
measurable map $\pi\colon Z\to Z'$ such that $\mu\circ\pi^{-1} = \mu'$ and $T'\circ\pi(x) = \pi\circ T(x)$ 
for $\mu$-almost all $x\in Z$. 
  In this setting, the indecomposable structures  are the \emph{prime} transformations, which are transformations with no non-trivial (measurable) factors. That is, any factor map on $(Z, \mathcal{M}, \mu,T)$ is either an isomorphism or a map to a one point system.  Historically, showing systems are prime has largely been accomplished by understanding the self-joinings of the system, that is,  the $T \times T$ invariant measures on $Z \times Z$ with marginals $\mu$ on each of the coordinates. 
  Our main result is that there exists a prime transformation with many self-joinings (the self-joinings form a Poulsen simplex) and the self-joinings  can be large (there is a self-joining that does not arise as a distal extension of the system): 
 
\begin{thm}
\label{theorem:main}
There exists a prime system $(Y, \mathcal B, \nu,T)$ that is rank 1, rigid, and has an ergodic self-joining $\eta$, which is not the product measure, 
such that $(Y\times Y, \mathcal B\times \mathcal B, \eta,T\times T)$ is not a distal extension of $(Y, \mathcal B, \nu,T)$.
 Moreover, the set of self-joinings of $Y$ is a Poulsen simplex. 
\end{thm}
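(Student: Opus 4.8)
The plan is to build the system $(Y,\mathcal B,\nu,T)$ by an explicit cutting-and-stacking construction, carefully choosing the cut parameters and spacer parameters so that the tower heights encode a lacunary sequence along which the system is rigid, while simultaneously the spacers are rich enough to force the desired abundance of self-joinings. The macros in the preamble (\lowdecay, \updecay, \growth, \sep, etc.) strongly suggest that the heights $h_k$ grow like $10^{2(2k+k_0)^2+\dots}$ and that at stage $k$ one introduces two kinds of behavior: a "rigidity block" where almost all columns get no spacers (so $T^{h_k}\to \mathrm{Id}$ weakly), and a "joining block" where a prescribed pattern of spacers of controlled relative size (\sep) is inserted. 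The first task, therefore, is to fix the combinatorics of the construction and verify the basic properties: that $T$ is rank $1$ (immediate from cutting-and-stacking with a single tower), that $T$ is rigid along $\{h_{k}\}$ (the no-spacer blocks dominate in measure, with error summable by the \updecay bound), and to set up the symbolic/Rokhlin-tower language used to track orbits.

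Next I would address primality and the failure of quasi-simplicity. The standard route to primality for rank-one maps is via joinings: one shows any non-trivial factor $\sigma$-algebra would be invariant under the centralizer or would be compatible with the off-diagonal self-joinings, and then uses the explicit structure of the spacers to rule out any factor strictly between the trivial one and $\mathcal B$. Concretely, I expect to prove that every ergodic self-joining of $(Y,T)$ with itself is either an off-diagonal $\Delta_{T^n}$ or the product measure or a specific "partial" joining coming from the spacer pattern, and that the only factor algebras consistent with all of these are the trivial ones; this gives primality. For the failure of quasi-simplicity and the existence of the non-distal self-joining $\eta$, I would exhibit one explicit self-joining — the weak limit of off-diagonals $\Delta_{T^{a_k}}$ along a cleverly chosen subsequence $a_k$ not equal to the rigidity times — which by the spacer combinatorics converges to a joining $\eta$ supported on a set where the extension $(Y\times Y,\eta,T\times T)\to(Y,\nu,T)$ has a weak-mixing, hence non-distal, component; the \sep parameter is exactly what guarantees this limit is "genuinely new" and not a distal (e.g. isometric) extension.

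The final and most substantial task is to show the simplex of self-joinings $J_2(T)$ is a Paulsen simplex, i.e. a metrizable Choquet simplex whose extreme points (the ergodic self-joinings) are dense. Metrizability and the Choquet property are automatic. Density of the ergodic self-joinings is the crux: I would show that off-diagonal joinings $\Delta_{T^n}$, together with their limits, are weakly dense in $J_2(T)$, which reduces (by a standard approximation-on-tower-levels argument) to showing that for any finite tower-level partition and any target self-joining, one can find a rigidity-type time $n$ so that $\Delta_{T^n}$ matches the target's values on products of levels up to small error — the flexibility here comes from the huge gaps in the height sequence (\growth) and the freedom in the spacer choices.

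\textbf{Main obstacle.} The hard part will be the density of ergodic self-joinings: one must control \emph{all} self-joinings (not just the off-diagonals and products) well enough to approximate them, and this requires a delicate, quantitative understanding of how the spacer pattern at stage $k$ constrains the possible overlaps of the two copies of the tower — essentially a Baire-category / genericity argument inside the space of constructions, balancing the competing demands that rigidity be preserved (forcing many no-spacer columns) against joinings being plentiful (forcing enough spacer variety), which is precisely why the decay/growth rates in the preamble macros are tuned the way they are.
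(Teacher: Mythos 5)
Your plan for primality is the fatal gap, and it is internally inconsistent with the rest of your own proposal. You propose to classify the ergodic self-joinings as ``off-diagonal, product, or a specific partial joining coming from the spacer pattern'' and deduce primality from that short list; but the theorem also demands that the self-joinings form a Paulsen simplex, i.e.\ the ergodic self-joinings are \emph{dense} in the (infinite-dimensional) simplex of all self-joinings, so no such classification can hold --- there are uncountably many ergodic self-joinings, far beyond off-diagonals and products, and indeed the whole point of the construction is that primality here cannot be obtained from a paucity of joinings (that is what makes the example new relative to MSJ/simple/quasi-simple systems). The paper's actual mechanism is entirely different: since the system is rigid rank one, the Markov operator of any self-joining (hence of the operator obtained by integrating over the fibers of any factor map) is a strong-operator limit of convex combinations $\sum_i\alpha_i^{(k)}U_{T^i}$ (Theorem~\ref{thm:CE factor}, from~\cite{CE}); one then runs a long combinatorial analysis of the powers that can appear --- the ``friends'' construction (Lemma~\ref{lem:easy friend}), the recoding of time scales in Definition~\ref{def:reduce}, and the three-case analysis of Section~\ref{sec: end}, exploiting the asymmetric alphabet at positions $10^{2k}$ and the removed cylinders $Z_\ell$, $W_k$ --- to force any such limit coming from a factor to be either the identity or the projection onto constants. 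Nothing in your outline supplies a substitute for this step, and the joining-classification route you describe would simply be false for the system you need.

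Two smaller but real issues. For the non-distal self-joining, taking a weak limit of single off-diagonals $\J(a_k)$ and asserting a ``weak-mixing, hence non-distal component'' is not an argument: ruling out \emph{distal} (not just isometric) extensions requires a quantitative criterion on the fiber measures --- in the paper, Theorem~\ref{thm:no compact}, which needs each fiber to carry positive mass both on points that track the rigidity shift and on points that separate from it; this is arranged by building $\eta$ as a limit of uniform averages of $2^r$ off-diagonals chosen via Proposition~\ref{prop:prelimit omnibus} and Lemma~\ref{lem:shift happens} (the two-speed behavior of the tower over the $10^{2k}$ coordinate), together with an ergodicity argument for the limit. For the Paulsen simplex, your reduction (approximate any convex combination of off-diagonals by a single off-diagonal) is the right target, but your proposed Baire-category/genericity argument over the space of constructions is incompatible with having fixed one explicit transformation; the paper instead proves this approximation deterministically (Corollary~\ref{cor:getting first r}, iterating the Chaika--Eskin scheme, again using the split behavior at positions $10^{2k}$) and combines it with King's flat-stack theorem and rigidity to get density of the off-diagonals' closure.
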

To highlight the novelty of our construction,  we note that being not quasi-simple (or being not quasi-distal) is a residual property in the space of measure preserving transformations (endowed with the weak topology). 
This answers a question posed by Danilenko~\cite[Section 7, Question (iii)]{danilenko} who asked if the set of quasi-simple transformations and the set of \emph{distal-simple} transformations are both meager. It is also  a strengthening of a result of Ageev~\cite{ageev} who showed that being simple is meager.

\subsection{Context of the results} 
The first systematic family of prime systems was introduced by Rudolph~\cite{rudolph}, based on Ornstein's counterexample machinery,
 and has been studied extensively since; for example, see~\cite{   dJ, dJRS, king2, Join 3, weak lim chacon, ryzhikov bounded}.
A system $(Y, \nu, T)$ has \emph{$2$-fold minimal self-joinings} 
if all of the ergodic self-joinings are either $\nu \times \nu$ or are concentrated on the graph 
$\{(x,T^jx)\}$ 
for some integer $j\geq 0$.  
Defining the natural generalization for $k$-fold minimal self-joinings for all $k\geq 2$, 
Rudolph showed that 
any system having minimal self-joinings is prime. 

However, having minimal self-joinings is quite special, and so there was interest in more general criteria for 
obtaining prime systems.  
 In this direction, Veech showed that a \emph{2-simple} system is prime if it has no compact subgroups in its centralizer. Recall that a system is 2-simple (which Veech called \emph{property S}) if the only ergodic self-joinings arise from the product measure and measures 
carried on graphs of transformations in the centralizer of the system.
Simple systems have since been studied in a variety of contexts (see for example~\cite{veech, dJR, GW, ageev, danilenko, dJ, ddJ, GHR, T}).   
Veech's criterion gave rise to the first example of a rigid prime system, 
with the construction by del Junco and Rudolph~\cite{dJR} of  a specific  rigid, simple  system that  had no non-trivial compact subgroups in its centralizer.
 Glasner and Weiss~\cite{GW} constructed an example of a prime system that is not simple, by taking a simple system and considering the factor corresponding to a non-normal maximal compact subgroup, again using Veech's criteria 
 to show that the factor is prime since it arises from a maximal compact subgroup. 
In this example, as the subgroup is not normal, the factor itself is not simple, but the self-joinings of the factor of a simple system are always isometric extensions of the factor.

There are a few other known examples of prime systems.  For example, 
King~\cite[Section 2]{king} showed that the (proper) factors of rank 1 systems are rigid and so it follows that mildly mixing rank $1$ systems are prime.   Continuing in this vein, 
Thouvenot asked if mildly mixing rank 1 transformations have minimal self-joinings, and this difficult question remains open. 
Parreau and Roy~\cite{PR} gave 
a construction of prime systems for some  Poisson suspensions of (infinite measure preserving) prime systems, 
and it follows from results in~\cite{LPR} that the constructed systems are quasi-distal. 
In the same article, Parreau and Roy write ``it is yet unknown whether prime rank one maps are always factors of simple systems." 
Our construction resolves this by producing a prime rank 1 system that is not the factor of a simple system (it is not quasi-simple).

This short list of examples basically includes all known prime systems, and one motivation for this work is to give a new construction of prime systems not relying on a paucity of joinings (as in the minimal self-joinings, simple, or factor of simple systems) or soft restrictions on the prime factors (as in the mildly mixing rank 1 or Poisson suspension of prime infinite measure preserving systems with additional properties).

Turning to the second conclusion of Theorem~\ref{theorem:main}, 
we note that it is well-known that a residual set of measure preserving systems is rank 1 and rigid. 
King~\cite{king flat} showed that for a typical measure preserving transformation, 
its self-joinings form a Poulsen simplex (recall that a \emph{Poulsen simplex} is a simplex such that the extreme points are dense).  Putting this in context,  
Lindenstrauss, Olsen, and Sternfeld~\cite{LinOlsStern} proved that a Poulsen simplex is unique up to affine homeomorphism.  
   Ageev showed that the typical transformation is not prime~\cite{ageev finite extension} and is not simple~\cite{ageev}.

\subsection{A brief outline of the paper and a conjecture}
In Section 2, we introduce general concepts from ergodic theory. In Section 3, we define our system $(Y,\nu,T)$, 
as the first return map of an odometer to a compact set, and then we set up the basic notation used throughout and prove  first results on the mixing properties of the system. In Section 4, we show that our system is not quasi-distal and that its self joinings form a Poulsen simplex. In Section 6, we show that our system is prime, building heavily 
on ingredients developed in Section 5. As our arguments are technical and require some additional development, we defer conceptual descriptions of the proofs to Sections 4 and 5, after the preliminary tools have already been defined. 

Our methods for building self-joinings and building self-joinings that can not be distal extensions of the base system are fairly soft and general (if technical and involved). 

 Our proof that the transformation is prime is more combinatorial, 
 making heavy use of the specific construction.  This should not be surprising, because being prime is a 
 meager property~\cite{ageev finite extension} in the space of measure preserving transformations (with the weak topology). Nevertheless, an ideology of this work is that it may still be a fairly common property. In particular, we conjecture that in some families of measure preserving transformations  almost every system is prime. To be specific:

\begin{conjecture}
Almost every $3$-IET is prime.
\end{conjecture}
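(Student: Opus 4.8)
Towards the conjecture, the plan would be to transplant the mechanism of Theorem~\ref{theorem:main}---primeness from built-in asymmetry rather than from scarcity of joinings---to the renormalization dynamics of a generic $3$-IET. Here the permutation is taken to be the symmetric one $\binom{1\,2\,3}{3\,2\,1}$ (the statement is false for the reducible permutations, where the map is a rotation and hence not prime). For Lebesgue-almost every length vector $\lambda$ the transformation $T_\lambda$ is uniquely ergodic (Masur, Veech), weakly mixing (Katok--Stepin; Avila--Forni), rigid, and, through Rauzy--Veech induction, linearly recurrent and rank one, with a canonical nested sequence of Rokhlin towers generating the Borel $\sigma$-algebra. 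By King's theorem~\cite{king} every proper factor of $T_\lambda$ is then rigid; but since $T_\lambda$ is itself rigid, hence not mildly mixing, the mild-mixing route to primeness that works for rank one systems is unavailable, and one must argue through self-joinings. This is genuinely delicate: by Chaika--Eskin~\cite{CE} the self-joinings of $T_\lambda$ already form a Paulsen simplex for a.e.\ $\lambda$, so there is no paucity of joinings to exploit, and (since the ergodic self-joinings are then dense and uncountable) no Veech-type $2$-simplicity argument can apply.

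The first step would be to fix, by a Borel--Cantelli estimate for the Zorich cocycle, a full-measure set of ``Diophantine-generic'' parameters: the combinatorial data produced by Rauzy--Veech induction (the stage-$n$ tower heights and the way one stage cuts into the next) should then display, for infinitely many $n$, the same kind of irregularity that in Section~\ref{sec:construction} is engineered by inserting perturbed times into the underlying odometer. The second step is to show that for such $\lambda$ every $T_\lambda$-invariant sub-$\sigma$-algebra $\mathcal A$ is trivial, equivalently that $T_\lambda$ carries no nontrivial self-adjoint idempotent self-joining (the relatively independent joining over the hypothetical factor): since the towers generate, $\mathcal A$ is, up to arbitrarily small error, measurable with respect to the stage-$n$ tower partition for all large $n$, and one then runs a counting argument in the spirit of Sections~\ref{sec:coding} and~\ref{sec: end} to rule out a proper nontrivial such $\mathcal A$, using that a Diophantine-generic cutting pattern admits no nontrivial $T_\lambda$-invariant coarsening.

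The hard part is the second step, precisely because a factor of a $3$-IET is a priori only a rigid, zero-entropy, loosely Bernoulli system---it need not be an IET, need not be rank one, and has no convenient normal form---so one must control \emph{all} of its factors at once along a full-measure set of parameters, extracting the needed asymmetry from generic Diophantine behavior instead of designing it. The usual shortcuts are blocked: $2$-simplicity fails badly by~\cite{CE}, so Veech's criterion does not apply even granting the (plausible, but as far as we know open in this generality) triviality of the centralizer of a.e.\ $3$-IET, and King's rigidity of factors~\cite{king} alone does not exclude proper factors. A reasonable first milestone would therefore be the special case of showing that a.e.\ $3$-IET has no nontrivial rank one factor, before attacking the general statement by the combinatorial route sketched above.
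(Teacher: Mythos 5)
This statement is one of the paper's two conjectures: the paper offers no proof of it, and indeed explicitly says that its primeness argument is combinatorial and makes heavy use of the specific odometer construction, with the IET case left open. Your submission is likewise not a proof but a research outline, and its decisive step is missing: step two --- showing that for a full-measure, ``Diophantine-generic'' set of length vectors every $T_\lambda$-invariant sub-$\sigma$-algebra is trivial --- is exactly the content of the conjecture, and you give no argument for it beyond saying one should ``run a counting argument in the spirit of Sections~\ref{sec:coding} and~\ref{sec: end}.'' Those sections do not supply a transferable criterion: they rest on Theorem~\ref{thm:CE factor} (Markov operators of self-joinings as limits of convex combinations of powers, which does plausibly apply to rigid rank one IETs) \emph{plus} an elaborate analysis of ``friends'' and a recoding of time scales whose inputs are the hand-designed sets $Z_k$, $W_k$ and the perturbed alphabet sizes of Section~\ref{sec:construction}. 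Extracting an analogous asymmetry from the Rauzy--Veech/Zorich data for almost every $\lambda$, uniformly over all possible factors, is precisely the missing idea, and merely invoking a Borel--Cantelli estimate for the Zorich cocycle does not identify what combinatorial property of the induction would play the role of the $Z_k$ and $W_k$.

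There are also factual slips in your step one. Linear recurrence is a measure-zero condition on IETs (it corresponds to bounded Rauzy--Veech/Teichm\"uller recurrence), so it cannot be part of a full-measure ``Diophantine-generic'' set; and the a.e.\ properties you do have (unique ergodicity, weak mixing, rigidity, the Paulsen simplex of self-joinings from~\cite{CE}) are exactly the ones the paper already lists without their implying primeness. Finally, the reduction you propose --- approximating a hypothetical factor algebra by stage-$n$ tower partitions because the towers generate --- is available for any rank one system and does not by itself rule out factors (rank one systems can have many factors), so it cannot substitute for the asymmetry mechanism. In short: correct framing of why the problem is hard, but no proof, and the gap coincides with the open problem itself.
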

Although this may hold more generally for a $k$-IET, such a conjecture is out of reach at this point, 
but for a $3$-IET, some of these tools are already developed with 
the methods of~\cite{CE}.

A second conjecture, closer to the work of this paper, is stated in Section~\ref{sec:construction}, after 
we have developed some further background.

\section{Definitions and notation}
\subsection{Systems and joinings}
By a \emph{measure preserving system} $(X, \mathcal{B}, \mu, T)$, we mean that  $\mathcal{B}$ is the Borel $\sigma$-algebra for some compact metric topology on $X$,   $(X, \mathcal{B}, \mu)$ is a probability space, and $T\colon X\to X$ is a measurable, measure preserving map.  Throughout the paper, we generally omit the associated $\sigma$-algebra from the notation, 
assuming that any measure preserving system is endowed with the Borel $\sigma$-algebra.  
We say that the measure preserving system $(Y, \nu, S)$ is a \emph{factor} of $(X, \mu, T)$ if there exists a measurable map $\pi\colon X\to Y$ such that $\pi\circ T= S\circ \pi$ and $\mu\circ\pi^{-1} = \nu$. 

A \emph{joining} of the  ergodic
measure preserving systems $(X_i, \mu_i, T_i)$ for $i=1, 2$ is a 
$(T_1\times T_2)$-invariant measure $\alpha$ on $X_1\times X_2$  
such that $\alpha$ projects to $\mu_1$ on the first coordinate and to $\mu_2$ on the second coordinate.  
A \emph{self-joining} of a system is a joining of two copies of the same system.  
If $(X, \mu, T)$ is a measure preserving system,   $\J(n)$ denotes the \emph{off diagonal joining} on $\{(x,T^nx)\}$, 
meaning that $\J(n)$ is the measure on $X\times X$ such that for all $f \in C(X \times X)$ 
$$
\int f(x, y)\,d\J(n) = \int f(x, T^nx)\,d\mu.
$$

If $(X, \mu, T)$ is an ergodic measure preserving system, we say that the bounded linear operator 
$P\colon L^2(\mu)\to L^2(\mu)$ is a \emph{Markov operator} if it satisfies: 
\begin{enumerate}
\item \label{it:one1}
For all $f\in L^2(\mu)$ with $f\geq 0$, we have $Pf \geq 0$ and $P^*f\geq 0$.
\item $P\one_X = \one_X$ and $P^*\one_X = \one_X$, where $\one_A$ denotes the indicator function of the set $A$.
\item \label{it:three3}
$PU_T = U_TP$,  where  $U_T\colon L^2(\mu) \to L^2(\mu)$ by $U_T f=f\circ T$.
\end{enumerate}

Markov operators can be defined more generally for an operator mapping one measure preserving system to another, but our interest is when the operator arises as an integral of fibers of a factor and so we can take the map from a system to itself; see, for example, 
Glasner~\cite{glasner-book} for more on such operators.  More precisely, 
if $(X,\mu,T)$ has a factor $(Y,\nu,S)$ 
with factor map $\pi$, then by integrating over the fibers of the factor map, we obtain a bounded linear operator $P\colon L^2(\mu) \to L^2(\mu)$, satisfying Properties~\eqref{it:one1}--\eqref{it:three3}
and we call this the \emph{Markov operator defined by $\pi$}.  That is, by disintegration of measures there exist measures $\mu_y$ on $X$ 
such that $\mu=\int_Y\mu_y d\nu$ and $P(f)(x)=\int fd\mu_{\pi(x)}$.
Note that joinings also give rise to Markov operators. However, 
these do not formally enter the arguments and so we do not discuss these Markov operators.

\subsection{Rigid rank one by cylinders}
As above, we assume that each system is endowed with its Borel $\sigma$-algebra, but we omit it from the notation.  
\begin{definition}
\label{def:rigid-rank-one}
An invertible ergodic system $(Z, \lambda, R)$, where  $Z \subset [0,1]$ and $\lambda$ denotes normalized (probability) Lebesgue measure restricted to $Z$, is {\em rigid rank one by cylinders} if  
there exist a sequence of intervals $(I_i)_{i\in\N}$, which we call \emph{cylinders},  and a sequence of positive integers $(n_i)_{i\in\N}$ such that: 
\begin{enumerate}
\item
\label{it:one}
For $0 \leq i < n_k, $  the iterate $R^iI_k$ is a cylinder having the same measure as $I_k$. 
\item
\label{it:two}
The cylinders $R^iI_k$ and $R^jI_k$ are pairwise disjoint for all $k$ and for $0 \leq i < j  < n_k$.
\item 
\label{it:three}
The measure $\lambda(\bigcup_{i=0}^{n_k-1}R^iI_k)$ tends to $1$ as $k\to\infty$. 
\item 
\label{it:four}
The ratio   $\frac{\lambda(R^{n_k}I_k\Delta I_k)}{\lambda(I_k)}$ tends to $0$ as $k\to\infty$.  
\end{enumerate}
\end{definition} 
Note that rank 1 systems can be rigid without being rigid rank 1, 
but the rigidity of a rigid rank one system is not directly tied to the towers of the system.
Also, note that cylinders in this setting are intervals in $[0,1]$, but 
we refer to them as cylinders in analogy with the symbolic setting. 
By a \emph{symbolic system} $(X, T)$, we mean an infinite sequence space $X\subset \prod_{i=1}^\infty \mathcal{A}_i$, where each $\mathcal{A}_i$ is a finite alphabet,  and $T\colon X \to X$ is a measurable map.   We denote elements of the space as $\bx = (x_i)_{i\in\N}\in X$, with the convention that a bold face letter $\bx$ has its entries denoted as $x_i$.  
In a symbolic system $X$, 
a \emph{cylinder set $[w]$ determined by a word $w= w_1\ldots w_n$} 
is defined to be
$$
[w] = \{\bx\in X\colon x_i = w_i \text{ for all } 1 \leq i \leq n\}.
$$
We also consider cylinders defined only by some entries $a_{i_1}\in \mathcal{A}_{i_1},\ldots,a_{i_k} \in \mathcal{A}_{i_k}$ defining the cylinder
$$ \{\bx \in X\colon x_{i_j}=a_{i_j} \text{ for all }1\leq j\leq k\}$$ 
and we refer to the $i_j$ as \emph{defining indices} of the cylinder. 
The collection of cylinder sets forms a basis for the topology of $X$. 
When working with a symbolic system $(X,T)$, 
fixing initial entries corresponds to an interval in $[0,1]$, 
 meaning that a cylinder set corresponds to an interval. 

The first three conditions in the definition of rigid rank one by cylinders imply that $G$ is rank one, but in the general 
setting of a rank one transformation there is no requirement that the subsets $I_i$ are intervals.  
The fourth condition gives a sequence of times under which the transformation $R$ is rigid, meaning 
that along these times the iterates of $R$ approach the identity.   Indeed, Condition~\eqref{it:four} implies that $\frac{\lambda(R^{n_k}R^iI_k \cap R^i I_k)}{\lambda(I_k)}$ is close to 1 for all large $k$ and $0 \leq i <n_k$, and so using this 
with Conditions~\eqref{it:one} and~\eqref{it:three}, we have a rigidity sequence.

\subsection{Distal extensions}
We review the definitions of  (measurable) isometric and distal extensions, as introduced by Parry~\cite{parry}.  
These extensions were key in Furstenberg's proof~\cite{Furstenberg} of Szemer\'edi's Theorem (see~\cite{furstenberg-book} for further background), and the definition we use comes from Zimmer~\cite{zimmer1, zimmer}, who showed that a measurably distal system is equivalent to a (possibly transfinite) inverse limit of a tower of isometric extensions.  

If $G$ is a compact group, $H\subset G$ is a closed subgroup, and $(X, \mu, T)$ is a Borel probability system, then 
 a measurable map $\phi\colon X\to G$ is called a \emph{cocycle} and the \emph{extension of $G$ by $G/H$ given 
by the cocycle $\phi$} is defined to be the system 
$(X\times G/H, \mu\times m_{G/H}, T_\phi)$, where 
$T_\phi(x,\tilde{g}) = (Tx, \phi(x)\cdot \tilde{g})$ for $x\in X$ and $\tilde{g}\in G/H$ and $m_{G/H}$ is the Haar measure on $G/H$ (we use the convention that cosets in $G/H$ are denoted by $\tilde{\cdot}$).   Defining the topology of the group $G$ by a distance $d_G$ that is invariant under right translation, and of course continuous with respect to translation on either side, we have an induced distance $d_{G/H}$ on $G/H$ and we have that the restriction of $T_\phi$ to each fiber of the natural projection map $X\times G/H\to X$ is continuous.  
The system $(X\times G/H, \mu\times m_{G/H}, T_\phi)$ is an \emph{isometric extension of the system $(X, \mu, T)$}. 

If $(X, \mu, T)$ and $(Y, \nu, S)$ are ergodic systems, 
then $(X, \mu, T)$ is a \emph{distal extension} of $(Y, \nu, S)$ if 
it has a sequence of factors $X_\eta$ indexed by ordinals $\eta\leq\eta_0$ for some 
countable ordinal $\eta_0$ such that $X_0 = Y$, $X_{\eta_0} = X$,  $X_{\eta+1}$ is an isometric extension of $X_\eta$ 
for each $\eta$, and for each limit ordinal $\zeta\leq\eta_0$ the system $X_\zeta$ is an inverse limit of the systems $X_\eta$ with $\eta\leq\zeta$.

\begin{notation*}
We use $d$ to denote the metric in various settings, with a subscript indicating the space as needed.  Thus $d_G$ denotes 
the right  invariant metric 
  on the group $G$, $d_{G/H}$ denotes the induced distance on $G/H$. 
  \end{notation*}

\section{Construction of the system}
\label{sec:construction}
\subsection{Definition of the transformation $T$}
We begin by constructing an odometer.
Set  
\begin{equation}
\label{def:X}
X = \prod_i\{0,\dots,a_i-1\},
\end{equation}
where 
$$
a_i = \begin{cases}
8 & \text{ if } i \notin \{10^k\colon k\geq 2 \}\\
k & \text{ if } i=10^{k} \text{ for some }k\geq 2.
\end{cases}
$$ 
We write elements $\bx\in X$ as $\bx = (x_i)_{i\in\N}$. 
Let  $S$ denote the odometer on $X$, meaning that $S$ is addition by $(1, 0, 0, \ldots)$ with carrying to the right.  Thus 
\begin{equation}
\label{def:S}
S(\bx) = S(x_1, x_2, \ldots, x_k, x_{k+1}, \ldots) = (0, 0, \ldots, 0, x_k+1, x_{k+1}, \ldots), 
\end{equation}
where $k$ is the least entry such that $x_k < a_k-1$ and if there is no such $k$, then the odometer turns over and  outputs the point $\bzero = 
(0, 0, \ldots)$.

Set
\begin{equation}
\label{def:Zk}
Z_k=\{\bx\in X\colon  x_k=7 \text{ and }x_i=a_i-2 \text{ for all }i <k\}
\end{equation}
and
\begin{equation}
\label{def:Wk}
W_k=\{\bx\in X\colon x_i=a_i-2 \text{ for all }i<10^{2k} \text{ and }x_{10^{2k}}<a_{10^{2k}}/2\}.
\end{equation}
Define 
\begin{equation}\label{eq:def-Y}
Y = X \setminus  \bigl(\bigcup_{{\ell \notin \{10^k\colon k\geq 2\}}}Z_\ell \cup \bigcup_{k=1}^{\infty} W_k\bigr)
\end{equation}
and define $T\colon Y \to Y$ to be the first return map of $S$ to $Y$.   
Throughout this paper, $T$ refers to this map and 
$d_Y$ is any metric on $Y$ giving rise to the product topology, viewing it as a subspace of $X$.  When there is no 
confusion as to which metric is meant, we omit the subscript and just write $d$ for the metric on $Y$. 
As usual, we denote elements $\by\in Y$ as $\by = (y_i)_{i\in\N}$.  

Define  $D_k$ to be the cylinder sets with largest defining index $k$ in $X \setminus Y$. More explicitly, this means that: 
\begin{equation}
\label{def:Dk}
D_k = 
\begin{cases}
Z_k & \text{ if } k\notin \{10^k\colon k\geq 2 \} \\
W_\ell & \text{ if } k=10^{2\ell} \text{ for some } \ell\geq 1 \\ 
\emptyset & \text{ if } k=10^{2\ell +1} \text{ for some } \ell\geq 1. 
\end{cases}
\end{equation}

The following result is standard: 
\begin{lem} The odometer $S$ is uniquely ergodic with respect to a probability measure $\mu$, and 
thus the first return map $T$ is uniquely ergodic with respect to the measure 
$\nu=\mu(Y)^{-1} \cdot \mu|_{Y}$. 
\end{lem}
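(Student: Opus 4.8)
The plan is to prove this in two halves: first that the odometer $S$ on the product space $X$ is uniquely ergodic, and then that unique ergodicity passes to the first return map $T$ on the subset $Y$ of positive measure. For the first half, $X=\prod_i\{0,\dots,a_i-1\}$ is a compact abelian group under coordinatewise addition with carrying, and $S$ is translation by the topological generator $(1,0,0,\dots)$; since the subgroup generated by this element is dense (the partial sums $1,2,3,\dots$ visit every finite initial word infinitely often, because $\prod_i a_i=\infty$), the system $(X,S)$ is minimal. A minimal rotation on a compact group is uniquely ergodic, with the unique invariant measure being Haar measure $\mu$, which here is the product of the normalized counting measures on each $\{0,\dots,a_i-1\}$. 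Alternatively, one can verify unique ergodicity directly by checking that Birkhoff averages of cylinder indicator functions converge uniformly, using the explicit odometer structure.

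For the second half, I would invoke the standard fact that if $(X,\mathcal M,\mu,S)$ is uniquely ergodic and $Y\subset X$ is a set with $\mu(Y)>0$ whose boundary is $\mu$-null (so that $\one_Y$ is a $\mu$-a.e. continuous function, or $Y$ is open and closed up to measure zero), then the induced (first return) transformation $T$ on $Y$ is uniquely ergodic with respect to $\nu=\mu(Y)^{-1}\mu|_Y$. In our setting $Y$ is obtained from $X$ by deleting a countable union of cylinder sets $\bigcup_\ell Z_\ell\cup\bigcup_k W_k$; each such cylinder is clopen, so $Y$ is a Borel set that differs from an open set by a $\mu$-null boundary, and the required regularity hypothesis holds. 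One must also check that $\mu(Y)>0$: the deleted set has measure $\sum_\ell \mu(Z_\ell)+\sum_k\mu(W_k)$, and each term decays rapidly enough (the $Z_\ell$ have measure comparable to $\prod_{i\le \ell} a_i^{-1}$ and the $W_k$ similarly) that the sum is strictly less than $1$; in fact this is exactly the kind of summable estimate the rest of the paper relies on, and it can be taken as given or verified by a crude geometric-series bound.

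The main obstacle — really the only nontrivial point — is justifying that unique ergodicity is inherited by the induced map. The clean way is: any $T$-invariant probability measure $\nu'$ on $Y$ gives rise, via the Kakutani tower construction over the return-time function $r_Y(y)=\min\{n\ge 1: S^n y\in Y\}$, to an $S$-invariant measure on $X$ (one needs $r_Y\in L^1(\nu')$, which follows from $\mu(Y)>0$ together with the fact that return times to a positive-measure clopen set are bounded on each level, or at least integrable); by unique ergodicity of $S$ this reconstructed measure must be $\mu$ (suitably normalized), and pushing back down forces $\nu'=\nu$. I would spell out the tower correspondence carefully, noting that the clopen nature of $Y$ makes $r_Y$ continuous on $Y$ away from a null set, so no measurability pathologies arise, and cite Kakutani's induced transformation construction (see e.g. the discussion of induced systems in standard ergodic theory references) for the general principle. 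Everything else is routine.
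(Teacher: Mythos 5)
Your argument is correct; note that the paper offers no proof at all (the lemma is stated as ``standard''), so there is nothing to compare it against except the standard folklore argument, which is essentially what you give. Two small remarks. First, the ``standard fact'' you invoke in the second paragraph (null boundary of $Y$ implies unique ergodicity of the induced map) is not really a citable theorem in that generality, and it is also not needed: the third paragraph, via the Kakutani tower, is what actually carries the proof, since it handles \emph{every} $T$-invariant Borel probability measure on $Y$, not just limits of empirical averages, and it requires no topological regularity of $Y$ or continuity of $T$ (the paper only claims $T$ is a measurable map of a compact metric space). Second, the integrability of the return time, which you flag as the one analytic hypothesis in the lifting step, is trivial here: from the structure of the deleted sets, at most two consecutive $S$-iterates can lie outside $Y$ (this is exactly the observation the paper makes later, in the proof of Lemma~\ref{lem:friends to offset}), so $r_Y\leq 3$ everywhere and every $T$-invariant probability measure lifts to a finite $S$-invariant measure, which by unique ergodicity of the odometer (minimal translation on the compact group $X$, Haar measure $=$ product of uniform measures) must be $\mu$; pushing back down gives $\nu'=\mu(Y)^{-1}\mu|_Y$ as you say. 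The crude bound $\mu(X\setminus Y)\leq\sum_\ell\mu(Z_\ell)+\sum_k\mu(W_k)<1$ is indeed a geometric-series estimate, so $\mu(Y)>0$ holds as claimed.
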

It follows immediately from the construction of the set $Y$ that its measure is strictly between $0$ and $1$, and so the maps $T$ and $S$ are not obviously isomorphic. In fact, they are not isomorphic,  as $T$ is weakly mixing (see Proposition~\ref{prop:wmix}), while $S$ has purely discrete spectrum. 

\begin{notation*}[for the systems we study throughout this article]
Throughout this article,  $X$ is the space defined by~\eqref{def:X}, $S$ is the odometer defined on $X$ as in~\eqref{def:S}, $\mu$ is the unique ergodic measure on this system, and  $(X, \mu, S)$ 
 is the odometer system thus defined. 
The space $Y$ is defined by~\eqref{eq:def-Y} and 
$(Y, \nu, T)$ 
is the associated uniquely ergodic system 
defined by the first return map.  
\end{notation*}

Both $(X,S)$ and $(Y, T)$ are measurable maps of compact metric spaces. The remainder of this paper is devoted to studying the properties of the system  $(Y, \nu, T)$.  

\subsection{An overview of the behavior in the system $(Y, \nu, T)$}
To give an idea of what types of behaviors built into the system  $(Y, \nu, T)$ give rise to it being both prime and having many self-joinings, we summarize the types of irregularities that are built into the system in the 
construction of the towers (see~\cite{KT} for the terminology) defining the system.
Namely, there are four distinct types of irregularities: \begin{enumerate}
\item The alphabet size for the odometer is typically $8$, but at stage $n = 10^k$, the alphabet has size $k$. 
This changing in the size of the indices is necessary to allow enough room for the constructions.  
\item  For any $n\neq 10^k$, before stacking the $n-1$ columns to obtain an $n$-tower, we delete
 a positive fraction (we fix this to be one eighth)
of the right most tower. This allows separation  of the indices in the set 
of indices for columns of atypical size, meaning those of the form $10^k$ for some $k\geq 1$.
\item  For $n= 10^{2k}$, we have $2k$ Rokhlin towers and we remove one level from each of the first $k$ of them and none from the other $k$. This is used in our construction of joininngs (in the language of~\cite{KT}, 
the joinings are built using that the system has
good linked approximation of type $(m,m+1)$).
\item  For $n =10^{2k+1}$, before stacking $n-1$ columns to obtain an $n$-tower, we make no change (meaning no deletion). This allows us to use results of Chaika and Eskin (Theorem~\ref{th:CE}) and King (Theorem~\ref{th:king}) to ensure that we obtain a system that is rigid rank $1$.
\end{enumerate}

\subsection{A further conjecture} 
Maintaining the notation of this section, we state a conjecture closely related to this subject: 
\begin{conjecture}
Let $a_1, a_2, \ldots \in \mathbb{N}$ with $a_i\geq  2$ for all $i \in\mathbb N$.  Let 
$T$ be the corresponding odometer viewed as a measure preserving map of $[0,1]$, 
meaning that if $x =\sum_{j=1}^{\infty}b_j \frac 1 {a_1\cdot\ldots\cdot a_j}$ with $b_i\in \{0,\ldots,a_i-1\}$, 
then  $Tx= \sum_{j=1}^{\infty}c_j \frac 1 {a_1\cdot\ldots\cdot a_j}$ where $c_k=b_k+1$ if $k=\min\{j\colon b_j<a_j-1\}$, $c_i=0$ for all $i<k$ and $c_i=b_i$ for all $i>k$. 
For almost every $x\in [0,1]$, the first return map of $T$ to $[0,x]$ is prime. 
\end{conjecture}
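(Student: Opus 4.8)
The plan is to adapt the primality half of the proof of Theorem~\ref{theorem:main} to the generic threshold $x$. The first observation is that, exactly as in the construction of $(Y,\nu,T)$ in Section~\ref{sec:construction}, the first return map $T_x$ of $T$ to $[0,x]$ is a rigid rank one by cylinders transformation: writing $x=\sum_j b_j/(a_1\cdots a_j)$ in the mixed-radix expansion, one groups the level-$n$ odometer cylinders inside $[0,x]$ according to their first return time to $[0,x]$ to obtain the columns of a Rokhlin tower, and refining in $n$ produces the rank one structure together with a rigidity sequence. The point that makes an ``almost every $x$'' statement tractable is that the coding map $x\mapsto (b_j)_j$ carries Lebesgue measure on $[0,1]$ to the Bernoulli measure $\prod_j\mathrm{Unif}\{0,\dots,a_j-1\}$; thus statements about $x$ become almost sure statements about an i.i.d.\ digit sequence, and in particular every finite digit pattern recurs infinitely often, the tower heights grow at a Borel--Cantelli rate, and the sequence of cut numbers and spacer placements along the tower is ``typical'' rather than specially structured.

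Granting this, primality of $T_x$ amounts to showing that every factor of $T_x$ is trivial, equivalently that the only self-adjoint idempotent Markov operators $P$ commuting with $U_{T_x}$ are the conditional expectation onto the constants and the identity. The mechanism from this paper is: such a $P$ must also commute with the rigidity limits inherited from the odometer, and this forces $P$ to respect, up to small error, the Rokhlin tower partitions at every level; one then uses the asymmetry of the generic tower data --- playing the role of the alphabet perturbations at the times $10^k$ and of the removed cylinders $Z_\ell$, $W_k$ in our construction --- to conclude that $P$ cannot blend distinct columns and hence must be $\Id$. (As in our setting, $T_x$ will typically also carry a rich family of self-joinings, so primality genuinely cannot be read off from a scarcity of joinings; but, unlike the Paulsen simplex assertion, the joining structure is not what one argues with here.) Rerunning the combinatorics of Sections~\ref{sec:coding}--\ref{sec: end} thus reduces to replacing the hand-crafted asymmetries by features that a random word exhibits almost surely.

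The hard part is precisely this last reduction. One needs a robust statement of the form ``a rigid rank one system whose cutting-and-stacking data is sufficiently irregular has no proper factor,'' where ``sufficiently irregular'' is a tail event of full Bernoulli measure, and the obstacle is to rule out factor maps that might exploit the coincidental near-periodicities and near-symmetries that a generic infinite word unavoidably contains --- a random word is far from globally symmetric, but it is not clear a priori that no Markov projection can take advantage of its local coincidences. The soft joining technology imported from~\cite{CE} is of no help here, as it controls self-joinings but not which of them arise from factors, so a genuinely new combinatorial input is required; this is the reason the statement is only a conjecture. A reasonable intermediate goal, weaker than the full conjecture but already substantial, is to first show that for a.e.\ $x$ the centralizer of $T_x$ contains only the powers of $T_x$, and then to attempt to upgrade this, using a description of the self-joinings, to primality.
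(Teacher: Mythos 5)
The statement you are addressing is stated in the paper as a conjecture, and the paper offers no proof of it; your text is likewise not a proof but a programme, and you yourself identify its decisive step as missing. To be concrete about where the gap lies: the paper's primality argument (Sections~\ref{sec:coding} and~\ref{sec: end}) is not an instance of a soft principle of the form ``rigid rank one with sufficiently irregular cutting data has no proper factor.'' It begins, as you say, from Theorem~\ref{thm:CE factor}, but everything after that is tied to the explicit arithmetic of the construction: the notion of $n$-\friend\ and Lemma~\ref{lem:easy friend} convert a putative factor into a quantitative statement about the coefficients $\alpha^{(k)}_i$, and the coefficients are then attacked by \emph{producing} friends, via Lemmas~\ref{lem:no E friends}, \ref{lem:rigid friends} and~\ref{lem:repeat rigid friends}, whose proofs manipulate the specific deleted cylinders $Z_\ell$ and $W_k$ placed at the positions $10^{k}$, and via the recoding of Definition~\ref{def:reduce}, whose case analysis (odd versus even powers of $10$, the splitting along $x_{\sigma_j}<a_{\sigma_j}/2$) is dictated by that placement. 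For the return map to $[0,x]$ with $x$ Lebesgue-random, the removed cylinders are prescribed by the digits of $x$, and none of this bookkeeping transfers as stated; one would need an almost-sure analogue of the friend-producing lemmas and of the halting analysis for the reduction algorithm, and you propose no candidate statement, let alone a proof. Your central assertion --- that a Markov projection commuting with $U_{T_x}$ must respect the tower partitions and cannot blend distinct columns because the generic tower data is ``asymmetric'' --- is exactly what those sections labor to prove in the hand-crafted model; asserting it for generic cutting data is a restatement of the conjecture rather than an argument, as you in effect concede.

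Two smaller points. First, the typicality properties you invoke (recurrence of finite digit patterns, Borel--Cantelli growth of heights) are not the relevant ones: the paper's mechanism needs \emph{asymmetries} that desynchronize return times by a bounded nonzero amount on a set of controlled measure (this is what being $n$-\friend\ encodes), and it is not clear that a random digit sequence over arbitrary alphabets $a_i\geq 2$ (for instance $a_i\equiv 2$) supplies them in the required quantitative form; this is the honest obstruction, not merely the local near-symmetries you mention. Second, your proposed intermediate goal (trivial centralizer for a.e.\ $x$, then upgrading via the joining description) is reasonable as a research direction but is also unproved here, so it does not close any part of the gap. In short: the approach is the natural one and is consistent with how the authors view the conjecture, but no step beyond the rank one/rigidity structure is actually established, and the key combinatorial input is missing.
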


\subsection{Weak mixing of the transformation $T$}
Our first goal is to show that the transformation $T$ is weakly mixing, and we start with  a sufficient (but not necessary) condition for a transformation to be weakly mixing.
\begin{lem}\label{lem:wmix crit} 
Assume that $(Z_1, \lambda, T_1)$ is an ergodic measure preserving system with respect to the Lebesgue measure $\lambda$.
If there exist a constant $c>0$,  a sequence of integers $(n_i)_{i\in\N}$, and 
sequences of measurable sets $(A_i)_{i\in\N}$ and $(B_i)_{i\in\N}$ such that 
\begin{enumerate}
\item the measures $\lambda(A_i),\lambda(B_i)>c$ for all $i\in\N$, 
\item the limit $\underset{i \to \infty} \lim \int_{A_i}|T_1^{n_i}x-x|\,d\lambda(x)=0$, and
\item  the limit $\underset{i \to \infty} \lim \int_{B_i}|T_1^{n_i}x-T_1x|\,d\lambda(x)=0$, 
\end{enumerate}
then $T_1$ is weakly mixing.
\end{lem}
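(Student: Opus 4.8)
The plan is to use the standard criterion that an ergodic system $(Z_1,\lambda,T_1)$ is weakly mixing if and only if it has no nonconstant eigenfunctions, i.e. for every $\theta \in [0,1)$ and every $f \in L^2(\lambda)$ with $f \circ T_1 = e^{2\pi i\theta} f$ we must have $f$ constant (equivalently $\theta = 0$). So suppose for contradiction that $f$ is an eigenfunction with eigenvalue $e^{2\pi i\theta}$; by ergodicity $|f|$ is constant, so after normalization we may assume $|f| \equiv 1$ a.e. The idea is to test the eigenvalue relation against the two rigidity-type hypotheses to force first $\theta = 0$ and then $f$ constant.

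First I would exploit hypothesis (2). Since $|f| = 1$ a.e., the function $f$ is in particular bounded; I would further approximate $f$ in $L^2$ by a Lipschitz (or uniformly continuous) function $g$ on $[0,1]$, say $\|f - g\|_{L^2(\lambda)} < \varepsilon$, with Lipschitz constant $L = L(\varepsilon)$. From $f \circ T_1^{n_i} = e^{2\pi i n_i \theta} f$ and hypothesis (2), I would estimate
\[
\int_{A_i} |f(T_1^{n_i}x) - f(x)|\,d\lambda(x) \le \int_{A_i}|g(T_1^{n_i}x) - g(x)|\,d\lambda + 2\|f-g\|_{L^1} \le L\int_{A_i}|T_1^{n_i}x - x|\,d\lambda + 2\varepsilon,
\]
which tends to something $\le 2\varepsilon$ as $i \to \infty$; since $\varepsilon$ is arbitrary and $\lambda(A_i) > c$, this forces $|e^{2\pi i n_i\theta} - 1| \to 0$, i.e. $n_i\theta \to 0 \pmod 1$. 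Then I would feed this into hypothesis (3) by the same approximation scheme: on $B_i$,
\[
\int_{B_i}|f(T_1^{n_i}x) - f(T_1 x)|\,d\lambda \le L\int_{B_i}|T_1^{n_i}x - T_1 x|\,d\lambda + 2\varepsilon \to \le 2\varepsilon,
\]
while the eigenvalue relation gives $f(T_1^{n_i}x) = e^{2\pi i n_i\theta} f(x)$ and $f(T_1 x) = e^{2\pi i \theta} f(x)$, so the integrand equals $|e^{2\pi i n_i\theta} - e^{2\pi i\theta}|$. Combining with $n_i\theta \to 0$ gives $|1 - e^{2\pi i\theta}| \le$ (arbitrarily small), hence $\theta = 0$.

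Finally, with $\theta = 0$ the eigenfunction relation becomes $f \circ T_1 = f$, so $f$ is $T_1$-invariant and therefore constant by ergodicity. Thus $T_1$ has no nonconstant eigenfunctions and is weakly mixing.

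\textbf{Main obstacle.} The routine analytic steps (Lipschitz approximation, pushing the limits through) are standard; the one point requiring a little care is the legitimacy of replacing $f$ by a Lipschitz $g$ and controlling $\int_{A_i}|g\circ T_1^{n_i} - g|$ by the hypothesis — this uses only that $g$ is Lipschitz on the ambient interval $[0,1]$ and that the hypotheses control the \emph{metric} displacement $|T_1^{n_i}x - x|$ pointwise, not merely in some weak sense, so the chain of inequalities goes through cleanly. The conceptually important step is recognizing that hypothesis (2) alone only yields $n_i\theta \to 0$ (a rigidity statement), and that one genuinely needs the second family of sets $B_i$ in hypothesis (3), comparing the $n_i$-th iterate with the \emph{first} iterate rather than with the identity, to rule out the possibility of a nontrivial eigenvalue; without (3) the criterion would be satisfied by, e.g., an irrational rotation along a rigidity sequence.
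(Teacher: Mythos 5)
Your proof is correct and follows essentially the same route as the paper's: assume a nonconstant eigenfunction $f$ with eigenvalue $\gamma\neq 1$, use the sets $A_i$ (hypothesis (2)) to force $\gamma^{n_i}$ close to $1$ and the sets $B_i$ (hypothesis (3)) to force $\gamma^{n_i}$ close to $\gamma$, conclude $\gamma=1$, and then invoke ergodicity to make $f$ constant. The only difference is the technical device: the paper controls $f$ via Lusin's theorem and a pointwise selection of points in a continuity set, whereas you approximate $f$ by a Lipschitz function and integrate; both serve the same purpose and your estimates go through.
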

\begin{proof} 
Assume that $f$ is 
an eigenfunction of $T_1$ with eigenvalue $\gamma\neq 1$. By Lusin's Theorem, 
for every $\varepsilon>0$ there exists $\delta>0$ and a measurable set $U$ with $\lambda(U)>1-\varepsilon$ such 
that if $|x-y|<\delta$ and $x,y \in U$, then $|f(x)-f(y)|<\varepsilon$. 
Choose $\varepsilon<\min \{ \frac {1-|\gamma|} 9, \frac c 9\}$, where $c$ is the constant given in the statement. 
For all sufficiently large $i\in\N$, 
by hypothesis there exists a measurable set $A_i'$ with measure at least $\frac c 2$ 
and integer $n_i$ such that if $x \in A_i'$, 
then $|T_1^{n_i}x-x|<\varepsilon$. It follows that there exists $x \in A_i'\cap U$ and $T_1^{n_i}x\in U$ and so
$$|f(x)-f(T_1^{n_i}x)|=|(1-\gamma^{n_i})|\cdot |f(x)|=|1-\gamma^{n_i}|<\varepsilon.
$$
Similarly there exists $y\in B_i \cap U$ such that $|f(y)-T_1^{n_i+1}y|=|1-\gamma^{n_i+1}|<\varepsilon$. 
If these two inequalities hold simultaneously, this contradicts the choice of $\varepsilon$, 
and so $\gamma=1$. 
Since $T_1$ is ergodic, it follows that $f$ is constant almost everywhere
and so $T_1$ is weakly mixing.
\end{proof}

Set  
\begin{equation}
\label{eq:qi}
q_i=\prod_{j=1}^{i-1}a_j.
\end{equation} 
Then $S^{q_i}(\bx)$  fixes the first $i-1$ positions of $\bx$ and increments the entry in $x_{i}$ 
position by 1. All other entries remain the same unless the $i^{th}$ position was exactly $a_{i}-1$, 
in which case the carrying continues until this process terminates. 

Given $n\in\N$, we choose $c_i(n)$ such that 
\begin{equation}
\label{eq:ci}
n=\sum c_{i}(n)q_i \quad\text{ with } c_i(n) \in \bigl\{-\frac{a_{i+1}}{2}, \ldots, \frac{a_{i+1}}2\bigr\}.
\end{equation} 
Note that there is no unique choice of these coefficients, but 
we can make a canonical choice by using the greedy algorithm to define the coefficients $c_i$. That is, 
we choose $i$ and $c_i$ such that $|n-c_iq_i|$ is minimal out of all possible $i \in \mathbb{N}$ and $c_i\in \bigl\{-\frac{a_{i+1}}{2}, \ldots, \frac{a_{i+1}}2\bigr\}$, and then iteratively choose the next coefficient 
to be the maximal choice satisfying these conditions.
If there is a tie, that is if $|n-c_jq_j|=|n-c_{j'}q_{j'}|$ is minimal, we choose $i=\min\{j,j'\}$.  Once such a representation is fixed, our construction depends on this choice. 

We define two functions from $\mathbb{Z}$ to itself that allow us  to move between studying properties of the odometer $S$ and those of the first return map $T$:  
\begin{notation*} We introduce two functions to relate powers of $T$ and $S$. These are useful in arguments throughout the paper, most immediately in the proof that $T$ is weakly mixing (Proposition 3.4) below. 
For $\by\in Y$, define $\zeta_{\by}\colon \mathbb Z\to\mathbb Z$ to be the map taking the integer $n$ to 
the integer $m$ such that $S^m\by=T^n\by$. 

For $\by \in Y$, define $\xi_{{\by}}\colon \mathbb Z\to\mathbb Z$ 
to be the map taking the integer $n$ to the least integer $m$ such that there exists $\ell\geq n$ satisfying $T^m\by=S^\ell \by$.

Let $\bzero \in Y$ denote the point consisting of 
all $0$'s.   To keep track of the iterates of $S$ that fix the first $i$ positions, as determined by the $q_i$ defined in~\eqref{eq:qi} and the expansion of any integer in the base determined by the sequence $q_i$, as defined in~\eqref{eq:ci}, we  define 
\begin{equation}
\label{def:ri}
{r}_i=\xi_{\bzero}(q_i)
\end{equation}  
and define
\begin{equation}
\label{def:di}
{d_i}(n)=c_i(\zeta_{\bzero}(n)).  
\end{equation}
\end{notation*}
Thus the map $\zeta_{\by}$ maps an iterate of $T$ to an iterate of $S$ and the coefficients $c_i$ are 
changed into $d_i$, while the map $\xi_{\by}$ reverses this, taking an iterate of $S$ to an iterate of $T$.  
However they are not precisely inverses, as one can not regain all of the odometer $S$ from the first return $T$:  if $S^i(\bx) \notin Y$, then there is no corresponding $T$ time.

\begin{rem}
 Our arguments require understanding the dynamics of $T$ both at specified times and at arbitrary times. 
Starting with the proof of Proposition~\ref{prop:wmix}, we make use of the $r_j$ 
to choose powers of $T$ with desired dynamical properties, and the indices $i$ in the criterion for weak mixing given in  Lemma~\ref{lem:wmix crit} are chosen to be $r_j$ for some appropriately chosen $j$. 
These $r_j$ are then used to select powers of $T$ with desired dynamical properties 
throughout Section~\ref{sec:joinings}. The  $d_j$ (especially for the largest $j$ such that $d_j$ is non-zero) 
are useful for understanding the dynamics of $T$ at arbitrary times. To motivate this, informally the $d_j(n)$ give us a representation of $n$ in some base constructed to be compatible with the dynamics of $T$. This role is analogous 
to how the $c_i$ act like such a base for the odometer $S$; the construction of the $d_i$ 
depend on the $c_i$, and they play such a role for $T$, and this role explored and exploited in Sections~\ref{sec:coding} and~\ref{sec:end}. 
\end{rem}

An easy analysis of the return times for the odometer $S$ leads to (we omit the proof):
\begin{lem}\label{lem:same hit}  If $\mathcal{C}$ is a cylinder defined by positions $<i$, then the sum $\sum_{j=0}^{q_i-1}\one_{\mathcal{C}}(S^j\bx)$ does not depend on $\bx\in X$. 
Thus the sum $\sum_{j=0}^{q_i-1}\one_{Z_\ell}(S^j\bx)$ does not depend on $\bx\in X$ for any $\ell$ such that $\ell<i$ and similarly, $\sum_{j=0}^{q_i-1}\one_{W_\ell}(S^j\bx)$ does not depend on $\bx\in X$ for any $\ell$ such that $10^{2\ell}<i$.  
\end{lem}

We use this to show: 
\begin{prop}\label{prop:wmix}
The system $(Y, \nu, T)$  is weakly mixing. 
\end{prop}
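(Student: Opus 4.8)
The plan is to verify the hypotheses of the weak mixing criterion, Lemma~\ref{lem:wmix crit}, using the rigidity times of the odometer and the fact that $T$ differs from $S$ only on the small sets removed in~\eqref{eq:def-Y}. Work on the odometer $(X,\mu,S)$ first. For the rigidity times $q_i$ of~\eqref{eq:qi}, the iterate $S^{q_i}$ fixes the first $i-1$ coordinates of $\bx$ and increments the $i$-th coordinate (with carrying), so $S^{q_i}\bx$ and $\bx$ agree on the first $i-1$ coordinates except for a set of $\bx$ of measure $O(1/a_i)$ (those for which carrying propagates). Since the first $i-1$ coordinates determine a point of $[0,1]$ up to an interval of length $q_i^{-1}$, this says $|S^{q_i}\bx - \bx|$ is small off a set of small measure; more precisely $\int_X |S^{q_i}\bx-\bx|\,d\mu(\bx)\to 0$, which handles the first integral condition (with $A_i = X$). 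For the second integral condition, I would use $q_i+1$ in place of $q_i+1$: note $S^{q_i+1}\bx = S(S^{q_i}\bx)$, and $S^{q_i}\bx$ agrees with $\bx$ on the first $i-1$ coordinates except on a small set, so $S^{q_i+1}\bx = S^{q_i}(S\bx)$ agrees with $S\bx$ in the same approximate sense; hence $\int_X |S^{q_i+1}\bx - S\bx|\,d\mu(\bx)\to 0$, giving condition~(iii) with $B_i = X$ and $n_i = q_i+1$. Actually it is cleanest to just take $n_i = q_i$ for both: $|S^{q_i}\bx - \bx|\to 0$ in $L^1$ and also $|S^{q_i}(S\bx)-S\bx|\to 0$ in $L^1$, and since $S$ preserves $\mu$ the latter equals $\int |S^{q_i}\bx - \bx|$ after a change of variables — so this alone does not separate the eigenvalue from $1$. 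To get condition~(iii) one genuinely needs a shift, so I would use $n_i' = q_i + q_1 + \cdots$ chosen so that $S^{n_i'}$ approximates $S$; the simplest is $n_i' = \zeta_{\bzero}^{-1}$-type adjustment, but on the odometer directly one takes times $m_i$ with $S^{m_i}$ close to $\mathrm{Id}$ and $m_i' = m_i + 1$ with $S^{m_i'}$ close to $S$, which is exactly the $q_i$ and $q_i+1$ pairing above.

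The transfer from $S$ to $T$ is where Lemma~\ref{lem:same hit} enters. The point is that $T = $ (first return of $S$ to $Y$), and for $\by\in Y$ the return time $\zeta_{\by}(n)$ — the $S$-time corresponding to the $n$-th $T$-iterate — equals $n$ plus the number of visits of the $S$-orbit of $\by$ to the removed set $X\setminus Y$ before time $\zeta_{\by}(n)$. By Lemma~\ref{lem:same hit}, for $n$ a multiple of $q_i$ this count is the same for every $\by$ (summing over the sets $Z_\ell$ with $\ell<i$ and $W_\ell$ with $10^{2\ell}<i$), and the contribution of the ``high'' sets $Z_\ell$, $W_\ell$ with large index is negligible because $S^{q_i}$ barely moves a typical point and those sets have small total measure. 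Concretely, I would let $n_i = r_i = \xi_{\bzero}(q_i)$ from~\eqref{def:ri}, the $T$-time matching the $S$-time $q_i$; then for $\by$ in a set $A_i\subset Y$ of measure $\to 1$, the $S$-orbit segment realizing $T^{n_i}\by$ has the same number of excursions through $X\setminus Y$ as the orbit of $\bzero$ does, so $T^{n_i}\by$ is obtained from $\by$ by the same odometer increment that sends $\by$ close to itself; hence $\int_{A_i}|T^{n_i}\by - \by|\,d\nu \to 0$. For condition~(iii), using the same $n_i$ (or $n_i+1$ adjusted through $\xi_{\bzero}$) one similarly gets $\int_{B_i}|T^{n_i}\by - T\by|\,d\nu\to 0$ on a large set $B_i$, because applying $T$ once before or after the large odometer increment commutes up to the same small error.

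I expect the main obstacle to be the bookkeeping that converts the ``$S^{q_i}$ is approximately the identity'' statement into the corresponding statement for $T$, i.e. controlling $\xi_{\by}$ and $\zeta_{\by}$ uniformly in $\by$ off a small set. The subtlety is that $\zeta_{\by}(q_i)$ is \emph{not} literally constant in $\by$ — Lemma~\ref{lem:same hit} only gives constancy of the number of visits to each \emph{fixed} $Z_\ell$ or $W_\ell$ with small index, and one must separately argue that the cumulative effect of the sets with large index is both measure-small and does not destroy the proximity $|T^{n_i}\by-\by|\approx 0$. This requires choosing $A_i$ to avoid, for each large $i$, the (small-measure) set of $\by$ whose orbit under the relevant odometer increment either lands in a removed set or triggers a long carry; the measure estimates come from the explicit sizes $a_i = 8$ generically and $a_{10^k}=k$, together with the definitions~\eqref{def:Zk}--\eqref{def:Wk} of $Z_k, W_k$, whose measures decay. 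Once this uniformity is in hand, conditions~(i)--(iii) of Lemma~\ref{lem:wmix crit} follow with $c = 1/2$, say, and weak mixing of $(Y,\nu,T)$ is immediate.
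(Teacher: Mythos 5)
There is a genuine gap in the transfer step from $S$ to $T$, and it sits exactly at the heart of the argument. Lemma~\ref{lem:wmix crit} requires a \emph{single} sequence $n_i$ for which $T^{n_i}$ is close to $\mathrm{Id}$ on $A_i$ and close to $T$ on $B_i$. You correctly observe that on the odometer one wants the pairing $S^{q_i}\approx \mathrm{Id}$ and $S^{q_i+1}\approx S$, but then for $T$ you claim that $\int_{B_i}|T^{n_i}\by - T\by|\,d\nu\to 0$ (with $n_i = r_i = \xi_{\bzero}(q_i)$) ``because applying $T$ once before or after the large odometer increment commutes up to the same small error.'' That commutation gives $T^{n_i}(T\by)\approx T\by$ from $T^{n_i}\bz\approx\bz$, which is just the condition-(ii) statement for the point $T\by$; it does not produce the needed $+1$ shift, and if $T^{n_i}\by\approx\by$ held on all of $B_i$, condition~(iii) would fail.

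The paper's mechanism is the opposite of what you describe, and is the actual reason the construction works: one deliberately chooses $B_i$ so that the $S$-orbit of $\by\in B_i$ passes through the removed set $D_i = Z_i$ \emph{exactly once} in $[0,q_i)$ (and through no $D_\ell$ with $\ell>i$), whereas the orbit of $\bx\in A_i$ misses all $D_\ell$, $\ell\ge i$, in that window. Since $T$ is the first return map to $Y$, that single extra excursion makes the $T$-time-to-$S$-time conversion differ by one: $T^{r_i}\bx = S^{q_i}\bx$ on $A_i$ but $T^{r_i}\bx = S^{q_i+1}\bx$ on $B_i$, and this gives the needed shift with the same $n_i=r_i$. (This is also why the index $i=10^k-k$ is chosen: it guarantees $D_i = Z_i \neq \emptyset$.) Your proposal says $A_i$ should be chosen ``to avoid \ldots the set of $\by$ whose orbit \ldots lands in a removed set,'' and implicitly treats $B_i$ the same way; but $B_i$ must be chosen to \emph{land} in a removed set in a controlled way. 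Without this observation the argument does not close, and the uniformity/bookkeeping you flag as ``the main obstacle'' is not where the actual difficulty lies.
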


\begin{proof}
Assume $i=10^k-k$ and 
set 
$$U_i= X \setminus\Bigl(\bigcup_{j=0}^{q_i} S^{-j}\bigl(\bigcup^\infty_{m  =i+1} Z_{ m} \cup \bigcup^\infty_{10^j>i}W_j\bigr)\Bigr)^c.
$$   
We claim that if $i\geq 10^8$, 
then $\mu(U_i)\geq 1-\frac 1 8-\sum_{ m\geq i+2}\frac 1 {8^{m-i+1}}$.  
Indeed, under this assumption $a_{ m}\geq 8$ and so $\mu(W_{ m})=\frac{ m}{q_{10^{2{m}}+1}}$ for ${ m} \geq i$ and $\mu(Z_{\ell})=\frac 1{q_{{\ell}+1}}$
 for all ${\ell}\geq i$.  Thus, $q_i\mu(Z_{\ell})\leq 8^{{i-\ell}-1}$ 
 for all ${\ell}\geq i$. 
By the assumption on $i$, it follows that $\sum_{\{j\colon 10^{2j}> m\}} q_i \frac{j}{q_j}<\frac 1 8.$

Set 
$$A_i=\{\bx \in U_i \cap Y\colon  x_i\leq 4 \}  
$$
and so $\nu(A_i)>\frac 1 2 -\frac 1 8$. 
For $x\in A_i$, we have $S^{j}\bx\notin D_\ell$  for any $0\leq j\leq q_i$ and $\ell\geq i$  (recall that the sets 
$D_k$ are defined in~\eqref{def:Dk} and $(S^j\bx)_i\neq a_i-2$). 
Thus by Lemma~\ref{lem:same hit} and the definition of $T$,  
and  $r_i\in \mathbb Z$ such that 
$T^{r_i}\bx=S^{q_i}\bx$, which by choice of $q_i$ is close to $\bx$ (note that $r_i$ is defined in~\eqref{def:ri}). 
Set 
\begin{multline*}B_i=\{\bx\in U_i\colon 
 x_i=7, x_{1}<5, x_{i-1}<6\}{\supset} \\
 \{\bx \in X\colon x_{i+1}< 5, x_i=7, x_{i-1}<6, x_1<5\}. 
\end{multline*} 
Then $\mu(B_i)>\frac 1 {64}$ and  $\nu(B_i\cap Y)>\frac 1 {128}$. 
For $\bx\in B_i$, we have 
 $S^{j}\bx\in D_i=Z_{10^k-k}$ for some $0<j<{q}_i$ and by definition $S^j\bx \notin D_\ell$ for all $\ell>i$ (because $(S^j\bx)_i\neq a_i-2$).  Thus Lemma~\ref{lem:same hit} implies  that $T^{r_i}\bx=S^{q_i+1}\bx$ (by our assumption that $x_1<5$,  we have $S^{q_i+1}\bx \in Y$).  
 Thus  the assumptions of 
Lemma~\ref{lem:wmix crit}  are verified for the measurable sets $A_i, B_i$,  and 
sequence of integers $n_i=r_i$ with $i  \in \{10^k-k\colon k\geq 8\}$.
\end{proof}

\subsection{$T$ is rigid rank one by cylinders}
\label{sec:rigid-rank-one}
We now show that the constructed system is rigid rank one by cylinders, using information on the odometer system $(X,S)$ to study the system $(Y,T)$. 
Recall that since the system $(X,S)$ is an odometer, fixing initial entries corresponds to an interval in $[0,1)$.  

\begin{lem}\label{lem:srank 1}
The system $(Y, \nu, T)$ is rigid rank one by cylinders. 
\end{lem}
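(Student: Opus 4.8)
The plan is to verify directly the four conditions in the definition of \emph{rigid rank one by cylinders} for $(Y,\nu,T)$, transferring the known rigid-rank-one structure of the odometer $(X,S)$ through the first return map. For the odometer, the natural Rokhlin towers are the levels $S^jI_i$ for $0\le j<q_i$, where $I_i=\{\bx\in X\colon x_1=\cdots=x_{i-1}=0\}$ and $q_i=\prod_{j<i}a_j$; these are cylinders of equal measure, they partition $X$, and $S^{q_i}I_i$ differs from $I_i$ only in the $i$-th and later coordinates, so $S$ is rigid along $(q_i)$. The idea is to take as cylinders for $T$ the sets $J_i=I_i\cap Y$ (or a slight shrinking thereof) together with the return time $n_i$ chosen so that $T^{n_i}$ on $J_i$ agrees with $S^{q_i}$; this is exactly the quantity $r_i=\xi_{\bzero}(q_i)$ introduced in~\eqref{def:ri}, which already appears in the proof of Proposition~\ref{prop:wmix}.

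First I would set $J_i = I_i \cap Y$ and $n_i = r_i$, and observe using Lemma~\ref{lem:same hit} that for $\bx$ in (most of) $I_i$, the number of visits to the removed sets $\bigcup Z_\ell\cup\bigcup W_k$ before time $q_i$ is independent of $\bx$; hence for $0\le j<n_i$, the iterate $T^jJ_i$ is, up to a set of small measure, a single cylinder $S^{j'}I_i\cap Y$ for the appropriate $j'<q_i$. This gives Conditions~\eqref{it:one} and~\eqref{it:two}: the levels $T^jJ_i$, $0\le j<n_i$, are (essentially) disjoint cylinders of equal $\nu$-measure. For Condition~\eqref{it:three}, I would note that $\bigcup_{j=0}^{q_i-1}S^jI_i=X$, so the $T$-tower over $J_i$ covers all of $Y$ except the $\nu$-measure of those levels $S^{j'}I_i$ that lie entirely inside the removed set, and this exceptional measure is controlled exactly by the estimate $\mu\bigl(\bigcup_{k\ge i}Z_k\cup\bigcup_{10^{2\ell}\ge i}W_\ell\bigr)\to 0$ already established in the proof of Proposition~\ref{prop:wmix}; as $i\to\infty$ this tends to $0$. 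For Condition~\eqref{it:four}, $T^{n_i}J_i=S^{q_i}J_i$ agrees with $J_i$ off the coordinates $x_i,x_{i+1},\dots$, so $\lambda(T^{n_i}J_i\,\Delta\,J_i)/\lambda(J_i)\le$ (a constant times) $1/a_i$ up to the correction coming from intersecting with $Y$, which again is negligible; this $\to 0$.

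The main obstacle is bookkeeping the discrepancy between $S$ and $T$: the first return map $T$ is not simply an induced map on a clean subset, because the removed sets $Z_\ell$ and $W_k$ are interleaved throughout the tower, and the carrying in the odometer can, on a small-measure set, produce long excursions. So the delicate point is showing that the cylinder structure of the $S$-tower survives passage to $T$ \emph{uniformly enough} — that is, that the set of $\bx\in I_i$ for which $T^{n_i}\bx\ne S^{q_i}\bx$, or for which some level $T^jJ_i$ fails to be a single cylinder, has $\nu$-measure tending to $0$. This is precisely where Lemma~\ref{lem:same hit} does the work (the return-time count to each $Z_\ell$, $W_\ell$ with small index is constant on $I_i$), and where one must absorb the contribution of the perturbed alphabet positions $i=10^k$. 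I expect the remaining verifications to be routine given the estimates already assembled for weak mixing, so the proof should largely amount to assembling those ingredients and passing to the limit $i\to\infty$ along, say, $i\notin\{10^k\colon k\ge 2\}$ to avoid the perturbed coordinates.
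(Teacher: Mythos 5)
There is a real gap. Your plan to pass to the limit along $i\notin\{10^k\colon k\ge 2\}$ ``to avoid the perturbed coordinates'' is backwards: the paper builds its tower along $i=10^{2k+1}$, i.e.\ \emph{at} a perturbed coordinate, and uses the perturbation in an essential way. If you take $J_i=I_i\cap Y$ with $i$ off the perturbed indices, then $a_i=8$ and applying $T^{n_i}$ (with $n_i=r_i$) increments $x_i$; the part of $J_i$ with $x_i=7$ then carries and leaves $J_i$. So $\mu(T^{n_i}J_i\cap J_i)/\mu(J_i)$ is bounded above by roughly $7/8$ and does not tend to $1$, which kills Condition~\eqref{it:four}. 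To get the rigidity fraction to tend to $1$ one must work at positions where the alphabet size grows; that is exactly what the positions $10^{2k+1}$ (with $a_{10^{2k+1}}=2k+1$) provide. The odd powers are chosen over the even ones because $D_{10^{2k+1}}=\emptyset$, so there is no removed set at that level.

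The second problem is the ``(essentially) disjoint cylinders'' hedge. Conditions~\eqref{it:one} and~\eqref{it:two} of the definition require each level $T^jI_k$ to \emph{be} a cylinder of the same measure and the levels to be honestly disjoint, not up to small error; Condition~\eqref{it:three} is the only place slack is permitted. The paper achieves exactness by shrinking the base: $I_k$ is the cylinder with $x_j=0$ for $j<10^{2k+1}$ \emph{and} $x_{10^{2k+1}}\in\{0,\dots,2k-4\}$ strictly inside $\{0,\dots,2k\}$. This extra restriction guarantees that for $\by$ in the tower, $(S^i\by)_{10^{2k+1}}<2k-1$ for $0\le i\le n_k$, so the $S$-orbit never enters $\bigcup_{\ell\ge 10^{2k+1}}D_\ell$; combined with Lemma~\ref{lem:same hit} this makes $\xi_\by(q_{10^{2k+1}})$ constant on $I_k$ and each $T^jI_k$ an exact cylinder. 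Without that restriction, and at a position with fixed alphabet $8$, the levels would fragment and you could only get the approximate statement you describe — which is not the statement of the lemma. The remedy is precisely the paper's: restrict $x_{10^{2k+1}}$ to $\{0,\dots,2k-4\}$, set $n_k=r_{10^{2k+1}}$, and let $k\to\infty$ so that the discarded fraction $\sim 5/(2k+1)$ tends to $0$.
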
 
\begin{proof}
Let $I_k$ be the cylinder set determined by the word of all $0$'s up to $10^{2k+1}$ and with any value 
between $0$ and $2k+1-5 = 2k-4$ in the entry at $10^{2k+1}$. Let $n_k= r_{10^{2k+1}}$, as defined 
in~\eqref{def:ri}.  
If $\by\in\bigcup_{i=0}^{2n_k-1}T^i(I_k)$,  then $y_{10^{2k+1}}<2k+1-3$ and so $(S^i\by)_{10^{2k+1}}<2k+1-2$ for $0\leq i\leq n_k$. Thus,  $S^i\by \notin \bigcup_{\ell\geq 10^{2k+1}}D_\ell$  for $0\leq i\leq n_k$ and so 
$$
\bigcup_{i=0}^{2n_k-1}T^i(I_k) \cap \Bigl(\bigcup^\infty _{\ell=10^{2k+1}+1}Z_\ell\cup \bigcup^\infty_{j=k+1} W_j\Bigr)=\emptyset.
$$ 
Additionally, by Lemma~\ref{lem:same hit} we have that $\sum_{j=0}^{q_{10^{2k+1}}-1}\one_{\cup_{\ell < 10^{2k+1}}D_\ell}S^j\bx$ is constant on $X$. 
Therefore $\xi_\by(q_{10^{2k+1}})$ is constant (and equal to $n_k=r_{10^{2k+1}}$) on this set.

 For any $\bx\in I_k$, 
 we have that $(T^{n_k}(\bx))_i=  x_i $ for all $i\neq 10^{2k+1}$ and $(T^{n_k}(\bx))_{10^{2k+1}} =
  x_{10^{2k+1}}+1$. 
Thus 
$$\mu(T^{n_k}I_k\cap I_k)=(1-\frac 1 { 2k-4 })\mu(I_k), $$
 establishing condition~\eqref{it:four} (after passing from $\mu$ to $\nu$) of the definition of rigid rank one by cylinders.   
 For any $\bx\in I_k$ and $0<i<n_k$, 
 we have $T^i(\bx)_j\neq0$ for  some $j<10^{2k+1}$, 
 and so condition~\eqref{it:two} follows. 
 Since each $T^iI_k$ is either contained in or is disjoint from $Z_\ell$ and $W_s$ for $\ell<10^{2k+1}$ and $s<2k+1$, 
 and furthermore is disjoint from all other $Z_\ell$ and $W_s$, 
 we have that $T^iI_k$ is a cylinder set for all $0\leq i<q_k$, establishing condition~\eqref{it:one}. 
Finally condition~\eqref{it:three} follows since  $\bigcup_{i=0}^{n_k-1}T^iI_k$ 
contains all of ${Y}$ other than the cylinder sets  defined by having entries at least $2k-4$ in the position $10^{2k+1}$. \end{proof}

\section{Joinings}
\label{sec:joinings}

In this section we prove that our system is not quasi-distal and that the self-joinings of the system form a Poulsen simplex. We start by proving Theorem~\ref{thm:no compact}, a general 
criterion for a system to not be quasi-simple. As simple extensions arise via quite a general construction, it 
is natural that this argument becomes technical. In Sections~\ref{sec:not qs} and~\ref{sec:big join proof}, we show 
that our system $(Y,\nu,T)$ verifies the assumptions of Theorem~\ref{thm:no compact}. The key results 
used for doing this are Proposition~\ref{prop:prelimit omnibus} and Lemma~\ref{lem:close good}, 
and we include a paragraph after Proposition~\ref{prop:prelimit omnibus} for a description of its role. 
The motivating idea behind the proof of Proposition~\ref{prop:prelimit omnibus} comes from a modification 
of a construction of the first named author and Eskin~\cite[Section 3]{CE}, and 
in Section~\ref{sec:first r}, we verify that  our system $(Y,\nu,T)$ satisfies the assumptions of the 
 construction. 
 Lemma~\ref{lem:close good} is general. 
 The fact that our joinings form a Poulsen simplex is analogous to  the previously mentioned construction in~\cite{CE} 
and is established in Section~\ref{sec:Poulsen} using only the results from Section~\ref{sec:first r} (and in particular does not require Proposition~\ref{prop:prelimit omnibus}). Section~\ref{sec:residual} establishes that these properties are residual. 

\subsection{Isometric and distal extensions}
\label{sec:iso-and-distal}
Given systems $(Z_1,\zeta_1, T_1)$ and $(Z_2,\zeta_2, T_2)$, if $\eta$ is a measure on $Z_1\times Z_2$, we make a mild abuse 
of notation and let $\eta_x$ denote the measure on $Z_2$ that is defined for almost all $x\in Z_1$ by disintegrating the measure $\eta$ on the fiber $\{x\}\times Z_2$. 
 We want to have a condition to rule out that $(Z_1 \times Z_2, \eta, T_1\times T_2)$
 is measurably isomorphic to $T_{\phi}\colon  Z_1  \times G/H \to Z_1 \times G/H$ by $T_{\phi}(x,g)=(Tx,[\phi(x)]g)$. Note that the change in the second fiber of such a map is independent of $g$ (but may  depend on $x$). 
Theorem~\ref{thm:no compact} is the tool to do this, 
and we give a rough idea how the various the conditions 
in the hypotheses play different roles.  Condition~\eqref{cond:stay}  identifies what the 
change in the second fiber must be (note that it is allowed to depend on $x$) and condition~\eqref{cond:move} says that this can not be the change. 
Since our isomorphism is only a measurable map, conditions~\eqref{cond:big set},~\eqref{cond:eta sees} and~\eqref{cond:extra} 
are to allow us to be able to apply Lusin's Theorem. 

\begin{thm}\label{thm:no compact}
Assume $(Z_1,\zeta_1, T_1)$ and $(Z_2,\zeta_2, T_2)$ are ergodic, Borel probability systems such that 
$Z_1$ and $Z_2$ are compact metric spaces. Let 
$\eta$ be an ergodic joining of the systems $(Z_1,\zeta_1, T_1)$ and $(Z_2,\zeta_2, T_2)$, and let
$c>0$.  
Assume that there exists $\hat{\delta}>0$, a sequence of integers $(n_i)_{i\in\N}$ 
 tending to infinity,  a sequence of integers $(L_i)_{i\in\N}$ such 
 that $L_i>\hat{\delta} n_i$,  and measurable sets $A_i\subset Z_1$ satisfying 
 \begin{enumerate}
 \item
\label{cond:big set}
 $\zeta_1(A_i)>c$ for all $i\in\N$.
 \end{enumerate}
Further assume that for each $x \in A_i$, 
there exist sets $C_i(x), E_i(x) 
\subset Z_2$, and $j_x\in [-n_i,n_i]$  (all depending on $x$)  satisfying the following conditions: 
 \begin{enumerate}[resume]
 \item \label{cond:stay} $\underset{x\in A_i}{\sup}\,\underset{y \in C_i(x)}{\sup}\, \frac 1 {L_i}\sum_{\ell=0}^{L_i-1}d_{Z_2}(T_2^{\ell}T_2^{n_i}y,T_2^{\ell}T_2^{j_x}y)\to 0$.
 \item \label{cond:move} For all $y \in  E_i(x) $,  we have $$\frac 1 {L_i}|\{0\leq \ell\leq L_i-1\colon d_{Z_2}(T_2^\ell T_2^{n_i} y,T_2^{\ell}T_2^{j_x}y)>c\}|>c.$$ 
 \item \label{cond:eta sees} For all $x\in A_i$, $\eta_x(C_i(x)), \eta_x( E_i(x))> c$ . 
 \item \label{cond:extra} For any $c'>0$, there exists $i_0$  such  that for all  $i\geq i_0$ and 
 any  $x\in A_i$ if we have balls $B(p_\ell,c') \subset Z_2$  satisfying  $\eta_x(E_i(x) \cap \cup B(p_\ell,c')>c-c'$ then $\eta_x(C_i(x)\cap \cup B(p_\ell,2c'))>c-2c'$. 
 \end{enumerate}
Then $\eta$ is not a distal extension of $(Z_1,\zeta_1,T_1)$.
\end{thm}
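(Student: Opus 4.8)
The plan is to argue by contradiction: suppose $\eta$ is a distal extension of $(Z_1,\zeta_1,T_1)$ via the first-coordinate factor map $\pi_1\colon(Z_1\times Z_2,\eta)\to(Z_1,\zeta_1)$, and write $T=T_1\times T_2$, $T^n(x,y)=(T_1^nx,T_2^ny)$. First I would dispose of the trivial case: if $\eta_x$ is a point mass for $\zeta_1$-a.e.\ $x$, then by~\eqref{cond:eta sees} both $C_i(x)$ and $E_i(x)$ contain the atom $y(x)$ of $\eta_x$, and then~\eqref{cond:stay} and~\eqref{cond:move} applied to the single point $y=y(x)$ contradict each other once $i$ is large. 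So $\eta$ is a \emph{nontrivial} distal extension of $Z_1$.

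Next I would invoke Zimmer's structure theorem: a distal extension of $Z_1$ is a transfinite inverse limit of isometric extensions, and since limit ordinals contribute only inverse limits (which would be trivial at the bottom), a nontrivial one has a first nontrivial level, i.e.\ a factor $(Z_1\times K,\zeta_1\times m_K,(T_1)_\phi)$ of $(Z_1\times Z_2,\eta)$ lying over $Z_1$, where $K=G/H$ is a compact homogeneous space of a compact metric group $G$ acting by isometries of an invariant metric $d_K$ (and effectively, after quotienting by the kernel of the action), $m_K$ is the invariant probability measure, $\phi\colon Z_1\to G$ is a cocycle with $n$-th cocycle $\phi^{(n)}(x)=\phi(T_1^{n-1}x)\cdots\phi(x)$, and the factor map has the form $(x,y)\mapsto(x,\psi(x,y))$ for a measurable $\psi$ with $\psi(x,\cdot)_*\eta_x=m_K$ for a.e.\ $x$ and $\psi(T_1^nx,T_2^ny)=\phi^{(n)}(x)\cdot\psi(x,y)$ for a.e.\ $(x,y)$ and all $n$. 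Setting $\gamma_\ell=\gamma_{\ell,x,i}:=\phi^{(n_i+\ell)}(x)\,\bigl(\phi^{(j_x+\ell)}(x)\bigr)^{-1}\in G$ and applying the cocycle identity twice gives, for a.e.\ $(x,y)$ and every $\ell$,
\[
\psi\bigl(T_1^{n_i+\ell}x,T_2^{n_i+\ell}y\bigr)=\gamma_\ell\cdot\psi\bigl(T_1^{j_x+\ell}x,T_2^{j_x+\ell}y\bigr),
\]
where decisively $\gamma_\ell$ is \emph{independent of $y$}; in particular $d_K(\psi(T^n(x,y)),\psi(T^n(x,y')))$ does not depend on $n$. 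This rigidity of the first isometric level is the tool I would exploit.

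The heart of the argument is then to play the tracking/non-tracking dichotomy against this rigidity. Fixing a small $\varepsilon>0$, I would choose by Lusin a compact set $\Psi$ with $\eta(\Psi)>1-\varepsilon$ on which $\psi$ is uniformly continuous, and using~\eqref{cond:big set} and Birkhoff's theorem for $T_1$ pass to $A_i'\subset A_i$ with $\zeta_1(A_i')>c/2$ consisting of $x$ whose $T_1$-orbit and its $n_i$- and $j_x$-shifts spend a $(1-\sqrt\varepsilon)$-fraction of $[0,L_i)$ in $\pi_1(\Psi)$ (here $L_i>\hat\delta n_i$ keeps the proportion of exceptional $\ell$ controlled as $i\to\infty$). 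For $x\in A_i'$, conditions~\eqref{cond:eta sees} and~\eqref{cond:extra} let me pick, for $i$ large, points $y_C\in C_i(x)$ and $y_E\in E_i(x)$ with $d_{Z_2}(y_C,y_E)$ as small as desired and $(x,y_C),(x,y_E)\in\Psi$. On one hand $\psi(x,y_C)\approx\psi(x,y_E)$, so by the $n$-independence $d_K(\psi(T^m(x,y_C)),\psi(T^m(x,y_E)))$ stays small for all $m$. On the other hand,~\eqref{cond:stay} forces $T_2^{n_i+\ell}y_C$ close to $T_2^{j_x+\ell}y_C$ in $Z_2$ for most $\ell$; transporting this through the displayed identity and the uniform continuity of $\psi$ on $\Psi$ shows that $\gamma_\ell$ acts nearly trivially on $K$ for most $\ell$; since $\gamma_\ell$ is $y$-free, it then acts nearly trivially on $\psi(T^{j_x+\ell}(x,y_E))$ too, so $\psi(T^{n_i+\ell}(x,y_E))\approx\psi(T^{j_x+\ell}(x,y_E))$ for most $\ell$; and feeding this back through the non-tracking estimate~\eqref{cond:move} for $y_E$, the surjectivity $\psi(x,\cdot)_*\eta_x=m_K$, and the effectiveness of the $G$-action yields a contradiction for all large $i$. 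Letting $\varepsilon$ and the scale in~\eqref{cond:extra} tend to $0$ would finish the proof.

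The step I expect to be the main obstacle is exactly the transfer in the last paragraph: because $\psi$ depends on the $Z_1$-coordinate and the shifted orbits $T^{n_i+\ell}(x,y)$ and $T^{j_x+\ell}(x,y)$ sit over the \emph{different} base points $T_1^{n_i+\ell}x$ and $T_1^{j_x+\ell}x$, the $Z_2$-closeness supplied by~\eqref{cond:stay} cannot be converted into closeness of $\psi$-values by continuity alone. The workaround is to route everything through the single $y$-free discrepancy element $\gamma_\ell\in G$ and to pin it down from its action on a full-Haar-measure family of fiber points, which is precisely why~\eqref{cond:eta sees} and the doubled-ball formulation of~\eqref{cond:extra} are needed: they guarantee that $C_i(x)$ and $E_i(x)$ are seen by $\eta_x$ on arbitrarily fine common scales, so that the near-trivial behavior of $\gamma_\ell$ deduced on $C_i(x)$ propagates to $E_i(x)$. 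Keeping all the error terms uniform over $x\in A_i'$ and over $\ell$ as $i\to\infty$, via~\eqref{cond:big set} and $L_i>\hat\delta n_i$, is the other delicate point, and it may be cleaner in the write-up to phrase the whole transfer in terms of relative almost periodicity of $L^2$-functions over $Z_1$ rather than the first-level cocycle directly.
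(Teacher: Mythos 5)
Your overall plan shares the paper's skeleton (argue by contradiction, reduce to an isometric structure over $Z_1$ via Furstenberg--Zimmer, restrict to a compact continuity set, use the ergodic theorem to place orbit segments inside it, and exploit the fact that the isometric cocycle preserves fiber distances), but the step on which your contradiction rests does not work. You claim that condition~\eqref{cond:stay}, ``transported through the displayed identity and the uniform continuity of $\psi$,'' shows that $\gamma_\ell$ acts nearly trivially for most $\ell$. The two points being compared there, $T^{n_i+\ell}(x,y_C)=(T_1^{n_i+\ell}x,\,T_2^{n_i+\ell}y_C)$ and $T^{j_x+\ell}(x,y_C)=(T_1^{j_x+\ell}x,\,T_2^{j_x+\ell}y_C)$, lie over \emph{different} base points, and condition~\eqref{cond:stay} only makes their $Z_2$-coordinates close. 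Since $\psi$ is merely measurable in the base variable, continuity on a Lusin set gives no relation whatsoever between the fibers over $T_1^{n_i+\ell}x$ and over $T_1^{j_x+\ell}x$ (in the application these base points are essentially unrelated), so no closeness of $\psi$-values, and hence no information about $\gamma_\ell$, follows. You identify this as the main obstacle, but the proposed workaround is circular: pinning down $\gamma_\ell$ from its action on many fiber points still requires comparing $\psi$-values across the two base points, which is exactly the comparison that fails; conditions~\eqref{cond:eta sees} and~\eqref{cond:extra} describe the geometry of $C_i(x)$ and $E_i(x)$ inside the single fiber over $x$ and cannot supply it. There is a second problem at the end: you work with the first nontrivial isometric level, whose factor map $\psi$ need not be injective, so the $Z_2$-separation provided by~\eqref{cond:move} need not be visible in $K$ at all; ``effectiveness'' of the $G$-action says nontrivial elements move some point of $K$, not that $Z_2$-separated points have separated $\psi$-images.

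The paper's proof avoids both difficulties by never comparing fiber values over different base points and by not working with a proper factor in the main step. It assumes the joining is isomorphic, over the identity on $Z_1$, to an isometric extension, chooses $y\in C_i(x)$ and $y'\in E_i(x)$ with $d_{Z_2}(y,y')<\delta'$ (this is where~\eqref{cond:eta sees} and~\eqref{cond:extra} enter), and uses~\eqref{cond:stay},~\eqref{cond:move} and the triangle inequality purely inside $Z_2$ to find a single time $b\in\{\ell+n_i,\ell+j_x\}$ at which $T_2^by$ and $T_2^by'$ are at distance $>c/3$ while all four points $(x,y),(x,y'),(T_1^bx,T_2^by),(T_1^bx,T_2^by')$ lie in the continuity set; since the same group element $g=\phi(T_1^{b-1}x)\cdots\phi(x)$ carries both fiber values, their images must remain close, and the isomorphism (injective on the continuity set) converts this back into $Z_2$-closeness, contradicting the separation. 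The distal case is then handled by the inverse-limit structure theorem, transferring the hypotheses (with a different $c$) to a suitable isometric stage, rather than by passing directly to the first isometric level. To repair your argument you would need to reorganize it along these lines, comparing the two fiber points $y_C,y_E$ over the same base at the same times instead of one fiber point at two different times.
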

 Note that this is a general result, holding for arbitrary measure preserving systems whose underlying spaces are compact metric spaces, and this result does not depend on the particular constructions we have for the systems $(Z_1,\zeta_1, T_1)$ and $(Z_2,\zeta_2, T_2)$.
We further note that in~\eqref{cond:stay}, we can not take $j_x = n_i$, as this would preclude 
Condition~\eqref{cond:move}.  
Note that since Condition~\eqref{cond:extra} holds for arbitrarily small choice of $c'$, 
this rules out the possibility that the joining is carried on a finite union of graphs.  Indeed, if $f_1,\ldots, f_r\colon Z_1\to Z_2$ are distinct functions satisfying $f_i(T_1z)=T_2(f_iz)$, 
then for all $\varepsilon>0$ there exists $\tilde{c}>$ such that for all but a set of $z\in Z_1$ of $\mu_1$-measure at most $\varepsilon$ we have $d_{Z_2}(f_i(z),f_j(z))>\tilde{c}$ for all $i \neq j$. 
Then we can not satisfy 
Condition~\eqref{cond:extra} with $\varepsilon$ small enough and $c'<\frac{\tilde{c}}3$.

The proof of Theorem~\ref{thm:no compact} proceeds by contradiction. We assume $(Z_1\times Z_2 , \eta,T_1 \times T_2)$ is an isometric extension of $(Z_1,\zeta_1,T_1)$, meaning that there exists a (measurable) isomorphism
$\Psi\colon  (Z_1\times Z_2 , \eta,T_1 \times T_2)\to (Z_1 \times G/H, \zeta_1 \times m_{G/H},T_\phi)$ that is the identity on the first coordinate, and use this to derive a contradiction. Since a distal system can 
be decomposed as a tower of isometric extensions, we conclude that it can not be a distal extension.  

Before turning to this proof, we start with some preliminaries and a lemma used to derive the contradiction. 
 
Let $\mathcal{K}$ be a compact continuity set for $\Psi$ with $\eta(\mathcal{K})>1-\frac {\hat{\delta}} {100}c^4$.  
Thus $\mathcal{K}$ is also a continuity 
set for $\pi_2\circ \Psi$, where $\pi_2\colon Z_1\times G/H \to G/H$ is the projection on the second coordinate.
Choose $\delta>0$ such that $d_{G/H}(g\tilde{h},g\tilde{h}')<\frac c 8$ whenever $d_{G/H}(\tilde{h},\tilde{h}')<\delta$ and $g \in G$. Choose $\frac c 8>\delta'>0$ such that 
$d_{G/H}(\pi_2\circ \Psi(x,y),\pi_2\circ \Psi(x'y'))<\delta$ whenever $(x,y),(x',y')\in \mathcal{K}$ and $d_{Z_1\times Z_2}\Big((x,y),(x',y')\Big)< \delta'$. 

\begin{lem}\label{lem:cond} Under the assumptions of 
Theorem~\ref{thm:no compact}, 
there  exist a pair of points $(x,y),(x,y')\in Z_1\times Z_2$ and  $b\in \mathbb{Z}$ such that 
\begin{enumerate}
\item $(x,y),(x,y'),(T_1^bx,T_2^by)$, and $(T_1^bx,T_2^by') \in \mathcal{K}$;
\item\label{conc:very close} $d_{Z_2}(y,y')<\delta'$;
\item \label{conc:far} $d_{Z_2}(T_2^by,T_2^by')> \frac c 2-\frac c 8>\frac c 3.$
\end{enumerate}
\end{lem}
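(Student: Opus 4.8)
The plan is to produce the pair $(x,y),(x,y')$ by a pigeonhole/averaging argument on the fiber measures $\eta_x$, exploiting the four structural hypotheses of Theorem~\ref{thm:no compact} together with the compact continuity set $\mathcal{K}$. First I would set up quantitative bookkeeping: since $\eta(\mathcal{K})>1-\frac{\hat\delta}{100}c^4$ and by the Birkhoff ergodic theorem applied to $\one_{\mathcal{K}^c}$ (using ergodicity of $\eta$), for all large $i$ the set of $x\in A_i$ for which the orbit segments under consideration spend all but a $\frac{c^4}{\mathrm{(const)}}$-fraction of their time in $\mathcal{K}$ still has $\zeta_1$-measure $>c/2$ (say); fix such a ``good'' $x$ and its associated $j_x$, $C_i(x)$, $E_i(x)$. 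The key point is that on the $L_i$-orbit of a typical $y$, hypothesis~\eqref{cond:stay} forces $T_2^{n_i}T_2^\ell y$ and $T_2^{j_x}T_2^\ell y$ to be $\delta'$-close for most $\ell$, while hypothesis~\eqref{cond:move} forces them to be $c$-separated for a definite fraction of $\ell$.

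Next I would use the isometric extension structure. Writing $\Psi(x,y)=(x,\tilde g(x,y))$ with $\tilde g\in G/H$, the transformation $T_\phi$ acts on the second coordinate by the cocycle, so the $G/H$-distance between two points in the same fiber over $x$ is preserved under iteration: $d_{G/H}(\pi_2\Psi(T_1^\ell x,T_2^\ell y),\pi_2\Psi(T_1^\ell x,T_2^\ell y'))$ is independent of $\ell$ (this is exactly why $d_G$ is chosen right-invariant). This rigidity is what lets us transfer ``close/far at time $\ell$'' to ``close/far at time $0$''. Concretely, I would pick a maximal $\frac{c'}{?}$-net of points $p_\ell$ in $Z_2$ covering $E_i(x)$ up to $\eta_x$-measure $c - c'$ by balls $B(p_\ell,c')$; hypothesis~\eqref{cond:extra} then pushes most of $C_i(x)$ into the slightly larger balls $B(p_\ell,2c')$. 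Since $\eta_x(C_i(x))>c$ and $\eta_x(E_i(x))>c$ by~\eqref{cond:eta sees}, a further pigeonhole on which ball contains a positive $\eta_x$-fraction of each, combined with choosing $y\in C_i(x)$ and $y'\in E_i(x)$ in the same small ball, gives $d_{Z_2}(y,y')<\delta'$, which is conclusion~\eqref{conc:very close}.

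For conclusion~\eqref{conc:far}, set $b=j_x$ (or $n_i$; the roles are symmetric via~\eqref{cond:stay}). For the chosen $y'\in E_i(x)$, hypothesis~\eqref{cond:move} says $T_2^{n_i}$ and $T_2^{j_x}$ disagree by more than $c$ along a $c$-fraction of the orbit of $y'$; combined with~\eqref{cond:stay} for $y$ (close along all but a tiny fraction), there must exist a single time $\ell$ at which simultaneously $(T_1^\ell x, T_2^\ell y)$, $(T_1^\ell x,T_2^\ell y')$, $(T_1^\ell x, T_2^{n_i}T_2^\ell y)$, and $(T_1^\ell x, T_2^{j_x}T_2^\ell y')$ all lie in $\mathcal{K}$, the points $T_2^{n_i}T_2^\ell y$ and $T_2^{j_x}T_2^\ell y$ are $\delta'$-close, and $T_2^{n_i}T_2^\ell y'$ and $T_2^{j_x}T_2^\ell y'$ are $c$-separated --- here one counts bad times: the ``not in $\mathcal{K}$'' times, the ``not close'' times for $y$, and the ``not far'' times for $y'$ each occupy less than a small fraction of $[0,L_i)$, so their union misses some $\ell$. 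Relabeling $(x,y)\leftarrow (T_1^\ell x,T_2^\ell y)$ etc.\ (replacing $x$ by $T_1^\ell x$, which is still a legitimate base point) and using the $\delta'$-continuity of $\pi_2\circ\Psi$ on $\mathcal{K}$ together with the $\delta$-to-$\frac c8$ uniform continuity of left translation on $G/H$, one deduces in the model that the $G/H$-distance between the two fibers, which is constant along the orbit, is at least $c - \frac c8$ at the ``far'' time but at most $\frac c8 + \frac c8$... one has to be careful: the clean statement is that $d_{G/H}$ between $\pi_2\Psi(x,y)$ and $\pi_2\Psi(x,y')$ equals $d_{G/H}$ between the $b$-shifted points, and the triangle inequality with the $\delta'$-closeness of $T_2^by$ to (a point near) $T_2^by'$ forces $d_{Z_2}(T_2^by,T_2^by')>\frac c2-\frac c8$.

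The main obstacle is the simultaneous selection in the previous paragraph: one must juggle several ``bad'' subsets of $[0,L_i)$ --- orbit excursions outside $\mathcal{K}$ for four different orbit points, the rare ``not close'' times from~\eqref{cond:stay}, the ``not far'' times from~\eqref{cond:move} --- and ensure their total density is $<1$ so that a good time $\ell$ exists, and this is precisely why $\eta(\mathcal{K})$ is taken so close to $1$ (comparably to $\hat\delta c^4$) and why $L_i>\hat\delta n_i$: the shift by $n_i$ or $j_x\in[-n_i,n_i]$ means controlling $T_2^{j}T_2^\ell y$ for $\ell\in[0,L_i)$ requires controlling the orbit of $y$ over a window of length $L_i+n_i\le (1+1/\hat\delta)L_i$, so the ergodic-average control of $\one_{\mathcal K^c}$ must be strong enough to survive inflating the window by this bounded factor. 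Hypothesis~\eqref{cond:extra} is the delicate ingredient making the ``same small ball'' pigeonhole work uniformly in $x$ and $i$, and getting its $\varepsilon$-management consistent with the net scale $c'$ is where the proof needs the most care.
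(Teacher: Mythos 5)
Your first two steps match the paper's proof: you control the fraction of time the joint orbits of $(x,y)$ and $(x,y')$ spend in $\mathcal{K}$ using the largeness of $\eta(\mathcal{K})$ (the paper uses a Markov-type inequality valid for every window length rather than the Birkhoff theorem, but this is immaterial), and you produce $y\in C_i(x)$, $y'\in E_i(x)$ with $d_{Z_2}(y,y')<\delta'$ exactly as the paper does, via Condition~\eqref{cond:extra} with net scale $c'\sim\delta'$ and a pigeonhole on the balls, using Condition~\eqref{cond:eta sees}. Your accounting of why $L_i>\hat\delta n_i$ and $\eta(\mathcal{K}^c)\lesssim\hat\delta c^4$ are needed is also the right one.

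The derivation of conclusion~\eqref{conc:far} has a genuine gap. You relabel the base pair as $(T_1^\ell x,T_2^\ell y),(T_1^\ell x,T_2^\ell y')$ and take $b\in\{n_i,j_x\}$, but then conclusion~\eqref{conc:very close} is lost: the hypotheses only compare $T_2^{n_i+\ell}z$ with $T_2^{j_x+\ell}z$ for the \emph{same} point $z$, and give no control whatsoever on $d_{Z_2}(T_2^\ell y,T_2^\ell y')$, which may be large for every $\ell\geq 1$ even though $d_{Z_2}(y,y')<\delta'$. Moreover, the route through the isometric model cannot produce the lower bound on $d_{Z_2}(T_2^by,T_2^by')$: the conjugacy $\Psi$ is only measurable, its continuity on $\mathcal{K}$ converts closeness in $Z_1\times Z_2$ into closeness in $G/H$ but never the reverse, and $d_{G/H}$ is not literally preserved by the cocycle action (the metric is right invariant, the action is by left translation; the paper only uses the uniform statement that $\delta$-closeness is pushed to $\tfrac c8$-closeness). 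The lemma needs no isometric structure at all. The repair is the paper's: keep the original pair $(x,y),(x,y')$ and let $b$ absorb $\ell$, i.e.\ $b\in\{\ell+n_i,\ell+j_x\}$, where $\ell$ is a common good time at which the four joint-orbit points $(T_1^{\ell+n_i}x,T_2^{\ell+n_i}y)$, $(T_1^{\ell+j_x}x,T_2^{\ell+j_x}y)$, $(T_1^{\ell+n_i}x,T_2^{\ell+n_i}y')$, $(T_1^{\ell+j_x}x,T_2^{\ell+j_x}y')$ lie in $\mathcal{K}$ (note these, not the mixed-time points you wrote, are what the $\mathcal{K}$-counting controls), $d_{Z_2}(T_2^{\ell+n_i}y,T_2^{\ell+j_x}y)<\delta'$ and $d_{Z_2}(T_2^{\ell+n_i}y',T_2^{\ell+j_x}y')>c$. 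Then a plain triangle inequality in $Z_2$ shows that for at least one of the two choices of $b$ one has $d_{Z_2}(T_2^by,T_2^by')>\tfrac c2-\tfrac c8$, since otherwise $d_{Z_2}(T_2^{\ell+n_i}y',T_2^{\ell+j_x}y')\leq 2(\tfrac c2-\tfrac c8)+\delta'<c$.
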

\begin{proof} For all 
 $L\geq 1$, we have that $\eta(\{(x,y)\colon \sum_{i=0}^{L-1}\one_{\mathcal{K}}\Big((T_1^ix,T_2^iy)\Big)<L-\frac {L\hat{\delta}} {10}c\})<\frac {1} {10}c^3$. 
  Choosing $c'=\frac {\delta'} 8$ 
 as in Condition~\eqref{cond:extra} of Theorem~\ref{thm:no compact}, 
 for all sufficiently large $i$, we can pick 
 $x\in A_i$, $y \in C_i(x)$,  and $y'\in E_i(x)$ 
 satisfying $d_{Z_2}(y,y')<\delta'$ and the conditions
 \begin{align*}
& \sum_{i=0}^{L_i+n_i-1}\one_{\mathcal{K}}\Big((T_1^ix,T_2^iy)\Big)>L_i+n_i-\frac {L_i} {10}c, \\
& \sum_{i=0}^{L_i+n_i-1}\one_{\mathcal{K}}\Big((T_1^ix,T_2^iy')\Big)>(L_i+n_i)-\frac {L_i} {10}c. \end{align*}
By Conditions~\eqref{cond:stay} and~\eqref{cond:move} of Theorem~\ref{thm:no compact}, there exists $\ell$ such that the points 
$(T_1^{\ell+n_i} x,T_2^{\ell+n_i}y)$, $(T_1^{\ell+j_x} x,T_2^{\ell+j_x}y)$, $(T_1^{\ell+n_i} x,T_2^{\ell+n_i}y')$, and $(T_1^{\ell+j_x} x,T_2^{\ell+j_x}y')$ all lie in the set $\mathcal{K}$, while at the same time $d_{Z_2}(T_2^{\ell+n_i}y,T_2^{\ell+j_x}y)<{\delta'}$ and $d_{Z_2}(T_2^{\ell+n_i}y',T_2^{\ell+j_x}y')>c$. 
Thus we can take $b$ to be one of $\ell+j_x$ or $\ell+n_i$. 
Indeed, 
\begin{multline*}\max\Big\{d_{Z_2}(T^{\ell+n_i}y,T^{\ell+n_i}y'),\, d_{Z_2}(T^{\ell+j_{x}}y,T^{\ell+j_x}y')\Big\}\geq \\
d_{Z_2}(T^{\ell+n_i}y',T^{\ell+j_x}y')-d_{Z_2}(T^{\ell+n_i}y,T^{\ell+j_x}y).\quad\qedhere
\end{multline*}
\end{proof}

\begin{proof}[Proof of Theorem~\ref{thm:no compact}] We first show that $\eta$ is not an isometric extension. 
Observe that if $g=\phi(T_1^{b-1}x)\cdot\ldots \cdot\phi(x)$, then $\pi_2\circ\Psi(T_1^bx,T_2^by)=g\pi_2\circ \Psi(x,y)$ and 
$\pi_2\circ\Psi(T_1^bx,T_2^by')=g\pi_2\circ \Psi(x,y').$
Because all four of 
the points $(T_1^{\ell+n_i} x,T_2^{\ell+n_i}y)$, $(T_1^{\ell+j_x} x,T_2^{\ell+j_x}y)$, $(T_1^{\ell+n_i} x,T_2^{\ell+n_i}y')$, and $(T_1^{\ell+j_x} x,T_2^{\ell+j_x}y')$ lie in the set $\mathcal{K}$, Conclusion~\eqref{conc:very close} of Lemma~\ref{lem:cond} implies that 
$d_{Z_2}(T_2^by,T_2^by')<\frac c 4$, a contradiction of Conclusion~\eqref{conc:far} of Lemma~\ref{lem:cond}.

Now assume that $\eta$ is a distal extension of $(Z_1,\zeta_1,T_1)$. 
 By the structure theorem for 
distal flows of Furstenberg~\cite{Furstenberg} and Zimmer~\cite{zimmer}, the system $(Z_1\times Z_2,\eta,T_1\times T_2)$. 
is an inverse limit of systems, each of which is an isometric extension of the preceding one. 
Thus there is a factor of our distal extension, which is an isometric extension of $(Z_1,\zeta_1,T_1)$
, and which satisfies the assumptions of Theorem~\ref{thm:no compact} (with different $c$). Indeed, by the definition of inverse limits,  
 we can embed our distal extension into the product defining the inverse limit.  
  This contradicts the previous paragraph. 
\end{proof}

\subsection{A self-joining that is not quasi-simple} \label{sec:not qs} We apply Theorem~\ref{thm:no compact} to establish part of Theorem~\ref{theorem:main}:
 \begin{thm}\label{thm:big joining} 
There exists a non-trivial ergodic  self-joining of $(Y,\nu,T)$ that is not a distal extension of $(Y,\nu,T)$. 
 \end{thm}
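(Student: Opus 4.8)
The plan is to construct an explicit self-joining of $(Y,\nu,T)$ by "gluing along the rigidity structure" and then verify the hypotheses of Theorem~\ref{thm:no compact} with $(Z_1,\zeta_1,T_1)=(Z_2,\zeta_2,T_2)=(Y,\nu,T)$. The key point is that, because $T$ is rigid rank one by cylinders (Lemma~\ref{lem:srank 1}), along the rigidity times $n_k=r_{10^{2k+1}}$ the transformation $T^{n_k}$ acts on the Rokhlin tower over $I_k$ essentially as the identity on most of the tower but increments the entry at position $10^{2k+1}$; crucially, the alphabet size $a_{10^{2k+1}}=2k+1$ grows with $k$, so this increment is a "slow" motion that can be used to manufacture a joining which is genuinely more than a distal extension. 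Concretely, I would build $\eta$ as a weak-$*$ limit of a sequence of "shifted" off-diagonal-type joinings: roughly, on the tower over $I_k$ one pairs a point $\by$ with the point obtained by applying to the second coordinate a self-map that advances the $10^{2k+1}$-coordinate by a bounded but nonzero amount relative to the first coordinate, i.e.\ one interpolates between $\J(0)$ and $\J(n_k)$ in a way that survives in the limit because the relevant column heights go to infinity. One must check along a subsequence that these measures converge, that the limit is $T\times T$-invariant with the correct marginals (both follow from the rank one tower structure and a standard diagonalization), and that it is a nontrivial joining.

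Next I would set up the verification of the five conditions of Theorem~\ref{thm:no compact}. For the sequence of times I would take $n_i=n_k=r_{10^{2k+1}}$ (or a bounded modification thereof) and $L_i$ a fraction of the column height $q_{10^{2k+1}}$, so that $L_i>\hat\delta n_i$ holds because the tower is nearly all of $Y$. For $A_i\subset Z_1=Y$ I would take a positive-measure union of levels of the $I_k$-tower sitting well inside the tower (away from the top $2k-4$ coordinate values and away from the exceptional sets $Z_\ell, W_j$ with $\ell\geq 10^{2k+1}$), exactly as in the proofs of Proposition~\ref{prop:wmix} and Lemma~\ref{lem:srank 1}; there $\zeta_1(A_i)>c$ for a uniform $c$. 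For $x\in A_i$, the fiber measure $\eta_x$ of the constructed joining is (approximately) supported on a bounded-size set of points $T^{j}y$ with $j$ ranging over a window determined by the interpolation; the sets $C_i(x)$ would be the part of this support that tracks $x$ under $T^{n_i}$ (i.e.\ where $j_x$ can be chosen so $T_2^{n_i}$ and $T_2^{j_x}$ nearly agree along the orbit segment of length $L_i$, using rigidity), and $E_i(x)$ the part where applying $T^{n_i}$ versus $T^{j_x}$ produces a genuine displacement of order $c$ on a definite fraction of the segment — this is possible precisely because different points in the fiber sit at different positions in the $10^{2k+1}$-coordinate, so the increment by $T^{n_i}$ lands them on different columns. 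Condition~\eqref{cond:eta sees} ($\eta_x$ gives both sets mass $>c$) is arranged by construction of $\eta$; condition~\eqref{cond:extra} is a mild uniformity/approximation statement that follows because the fiber measures are, up to small error, uniform on a bounded number of explicitly described points.

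After the conditions are checked, Theorem~\ref{thm:no compact} immediately gives that $\eta$ is not a distal extension of $(Y,\nu,T)$, which is the assertion of Theorem~\ref{thm:big joining}. The main obstacle, I expect, is the construction of $\eta$ itself together with conditions~\eqref{cond:stay}, \eqref{cond:move}, and~\eqref{cond:eta sees}: one must choose the interpolation between $\J(0)$ and $\J(n_k)$ so that (a) the limit joining actually exists and is not simply an off-diagonal joining $\J(n)$ or the product (otherwise there is nothing to prove, but also the conditions could fail), (b) the Birkhoff-type averages in~\eqref{cond:stay} over segments of length $L_i\asymp q_{10^{2k+1}}$ really do go to zero — this requires that the "error" coordinates (the positions below $10^{2k+1}$ and the rare exceptional returns through $Z_\ell,W_j$) contribute negligibly, which is where Lemma~\ref{lem:same hit} and the fast growth of the exceptional times are used — and (c) the displacement in~\eqref{cond:move} is bounded below along a definite fraction of the segment, which needs a counting argument showing that advancing the $10^{2k+1}$-coordinate by a nonzero amount moves a point to a metrically distant part of $Y$ for a positive-density set of times $\ell$. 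Balancing $L_i$ (large enough that $L_i>\hat\delta n_i$, small enough that the averaging is controlled) is the delicate quantitative heart of the argument; everything else is bookkeeping with the odometer coordinates.
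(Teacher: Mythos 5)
Your high-level strategy (build an explicit self-joining as a weak$^*$ limit and feed it into Theorem~\ref{thm:no compact}) is the same as the paper's, but the construction you actually sketch has a gap that is fatal as stated. At stage $k$ you pair $\by$ with a point obtained by advancing the $10^{2k+1}$-coordinate by a bounded amount; such a point differs from $\by$ only in a coordinate of index $10^{2k+1}$, so the stage-$k$ measure lies within roughly $2^{-10^{2k+1}}$ of the diagonal joining $\J(0)$ in the Kantorovich--Rubinstein metric, and your weak$^*$ limit is exactly $\J(0)$ --- which is isomorphic to the base and trivially a distal extension. ``Column heights going to infinity'' does not rescue this: to survive in the limit the perturbations must accumulate across infinitely many scales, with the fiber measures acquiring definite mass on pairs of points at \emph{uniformly macroscopic} distance. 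That is precisely what the paper's inductive doubling does (Proposition~\ref{prop:prelimit omnibus} iterated in Section~\ref{sec:big join proof}, with $2^r$ off-diagonals at stage $r$ and the outputs of one stage fed back in as the inputs $k_1,\ldots,k_r$ of the next), and it is also what makes the ergodicity of the limit nontrivial --- a hypothesis of Theorem~\ref{thm:no compact} that your proposal never addresses, and which in the paper consumes conditions~\ref{cond:close to bary}--\ref{conc:close}, Lemma~\ref{lem:close good}, and the generic-point criterion~\eqref{eq:generic}.

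Second, you have located the non-distal mechanism at the wrong scales. Your condition~\eqref{cond:move} is supposed to come from ``the increment at the $10^{2k+1}$-coordinate landing fiber points on different columns,'' but the odd scales carry no asymmetry at all: $D_{10^{2k+1}}=\emptyset$, so $T^{r_{10^{2k+1}}}$ acts (up to a small set) in the same way on everything --- these scales only produce the tracking behavior of condition~\eqref{cond:stay}. The displacement in the paper comes from the even scales and the removed sets $W_k$: because of them, a single time of the form $a+(a-b)r_{10^{2k}}$ acts like $T^a$ on $\{\bx\colon x_{10^{2k}}<k\}$ and like $T^b$ on $\{\bx\colon x_{10^{2k}}\geq k\}$ (Lemma~\ref{lemma:bound-cylinders}, Corollary~\ref{cor:most conditions}, Lemma~\ref{lem:shift happens}), and the uniform constant $c$ in~\eqref{cond:move} is inherited from a previously constructed pair of off-diagonals already separated by a definite distance on a set of measure $\geq\frac18$ (Lemma~\ref{lem:pairing}, seeded by $d(T\bx,\bx)$ being bounded below). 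With fibers consisting of points within $2^{-10^{2k+1}}$ of each other, $T^{n_i}$ and $T^{j_x}$ treat all fiber points identically, so the sets $C_i(x)$ and $E_i(x)$ with opposite behaviors and a $k$-independent $c$ cannot both exist; your verification of conditions~\eqref{cond:move} and~\eqref{cond:eta sees} therefore cannot go through without importing the multi-scale, $W_k$-based intertwining that is the actual content of the paper's proof.
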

 By non-trivial, we mean that 
  the self-joining is not $\nu \times \nu$.

Before turning to the proof, we start with  some preliminaries.
If $(Y, T)$ is a compact metric space, 
let $\mathcal{M}(Y\times Y)$ denote the set of Borel probability measures on $Y$ and let $d_{\mathcal{M}(Y \times Y)}(\cdot,\cdot)$ denote the 
\emph{Kantorovich-Rubenstein metric}, defined for Borel probability measures $\mu, \nu\in \mathcal{M}(Y\times Y)$ 
as $$d_{\mathcal{M}(Y \times Y)}(\mu,\nu):=\sup\Bigl\{\Bigl\vert \int fd\mu-\int f d\nu\Bigr\vert\colon f \text{ is 1-Lipschitz function on } Y \times Y\Bigr\}.$$
 This metric  endows the set of Borel probability measures  $\mathcal{M}(Y\times Y)$
 on $Y\times Y$  with   the weak*-topology.  
 Similarly, define $d_{\mathcal{M}(Y)}$ to be the Kantorovich-Rubenstein metric on the $\mathcal{M}(Y)$.

Recall that $\J(n)$ denotes the off diagonal joining on $\{(\bx,T^n\bx)\}$, 
meaning that $\J(n)$ is the measure on $X\times X$ defined by 
$$
\int f(\bx, \by)\,d\J(n) = \int f(\bx, T^n\bx)\,d\mu.
$$ 

Recall that if $\sigma$ is a self-joining of $(Y,\nu,T)$, we let $\sigma_\bx$ denote the disintegration of $\sigma$ given by projection to the first coordinate, thought of as a measure on $Y$. Note that this is only defined $\nu$-almost everywhere and is slightly different than the usual disintegration of measures:  it defines a measure on $Y$, rather than a measure on $Y \times Y$ that gives full measure to $\{\bx\}\times Y$. 

The main tool in establishing 
Theorem~\ref{thm:big joining} 
is the following proposition:
\begin{prop} \label{prop:prelimit omnibus}
  For any $\varepsilon>0$ and  $k_1,\ldots,k_r \in \mathbb{Z}$, 
there exist $\ell_1,\ldots,\ell_{2r}$, $ N, M,L \in \mathbb{Z}$, with $M\leq L$,  a set $A\subset Y$ with $\nu(A)>\frac 1 {99}$, 
 and for each $\bx\in A$ 
there exists $j_\bx\in [-M,M]$ such that 
\begin{enumerate}[label=(\alph*)]
\item\label{cond:close to bary}
 $\nu\Big(\big\{ 
\bx
\colon d_{\mathcal{M}(Y)}\big((\frac 1 {2r}\sum_{n=1}^{2r}\J(\ell_{n}))_{\bx},( \frac 1 r \sum_{n=1}^r \J(k_{n}))_{\bx}\big)>\varepsilon\big\}\Big)<\varepsilon$.
(Recall our  convention that the disintegration of measure on $Y \times Y$ by projection onto the first coordinate is a measure on $Y$.) 

\item \label{cond:fiber}
\begin{multline*}
\nu\bigl(\{\bx  \in A  \colon \text{there exist reorderings }i_1,\ldots,i_r \text{ of }1,\ldots,r \text{ and }\\ i_{1+r},\ldots,i_{2r} \text{ of }r+1,\ldots,2r 
\text{ such that for all }1\leq s\leq r, \\
d_Y\big(T^{k_s}\bx,T^{\ell_{i_s}}\bx\big)<\varepsilon \text{ and } d_Y\big(T^{k_s}\bx,T^{\ell_{i_{r+s}}}\bx\big)<\varepsilon\}\bigr)>\nu(A)- \varepsilon. 
\end{multline*}
\item\label{conc:close} 
$d_{\mathcal{M}(Y \times Y)}\big(\frac{1}{N}\sum_{i=1}^N\delta_{(T^i\bx,T^iT^{\ell_{n}}\bx)}, \frac 1 r \sum_{i=1}^r \J(k_i)\big)\}\big)<\varepsilon$ for all $n\leq 2r$ and $\bx \in A$.  
\item\label{conc:stay} $\frac 1 {L} \sum_{i=0}^{L-1}d_{Y}(T^{M+i}T^{\ell_{n}}\bx,T^{i+j_\bx}T^{\ell_{n}}\bx)<\varepsilon$ for all $\bx\in A$ and $n\leq r$.
\end{enumerate}
Moreover, if we assume that there exist $a,b \in \mathbb{N}$ and $c>0$ such that $d_{Y}(T^a\bx,T^b\bx)>4c+\varepsilon$ for a set $W$ of $\bx$ with   $\nu(W) = \frac 1 2 $  and 
\begin{equation}\label{eq:to intertwine}
|\{1\leq n\leq r\colon d_{Y}(T^{k_{n}}\bx,T^a\bx)<c\}|=|\{1\leq  n \leq r\colon d_{Y}(T^{k_{n}}\bx,T^b\bx)<c\}|=\frac r 2
\end{equation}
for all $\bx \in W$, then 

\begin{enumerate}[resume, label=(\alph*)]
\item\label{conc:move} $\frac 1 {L} |\{0\leq i\leq L-1\colon d_{Y}(T^{M+i}T^{\ell_d}\bx,T^iT^{\ell_d}T^{ j_\bx}\bx)>c$ for all $\bx\in A$ and 
$r<d\leq r+ 2 \lceil  \frac 1 {16}   r \rceil \}|>\frac 1 9$.
\end{enumerate}
\end{prop}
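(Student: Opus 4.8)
The plan is to build the $\ell_n$'s and the auxiliary integers by a multi-stage approximation argument, exploiting the rigid rank one structure of $(Y,\nu,T)$ established in Lemma~\ref{lem:srank 1} together with unique ergodicity. First I would use rigidity: along the rigidity times $n_k = r_{10^{2k+1}}$ (and their iterates), the iterates $T^{n}$ approach the identity on most of the Rokhlin tower determined by $I_k$, so for a suitable index $k$ one can find times $\ell_1,\ldots,\ell_{2r}$ with $\ell_{i_s}\equiv k_s$ and $\ell_{i_{r+s}}\equiv k_s$ ``up to a rigidity correction'', i.e.\ $\ell_{i_s} = k_s + (\text{large rigid shift})$. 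This is what gives~\ref{cond:fiber}: on the bulk of the tower, applying the rigid shift moves a point only a tiny distance, so $d_Y(T^{k_s}\bx, T^{\ell_{i_s}}\bx)<\varepsilon$ for $\bx$ in a set of measure $>\nu(A)-\varepsilon$. Because the rigid shift is (weakly) close to the identity as an operator, the barycenter $\frac1{2r}\sum \J(\ell_n)$ is weak$^*$-close to $\frac1r\sum\J(k_n)$ (each $\J(k_s)$ is hit twice), which is~\ref{cond:close to bary}; here I would pass between the operator/joining picture and the pointwise picture using the Kantorovich--Rubenstein metric and the fact that $\J(n)_\bx = \delta_{T^n\bx}$.

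Next, for~\ref{conc:close} I would invoke unique ergodicity of $(Y,\nu,T)$: the empirical measures $\frac1N\sum_{i=1}^N \delta_{(T^i\bx, T^{i+\ell_n}\bx)}$ converge, uniformly in $\bx$, to $\int \delta_{(T^i x, T^{i+\ell_n}x)}\,d\nu = \J(\ell_n)$; combining with~\ref{cond:close to bary}-type estimates (more precisely, choosing the $\ell_n$ so the averaged empirical measure over $n$ is close to $\frac1r\sum\J(k_i)$, and controlling each individually) gives the displayed bound for a suitable $N$. One has to be slightly careful: the statement asks for closeness of the (un-normalized? — read as normalized) empirical sum for \emph{each} $n\le 2r$ to the \emph{averaged} target $\frac1r\sum\J(k_i)$; this forces the $\ell_n$ for $n\le r$ and for $r<n\le 2r$ to each individually equidistribute like the average, which again comes from choosing them as rigid perturbations of the $k_i$'s and using that a rigid shift barely changes the joining. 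Then~\ref{conc:stay} is pure rigidity applied again at a \emph{later} scale: pick $M$ to be a rigidity time for a much finer tower than the one used for $N$, so that $T^M$ is within $\varepsilon$ of the identity in the $L^1$ (hence, on a big set, pointwise) sense along an orbit segment of length $L$; set $j_\bx$ to be the small integer correction coming from which level of the fine tower the point $T^{\ell_n}\bx$ sits in (this is where $j_\bx\in[-M,M]$ and its dependence on $\bx$ enter), and choose $L$ so that $L$ is small compared to the tower height but the Birkhoff-type average over $\ell=0,\ldots,L-1$ is controlled.

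For the final ``moreover'' clause~\ref{conc:move}, the point is that the hypothesis~\eqref{eq:to intertwine} says that among the $k_n$'s, exactly half are close to $a$ and half close to $b$ on the set $W$, while $d(T^a\bx,T^b\bx)>4c+\varepsilon$. When we replace $T^{\ell_n}\bx$ by $T^{\ell_n}T^{j_\bx}\bx$ in~\ref{conc:stay} we use a rigid shift; but for the indices $d$ in the range $r<d\le r+2\lceil r/16\rceil$ — a small subfamily of the ``second copy'' indices — I would instead choose $\ell_d$ to realize the \emph{other} of $\{a,b\}$ than the rigid-shift would give, so that $T^{M+i}T^{\ell_d}\bx$ and $T^{i}T^{\ell_d}T^{j_\bx}\bx$ track points that are $>4c$ apart for a definite-density set of $i\in[0,L)$, with density at least $1/9$ after accounting for the exceptional small-measure sets where rigidity or the continuity sets fail. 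The numerics ($1/99$, $1/9$, $1/16$) are slack absorbed into these exceptional sets. \textbf{The main obstacle} I anticipate is the simultaneous bookkeeping: the $\ell_n$ must be rigid perturbations of the $k_n$ at \emph{one} scale to get~\ref{cond:fiber} and~\ref{conc:close}, while $M$ lives at a \emph{much finer} scale to get~\ref{conc:stay}, and then for~\ref{conc:move} a few of the $\ell_d$ must be \emph{deliberately mismatched} relative to $j_\bx$ — all of this while keeping $N \ll M \ll L \ll$ (fine tower height) and ensuring every ``bulk of the tower'' estimate loses only $O(\varepsilon)$ in measure. Keeping these three scales consistent, and verifying that the mismatched family of size $\sim r/8$ genuinely forces the $1/9$-density separation rather than being washed out, is the delicate part; everything else is rigidity plus unique ergodicity.
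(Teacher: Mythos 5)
There is a genuine gap, and it sits at the heart of the proposition. You propose to take each $\ell_n$ to be a ``rigid perturbation'' of a single $k_s$ and to argue that ``a rigid shift barely changes the joining.'' That is exactly right as far as it goes, but it proves the wrong thing: if $\ell_n=k_s+(\text{rigidity time})$ then $\J(\ell_n)$ is weak*-close to the single joining $\J(k_s)$, whereas Conclusion~\ref{conc:close} demands that the empirical joining of \emph{each individual} $\ell_n$ be close to the barycenter $\frac1r\sum_{i=1}^r\J(k_i)$. When the $\J(k_i)$ are distinct (e.g.\ $k_1=0$, $k_2=1$), no rigidity argument can make a single off-diagonal time mimic the average: you need a single power of $T$ that acts like $T^{k_1}$ on part of the space, like $T^{k_2}$ on another part, and so on, with the correct proportions. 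This is precisely what the paper's construction supplies and what your plan omits: the sets $W_k$ remove half of the fiber at position $10^{2k}$, so the time $p_\ell=b+(b-b')r_{10^{2\ell}}$ of Corollary~\ref{cor:most conditions} acts (up to small error) like $T^b$ on a set of measure $\approx\frac12$ and like $T^{b'}$ on a complementary set of measure $\approx\frac12$; iterating this splitting through the Chaika--Eskin scheme (Theorem~\ref{th:CE}, Corollary~\ref{cor:getting first r}) is what drives each $\J(\ell_n)$ to the barycenter, and it also produces the $\bx$-dependent reorderings in~\ref{cond:fiber} (in your scheme the matching would be a fixed permutation, another sign that the mechanism is different). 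Rigidity in the paper is only used for the tail family $\ell_{r+2\lceil r/16\rceil+1},\ldots,\ell_{2r}$ and as an auxiliary tool, not to produce~\ref{cond:close to bary} or~\ref{conc:close}.

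A secondary, related gap concerns~\ref{conc:stay} and~\ref{conc:move}. In the paper, $M=\frac k2 r_{10^{2k}}$ is not merely ``a rigidity time at a finer scale'': because of the $W_k$-asymmetry, $T^M$ tracks $T^{j_\bx}$ where $j_\bx$ counts visits of the $S$-orbit to $D_{10^{2k}}$ (Lemma~\ref{lem:shift happens}), and the times $\ell_{r+j}=a+(a-b)r_{10^{2k}}$ with $a,b$ chosen via the pairing of Lemma~\ref{lem:pairing} are engineered so that applying $T^M$ versus $T^{j_\bx}$ after $T^{\ell_d}$ lands on orbits shadowing $T^a$ versus $T^b$, with the $\frac19$ density of separation coming from the ergodic theorem applied to $\{d(T^a\bx,T^b\bx)>c\}$. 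Your sketch gestures at ``deliberately mismatching'' some $\ell_d$, which is the right instinct, but without the explicit splitting mechanism and the concrete definition of $j_\bx$ the mismatch has no reason to persist for a positive density of $i\in[0,L)$; it could be washed out exactly as you fear. In short, the proposal substitutes rigidity for the asymmetric return-time structure that the system was built to have, and conditions~\ref{conc:close} and~\ref{conc:move} cannot be obtained from rigidity alone.
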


The proof of this proposition occupies the rest of this section, starting with finding the first half of the $\ell_i$ and then the second half.  
Before we turn to this, we comment on the role that this proposition plays. 
To prove Theorem~\ref{thm:big joining}, we iteratively apply this proposition, and at the $k-1^{\text{st}}$ application obtain a joining that is the barycenter of $2^k$ 
off diagonal joinings. We then take the weak*-limit of this sequence of (non-ergodic) joinings and obtain $\sigma$, an ergodic joining that satisfies the criterion of Theorem~\ref{thm:no compact}.
Using the proposition, we obtain a joining with the desired properties before passing to a limit. 
Before turning to the proof, we give some indication of the role of the various conditions in the statement.  
Conditions~\ref{cond:close to bary} and~\ref{cond:fiber}
are used to prove   that $\sigma$ is ergodic, and
Conditions~\ref{conc:stay} and~\ref{conc:move}  are used to show that $\sigma$ satisfies the assumptions of Theorem~\ref{thm:no compact}.  
More precisely, the sets $C_i(x)$ for $\sigma$  
are approximated by $\{T^{\ell_{n}}\bx\}_{n=1}^r$  in the sense that $\eta_x$ restricted to $C_i(x)$ is close to $\frac 1 {2r} \sum_{n=1}^r \delta_{T^{\ell_n}\bx}$. Similarly, 
  the sets  $\{T^{\ell_n}\bx\}_{n=r+1}^{r+2\lceil \frac r {16}\rceil}$ correspond to $E_i(x)$,  $M$ corresponds to $n_i$, and $L$ to $L_i$. 
 Conclusion~\ref{conc:move}  is the analog of~\eqref{cond:move} in Theorem~\ref{thm:no compact} and Conclusion~\ref{conc:stay} of~\eqref{cond:stay}. Condition~\eqref{cond:extra} in Theorem~\ref{thm:no compact} corresponds to observing that~\ref{cond:fiber} implies that for most $\bx$, for any $r< d \leq r+2\lceil \frac r {16} \rceil$ there exists $1\leq i_d \leq i_r$ such that $T^{\ell_d}\bx$ is close to $T^{\ell_{i_d}}\bx$. 
 The relation between the pre-limit versions of the properties and the desired properties for the limiting measure $\sigma$ is 
 addressed in Lemma~\ref{lem:close good}.

 \subsubsection{Finding $\ell_1$,\ldots,$\ell_r$}\label{sec:first r}

We now construct $\ell_1,\ldots,\ell_r$ satisfying the conclusions of Proposition~\ref{prop:prelimit omnibus}. 
\begin{lem}\label{lem:towers shadow} For all $\varepsilon>0$, there exists $k_0\in\N$ such that for all $k>k_0$ and $1 \leq \ell\leq k$:
\begin{enumerate}
\item 
If $(T^i\bx)_{10^{2k}}<k$ for all $0\leq i\leq \ell r_{10^{2k}}$, then 
$d(T^{\ell r _{10^{2k}}}\bx,\bx)<\varepsilon.$ 

\noindent
Similarly, if  $(T^{i}\bx)_{10^{2k}}<k$ for all $0\geq i\geq -\ell r_{10^{2k}}$, then 
$d(T^{\ell r _{10^{2k}}}\bx,\bx)<\varepsilon.$
\item 
\label{item:close-in-T}
If $k\leq (T^i\bx)_{10^{2k}}<2k-2$  for all $0\leq i\leq \ell (r_{10^{2k}}+1)$, then 
$d(T^{\ell (r _{10^{2k}}+1)}\bx,\bx)<\varepsilon$. 

\noindent
Similarly, if 
 $k\leq (T^i\bx)_{10^{2k}}<2k-2$  for all $0\geq i\geq -\ell(r_{10^{2k}}+1)$, then 
$d(T^{\ell (r _{10^{2k}}+1)}\bx,\bx)<\varepsilon$. 
\end{enumerate}
\end{lem}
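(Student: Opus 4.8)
The plan is to reduce everything to a statement about the odometer $S$, where the dynamics are completely explicit, and then transfer back to $T$ via the return-time maps $\zeta_\bx$ and $\xi_\bx$. First I would recall that $r_{10^{2k}} = \xi_{\bzero}(q_{10^{2k}})$, and that $S^{q_{10^{2k}}}$ is the odometer iterate that increments the digit in position $10^{2k}$ by one, leaving positions $1,\dots,10^{2k}-1$ fixed (and leaving positions $>10^{2k}$ fixed as long as no carry propagates past position $10^{2k}$). The key point I would make precise is a dictionary between iterates of $T$ and iterates of $S$: by Lemma~\ref{lem:same hit}, the number of visits to $\bigcup_{\ell<i} D_\ell$ during $q_i$ steps of $S$ is a constant independent of the point (here $i = 10^{2k}$), so as long as the $S$-orbit segment of length $\ell q_{10^{2k}}$ starting at a point $\bz = \by$ (or its appropriate translate) avoids all the excised sets $D_\ell$ with $\ell \geq 10^{2k}$, the $T$-time corresponding to $\ell q_{10^{2k}}$ odometer steps is exactly $\ell\, r_{10^{2k}}$, i.e. $T^{\ell r_{10^{2k}}}\bx = S^{\ell q_{10^{2k}}}\bx$.

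For part (1): the hypothesis $(T^i\bx)_{10^{2k}} < k$ for all $0\le i\le \ell r_{10^{2k}}$ says precisely that along this orbit segment the digit at position $10^{2k}$ never reaches the values that would put the point into $W_k = D_{10^{2k}}$ (which requires a digit $< a_{10^{2k}}/2 = k/2$, together with all earlier digits equal to $a_i - 2$ — so actually I should check the digit stays out of the relevant window), and more importantly, since the digit at $10^{2k}$ is being incremented by $1$ at each of the $\ell$ relevant odometer jumps and stays below $k = a_{10^{2k}}$, no carry ever propagates past position $10^{2k}$. Hence $S^{\ell q_{10^{2k}}}\bx$ agrees with $\bx$ in all positions $\geq 10^{2k}$ except the single digit at $10^{2k}$ which has increased by $\ell$; all lower positions are reset to $0$ at the moment of the jump but — here I need to be careful — they can be anything at the final time, EXACTLY as in the definition of the cylinder $I_k$. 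The cleanest route is: $S^{\ell q_{10^{2k}}}\bx$ and $\bx$ agree in all coordinates $i \geq 10^{2k}$ except coordinate $10^{2k}$ itself, where they differ by $\ell$; since $10^{2k} \to \infty$, the metric distance on the product space contributed by coordinates $< 10^{2k}$ is at most $\sum_{i < 10^{2k}} 2^{-i}$-type tail, which is $< \varepsilon$ once $k > k_0$. Wait — that tail is small, not the head. Let me restate: choosing the product metric so that disagreement only in coordinates $\geq 10^{2k}$ contributes at most (tail of a convergent series starting at index $10^{2k}$), this is $< \varepsilon$ for $k$ large. So $d(T^{\ell r_{10^{2k}}}\bx, \bx) < \varepsilon$. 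The ``similarly'' statement for negative times is identical, running the odometer backwards (decrementing the digit), with the hypothesis now controlling $(T^i\bx)_{10^{2k}}$ for $-\ell r_{10^{2k}} \le i \le 0$ so that no borrow propagates.

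For part (2): now the hypothesis $k \le (T^i\bx)_{10^{2k}} < 2k-2$ places us in the regime where the digit at position $10^{2k}$ is large — above the $W_k$ threshold — but the relevant return time picks up one extra step per block because of the excised set $Z_{10^{2k}-k}$ (which is why the return time is $r_{10^{2k}}+1$ rather than $r_{10^{2k}}$: passing through $D_{10^{2k}}$'s complement still costs $q_{10^{2k}}$ odometer steps, but the bookkeeping of a nearby $Z_\ell$ adds exactly one — this is the ``$+1$'' appearing in the proof of Proposition~\ref{prop:wmix} where $T^{r_i}\bx = S^{q_i+1}\bx$ for $\bx \in B_i$). Concretely I would show $T^{\ell(r_{10^{2k}}+1)}\bx = S^{\ell q_{10^{2k}}}\bx$ under this hypothesis (the extra $+1$'s in the exponent compensating for the $\ell$ excised odometer points the orbit would otherwise have to skip), again using Lemma~\ref{lem:same hit} to see the count is point-independent, and then conclude as before that $S^{\ell q_{10^{2k}}}\bx$ differs from $\bx$ only in coordinate $10^{2k}$ (the digit increased by $\ell$, and $\ell \le k$ keeps it below $2k-2 < a_{10^{2k}}$ so no carry escapes) and in coordinates $< 10^{2k}$, whence the distance is $< \varepsilon$. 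The negative-time version runs the odometer backwards identically.

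The main obstacle I anticipate is the careful bookkeeping in part (2): verifying that the hypothesis ``digit at $10^{2k}$ stays in $[k, 2k-2)$'' is exactly the condition guaranteeing that each of the $\ell$ odometer jumps is ``shadowed'' by a $T$-orbit segment of length exactly $r_{10^{2k}}+1$ — i.e. that the orbit passes through precisely one point of $D_{10^{2k}-k} = Z_{10^{2k}-k}$ per block and avoids $D_\ell$ for all $\ell \geq 10^{2k}$ during these blocks. This requires unwinding the definitions of $Z_\ell$, $W_k$, and the first-return structure of $T$, together with Lemma~\ref{lem:same hit} to get the point-independence (so that $r_{10^{2k}}$ is well-defined and the return times add up), but it is ultimately a finite, deterministic computation with the digit expansions. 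Once that dictionary is in hand, both conclusions follow immediately from the observation that incrementing a single digit far out in a product space is a small perturbation in the metric.
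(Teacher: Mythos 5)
Your overall strategy (translate to the odometer $S$, use Lemma~\ref{lem:same hit}, and conclude from agreement of the digits below position $10^{2k}$ that the distance is small) is the paper's strategy, and your target identities $T^{\ell r_{10^{2k}}}\bx=S^{\ell q_{10^{2k}}}\bx$ in case (1) and $T^{\ell(r_{10^{2k}}+1)}\bx=S^{\ell q_{10^{2k}}}\bx$ in case (2) are the correct ones. But the central bookkeeping --- the very point of the lemma --- is wrong in your write-up. Recall $r_{10^{2k}}=\xi_{\bzero}(q_{10^{2k}})$, and the $S$-orbit of $\bzero$ of length $q_{10^{2k}}$ hits $D_{10^{2k}}=W_k$ exactly once: within one block the digits in positions $<10^{2k}$ run through every combination exactly once, and at the moment they all equal $a_i-2$ the digit of $\bzero$ at $10^{2k}$ is still $0<k$. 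Consequently, a point whose orbit segment avoids \emph{all} $D_j$ with $j\geq 10^{2k}$ has one \emph{more} return to $Y$ per block than $\bzero$ does, so $q_{10^{2k}}$ odometer steps correspond to $r_{10^{2k}}+1$ steps of $T$, not $r_{10^{2k}}$. Your ``dictionary'' therefore describes case (2) while you use it for case (1). In case (1) the hypothesis $(T^i\bx)_{10^{2k}}<k$ does not keep the orbit out of $W_k$ (note $a_{10^{2k}}=2k$, so the $W_k$ threshold is $k$, not $k/2$); it guarantees the orbit hits $W_k$ exactly once per block, exactly as $\bzero$'s orbit does, and that matching is why the $T$-time per block is exactly $r_{10^{2k}}$.

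Correspondingly, in case (2) the extra $+1$ per block has nothing to do with $Z_{10^{2k}-k}$: by Lemma~\ref{lem:same hit} the number of visits to any $Z_\ell$ with $\ell<10^{2k}$ (and to $W_\ell$ with $10^{2\ell}<10^{2k}$) during a block of length $q_{10^{2k}}$ is the same for every point, so those sets cannot create a discrepancy with $\bzero$; the analogy with the ``$+1$'' in Proposition~\ref{prop:wmix} is a different mechanism. The $+1$ here comes precisely from $k\leq (T^i\bx)_{10^{2k}}<2k-2$: the orbit misses $W_k$ (digit $\geq k$) and misses every $D_j$ with $j>10^{2k}$ (digit $\neq 2k-2$), i.e.\ one fewer excised point per block than $\bzero$, which is the paper's argument ($S^{q_{10^{2k}}}\bx=T^{r_{10^{2k}}+1}\bx$, then iterate $\ell$ times). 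Your concluding metric step is fine once one notes that $S^{\ell q_{10^{2k}}}$ \emph{fixes} the digits in positions $<10^{2k}$ (they are not ``reset to $0$''), so the two points agree below $10^{2k}$ and the distance is small for large $k$. As written, though, the justification of the case split is inverted in (1) and misattributed in (2), and this off-by-one is exactly the content that Lemma~\ref{lemma:bound-cylinders} and the later joining construction rely on, so it needs to be repaired rather than glossed.
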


 Recall from Section~\ref{sec:construction} that $d=d_Y$ is a metric giving the subspace topology for $Y$ coming from product topology on $X$. 
\begin{proof} 
We only include the proof of the first part of~\eqref{item:close-in-T}, as the proofs of all four statements are similar. 
Thus we need to show that under the assumptions, $(T^{\ell (r _{10^{2k}}+1)}\bx)_i=x_i$ for all  
$i< 10^{2k}$.  
This statement immediately follows once we show that  \begin{equation}
\label{eq:close-T-and-S}
T^{\ell (r _{10^{2k}}+1)}\bx=S^{\ell q_{10^{2k}}}\bx.
\end{equation}
To prove~\eqref{eq:close-T-and-S}, note that by assumption,  
$(T^i\bx)_{10^{2k}}<2k-2$ for all $0\leq i\leq \ell (r_{10^{2k}}+1)$, 
and so 
$S^i\bx \notin \bigcup_{j>10^{2k}} D_j$ for all $0\leq i\leq \ell q_{10^{2k}}$.  (Recall that $D_j$ are defined in~\eqref{def:Dk}.) 
Similarly, by the assumption that 
$k\leq (T^i\bx)_{10^{2k}}$ for all $0\leq i\leq \ell (r_{10^{2k}}+1)$, 
we have that $S^i\bx \notin D_{ 10^{2k}}$ for all $0\leq i\leq \ell q_{10^{2k}}$. 
Thus for any such $\bx$, $\zeta_{\bx}(q_{10^{2k}})=\zeta_{\bf{0}}(q_{10^{2 \ell}})+1=r_{10^{2k}}+1$. Indeed, there exists $0< i < r_{10^{2k}}$ such that $S^i({\bf{0}})\in D_{10^{2k}}$. Iterating this process 
for $\bx$, we obtain that $S^{q_{10^{2k}}}\bx=T^{r_{10^{2k}}+1}\bx,\ldots,
S^{\ell q_{10^{2k}}}\bx = T^{\ell (r_{10^{2k}}+1)}\bx$, thus proving the claim.  (Note that by assumption, $S^{i}(T^{j (r_{10^{2k}}+1)}{\bf{x}}) \notin D_{10^{2k}}$ for any $0\leq j\leq \ell$ and $0\leq i\leq q_{10^{2k}}$.) 
If $k_0$ is large enough (depending on the metric, $d$, and $\varepsilon$), the lemma follows. 
\end{proof}

\begin{lem}
\label{lemma:bound-cylinders} Let $u,v\in \mathbb{Z}$ and 
 $n=u+vr_{10^{2k}}$. If 
 \begin{equation}\label{eq:x range 1} x_{10^{2k}} \in [|u|+|v|+1,k-|u|-|v|-1],
 \end{equation} 
 then $(T^n\bx)_{j}=(T^u\bx)_j \text{ for all }j\neq 10^{2k}$.
Similarly, if  
\begin{equation}\label{eq:x range 2} x_{10^{2k}} \in [|u|+|v|+k+1,2k-|u|-|v|-3],
\end{equation}
 then $(T^n\bx)_{j}=(T^{u-v}\bx)_j$ for all $j\neq 10^{2k}$. 
\end{lem} 
\begin{proof}  These results follow from Lemma~\ref{lem:towers shadow}, and again we only prove the first part as the others are analogous. If 
${x_{10^{2k}} \in [|u|+|v|+1,k-|u|-|v|-1]}$, then we apply the first part of Lemma~\ref{lem:towers shadow} 
with $\ell=v$ to $T^u\bx$. 
\end{proof}

\begin{cor}\label{cor:most conditions} For all $\varepsilon>0$ and $b,b' \in \mathbb{Z}$, there exists $k_0$ such that for all $\ell>k_0$ there exists $p_\ell \in \mathbb{Z}$, 
disjoint sets $A_\ell,\, B_\ell$, and a cylinder
 $J_\ell$ satisfying
\begin{enumerate}
\item\label{conc:most tracking 1} $T^{p_\ell}(\bx)_j=(T^b\bx)_j$ for all $j \neq 10^{2\ell}$ and $\bx \in A_\ell$.
\item\label{conc:most tracking 2} $T^{p_\ell}(\bx)_j=(T^{b'}\bx)_j$ for all $j \neq 10^{2\ell}$ and $\bx \in B_\ell$.
\item $\nu(A_\ell), \, \nu(B_\ell)>\frac 1 2 -\varepsilon$.
\item \label{conc:A description} $A_\ell=\bigcup_{i=0}^{\big(\ell-2(|b|-|b'|-1)\big)r_{10^{2\ell}}} T^iJ_{\ell}$
\item\label{conc:disjoint J} $T^iJ_{\ell} \cap J_{\ell}=\emptyset$ for all $0 \leq i\leq 2(\ell-|b|-|b'|-1)r_{10^{2k}}$. 
\end{enumerate}
\end{cor}
\begin{proof}
We apply Lemma~\ref{lemma:bound-cylinders}  to $n=b+(b-b')r_{10^{2 \ell}}$, and as in the lemma, we write $n=u+vr_{10^{2k}}$. (That is, $u=b$ and $v=b-b'$.) 
Since $b-(b-b')=b'$, by choosing 
$$J_{\ell}=\{\bx\colon x_j=0 \text{ for all }j< 10^{2\ell} \text{ and } x_{10^{2\ell}}=|u|+|v|+1\},$$ 
 the corollary follows with $p_\ell=b+(b-b')r_{10^{2 \ell}}$. Indeed, $A_\ell$ satisfies~\eqref{eq:x range 1} and 
 $$B_\ell= \bigcup_{i=(|u|+|v|+\ell)r_{10^{2\ell}}}^{(2\ell-|u|-|v|)r_{10^{2\ell}}}T^i J_\ell$$ satisfies~\eqref{eq:x range 2}. 
 Clearly the measures of each of these sets converge to $\frac 1 2 $ as $\ell$ goes to infinity (for fixed $u,v$). 
\end{proof}

\begin{lem}\label{lem:empirical}For any $\varepsilon>0$ and $b \in \mathbb{Z}$,  there exists $N\in\N$ such that for all  $n\geq N$, 
$$\nu(\{\bx \in Y\colon d_{\mathcal{M}(Y \times Y)}\big(\frac{1}{n}\sum_{i=1}^n\delta_{(T^i\bx,T^iT^{b}\bx)}, \J(b)\big)>\varepsilon\})<\varepsilon.$$
\end{lem}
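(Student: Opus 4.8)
The plan is to deduce this from unique ergodicity of $(Y,\nu,T)$ together with the fact that the off-diagonal joining $\J(b)$ is itself the image of $\nu$ under the continuous (in the measurable, but in fact here one needs ``effectively continuous'' — see below) map $\bx\mapsto(\bx,T^b\bx)$, composed with the $T$-dynamics on the first coordinate. Concretely, fix $b\in\mathbb{Z}$ and fix a $1$-Lipschitz function $f$ on $Y\times Y$. Define $g\colon Y\to\mathbb{R}$ by $g(\bx)=f(\bx,T^b\bx)$. Then
$$
\int f\,d\Bigl(\frac1n\sum_{i=1}^n\delta_{(T^i\bx,T^iT^b\bx)}\Bigr)=\frac1n\sum_{i=1}^n f(T^i\bx,T^{i+b}\bx)=\frac1n\sum_{i=1}^n g(T^i\bx),
$$
while $\int f\,d\J(b)=\int g\,d\nu$ by the very definition of $\J(b)$. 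So the statement reduces to showing that the ergodic averages $\frac1n\sum_{i=1}^n g(T^i\bx)$ converge to $\int g\,d\nu$ uniformly in $\bx$, and that this convergence is moreover uniform over the family of $1$-Lipschitz $f$ (equivalently, over a set of $g$'s of the above form).

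The main obstacle is that $g=f\circ(\mathrm{id}\times T^b)$ need \emph{not} be continuous on $Y$: the map $\bx\mapsto T^b\bx$ (and even $\bx\mapsto T\bx$, since $T$ is a first-return map, not the odometer itself) is only measurable, with discontinuities at the countably many cylinder boundaries where the return time jumps. So I cannot invoke unique ergodicity naively. The fix is a standard approximation argument: $g$ is continuous off a set $E$ that is a countable union of cylinder boundaries, hence $\nu(\partial E)=0$ and for every $\rho>0$ there is a clopen neighborhood $V_\rho\supset E$ with $\nu(V_\rho)<\rho$; on $Y\setminus V_\rho$ the function $g$ is uniformly continuous, so one may find a genuinely continuous $\tilde g$ on $Y$ with $\|g-\tilde g\|_\infty$ bounded by the Lipschitz constant ($=1$) times the diameter of $Y$ on $V_\rho$ and small elsewhere, i.e. $\int|g-\tilde g|\,d\nu<C\rho$ and $|g-\tilde g|\le \mathrm{diam}(Y)$ everywhere. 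Then $\frac1n\sum_{i=1}^n\tilde g(T^i\bx)\to\int\tilde g\,d\nu$ uniformly in $\bx$ by unique ergodicity of $(Y,\nu,T)$, and the error between the $g$-average and the $\tilde g$-average is controlled by $\frac1n\sum_{i=1}^n\one_{V_\rho}(T^i\bx)\cdot\mathrm{diam}(Y)$, which by unique ergodicity applied to (a continuous majorant of) $\one_{V_\rho}$ is, for $n$ large, at most $2\rho\,\mathrm{diam}(Y)$ uniformly in $\bx$.

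To finish, I would make the choice of $\tilde g$ and $N$ uniform over all $1$-Lipschitz $f$. The key point is that the ``bad set'' $E$ where $\bx\mapsto T^b\bx$ is discontinuous depends only on $b$, not on $f$; and on $Y\setminus V_\rho$ all the functions $g=f\circ(\mathrm{id}\times T^b)$ with $f$ $1$-Lipschitz are equi-uniformly-continuous (Lipschitz constant controlled by that of $T^b$ restricted to the relevant clopen pieces). Hence one can choose a single finite clopen partition refining the cylinder structure so that on $Y\setminus V_\rho$ every such $g$ is within $\rho$ of a function constant on atoms, and unique ergodicity on the finitely many atom-indicators (again via continuous majorants/minorants, since atoms are clopen) gives a single $N=N(\varepsilon,b)$ valid for all $\bx$ and all $1$-Lipschitz $f$ once $\rho$ is chosen small relative to $\varepsilon/\mathrm{diam}(Y)$. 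Taking the supremum over $f$ converts the uniform estimate into the asserted bound on $d_{\mathcal M(Y\times Y)}$, completing the proof.
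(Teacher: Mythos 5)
Your reduction of the Kantorovich--Rubinstein distance to uniform convergence of the Birkhoff averages of $g(\bx)=f(\bx,T^b\bx)$ over $1$-Lipschitz $f$ is fine, and you correctly identify that the discontinuity of $T$ (a first-return map, not the odometer) is the real issue. The gap is in how you then dispose of it: after replacing $g$ by a continuous $\tilde g$, you assert that $\frac 1n\sum_{i=1}^n\tilde g(T^i\bx)\to\int\tilde g\,d\nu$ \emph{uniformly in $\bx$} ``by unique ergodicity of $(Y,\nu,T)$'', and likewise for a continuous majorant of $\one_{V_\rho}$. But the Oxtoby--Walters equivalence (unique ergodicity $\Leftrightarrow$ uniform convergence of Birkhoff averages of continuous observables) is a theorem about \emph{continuous} transformations of compact metric spaces: its proof needs continuity of the map to guarantee that weak*-limits of empirical measures are invariant. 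For a merely Borel (piecewise continuous) map such as $T$, unique ergodicity in the sense of ``unique invariant Borel probability measure'' does not by itself give convergence at \emph{every} point, let alone uniformly; so the very discontinuity you flagged as the obstacle for $g$ also invalidates the step you rely on for $\tilde g$. (A secondary, fixable, imprecision: the discontinuities of $T$ are not located on ``cylinder boundaries'' --- cylinders here are clopen, so have empty boundary; they lie on the boundaries of the level sets of the return-time function, which involve the closed, non-clopen set $Y$, and one must argue separately that this set is $\nu$-null and can be engulfed by small clopen sets.)

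The paper avoids this by applying unique ergodicity not to $(Y,\nu,T)$ with a discontinuous $T$, but to the system obtained by restricting $T\times T$ to the (closure of the) graph $\{(\bx,T^b\bx)\}$, which it asserts is a compact metric space on which the map is continuous and uniquely ergodic, so Walters' theorem gives uniform convergence directly. If you want to salvage your route, the natural repair is to transfer to the odometer: $S$ on $X$ is a continuous uniquely ergodic homeomorphism, a $T$-Birkhoff sum of length $n$ starting at $\bx$ equals an $S$-Birkhoff sum of length $\zeta_{\bx}(n)$ of the corresponding function supported on $Y$ (with $\zeta_{\bx}(n)/n$ converging uniformly, using that the return time is bounded and $\nu(\partial Y)=0$), and then your clopen approximation argument can be run for $S$, where uniform convergence for continuous observables is legitimate. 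As written, though, the key uniform-convergence step is unjustified.
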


\begin{proof} This follows immediately from the Ergodic Theorem and the compactness of the space of 1-Lipschitz functions with bounded integral. 
\end{proof}

We now combine these results with the strategy  developed in~\cite{CE} 
to build  off diagonal 
joinings that are weak-*close to the barycenter of other off diagonal joinings. 
We begin by summarizing 
the results of~\cite{CE},  
where the input is a sequence of numbers and sets with certain properties.

We assume that $c>0$,  $J_{j}$ is a sequence of  
cylinders,
 $m_j$ is a sequence of natural numbers, $b_j^{(1)},\ldots,b_j^{(d)}$ are sequences of integers, and 
$\hat{A}_j$, $\hat{B}_j$ and $U_j$ are sequences of sets, and $\varepsilon_j>0$ satisfy the following properties: 
\begin{enumerate}
\item\label{itt:one}
For all $j$, $\hat{A}_j=\bigcup_{\ell=1}^{m_j}T^\ell J_{j} \setminus U_j$. 
\item  For all $j$,  $\hat{B}_j=Y \setminus (A_j \cup U_j)$. 
\item  For all $j$, $\nu(\hat{A}_j),\nu(\hat{B}_j)>c$. 
\item\label{cond:minimal return} The minimal return time to $J_{j}$ is at least $\frac 3 2 m_j$. 
\item For all $j$,  $\nu(U_j)<\varepsilon_j$.  
\item For all $j$, $m_j \sum_{\ell = j+1}^\infty \nu(J_\ell)<\varepsilon_\ell$. 
\item For all $j$, $\varepsilon_{j+1}\leq  \varepsilon_j$ and $\sum_j \varepsilon_j<\infty$.
\item \label{cond:track}  For any $\bx \in \hat{A}_j$, we have 
$d(T^{b_{j}^{(p)}}\bx,T^{b_{j-1}^{(p-1)}}\bx)<\varepsilon_j$  
and 
for any $\bx \in \hat{B}_j$, we have $d(T^{b_{j}^{(p)}}\bx,T^{b_{j-1}^{(p)}}\bx)<
\varepsilon_j$. 
(Note that $b_{j-1}^{(p-1)}$ is interpreted to be $b_{i-1}^{(d)}$ if $p=1$.) 
\item\label{cond:kr close}$d_{\mathcal{M}(Y \times Y)}\bigl( \frac 1 L \sum_{i=1}^{L}(T \times
T)^i (\J(b_{i}^{(p)})_\bx),\J(b_{i}^{(p)})\bigr)<\varepsilon_i$ for all $\bx \in Y$, 
all $L\geq \frac{m_{k_{i+1}}}9$, and any $p \in \{1,\ldots,d\}$. 
\end{enumerate}

\begin{thm}[{Chaika-Eskin~\cite[Proposition 3.1 and (the proof of) Corollary 3.3]{CE}}]\label{th:CE}
Assuming sequences of numbers and sets satisfying~\eqref{itt:one}-~\eqref{cond:kr close}, 
there
exist $ \rho<1$, $C'>0$ (depending only on $c$ and $d$) such that 
$$d_{\mathcal{M}(Y \times Y)}\big(\J(b_{k}^{(p)}),\frac 1
r\sum^r_{p=1} \J({b_i}^{(p)})\big)\leq C'\sum_{q=i}^k\varepsilon_q 
+C'\rho^{k-i},$$   
whenever $k\geq i$ and $p\in \{1,\dots,r\}$. Moreover, if $\bx \notin  \bigcup_{q=i}^kU_q$, 
 there is a reordering (which is allowed to depend on $\bx$) $p_1,\ldots,p_d$ with $d(T^{b^{(j)}_i}\bx,T^{\hat{b}_{k}^{(p_j)}}\bx)<\sum_{q=1}^k\varepsilon_q$ for all $1\leq j\leq d$. 
\end{thm}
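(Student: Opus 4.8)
The plan is to expose, inside conditions~\eqref{itt:one}--\eqref{cond:kr close}, a genuine Markov chain on the $d$-element label set $\{1,\dots,d\}$ and to show that $k-i$ steps of it contract geometrically to its uniform stationary distribution, with error measured by $\sum\varepsilon_q$. The driving observation is condition~\eqref{cond:track}: passing from level $j-1$ to level $j$, the ``label~$p$'' time $b_j^{(p)}$ agrees, up to $\varepsilon_j$ in the sense that $d(T^{b_j^{(p)}}\bx,\cdot)$ is small, with $b_{j-1}^{(p-1)}$ on the Rokhlin tower $\hat A_j$ and with $b_{j-1}^{(p)}$ off it; and the remaining conditions force $\alpha_j:=\nu(\hat A_j)\in(c,1-c)$ (since $\nu(\hat A_j),\nu(\hat B_j)>c$) and $\nu(U_j)<\varepsilon_j$. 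Thus unwinding from level $k$ down to level $i$ amounts to $k-i$ applications of the chain ``with probability $\alpha_j$ cyclically shift the label, otherwise keep it''.

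Concretely, write $C$ for the cyclic-shift permutation matrix on $\{1,\dots,d\}$, $I$ for the identity, $\Pi$ for the matrix with all entries $1/d$, and $P_j:=\alpha_j C+(1-\alpha_j)I$. The core of the argument is the \textbf{one-step transfer estimate}
\[
d_{\mathcal{M}(Y\times Y)}\Bigl(\J\bigl(b_j^{(p)}\bigr),\ \sum_{q=1}^{d}(P_j)_{p,q}\,\J\bigl(b_{j-1}^{(q)}\bigr)\Bigr)\le C_0\,\varepsilon_j \qquad(1\le p\le d),
\]
with $C_0=C_0(c,d)$. To prove it, split $\J(b_j^{(p)})=\int_Y\delta_{(\bx,T^{b_j^{(p)}}\bx)}\,d\nu(\bx)$ along the partition $\{\hat A_j,\hat B_j,U_j\}$ of $Y$: the $U_j$-part costs at most $\varepsilon_j\cdot\operatorname{diam}(Y\times Y)$ since $\nu(U_j)<\varepsilon_j$, and on $\hat A_j$ (resp.\ $\hat B_j$) condition~\eqref{cond:track} replaces $T^{b_j^{(p)}}\bx$ by $T^{b_{j-1}^{(p-1)}}\bx$ (resp.\ $T^{b_{j-1}^{(p)}}\bx$) at cost $\varepsilon_j$ in the Kantorovich--Rubinstein metric. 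One is then left to compare $\int_{\hat A_j}\delta_{(\bx,T^{b}\bx)}\,d\nu$ with $\nu(\hat A_j)\,\J(b)$, i.e.\ to bound $\sup_{f\ 1\text{-Lip}}\bigl|\int_Y(\one_{\hat A_j}(\bx)-\nu(\hat A_j))\,f(\bx,T^{b}\bx)\,d\nu\bigr|$ (with $b=b_{j-1}^{(p-1)}$, and analogously for $\hat B_j$). \textbf{This is the main obstacle}: it is a decorrelation estimate exploiting that $\hat A_j$ is a Rokhlin tower whose base $J_j$ is a cylinder of large combinatorial depth, so $\one_{\hat A_j}$ has high complexity, whereas a $1$-Lipschitz $f$ precomposed with $(\mathrm{id},T^{b})$ has bounded complexity; quantitatively it is controlled by the minimal-return-time bound on $J_j$, the $\varepsilon$-summability conditions, and the uniform Birkhoff convergence~\eqref{cond:kr close} (cf.\ Lemma~\ref{lem:empirical}).

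Granting the one-step estimate, the remainder is elementary. Iterating it, and using that $d_{\mathcal{M}(Y\times Y)}$ is convex along matched convex combinations (with errors not amplified, since each $P_j$ is row-stochastic) and that $\operatorname{diam}(Y\times Y)<\infty$, gives
\[
d_{\mathcal{M}(Y\times Y)}\Bigl(\J\bigl(b_k^{(p)}\bigr),\ \sum_{q=1}^{d}\bigl(P_kP_{k-1}\cdots P_{i+1}\bigr)_{p,q}\,\J\bigl(b_i^{(q)}\bigr)\Bigr)\le C'\sum_{q=i+1}^{k}\varepsilon_q .
\]
Each $P_j$ is a polynomial in $C$, so the $P_j$ commute; since $\Pi P_j=P_j\Pi=\Pi$ one has $P_k\cdots P_{i+1}-\Pi=\prod_{j=i+1}^{k}(P_j-\Pi)$, and on the mean-zero subspace the factors are simultaneously diagonalizable with eigenvalues $\alpha_j\omega+(1-\alpha_j)$ as $\omega$ ranges over the nontrivial $d$-th roots of unity. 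As $|\alpha_j\omega+1-\alpha_j|^2=1-2\alpha_j(1-\alpha_j)(1-\operatorname{Re}\omega)\le 1-2c(1-c)\bigl(1-\cos\tfrac{2\pi}{d}\bigr)=:\rho^2<1$, the $p$-th row of $P_k\cdots P_{i+1}$ is within $C_d\rho^{\,k-i}$ of $(\tfrac1d,\dots,\tfrac1d)$ in $\ell^1$, so $\sum_q(P_k\cdots P_{i+1})_{p,q}\J(b_i^{(q)})$ lies within $C_d\rho^{k-i}\operatorname{diam}(Y\times Y)$ of $\tfrac1d\sum_{q=1}^{d}\J(b_i^{(q)})$ (here $r=d$). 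Adding the two estimates and taking $C'$ to be the largest constant produced, which depends only on $c$ and $d$, yields the claimed bound $C'\sum_{q=i}^{k}\varepsilon_q+C'\rho^{k-i}$ after trivially enlarging the $\varepsilon$-sum to start at $q=i$.

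For the ``moreover'' statement, fix $\bx\notin\bigcup_{q=i}^{k}U_q$, so that $\bx\in\hat A_j$ or $\bx\in\hat B_j$ for every $j\in\{i+1,\dots,k\}$, and condition~\eqref{cond:track} applies deterministically at each level. Composing the resulting label maps — the cyclic shift $p\mapsto p-1$ at each level with $\bx\in\hat A_j$, the identity at each level with $\bx\in\hat B_j$ — produces a single cyclic permutation $\sigma$ of $\{1,\dots,d\}$; telescoping down the levels $j=k,k-1,\dots,i+1$ and adding $\varepsilon_j$ at each step via the triangle inequality gives $d\bigl(T^{b_i^{(j)}}\bx,\,T^{b_k^{(\sigma^{-1}(j))}}\bx\bigr)<\sum_{q=i+1}^{k}\varepsilon_q\le\sum_{q=1}^{k}\varepsilon_q$ for all $j$. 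Taking $p_j:=\sigma^{-1}(j)$ is the required reordering.
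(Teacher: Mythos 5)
The paper does not actually prove this statement: it is quoted from~\cite{CE}, and the only argument supplied here is the remark that the final claim follows by iterating~\eqref{cond:track}. Your last paragraph is exactly that argument (at each level $j$ with $\bx\notin U_j$ condition~\eqref{cond:track} applies with a label map that is either the cyclic shift or the identity, independent of $p$, and the errors telescope), so the ``moreover'' part matches the paper. Your reconstruction of the quantitative estimate also follows the same strategy as the cited source: a one-step estimate expressing $\J(b_j^{(p)})$ as approximately the convex combination $\nu(\hat A_j)\J(b_{j-1}^{(p-1)})+\nu(\hat B_j)\J(b_{j-1}^{(p)})$, then contraction of the resulting label chain. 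Your contraction step is correct and clean: since every $P_j=\alpha_j C+(1-\alpha_j)I$ is a polynomial in the same cyclic shift, they commute and are simultaneously diagonalized by the Fourier basis, and $\alpha_j\in(c,1-c)$ (from $\nu(\hat A_j),\nu(\hat B_j)>c$) gives $\rho^2=1-2c(1-c)(1-\cos\tfrac{2\pi}{d})<1$; joint convexity of $d_{\mathcal{M}(Y\times Y)}$ keeps the accumulated errors at $C'\sum_q\varepsilon_q$.

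The one place you stop short is the step you yourself flag as ``the main obstacle,'' and your ``high complexity versus bounded complexity'' heuristic is not the mechanism and would not by itself yield a bound. In fact no decorrelation argument is needed: the comparison of $\int_{\hat A_j}\delta_{(\bx,T^{b}\bx)}\,d\nu$ with $\nu(\hat A_j)\J(b)$ is exactly what hypothesis~\eqref{cond:kr close} is tailored to provide. Up to the set $U_j$ of measure $<\varepsilon_j$, the set $\hat A_j$ is the disjoint union $\bigcup_{\ell=1}^{m_j}T^{\ell}J_j$ (disjointness is where the minimal-return-time bound enters), and $T$-invariance of $\nu$ gives $\int_{\bigcup_{\ell}T^{\ell}J_j}\delta_{(\bx,T^{b}\bx)}\,d\nu=\int_{J_j}\sum_{\ell=1}^{m_j}\delta_{(T\times T)^{\ell}(\by,T^{b}\by)}\,d\nu(\by)$. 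By~\eqref{cond:kr close} each inner empirical measure, having length $m_j$ above the threshold there, is uniformly within the relevant $\varepsilon$ of $\J(b)$ in the Kantorovich--Rubinstein metric (this is the uniform unique-ergodicity statement, cf.\ Lemma~\ref{lem:empirical}); integrating over $J_j$ and using $m_j\nu(J_j)=\nu(\bigcup_{\ell}T^{\ell}J_j)$, which differs from $\nu(\hat A_j)$ by at most $\varepsilon_j$, gives the one-step transfer estimate with error $O(\varepsilon)$ (the index of the $\varepsilon$ shifts by one, which is absorbed into $C'\sum_{q=i}^{k}\varepsilon_q$ by monotonicity). With that step filled in as above, your argument is a correct proof of the quoted theorem, essentially along the lines of~\cite{CE}, with the circulant spectral computation giving a particularly transparent value of $\rho$.
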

\begin{rem}
The last statement of this theorem is not in the statement of Corollary 3.3, but follows by  iterating~\eqref{cond:track}. 
 The condition in~\eqref{cond:track} is a slightly simpler condition than that in~\cite{CE}, 
where the conditional measure of an off diagonal joining on a fiber is used instead of the distance between points,  
but the condition in~\cite{CE} follows immediately by using the definition of the Kantorovich-Rubinstein metric.
\end{rem}
\begin{rem}We iteratively apply the result of Theorem~\ref{th:CE} for different (decreasing) choices of 
$\varepsilon_i$ and (increasing) $d$, with each choice satisfying all of the properties~\eqref{itt:one}--\eqref{cond:kr close}.  Corollary~\ref{cor:most conditions} 
is designed to ensure that conditions (i)-(ix) in the hypotheses of Theorem~\ref{th:CE} hold. Indeed conclusions~\eqref{conc:most tracking 1} and~\eqref{conc:most tracking 2} of Corollary~\ref{cor:most conditions} provide condition~\eqref{cond:track}. 
Conclusion~\eqref{conc:disjoint J}  of Corollary~\ref{cor:most conditions} provides condition~\eqref{cond:minimal return}, 
while conclusion~\eqref{conc:A description} provides condition~\eqref{itt:one}. Lemma~\ref{lem:empirical} provides condition~\eqref{cond:kr close}.
\end{rem}
\begin{cor} \label{cor:getting first r} For any $\varepsilon>0$ and integers $b_1,\ldots,b_d$, there exist integers $\hat{b}_1,\ldots,\hat{b}_d$ such that 
\begin{equation}\label{eq:getting first r}
d_{\mathcal{M}(Y\times Y)}\bigl(\J(\hat{b}_\ell),\frac 1 d \sum_{j=1}^d\J(b_j)\bigr)<\varepsilon
\end{equation}
for all $\ell\in\{1,\ldots,d\}$.  Moreover, we may assume that there is a set  $\hat{W}$ of measure $1-\varepsilon$ 
such  that for every $\bx\in \hat{W}$, 
 there is a reordering $p_1,\ldots,p_d$ with $d(T^{b_j}\bx,T^{\hat{b}_{p_j}}\bx)<\varepsilon$ for all $1\leq j\leq d$.
\end{cor}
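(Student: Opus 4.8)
The plan is to obtain Corollary~\ref{cor:getting first r} as a direct consequence of Theorem~\ref{th:CE}, by constructing sequences of numbers and sets that verify hypotheses~\eqref{itt:one}--\eqref{cond:kr close} and then running the iterative scheme. The starting integers $b_1,\ldots,b_d$ will play the role of the $b^{(1)}_{j_0},\ldots,b^{(d)}_{j_0}$ at some initial stage $j_0$, and the conclusion~\eqref{eq:getting first r} is then exactly the first inequality in the statement of Theorem~\ref{th:CE} after choosing the stage index $k$ large enough that $C'\sum_{q=j_0}^{k}\varepsilon_q + C'\rho^{k-j_0}<\varepsilon$; this is where the summability condition~\eqref{cond:track}\,(vii), namely $\sum_j\varepsilon_j<\infty$, and $\rho<1$ are used, so the $\varepsilon_j$ must be chosen decaying fast (say geometrically) and with small total sum \emph{before} invoking the theorem. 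The ``moreover'' clause comes for free: the last sentence of Theorem~\ref{th:CE} gives, for $\bx\notin\bigcup_{q=j_0}^{k}U_q$, a reordering $p_1,\ldots,p_d$ with $d(T^{b_j}\bx,T^{\hat b^{(p_j)}_k}\bx)<\sum_{q}\varepsilon_q<\varepsilon$, and since $\nu(U_q)<\varepsilon_q$ with $\sum_q\varepsilon_q$ small, the set $\hat W=Y\setminus\bigcup_{q\geq j_0}U_q$ has measure $>1-\varepsilon$.

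The real content is building the input data. First I would use Corollary~\ref{cor:most conditions}: for each relevant pair of target integers it produces, for all large $\ell$, an integer $p_\ell$, disjoint sets $A_\ell,B_\ell$ of measure $>\tfrac12-\varepsilon$, and an interval $J_\ell$ such that $A_\ell=\bigcup_{i=0}^{m_\ell}T^iJ_\ell$ for an appropriate $m_\ell$ (of order $\ell\, r_{10^{2\ell}}$) with $T^iJ_\ell\cap J_\ell=\emptyset$ for $i$ up to roughly $2m_\ell$ — this gives conditions~\eqref{itt:one}--\eqref{itt:one}, the minimal-return-time bound $\geq\tfrac32 m_\ell$, and, on $A_\ell$ and $B_\ell$, the coordinatewise identities $T^{p_\ell}(\bx)_j=(T^b\bx)_j$ resp. $(T^{b'}\bx)_j$ for $j\neq 10^{2\ell}$, which is exactly what is needed to get the tracking condition~\eqref{cond:track}: on $\hat A_j$ the new off-diagonal exponent follows the previous one shifted cyclically among the $d$ coordinates, and on $\hat B_j$ it follows the corresponding one, up to an error $\varepsilon_j$ coming from the single free coordinate $10^{2\ell}$ (whose contribution to $d_Y$ is tiny once $\ell$ is large, by the metric on the product space). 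The small exceptional set $U_j$ absorbs the cylinders where the coordinate at $10^{2\ell}$ is too close to the edge of its allowed window; its measure is $O(1/\ell)$, which one can push below any prescribed $\varepsilon_j$ by discarding finitely many stages and relabeling. The two remaining bookkeeping conditions, $m_j\sum_{\ell>j}\nu(J_\ell)<\varepsilon_j$ and the geometric decay $\varepsilon_{j+1}\leq\varepsilon_j$, $\sum\varepsilon_j<\infty$, are arranged by choosing the stages $\ell$ along a rapidly increasing subsequence so that $\nu(J_\ell)$ (which is comparable to $q_{10^{2\ell}}^{-1}$, hence super-exponentially small) decays far faster than $m_j$ grows.

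Finally, condition~\eqref{cond:kr close} — that the Birkhoff averages of the off-diagonal joinings $\J(b^{(p)}_i)$ along towers of height $\gtrsim m_{j+1}/9$ are $\varepsilon_i$-close to $\J(b^{(p)}_i)$ uniformly in $\bx$ — is supplied by Corollary~\ref{cor:KR} (itself from Lemma~\ref{lem:empirical} and unique ergodicity of $T\times T$ on the compact metric graph $\{(x,T^bx)\}$): the corollary gives exactly such uniform KR-closeness for all $L$ beyond a threshold of the stated form, so it suffices to take $\ell$ in each stage large enough that this threshold is met. With all of~\eqref{itt:one}--\eqref{cond:kr close} in force, Theorem~\ref{th:CE} applies and yields the corollary. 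I expect the main obstacle to be the $\varepsilon$-management: one must fix the decaying sequence $(\varepsilon_j)$ and the constant $C'$, $\rho$ from Theorem~\ref{th:CE} \emph{first}, then choose the stage sequence $\ell_j\to\infty$ so that \emph{simultaneously} the tracking error, the measure of $U_j$, the tail sum $m_j\sum_{\ell>j}\nu(J_\ell)$, and the KR-threshold from Corollary~\ref{cor:KR} are all below $\varepsilon_j$ — a finite list of conditions at each stage, each satisfied for all sufficiently large $\ell$, so a diagonal choice works, but it requires care to present cleanly.
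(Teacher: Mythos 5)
Your proposal is correct and follows essentially the same route as the paper: the paper also builds the input data for Theorem~\ref{th:CE} by applying Corollary~\ref{cor:most conditions} simultaneously to the cyclically paired exponents $b_j^{(1)},b_j^{(2)};\ldots;b_j^{(d)},b_j^{(1)}$ (producing the common sets $\hat{A}_j,\hat{B}_j$), uses Corollary~\ref{cor:KR} for the Kantorovich--Rubinstein closeness condition, and then invokes Theorem~\ref{th:CE} for both the joining estimate and the reordering statement, with $\hat{W}=\bigcap_i(\hat{A}_i\cup\hat{B}_i)$, which coincides with your complement of the exceptional sets $U_q$. Your additional bookkeeping on the choice of the $\varepsilon_j$ and the stage sequence is exactly the (implicit) $\varepsilon$-management in the paper's argument.
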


We note  that the reordering in the second part of this statement depends on the particular $\bx$. 
\begin{proof} 
By Corollary~\ref{cor:most conditions} and Lemma~\ref{lem:empirical} , the result holds for $d=2$.  
Indeed, given $b_1,b_2$, and $\varepsilon'$, Lemma~\ref{lem:empirical}  provides $L_0$ such  that 
$$\nu(\{x\colon  d_{\mathcal{M}(Y \times Y)}
\left(\frac 1 L \sum_{i=1}^{L}\delta_{(T\times T)^i(\bx,T^{b_j}\bx)},\J(b_j)\right) 
>\varepsilon'\})<\varepsilon'$$ for $j\in \{1,2\}$ and $L\geq L_0$. Given this $L_0$, we apply Corollary~\ref{cor:most conditions} twice (for sufficiently large $\ell$ depending on $L_0$) to obtain sets $A$, $B$ and $p,p'\in \mathbb{Z}$ for $(b,b')=(b_1,b_2)$ and $(b,b')=(b_2,b_1)$ respectively as in the statement of the Corollary and such that $\nu(\cap_{i=0}^{L_0}A)$ and $\nu(\cap_{i=0}^{L_0}B)$ are at least $\frac 1 2 - \varepsilon'$. 
Iterating this provides conditions (i)-(ix). In particular, in the next application we have $p,p'$ instead of $b_1,b_2$.

Moreover, we claim that we can simultaneously apply these results to $d$ different pairs $b_1,b'_1,\ldots,b_d,b'_d$ 
 (the resulting common sets become $\hat{A}$ and $\hat{B}$).   To see this, choose 
$$\hat{A}=\{\bx\colon x_{10^{2\ell}}\in [\max\{|b_i|\}+\max\{|b_i-b_i'|\}+1,\ell-(\max\{|b_i|\}+\max\{|b_i-b_i'|\})-1]\}$$
and 
$$\hat{B}=\{\bx\colon x_{10^{2\ell}}\in [\ell+ \max\{|b_i|\}+\max\{|b_i-b_i'|\}+1,2\ell-(\max\{|b_i|\}+\max\{|b_i-b_i'|\})-3]\}.$$
We apply this argument for the $d$ pairs $b_1^{(1)},b_1^{(2)};\ldots;b_1^{(d)},b_1^{(1)}$  to produce measures $b_2^{(1)},\ldots,b_2^{(d)}$. 
Note that on $\hat{A}$, there is a reordering of $1,\ldots,d$, call it $p_1,\ldots,p_d$, such that $d(T^{b_j}\bx,T^{\hat{b}_{p_j}}\bx)<\varepsilon$ for all $1\leq j\leq d$ and $\bx \in \hat{A}$. (In fact, this is the reordering $p_j=j$.) 
There is a similar reordering on $\hat{B}$ (this is the reordering $p_j=j-1$ for $j\neq 1$ and $p_1=d$).  
 Inductively, given $b_j^{(1)},\ldots,b_j^{(d)}$ we apply this to the corresponding pairs 
$b_j^{(1)},b_j^{(2)};\ldots;b_j^{(d)},b_j^{(1)}$. 
Let $\hat{A}_1,\ldots,\hat{A}_j$ and $\hat{B}_1,\ldots,\hat{B}_j$ denote the corresponding sets, as above. 
 By Theorem~\ref{th:CE}, there exists 
 $j\in\mathbb{N}$ such that $b_j^{(i)}$ satisfy~\eqref{eq:getting first r} for all $i=1, \ldots, d$.  That is, 
$$ d_{\mathcal{M}(Y\times Y)}\bigl({b}_j^{(i)}),\frac 1 d \sum_{\ell=1}^d\J(b_\ell)\bigr)<\varepsilon.$$

Define 
\begin{equation}
\label{eq:def-hatW}
\hat{W} = \bigcap_{i=1}^j(\hat{A}_i\cup\hat{B}_i)
\end{equation}
to be the intersection of the sets obtained at each step, and this satisfies the desired conclusion.
\end{proof}

We now combine these results to find $\ell_1, \ldots, \ell_r$.  
Given $\varepsilon > 0$, applying Corollary~\ref{cor:getting first r} to $k_1,\ldots, k_r$, we obtain $\ell_1,\ldots,\ell_r$ such that 
$$\nu\bigl(\{\bx\colon d_{\mathcal{M}(Y)}(\frac 1 r \sum_{i=1}^rJ(\ell_i)_\bx,\frac 1 r \sum_{i=1}^rJ(k_i)_\bx\bigr) > \varepsilon/2 \})< \varepsilon/2$$ (Condition~\ref{cond:close to bary} for the first $r$)  and 
also satisfy the reordering condition on $\ell_1,\ldots,\ell_r$ in~\ref{cond:fiber}, where for each $\bx\in \hat{W}$,  the reordering is given by
$p'_k=k-|\{1\leq i\leq j\colon \bx \in \hat{B}_i\}|$ and this difference is taken modulo $d$.  (Note that 
as we have not yet introduced $\ell_{r+1},\ldots,\ell_{2r}$, we have not yet fully established~\ref{cond:close to bary} or~\ref{cond:fiber}.) 
Towards obtaining Conclusion~\ref{conc:close}, for each $\ell_i$, choose $N_i$ such that for all $L\geq N_i$ we have 
  $$d_{\mathcal{M}(Y\times Y)}\bigl(\frac 1 L\sum_{j=0}^{L-1}\delta_{(T^j\times T^j)(\bx,T^{\ell_i}\bx)},\J(\ell_i)\bigr)<\varepsilon.$$

\subsubsection{Finding $\ell_{r+1},\ldots,\ell_{2r}$}
We start first by finding $\ell_{r+1},\ldots,\ell_{2\lceil \frac 1 {16} r \rceil}$.

\begin{lem}\label{lem:pairing} 
Under the assumptions of Proposition~\ref{prop:prelimit omnibus} including the additional assumption, there exists $J\subset \{1,\ldots,r\}$ with $|J|= 2 \lceil  \frac {1} {2\cdot 8}  r \rceil$  and an order 2 bijection  $\phi\colon J \to J$ such that 
\begin{equation}\label{eq:shift}
\nu(\{\bx\colon d(T^{\ell_i}\bx,T^{\ell_{\phi(i)}}\bx)>c\})>\frac 1 8
\end{equation}
for all $i \in J$. 
\end{lem}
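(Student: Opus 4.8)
The plan is to transport the two ``clusters'' of the $k_n$ downstairs (those pulling $\bx$ near $T^a\bx$ and those pulling it near $T^b\bx$) into a balanced $2$-colouring of $\{1,\dots,r\}$ upstairs, and then to extract the required system of pairs greedily, controlling at every step the measure of the set of $\bx$ on which some still-unused pair is $c$-far apart.

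First I would record some harmless normalizations. The additional hypothesis forces $r$ to be even: if $r$ were odd, $|\{n\colon d(T^{k_n}\bx,T^a\bx)<c\}|=r/2$ could not hold on a set of positive measure. I would also arrange that in \S\ref{sec:first r} the parameter used when invoking Corollary~\ref{cor:getting first r} was at most $\varepsilon_0:=\min\{\varepsilon/2,\,1/10\}$ (this only strengthens Conclusions~\ref{cond:close to bary}--\ref{conc:close}), so that the set $\hat W$ of~\eqref{eq:def-hatW} satisfies $\nu(\hat W)>1-\varepsilon_0$ and, for each $\bx\in\hat W$, there is a permutation $\tau_\bx$ of $\{1,\dots,r\}$ with $d(T^{k_n}\bx,T^{\ell_{\tau_\bx(n)}}\bx)<\varepsilon_0$ for all $n$. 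For $\bx\in W$ put $A(\bx)=\{n\colon d(T^{k_n}\bx,T^a\bx)<c\}$ and $B(\bx)=\{n\colon d(T^{k_n}\bx,T^b\bx)<c\}$; by~\eqref{eq:to intertwine} each has $r/2$ elements, and since $d(T^a\bx,T^b\bx)>4c+\varepsilon>2c$ they are disjoint, hence partition $\{1,\dots,r\}$. For $\bx\in\Omega:=W\cap\hat W$ (so $\nu(\Omega)\ge\tfrac12-\varepsilon_0$) set $\tilde A(\bx)=\tau_\bx(A(\bx))$ and $\tilde B(\bx)=\tau_\bx(B(\bx))$, a partition of $\{1,\dots,r\}$ into two blocks of size $r/2$. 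If $i\in\tilde A(\bx)$ and $j\in\tilde B(\bx)$ then $d(T^{\ell_i}\bx,T^a\bx)<c+\varepsilon_0$ and $d(T^{\ell_j}\bx,T^b\bx)<c+\varepsilon_0$, so by the triangle inequality
$$d(T^{\ell_i}\bx,T^{\ell_j}\bx)\ \ge\ d(T^a\bx,T^b\bx)-2(c+\varepsilon_0)\ >\ (4c+\varepsilon)-2c-2\varepsilon_0\ \ge\ 2c\ >\ c.$$
Call an unordered pair $\{i,j\}$ \emph{$\bx$-separated} if its members lie in different blocks of this partition; the display shows every $\bx$-separated pair is $c$-far apart at $\bx$.

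Next I would run the greedy selection. Writing $P(\{i,j\})=\nu\bigl(\{\bx\colon d(T^{\ell_i}\bx,T^{\ell_j}\bx)>c\}\bigr)$, suppose that disjoint pairs occupying an index set $U$ with $|U|=2s$ and $s<\lceil r/16\rceil$ have been selected; I claim there is a further pair $\{i,j\}$ inside $V:=\{1,\dots,r\}\setminus U$ with $P(\{i,j\})>\tfrac18$. If not, then for every $\bx\in\Omega$ we have $|\tilde A(\bx)\cap V|,\,|\tilde B(\bx)\cap V|\ge m:=\tfrac r2-2s$, so $V$ contains at least $m^2$ $\bx$-separated pairs, each $c$-far apart at $\bx$; integrating the number of such pairs inside $V$ over $\Omega$ gives
$$\nu(\Omega)\,m^2\ \le\ \sum_{\{i,j\}\subset V}P(\{i,j\})\ \le\ \tfrac18\binom{|V|}{2}\ \le\ \tfrac{|V|^2}{16}\ =\ \tfrac{(2m+2s)^2}{16}\ =\ \tfrac{(m+s)^2}{4}.$$
Since $s\le\lceil r/16\rceil-1\le r/16$ we get $m\ge\tfrac{3r}{8}$ and $s\le\tfrac m6$, hence $\tfrac{(m+s)^2}{4m^2}\le\tfrac{49}{144}$, forcing $\nu(\Omega)\le\tfrac{49}{144}$ and contradicting $\nu(\Omega)\ge\tfrac12-\varepsilon_0>\tfrac{49}{144}$. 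Thus the process does not stall before $\lceil r/16\rceil=\lceil\tfrac1{2\cdot8}r\rceil$ pairs are chosen; taking $J$ to be their union and $\phi$ the involution swapping the two members of each selected pair yields $|J|=2\lceil\tfrac1{2\cdot8}r\rceil$ and $P(\{i,\phi(i)\})>\tfrac18$ for every $i\in J$, which is~\eqref{eq:shift}.

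The hard part is the quantitative balance in the greedy step: one must ensure that even after roughly $r/16$ pairs have been removed, the surviving $\bx$-separated pairs still force some remaining pair to be $c$-far apart on a set of measure exceeding $\tfrac18$. This is precisely where the smallness of $\varepsilon_0$ enters and where the count $\tfrac1{2\cdot8}$ comes from: a larger fraction would violate the inequality $\tfrac{(m+s)^2}{4m^2}<\nu(\Omega)$. A secondary, purely bookkeeping, difficulty is keeping straight the direction of the reordering permutation $\tau_\bx$ supplied by Corollary~\ref{cor:getting first r} when passing from the $k_n$-clusters to the $\ell$-colouring.
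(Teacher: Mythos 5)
Your proof is correct, and it reaches the required matching by a route whose combinatorial core differs from the paper's. The paper first proves the per-index degree bound \eqref{eq:pairing goal}: for every $i\le r$, at least $r/8$ indices $j$ satisfy $\nu(\{\bx\colon d(T^{\ell_i}\bx,T^{\ell_j}\bx)>c\})>\frac{1}{8}$. This is obtained by contradiction: averaging over $j$ on $W\cap\hat W$ produces a single point $\bx$ at which more than $r/2$ of the $T^{\ell_j}\bx$ lie within $c$ of $T^{\ell_i}\bx$, and then the reordering on $\hat W$ together with \eqref{eq:to intertwine} (only $r/2$ of the $T^{k_j}\bx$ can lie near whichever of $T^a\bx$, $T^b\bx$ is close to $T^{\ell_i}\bx$) yields a contradiction; with this degree bound the greedy extraction of $J$ is immediate, since a set of size smaller than $r/8$ cannot absorb all partners of any $i\notin J$. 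You instead transport the two clusters through $\tau_\bx$ to show that for every $\bx\in W\cap\hat W$ every cross pair is $c$-separated at $\bx$, and you justify each greedy step by double counting cross pairs inside the leftover index set $V$, comparing $\nu(W\cap\hat W)\,m^2$ with $\frac{1}{8}\binom{|V|}{2}$ and using $\nu(W\cap\hat W)>\frac{49}{144}$; I checked the arithmetic ($m\ge \frac{3r}{8}$, $s\le \frac{m}{6}$, hence the bound $\frac{49}{144}$) and it is sound. Both arguments rest on the same ingredients --- the clusters around $T^a\bx$ and $T^b\bx$, the reordering on $\hat W$ from Corollary~\ref{cor:getting first r}, triangle inequalities, and the measure of $W\cap\hat W$ --- but yours avoids the per-index bound at the cost of explicit constant bookkeeping at every greedy stage, whereas the paper's degree bound makes the extraction trivial once \eqref{eq:pairing goal} is in hand. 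Your normalization $\varepsilon_0=\min\{\varepsilon/2,\tfrac{1}{10}\}$ is the explicit analogue of the paper's implicit smallness requirement (its proof needs $\nu(W\cap\hat W)\ge\frac{1}{2}-\varepsilon>\frac{49}{100}$), and it is legitimate since the lemma is only invoked inside the proof of Proposition~\ref{prop:prelimit omnibus}, where the construction parameter for $\ell_1,\ldots,\ell_r$ is at the prover's disposal.
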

\begin{proof} First we claim that for each $i \leq r$, we have that 
\begin{equation}\label{eq:pairing goal}\Big|\Big\{j \leq r\colon \nu\big(\{\bx\colon d(T^{\ell_i}\bx,T^{\ell_{j}}\bx)>c\}\big)>\frac 1 8\Big\}\Big|\geq \frac r 8.
\end{equation}
To justify~\eqref{eq:pairing goal}, we limit our consideration to $W\cap \hat{W}$, where  $W$ is as in the statement of Proposition~\ref{prop:prelimit omnibus} and $\hat{W}$ is defined as in~\eqref{eq:def-hatW} as given in the proof of Corollary~\ref{cor:getting first r}
and note that $\nu(W\cap \hat{W})\geq \frac 1 2 -\varepsilon>\frac {49} {100}$. 
If~\eqref{eq:pairing goal} does not hold, 
$$
\int_{W\cap \hat{W}}\frac 1 r |\{j\colon d(T^{\ell_j}\bx,T^{\ell_i}\bx)
\leq c\}| \,d\nu 
\geq (\frac {49}{100}-\frac 1 8) \frac 7 8>\frac 1 2 \nu(W\cap \hat{W}).$$
It follows that there exists $\bx \in (W \cap \hat{W})$ such that 
$$|\{j:d(T^{\ell_j}\bx,T^{\ell_i}\bx)<c\}|>\frac r2.$$ 
Since $\bx \in \hat{W}$, it follows that 
$$|\{j\colon d(T^{k_j}\bx,T^{\ell_i}\bx)<c+\varepsilon\}|>\frac r2.$$
But since $\bx \in W \cap \hat{W}$, we have that $d(T^{\ell_i}\bx, T^a\bx)$ or $d(T^{\ell_i}\bx,T^b\bx)$ is less than $c+\varepsilon$, all of these $T^{k_j}\bx$ are at least $2c$ away from whichever of $T^a\bx$ or $T^b\bx$ that $T^{\ell_i}\bx$ is not close to. This contradicts the fact that $\bx \in W$.

Given~\eqref{eq:pairing goal}, we can obtain our set of $J$, because until $|J|=\lceil\frac 1 8 r\rceil$, we can always inductively pick any $i \notin J$ and find $j \notin J$ satisfying~\eqref{eq:shift} and add them both into $J$, letting $\phi(i)=j$ and $\phi(j)=i$. 
Thus we can obtain 
a set $J$ whose cardinality is the smallest even number that is at least $\frac 1 8 r$.
\end{proof}

\begin{lem}\label{lem:shift happens}
Assume there exist $a,b \in \mathbb{Z}$ and $c>0$ 
 such that 
$$\nu(\{\bx\colon d(T^a\bx,T^b\bx)>c\})>\frac 1 8.$$ 
Let $$G_k=\{\bx \in Y\colon x_{10^{2k}}\in[\frac 1 3 k+|a-b|,\frac 1 2 k-2-|a-b|]\text{  and } x_{10^{2k}-1}=3\}$$  and set
$d_k=a+(a-b)r_{10^{2k}}$. Then for 
every $\varepsilon>0$, there exists $k_0$ such for all $k\geq k_0$ and $\bx \in G_k$, 
 there exists $j_\bx \in [-k,k]$  satisfying 
\begin{enumerate}
\item \label{conc:follow a} $d(T^{ \ell+a+ j_\bx }\bx,T^{\ell+ a+\frac k 2 r_{10^{2k}}}\bx)<\varepsilon$
\item\label{conc:follow b} $d(T^{d_k+\ell+j_\bx+\frac k2r_{10^{2k}}}\bx,T^{b+\ell }\bx)<\varepsilon$
\end{enumerate}
 for all $\ell\in [-r_{10^{2k}-1},r_{10^{2k}-1}]$.
Moreover, for all but a set of such $\bx$ with measure at most $\varepsilon$, 
we have
\begin{equation}\label{eq:shift happens}
\Big|\{\ell\in [-r_{10^{2k}-1},r_{10^{2k}-1}]\colon  d(T^{a+\ell}\bx,T^{b+\ell}\bx)>c\}\Big|> \frac 1 9 2r_{10^{2k}-1}.
\end{equation}
\end{lem}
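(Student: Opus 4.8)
\textbf{Proof proposal for Lemma~\ref{lem:shift happens}.}

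The plan is to exploit the same ``tower shadowing'' mechanism used in Lemma~\ref{lem:towers shadow} and Lemma~\ref{lemma:bound-cylinders}: we engineer a set $G_k$ on which the iterates of $T$ up to time roughly $r_{10^{2k}}$ in either direction keep the coordinate $x_{10^{2k}}$ inside a controlled window, so that composing with $S^{q_{10^{2k}}}$ (which shifts $x_{10^{2k}}$ by $\pm 1$) corresponds to composing with $T^{r_{10^{2k}}}$ or $T^{r_{10^{2k}}+1}$. Concretely, for $\bx\in G_k$ we have $x_{10^{2k}}\in[\frac13 k+|a-b|,\frac12 k-2-|a-b|]$, so applying Lemma~\ref{lemma:bound-cylinders} with the substitution $u=a$, $v=a-b$ (and after shifting by the bounded amount $\ell\in[-r_{10^{2k}-1},r_{10^{2k}-1}]$, which only moves the relevant coordinate by a bounded number of units since $r_{10^{2k}-1}$ is a full return time to the cylinder $[x_j=0,\,j<10^{2k}-1]$ and hence much smaller than $r_{10^{2k}}$), we get that $T^{d_k}$ acts on all but the $10^{2k}$-coordinate exactly as $T^{a-(a-b)}=T^b$ on the appropriate shifted point. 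First I would make precise the choice $j_\bx = \frac{k}{2}r_{10^{2k}}$-type correction: the point of the condition $x_{10^{2k}-1}=3$ is to pin down where in the $10^{2k}$-tower the orbit of $\bx$ sits, so that $j_\bx$ can be chosen (depending only on $x_{10^{2k}}$ and the fixed constant structure) to realize the two alignments~\eqref{conc:follow a} and~\eqref{conc:follow b} simultaneously. Then Conclusion~\eqref{conc:follow a} is just Lemma~\ref{lem:towers shadow} applied to $T^{\ell+a}\bx$ with the number of steps being $\frac k2(r_{10^{2k}}+\text{or }{}+1)$, using that $x_{10^{2k}}$ stays in the required interval throughout, and Conclusion~\eqref{conc:follow b} follows by combining this with the identity $T^{d_k}\approx T^b$ from Lemma~\ref{lemma:bound-cylinders}.

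For the ``moreover'' clause, the point is to transfer the hypothesis $\nu(\{\bx\colon d(T^a\bx,T^b\bx)>c\})>\frac18$ into a statement about the proportion of $\ell\in[-r_{10^{2k}-1},r_{10^{2k}-1}]$ for which $d(T^{a+\ell}\bx,T^{b+\ell}\bx)>c$. Here I would use that $r_{10^{2k}-1}$ is (up to the negligible excluded sets $D_j$, $j$ near $10^{2k}$) the return time of $S$ to the cylinder determined by $x_j=0$ for $j<10^{2k}-1$, so that the $T$-orbit segment $\{T^{\ell}\bx\colon \ell\in[-r_{10^{2k}-1},r_{10^{2k}-1}]\}$ is (for $\bx$ in a large-measure set) an essentially complete, and essentially equidistributed, sweep through this cylinder. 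Hence the empirical frequency along this orbit segment of the event $\{d(T^a\cdot,T^b\cdot)>c\}$ is within $\varepsilon$ of its $\nu$-measure for all but an $\varepsilon$-fraction of base points $\bx$ — this is a Birkhoff/unique-ergodicity statement for the induced system on the cylinder, exactly analogous to Lemma~\ref{lem:empirical} and Corollary~\ref{cor:KR}. Since $\frac18 - \varepsilon > \frac19$ for $\varepsilon$ small, the event occurs for at least $\frac19\cdot 2r_{10^{2k}-1}$ values of $\ell$, giving~\eqref{eq:shift happens}.

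The main obstacle I anticipate is bookkeeping the interaction between the two scales $r_{10^{2k}-1}$ and $r_{10^{2k}}$: one must check that a shift by $\ell$ with $|\ell|\le r_{10^{2k}-1}$ genuinely does not push $x_{10^{2k}}$ out of the window $[\frac13 k+|a-b|,\frac12 k-2-|a-b|]$ (it cannot increment $x_{10^{2k}}$ at all, since $r_{10^{2k}-1}$ is shorter than one pass through the sub-tower below level $10^{2k}$), and that $\zeta_\bx$ and $\xi_\bx$ behave consistently across the relevant orbit segment — i.e. that none of the excluded sets $D_j$ with $j\ge 10^{2k}$ are hit, which is where the requirement $x_{10^{2k}}<\frac12 k-2$ is used (it keeps $S^i\bx$ out of $W_k$ and the $Z_j$ with $j$ large), and where the window being bounded away from $0$ and from $k$ keeps us in the ``first part'' regime of Lemma~\ref{lem:towers shadow} rather than the wraparound regime. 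Once these containments are verified, everything else is a direct citation of Lemmas~\ref{lem:towers shadow}, \ref{lemma:bound-cylinders}, and the equidistribution argument underlying Corollary~\ref{cor:KR}.
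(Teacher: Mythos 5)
Your handling of the \emph{moreover} clause matches the paper in substance: one applies the (mean/Birkhoff) ergodic theorem to $\one_V$ with $V=\{\bx\colon d(T^a\bx,T^b\bx)>c\}$, so that for all long enough windows the orbital frequency exceeds $(1-\varepsilon)\nu(V)>\frac19$ outside a set of base points of measure $\varepsilon$, and then one requires $r_{10^{2k}-1}$ to exceed the threshold. Your detour through ``equidistribution for the induced system on the cylinder'' is unnecessary, and unique ergodicity cannot be quoted for the indicator of a merely measurable set; but since you allow an exceptional set of $\bx$, the ergodic-theorem version suffices and is exactly what the paper uses.

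The genuine gap is in conclusions~\eqref{conc:follow a}--\eqref{conc:follow b}, which are the heart of the lemma: you never define $j_\bx$, and your sketch is internally inconsistent about the mechanism that makes both conclusions hold with one and the same $j_\bx$ for every $\ell$ in the window. Conclusion~\eqref{conc:follow b} requires $T^{d_k}$, $d_k=a+(a-b)r_{10^{2k}}$, to act like $T^{b}$ on the point reached after the shift by $\frac k2 r_{10^{2k}}+j_\bx$; by Lemma~\ref{lemma:bound-cylinders} that is the behavior in the \emph{other} window of the $10^{2k}$ digit, whereas on $G_k$ itself $T^{d_k}$ acts like $T^{a}$. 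So the entire content of the lemma is the bookkeeping of how the digit at position $10^{2k}$ and the number of visits to $D_{10^{2k}}=W_k$ evolve across the shift: the paper applies the proof of Corollary~\ref{cor:most conditions} with $n=d_k$ and defines $j_\bx=\sum_{i=0}^{\zeta_{\bx}(\frac k2 r_{10^{2k}})-1}\one_{D_{10^{2k}}}(S^i\bx)-\frac k2$, then uses $x_{10^{2k}-1}=3$ (a visit to $D_{10^{2k}}$ forces the $(10^{2k}-1)$-entry to equal $6$) to show $j_{T^{\ell'}\bx}=j_\bx$ for $|\ell'|\le\frac32 r_{10^{2k}-1}$, which is what lets a single $j_\bx$ serve in both conclusions uniformly in $\ell$. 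Your proposal does none of this: you describe $j_\bx$ only as a ``$\frac k2 r_{10^{2k}}$-type correction depending on $x_{10^{2k}}$,'' and, worse, you argue that the window keeps the orbit ``in the first part regime of Lemma~\ref{lem:towers shadow} rather than the wraparound regime'' throughout. If that were literally so, then $T^{d_k}$ would act like $T^{a}$ both before and after the shift, conclusion~\eqref{conc:follow b} would contradict the moreover clause, and the times $\ell_{r+j}$ built from this lemma would produce no ``move'' in Proposition~\ref{prop:prelimit omnibus}. You cannot simultaneously claim to remain in the first regime and invoke the second-regime identity $T^{d_k}\approx T^{b}$; the missing step is precisely the hit-count/digit analysis that defines $j_\bx$ and places the shifted point in the branch of Lemma~\ref{lemma:bound-cylinders} that yields $T^{b}$.
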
 

\begin{proof}
We apply the proof of Corollary~\ref{cor:most conditions} with $n=d_k$ to obtain the first  2 conditions. 
More precisely, 
by construction $G_k$ is a subset of $A_k \cup B_k$ where $A_k$ and $B_k$ given by the proof of Corollary~\ref{cor:most conditions}.   
Choose $j_{\bx}=\sum_{i=0}^{\zeta_{\bx}(\frac k 2 r_{10^{2k}})-1}\one _{D_{10^{2k}}}(S^i\bx)-\frac k 2$. (Recall that $D_j$ is defined in~\eqref{def:Dk}.)
Then since $x_{10^{2k}-1}=3$, 
it follows that $j_{\bx}=j_{T^{\ell'}\bx}$ for all such $\bx$ and $\ell'\in [-\frac 3 2 r_{10^{2k}-1},\frac 3 2 r_{10^{2k}-1}]$. 
 Indeed, because $\by \in D_{10^{2k}}$ implies $y_{10^{2k}-1}=6$, for all $\bx$ with $ x_{10^{2k}-1}=3 $ and 
$\ell'\in [-\frac 3 2 r_{10^{2k}-1},\frac 3 2 r_{10^{2k}-1}]$ we have 
$$\sum_{i=0}^{\zeta_{\bx}(\frac k 2 r_{10^{2k}})-1}\one_{D_{10^{2k}}}(S^i\bx)- \sum_{i=0}^{\zeta_{T^{\ell'}\bx}(\frac k 2 r_{10^{2k}})-1}\one_{D_{10^{2k}}}(S^i\bx)=0.$$
 
Choosing $k_0$ such that $|a|,\,|b|<r_{10^{2k_0}-1}$,  the first 2 conditions 
hold. 
For the final condition, let $V= \{\bx\colon d(T^{a}\bx,T^{b}\bx)>c\}$. 

 By the (mean)  ergodic theorem, there exists $N\in\N$ such that 
$$\nu\bigl(\{\bx\colon \frac 1 M\sum_{i=0}^{M-1}\one_{V}(T^i\bx)>(1-\varepsilon)\nu(V)\}\bigr)>1-\varepsilon$$
for all $M\geq N$. Choosing $r_{10^{2k}-1}> N$ we have~\eqref{eq:shift happens}.  
\end{proof}

We now use this to define $\ell_j$ for $j\in \{r+1,\ldots, r+ 2\lceil  \frac r {16} \rceil \}$.
 Choose $J$ as in Lemma~\ref{lem:pairing} and enumerate 
the elements of the set $J$ 
 as $a_1,\ldots, a_{2\lceil\frac  r {16}   \rceil}$. 
Let  $\ell_{r+j}$ be given by Lemma~\ref{lem:shift happens} applied 
with $a=a_j$ and $b=\ell_{\phi(a_j)}$ 
and where $k$ is chosen larger than the $k_0$ needed for the $2\lceil  \frac r {16}  \rceil$ 
 applications of Lemma~\ref{lem:shift happens}, as well as sufficiently large such 
that $\ell_i< \frac 1 2 r_{10^{2k}-1}$ 
for all $1\leq i \leq r$.  For each such $j$,  we define $\ell_{r+j}$ to be  the corresponding $d_k$.

We now define $\ell_{r+ 2\lceil  \frac {r} {16}  \rceil+1},\ldots,\ell_{2r}$.  Define $\hat{V}_{a}$ to be the set 
of $\bx$ 
such that $d(T^{\ell_j}\bx,T^{\ell_j+r_{10^{2a+1}}}\bx)<\varepsilon$ for all $j \in J^c$. Observe that for all large enough $a$, $\nu(\hat{V}_a)>1-\varepsilon$. 
Let $i_1,...,i_{r-2\lceil \frac r {16}\rceil}$ be an enumeration of $J$ and define $\ell_{j+r+2\lceil \frac r {16}\rceil}=\ell_{i_j}+r_{10^{2a+1}}$. So $\{\ell_j\}_{j=r+2\lceil \frac r {16}\rceil+1 }^{2r}=\{\ell_{j'}+r_{10^{2a+1}}\}_{j' \in J^c}$. 

We use this to prove the proposition: 
\begin{proof}[Proof of Proposition~\ref{prop:prelimit omnibus}] 
Choose $M=\frac k 2 r_{10^{2k}}$, $L=r_{10^{2k}+1}$, and  set 
$$
B = \big[\frac 1 3 k+2\max_{j=1,\ldots, 2\lceil \frac {r}{16} \rceil}\{|\ell_{a_j}|\},\frac 1 2 k-2-2\max_{j=1,\ldots, 2\lceil \frac r {16} \rceil}\{|\ell_{a_j}|\}\big]$$
and 
$$A=\big\{\bx\colon x_{10^{2k}-1}=3 \text{ and } x_{10^{2k}}\in B \big\} \cap \hat{W} \cap \hat{V}.
$$
By Conclusion~\eqref{conc:follow a}  of  Lemma~\ref{lem:shift happens} and our choice that $|\ell_j|<r_{10^{2k}-1}$, we have Conclusion~\ref{conc:stay} with  
$j_\bx$ as in Lemma~\ref{lem:shift happens}. 
  By Corollary~\ref{cor:getting first r} (see also the last paragraph of Section~\ref{sec:first r}) we have Conclusion~\ref{cond:close to bary} and Conclusion~\ref{cond:fiber} for $\{\ell_i\}_{i=1}^r$.
Since each $\ell_{a_1},\ldots, \ell_{a_{2\lceil \frac {r} {16} \rceil}}$ appears as both $j$ and $\phi(j)$ 
in our construction of $\{\ell_i\}_{i=r+1}^{r+2\lceil \frac r{16} \rceil}$, 
 it follows outside a set of $\bx$ with small measure, 
 for each such $\bx$ 
 there exist, $p_1,\ldots, p_{2\lceil \frac {r}{16} \rceil}$, a reordering of $\ell_i$ for $r<i \leq r+2 \lceil \frac {r}{16} \rceil$  such that $d_Y(T^{\ell_{a_i}}\bx, T^{\ell_{p_i}}\bx)<\varepsilon$. 
For the remaining $r<j\leq 2r$, the off diagonal joining $\ell_j$ is built to be $\varepsilon$ close to the corresponding $\ell_{i}$.
 Thus Conclusion~\ref{cond:fiber} follows  for $\{\ell_i\}_{i=r}^{2r}$.  We have~\ref{conc:close} for $N=\max\{N_i\}_{i=1}^r$ (see the end of Section~\ref{sec:first r}). Finally, by~\eqref{eq:shift happens} and~\eqref{conc:follow b} of Lemma~\ref{lem:shift happens}, we have~\ref{conc:move}. Indeed, by our choice of $A$ we can apply~\eqref{conc:follow b} and by~\eqref{eq:shift happens} this gives the desired distance bound of $T^{M+d_k+\ell}\bx$ from $T^{d_k+\ell+j_\bx}\bx$. 
\end{proof}

\subsection{Proof of Theorem~\ref{thm:big joining}} \label{sec:big join proof}
Let $\mathcal{J}_{\nu}$ denote the self-joinings of $(Y,T,\nu)$.
  Recall that $\sigma_x$ denotes a measure on $Y$, and not on $\{x\}\times Y$.

\begin{lem}\label{lem:close good} 
Let $\varepsilon> 0$,  $k_1,\ldots, ,k_r \in \mathbb{Z}$, $\ell_1,\ldots,\ell_{2r}, L, N, M \in\mathbb Z$, the set $A$, and $j_\bx\in[-M,M]$ be as in Proposition~\ref{prop:prelimit omnibus}. 

There exists $\frac 1 {20}>\delta>0$ such that if for some $\sigma \in \mathcal{J}_\nu$ we have 
\begin{equation}\label{eq:pointwise close}
\nu\Big(\big\{\bx\colon  d_{\mathcal{M}(Y)}\big(\sigma_\bx, (\frac{1}{2r}\sum_{n=1}^{2r}\J(\ell_{n}))_\bx\big)>\delta\big\}\Big)<\delta,
\end{equation} 
then there exists $\tilde{A} \subset A\subset Y$ and $\nu(\tilde{A})> \frac 1 {999}$ such that
\begin{enumerate}
\item\label{item:close good i} $\sigma(\{(\bx,\by) \in \tilde{A} \times Y\colon d_{\mathcal{M}(Y \times Y)}(\frac 1 N \sum_{i=1}^N\delta_{(T^i\bx,T^i\by)}, \frac 1 r \sum_{i=1}^r \J(k_i))<2\varepsilon\})>\frac 9 {10} \nu(\tilde{A}) $.
\item \label{item:close good b}
 for all $\bx \in \tilde{A}$, there exists $C_\bx$ with 
$\sigma_\bx(C_\bx)
>\frac 1 {99999}$ and 
$$\frac 1 L \sum_{i=0}^{L-1}d(T^{M+i}\by,T^{j_\bx+i}\by)<2\varepsilon$$ for all $\by \in C_\bx$.

\end{enumerate}
Moreover, under the additional assumption 
that there exist $a,b \in \mathbb{N}$ and $c>0$ such that $d(T^a\bx,T^b\bx)>3c$ for a set of $\bx$ with measure  $\frac 1 2 $, 
then  there exists $E_\bx$ with $\sigma_\bx( E_\bx) > \frac{1}{99999}$ satisfying 
\begin{enumerate}[resume]
\item\label{item:close good iii} $\frac 1 L|\{0\leq i<L\colon  d(T^{M+i}\by,
 T^{j_\bx+i}\by )>\frac c 2\}|>\frac c 2 $ for all $\bx \in \tilde{A}$ and $\by \in  E_\bx$.
\end{enumerate}
\end{lem}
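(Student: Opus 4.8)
The plan is to derive Lemma~\ref{lem:close good} directly from Proposition~\ref{prop:prelimit omnibus}, transporting the ``pre-limit'' data $\ell_1,\dots,\ell_{2r},L,N,M,A,j_\bx$ into statements about an arbitrary self-joining $\sigma$ that is pointwise close (in the sense of~\eqref{eq:pointwise close}) to the barycentric off-diagonal object $(\tfrac 1{2r}\sum_n\J(\ell_n))_\bx$. First I would fix $\varepsilon$ and the proposition's output, and then choose $\delta$ small (at least $\delta<\varepsilon$ and $\delta<\frac 1{20}$, with further smallness to be determined by the chase below). The set $\tilde A$ will be obtained from $A$ by discarding the measure-$\leq\delta$ bad set in~\eqref{eq:pointwise close} together with one or two further small exceptional sets coming from Markov-type averaging; since $\nu(A)>\frac 1{99}$ and all discarded pieces have measure $O(\delta)$, shrinking $\delta$ gives $\nu(\tilde A)>\frac 1{999}$.

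For part~\eqref{item:close good i}, the idea is that for $\bx\in\tilde A$ the conditional measure $\sigma_\bx$ is $\delta$-close to $\frac 1{2r}\sum_{n=1}^{2r}\delta_{T^{\ell_n}\bx}$ in $d_{\mathcal M(Y)}$, so a $\by$ drawn from $\sigma_\bx$ is, with $\sigma_\bx$-probability close to $1$, within $O(\sqrt\delta)$ of some $T^{\ell_n}\bx$; combined with Conclusion~\ref{conc:close} of Proposition~\ref{prop:prelimit omnibus} (which says the empirical measure of length-$N$ orbit of $(T^i\bx,T^iT^{\ell_n}\bx)$ is within $\varepsilon$ of $\frac 1r\sum\J(k_i)$), continuity of the length-$N$ empirical-measure map and a Markov inequality over $\tilde A$ give the $\frac 9{10}\nu(\tilde A)$ bound with $2\varepsilon$ slack, absorbing the $O(\sqrt\delta)$ perturbation into the second $\varepsilon$. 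For parts~\eqref{item:close good b} and~\eqref{item:close good iii}, I would set $C_\bx$ (respectively $E_\bx$) to be the $\sigma_\bx$-mass sitting within $O(\sqrt\delta)$ of the sub-collection $\{T^{\ell_n}\bx\}_{n\le r}$ (respectively $\{T^{\ell_d}\bx\}_{r<d\le r+2\lceil r/16\rceil}$): since $\sigma_\bx$ is $\delta$-close to a measure giving each of these points mass $\frac 1{2r}$, each sub-collection carries $\sigma_\bx$-mass at least roughly $\frac 1{2\cdot 2r}-O(\sqrt\delta)$, and one checks $\frac 1{99999}$ is a safe lower bound once $r$ is the concrete value produced earlier and $\delta$ is small (for~\eqref{item:close good iii} the extra hypothesis $d(T^a\bx,T^b\bx)>3c$ on a half-measure set is exactly what is needed to invoke the ``moreover'' part of Proposition~\ref{prop:prelimit omnibus}, noting $3c<4c+\varepsilon$ and that~\eqref{eq:to intertwine} holds by the reordering in Conclusion~\ref{cond:fiber}). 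Then Conclusion~\ref{conc:stay} transfers to~\eqref{item:close good b} and Conclusion~\ref{conc:move} transfers to~\eqref{item:close good iii} by the triangle inequality along the Birkhoff sum $\frac 1L\sum_{i=0}^{L-1}d(\cdots)$, moving from $T^{\ell_n}\bx$ to the nearby $\by$ at a cost of $O(\sqrt\delta L)$ in the sum, which is $O(\sqrt\delta)$ after dividing by $L$ and hence absorbed.

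The main obstacle I anticipate is controlling the Birkhoff averages in~\eqref{item:close good b} and~\eqref{item:close good iii} after replacing the distinguished orbit point $T^{\ell_n}\bx$ by a generic $\by$ close to it: closeness of $\by$ to $T^{\ell_n}\bx$ in $Y$ does not by itself control $d(T^{M+i}\by,T^{j_\bx+i}\by)$ for all $0\le i<L$, since the dynamics expands. The resolution is that Proposition~\ref{prop:prelimit omnibus} was engineered precisely so that the relevant estimates hold with $\ell_n$ replaced by any point in a full cylinder neighborhood (the sets $J_\ell$, $G_k$ in Lemmas~\ref{lem:pairing}--\ref{lem:shift happens} are cylinder sets, and the bounds there are uniform over such cylinders), so I would phrase the transfer not as ``$\by$ near $T^{\ell_n}\bx$'' but as ``$\by$ lies in the same defining cylinder as $T^{\ell_n}\bx$ up to a high coordinate,'' which holds with $\sigma_\bx$-probability $\ge\frac 1{2\cdot 2r}-O(\sqrt\delta)$ because $\sigma_\bx$ is weakly close to a convex combination of point masses at the $T^{\ell_n}\bx$ and weak closeness on a symbolic system forces agreement on any fixed finite set of coordinates for a correspondingly large mass. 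A secondary, purely bookkeeping, difficulty is tracking all the numerical constants ($\frac 9{10}$, $\frac 1{99999}$, $\frac c2$, $\frac 1{999}$) so that the chosen $\delta$ simultaneously beats all of them; this is routine once one fixes the order in which $\delta$ is shrunk (last, after $\varepsilon$, $r$, $\ell_i$, $L$, $N$, $M$ are all fixed).
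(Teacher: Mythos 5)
Your overall architecture matches the paper's: KR-closeness of $\sigma_\bx$ to $\frac 1{2r}\sum_n\delta_{T^{\ell_n}\bx}$ gives lower bounds on the $\sigma_\bx$-mass near the two blocks of atoms, and the real content is transferring Conclusions~\ref{conc:close}, \ref{conc:stay}, \ref{conc:move} of Proposition~\ref{prop:prelimit omnibus} from the atoms $T^{\ell_n}\bx$ to nearby points $\by$ in the support of $\sigma_\bx$. Where you genuinely differ is in how this transfer is justified. The paper does it softly: since $T$ is only measurable, it takes a Lusin set $G$ with $\nu(G)>1-\frac{\varepsilon}{10000r}$ on which the finitely many relevant powers of $T$ are uniformly continuous, sets $\hat G=G\cap\bigcap_{n}T^{-\ell_n}G$, defines $C_\bx$ and $E_\bx$ as $\delta$-balls around the atoms intersected with $\hat G$, and controls $\sigma_\bx(\hat G)$ for most $\bx$ via the marginal identity $\int\sigma_\bx(\hat G^c)\,d\nu=\nu(\hat G^c)$ and Markov's inequality; this never uses the symbolic structure, which is what lets the same scheme run verbatim for abstract rigid rank one transformations later in the paper. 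You instead use coordinate-locality of the construction (deep cylinder agreement). That route can be made to work, but your stated reason --- that the bounds of Lemmas~\ref{lem:pairing}--\ref{lem:shift happens} are ``uniform over cylinders'' --- is not quite right: \eqref{eq:shift happens} is only a for-most-points statement coming from the ergodic theorem, not a statement uniform on $G_k$. What you actually need, and what is true, is that if $\by$ agrees with $T^{\ell_n}\bx$ up to coordinate $10^{2k}$, then along the relevant window the two $S$-orbits agree on those coordinates, neither hits any $D_j$ with $j>10^{2k}$ (coordinate $10^{2k}$ never reaches $a_{10^{2k}}-2$), hence $\zeta_\by=\zeta_{T^{\ell_n}\bx}$ there and the $T$-iterates of $\by$ track those of $T^{\ell_n}\bx$ to depth $10^{2k}$ for all times $O(M+L)$; each term of the Birkhoff sums then transfers with error absorbed into $2\varepsilon$ (and into $c/2$ for the counting statement). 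So the transfer is from the specific point $T^{\ell_n}\bx$, whose estimate Proposition~\ref{prop:prelimit omnibus} supplies, not from cylinder-uniformity of the earlier lemmas.

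One numerical slip should be fixed: the $\sigma_\bx$-mass of a $\rho$-neighborhood of the block $\{T^{\ell_n}\bx\}_{n\le r}$ is at least $\frac r{2r}-\frac\delta\rho\approx\frac 12$, and of $\{T^{\ell_d}\bx\}_{r<d\le r+2\lceil r/16\rceil}$ at least roughly $\frac 1{16}-\frac\delta\rho$, not ``$\frac 1{2\cdot 2r}$ per sub-collection''; and $r$ is not a fixed concrete value --- in the proof of Theorem~\ref{thm:big joining} the lemma is applied with $r=2^m\to\infty$, so a bound decaying in $r$ would not give the uniform constant $\frac 1{99999}$ that must survive the limit. Since your $C_\bx$ and $E_\bx$ are defined as neighborhoods of the whole blocks, the block-level count is available and the rest of your bookkeeping (choosing $\delta$ last, after $\varepsilon,r,\ell_i,L,N,M$) goes through.
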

\begin{proof} 
Choose a compact set $G$ with $\nu(G)>1-\frac \varepsilon {10000r}$ such 
that $T^i|_G$ is (uniformly) continuous for all $|i|\leq \max\{N,L,M\}$.  Let $\hat{G}=G\cap   \bigcap_{n=1}^{2r}T^{-\ell_n}G$.  There exists $\delta>0$ such that if $\bx \in G$ and $d(\bx,\by)<\delta$, 
then 
\begin{equation} \label{eq:little move} d(T^i\bx,T^i\by)<\min\{\varepsilon,10^{-7}\} \quad\text{ for all }|i|\leq  \max\{N,L,M\}.
\end{equation}
Thus we can choose $A_1 = A \cap  \hat{G}$. 
If $\bx \in A_1$, $\by,\by' \in \hat{G}$,  $d(\by,\by')<\delta$, and  
$$d_{\mathcal{M}(Y \times Y)}(\frac 1 N \sum_{i=1}^N\delta_{(T^i\bx,T^i\by)}, \frac 1 r \sum_{i=1}^r \J(k_i))<\varepsilon,$$ then by~\eqref{eq:little move} the definition of $d_{\mathcal{M}(Y\times Y)}$ we have 
$$ d_{\mathcal{M}(Y \times Y)}(\frac 1 N \sum_{i=1}^N\delta_{(T^i\bx,T^i\by')}, \frac 1 r \sum_{i=1}^r \J(k_i))<2\varepsilon.$$
Thus  Condition~\eqref{item:close good i} follows from Condition~\ref{conc:close} of Proposition~\ref{prop:prelimit omnibus}  (as well as~\eqref{eq:pointwise close} and the measure bound on $G$). 
Setting 
$$C_{\bx}= \hat{G} \cap  \bigcup_{n=1}^rB(T^{\ell_{n}}\bx,\delta),$$   
then~\eqref{item:close good b} (without the measure bound) follows from~\ref{conc:stay}. 
Setting 
$$E_{\bx} 
=\hat{G}\cap  \bigcup_{n=r+1}^{r+2\lceil \frac r {16}  \rceil} B(T^{\ell_{n}}\bx,\delta),$$ 
then~\eqref{item:close good iii}  (without the measure bound)  follows from~\ref{conc:move}. Let $\tilde{A}$ be the subset of $A_1$ such that~\eqref{eq:pointwise close} holds, $\sigma_{\bx}(A_1)>\frac {19} {20}>\frac 9 {10}+\delta$ and so that $\sigma_\bx(C_\bx),\,\sigma_{\bx}(E_{\bx})>\frac 1 {99999}$. 
\end{proof}

Before completing the proof of Theorem~\ref{thm:big joining} we note the following.  
If $(X,T,\mu)$ is an  Borel probability system and $X$ is a compact metric space, 
then $\mu$ is ergodic if and only if  there exists a sequence $N_i \to \infty$ such that for every $f \in C(X)$, 
\begin{equation}\label{eq:generic}
\underset{i \to \infty}{\lim} \frac 1 {N_i} \sum_{j=0}^{N_i-1}f(T^jx)-\int f d\mu=0
\end{equation}
for $\mu$-almost every $x \in X$.

\begin{proof}[Proof of Theorem~\ref{thm:big joining}]
 We now produce a joining as close (with respect to $d_{\mathcal{M}(Y\times Y)}$) as desired to $\frac 1 2 (J(0)+J(1))$, which can thus be assumed to be different from the product joining.
Let  $k_1^{(1)}=0$ and $k_2^{(1)}=1$.  We apply Proposition~\ref{prop:prelimit omnibus} with $\varepsilon<\frac 1 {10^{-3}}<\frac 1 {10}\,  \underset{\bx \in Y}{\min}\, d(T\bx,\bx)$ to obtain $\ell_1,\ldots,\ell_4$, which we denote as $k^{(2)}_1,\ldots,k_4^{(2)}$. 
We also obtain $N_1,L_1,M_1$ and we then apply Lemma~\ref{lem:close good} to obtain $\delta_1$.
Applying Proposition~\ref{prop:prelimit omnibus} with $\varepsilon_2 =\min\{\frac 1 {2^2},  \frac{\delta_1}2\}$ and $k^{(2)}_1,\ldots,k_4^{(2)}$ and we  also obtain $N_2,L_2,M_2$. 
We repeat the application of Lemma~\ref{lem:close good} to obtain $\delta_2$, 
which without loss of generality we can 
assume 
is less than $\frac{\delta_1}2$. 
We repeat this procedure, inductively obtaining $k_1^{(r)},\ldots,k_{2^r}^{(r)}$, which by 
Part~\ref{cond:close to bary} of Proposition~\ref{prop:prelimit omnibus} satisfies 
$$\nu\Big(\big\{\bx\colon d_{\mathcal{M}(Y)}\big(\frac{1}{2^r}
\sum_{i=1}^{2^r}\J(k_i^{(r)})_\bx, \frac{1}{2^n}
\sum_{i=1}^{2^{n}}\J(k_i^{(n)})_\bx\big)>\delta_{n}(1-\frac 1 {2^{r-n}})\big\}\Big)<\delta_{n}(1-\frac 1 {2^{r-n}})$$
 for all $n<r$.  
 Applying Lemma~\ref{lem:close good} to obtain $\delta_r$, 
 which again we take to be bounded by $\frac{\delta_{n}}{2^{r-n}}$ for all $n<r$, 
 we can repeat the application of Proposition~\ref{prop:prelimit omnibus}, 
 but with  $\varepsilon_{r+1}=\min\{\frac 1 {2^{r+1}},\frac 1 2\delta_r\}$. 
 
We pass to the weak*-limit of $\frac 1 {2^r}\sum_{i=1}^{2^r} \J(k_r^{(i)})$, which we denote $\sigma$. By construction,  we have that 
 \begin{equation}\label{eq:sigma close}
 \nu\Big(\big\{\bx\colon d_{\mathcal{M}(Y)}\big(\sigma_\bx, \frac{1}{2^{r}}
 \sum_{i=1}^{2^{r}}\J(k_i^{(r)})_x\big)>\delta_{r} )\big\}\Big)\leq  \delta_{r}
 \end{equation}
 for all $r\in\N$. 
 From this, it follows via Lemma~\ref{lem:close good}~\eqref{item:close good i} and Proposition~\ref{prop:prelimit omnibus}~\ref{conc:close},  
 \begin{multline*}
 \sigma\Bigl\{(\bx,\by) \in Y \times Y\colon d_{\mathcal{M}(Y \times Y)}(\frac 1 {N_r}\sum_{i=1}^{N_r}\delta_{( T^i\bx,T^i\by)}, \frac 1 {2^{r-1}} \sum_{i=1}^{2^{r-1}} \J(k^{(r-1)}_i))>\varepsilon_r\Bigr\}\\ <2\varepsilon_r\leq\frac 1 {2^{r-1}}.
 \end{multline*}
 Thus 
 $\underset{r \to \infty}{\lim} \frac 1 {N_r}\sum_{i=1}^{N_r}\delta_{(T^i\bx,T^i\by)}$ is the weak*-limit of $\frac 1 {2^r} \sum_{i=1}^{2^r} \J(k^{(r)}_i)$, which is $\sigma$, and this holds for $\sigma$-almost every $(\bx,\by)\in Y\times Y$. 
 By the criterion given in~\eqref{eq:generic}, 
 it follows that $\sigma$ is ergodic.

 Thus to complete the proof of the theorem, it suffices to show that the assumptions for Theorem~\ref{thm:no compact} are satisfied.   By ~\eqref{eq:sigma close} we can apply Lemma~\ref{lem:close good} with $\sigma$ and any 
 $\frac{1}{2^r}\sum_{i=1}^{2^{r}} J(k_i^{(r)})$ that we have produced. That is, it suffices to show that for each $c'>0$, for all large enough $r$, we have that $\frac{1}{2^r}\sum_{i=1}^{2^{r}} J(k_i^{(r)})$ satisfies the assumptions of Theorem~\ref{thm:no compact} with $L_i=L_r$, $n_i=M_r$,  as this then verifies the assumptions of Theorem~\ref{thm:no compact} for the weak* limit and with the same parameters $L_r,M_r$. 
 This gives us a sequence of sets $\tilde{A}_m$ such that  for every $\bx \in \tilde{A}_m$ we have sets $C^{(m)}_\bx,  E_\bx^{(m)}$  such that $\sigma_\bx(C_\bx^{(m)}),\sigma_\bx( E_\bx^{(m)})
 >\frac 1 {99999}$ and 
 \begin{enumerate}
\item  $\frac 1 L \sum_{i=0}^{L-1}d(T^{M+i}\by,T^{j_\bx+i}\by)<2\varepsilon$ 
 for all $\bx \in \tilde{A}_m$ and  $\by \in C_\bx$ giving Theorem~\ref{thm:no compact}~\eqref{cond:stay} as $m\to \infty$ and we can choose $\varepsilon \to 0$. 
 (This uses Proposition~\ref{prop:prelimit omnibus}, part~\ref{conc:stay} and Lemma~\ref{lem:close good}, part~\eqref{item:close good b}.)  
\item $\frac 1 L|\{0\leq i<L\colon d(T^{M+i}\by,T^{j_\bx+i}\by)>\frac c 2\}|>\frac c 2 $ for all $\bx \in \tilde{A}_m$ and $\by \in  E_\bx$ giving Theorem~\ref{thm:no compact}~\eqref{cond:move}. 
(This uses Proposition~\ref{prop:prelimit omnibus}, part~\ref{conc:move} and Lemma~\ref{lem:close good}, part~\eqref{item:close good iii}.) 

\item The assumption that $\sigma_\bx(C_\bx),\sigma_\bx(E_\bx)>\frac 1 {99999}$ giving Theorem~\ref{thm:no compact} ~\eqref{cond:eta sees}.
\item  Proposition~\ref{prop:prelimit omnibus}, part~\ref{cond:fiber} applied to $r<d\leq r+ 2 \lceil  \frac r {16}    \rceil $ 
combined with~\eqref{eq:pointwise close} imply Theorem~\ref{thm:no compact}~\eqref{cond:extra}.
\end{enumerate}

Note that strictly speaking, $M$, $L$, $C_\bx$, $E_\bx$, and $j_{\bx}$ depend on $m$, but we omit the dependency for the sake of readability.  The choices of $C_{\bx}$ and $E_{\bx}$ are given by Lemma~\ref{lem:close good} and we have that $M$ and $L$ are $M_m$ and $L_m$ introduced earlier in the proof (which are required input for using Lemma~\ref{lem:close good}). 
Thus we have proven the assumptions needed to apply Theorem~\ref{thm:no compact}. 
\end{proof}

\subsection{Poulsen simplex}\label{sec:Poulsen}

We have assembled the tools to prove the last part of Theorem~\ref{theorem:main}, showing that the set of self-joinings of the 
constructed system form a {\em Poulsen simplex},   meaning that they form a simplex such that the extreme points are dense.

\begin{prop}
\label{prop:paulsen}
The self-joinings of  the system $(Y,\nu,T)$ form a Poulsen simplex. 
\end{prop}

In the proof, we make use of a result of King:
\begin{thm}[{King~\cite[EJCL Theorem]{king flat}}]
\label{th:king}
If $\eta$ is a self-joining of $(Y,\nu,T)$ and $T$ 
is rigid rank 1, then there exist real numbers $\alpha_i^{(k)}>0$ such that $\sum_i \alpha_i^{(k)}\J(i)$ converges
 in the weak*-topology to $\eta$.
\end{thm}
Note that in our setting, we can utlize this result, as rigid rank 1 transformations have {\em flat stacks}. 
In fact, King establishes that the ergodic self-joinings of transformations with flat stack lie in the weak closures of off diagonal joinings.  A different proof of this result is given in~\cite[Corollary 2.3]{CE} (see also~\cite[Corollary 0.3]{note}).

\begin{proof}[Proof of Proposition~\ref{prop:paulsen}]
By King's Theorem, it suffices to show that for any integers $n_1, \ldots, n_k$ and positive rationals $\beta_1,\ldots, \beta_k$ such that $\sum \beta_i=1$, there exists $m$ such that {$d_{\mathcal{M}(Y\times Y)}(\J(m),\sum \beta_i \J(n_i))<\varepsilon$.}
Without loss of generality, we may assume that all of the rationals have a common denominator, 
writing $\beta_i=\frac{m_i}r$ where all $m_i$ are positive integers. 
By Corollary~\ref{cor:getting first r}, applied to $n_1,\ldots,n_k$ where each $n_i$ appears $m_i$ times, there exists $m$ such that $d_{\mathcal{M}(Y\times Y)}(\J(m),\frac 1 r\sum_i \sum_{\ell=1}^{m_i} \J(a_\ell^{(i)}))<\frac \varepsilon 2$. Thus 
$d_{\mathcal{M}(Y\times Y)}(\J(m), \sum \beta_i \J(n_i))<\varepsilon$. 
\end{proof}

\subsection{These properties are residual}  \label{sec:residual}
\begin{thm} A residual set of measure preserving transformations are not quasi-simple.
\end{thm}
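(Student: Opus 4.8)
The plan is a Baire category argument in the Polish group $\mathrm{Aut}(Y,\nu)$ of measure preserving transformations of a fixed non-atomic Lebesgue probability space, with the weak topology. Being quasi-simple is an isomorphism invariant, hence conjugation invariant, and it is classical (Halmos) that the conjugacy class of any aperiodic transformation is weakly dense. By Proposition~\ref{prop:wmix} the transformation $T$ of Section~\ref{sec:construction} is weakly mixing, hence aperiodic, and the proof of Theorem~\ref{thm:big joining} produces for it an ergodic self-joining that is not the product measure and not a distal extension of $(Y,\nu,T)$; since ``not a distal extension'' implies ``not an isometric extension'', this $T$ is not quasi-simple. Hence the set $\mathcal{NQ}$ of transformations that are not quasi-simple is already dense, and the problem reduces to exhibiting a dense $G_\delta$ subset of $\mathcal{NQ}$.

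The key observation is that the ergodic, non-product, non-distal self-joining produced in the proof of Theorem~\ref{thm:big joining} arises as a weak-$*$ limit of explicit finite averages $\frac1{2^r}\sum_i\J(k_i^{(r)})$ of off diagonal joinings, and that every estimate that is actually checked --- the conclusions of Proposition~\ref{prop:prelimit omnibus}, and through them the hypotheses~\eqref{cond:stay}, \eqref{cond:move}, \eqref{cond:eta sees}, \eqref{cond:extra} of Theorem~\ref{thm:no compact} --- is verified \emph{before} passing to the limit and is finitary: it involves only a choice of Rokhlin tower, finitely many integers (the $k_i$, the $\ell_i$, $N$, $M$, $L$), finitely many iterates of the transformation, and finitely many off diagonal (graph) joinings. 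First I would fix a refining sequence $(\mathcal{P}_n)_n$ of finite partitions generating the $\sigma$-algebra and rephrase every metric statement (``two orbit points are within $\varepsilon$'', ``$d_{\mathcal{M}(Y)}(\cdot,\cdot)<\varepsilon$'', and so on) in terms of orbit points lying in common atoms of $\mathcal{P}_n$, replacing the ``for all $\bx$'' statements that came from unique ergodicity of the concrete model by the almost everywhere statements that the mean ergodic theorem supplies and that the subsequent arguments actually use. After this rephrasing each finitary condition is a condition on finitely many numbers of the form $\nu(R^{j_1}P_{i_1}\cap\dots\cap R^{j_s}P_{i_s})$, and since taking a fixed power of a transformation is weakly continuous, these depend continuously on $R$. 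Consequently, for each $n$, the set $\mathcal{O}_n$ of transformations $R$ admitting a \emph{stage-$n$ configuration} --- a tower together with parameters satisfying the stage-$n$ inequalities with tolerance $1/n$ and with enough slack to be extendable to the next stage --- is a countable union, over the admissible parameter choices, of open sets, hence open; it is conjugation invariant and contains $T$, hence contains the dense conjugacy class of $T$, so $\mathcal{O}_n$ is open and dense. Then $\mathcal{O}=\bigcap_n\mathcal{O}_n$ is a dense $G_\delta$, and I would show that any $R\in\mathcal{O}$ admits, by a greedy stage-by-stage choice, a consistent infinite sequence of configurations; the adaptive construction of Theorem~\ref{thm:big joining} then applies to $R$ and yields an ergodic self-joining that is not $\nu\times\nu$ and not a distal extension of $(Y,\nu,R)$, so that $R$ is not quasi-simple and $\mathcal{O}\subseteq\mathcal{NQ}$.

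Two points need care. The first is the adaptivity: in Theorem~\ref{thm:big joining} the tolerance $\delta_r$ output at the end of stage $r$ by Lemma~\ref{lem:close good} feeds into the choice of $\varepsilon_{r+1}$, so the sets $\mathcal{O}_n$ are not specified independently. I would deal with this by noting that, once balls of a metric are replaced by atoms of the partitions $\mathcal{P}_n$, $\delta_r$ depends on the earlier data only through a universal function of the finitely many integers already chosen; the construction is then ``forward stable'' in the sense that a stage-$n$ configuration with enough slack can always be extended to a stage-$(n+1)$ configuration, so the greedy construction goes through (equivalently, one may diagonalize over the countable set of admissible parameter sequences together with a single fixed null sequence of tolerances). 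The second, and where I expect the real work to lie, is confirming that every quantity appearing in Sections~\ref{sec:iso-and-distal}--\ref{sec:big join proof} and in Theorem~\ref{thm:no compact} depends on $R$ only through finitely many partition-overlap numbers with strict, non-extremal tolerances --- that is, that the construction is genuinely as soft as claimed. The only ingredient for which this is not immediate is the point-separation mechanism of Lemma~\ref{lem:shift happens}, which in the explicit system exploited the arithmetic of the perturbed alphabet sizes; abstractly, all that is needed there is the existence, along a rigidity tower of $R$, of a set of columns of definite measure whose return map displaces points by a definite amount --- an open, finitary property enjoyed by the generic rank one transformation, and failing precisely for ``homogeneous'' rank one maps such as the del Junco--Rudolph example of~\cite{dJR}, which is why the conclusion is only generic and not universal among rank one rigid weakly mixing transformations. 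Granting this, the argument closes; and since ``not quasi-simple'' implies ``not simple'', this strengthens the result of Ageev~\cite{ageev}.
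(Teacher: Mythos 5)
Your overall strategy is the paper's: the negation of quasi-simplicity is conjugation invariant, Halmos gives density of the conjugacy class of the non-quasi-simple example furnished by Theorem~\ref{thm:big joining}, and the point is to exhibit a conjugation-invariant $G_\delta$ of transformations for which the joining construction of Sections~\ref{sec:not qs} and~\ref{sec:big join proof} can be rerun. The difference is in how the $G_\delta$ is packaged. You encode the adaptive construction itself, stage by stage, as open sets $\mathcal{O}_n$ of ``stage-$n$ configurations,'' which forces you to confront the dependence of $\delta_r$ on earlier data and to argue a ``forward stability''/diagonalization to splice the stages together. The paper sidesteps this entirely by fixing, once and for all, two static conjugacy-invariant properties --- admitting a special linked approximation of type $(h_j,h_j+1)$ and being rigid rank 1 (with the small-diameter tower-level condition) --- each a countable intersection of open conditions, hence residual; for any transformation with both properties the whole adaptive construction of Theorem~\ref{thm:big joining} is then run verbatim, so no stage-by-stage openness or extendability needs to be certified. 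Your diagnosis of where the explicit arithmetic of $(Y,\nu,T)$ is actually used is accurate: the only non-soft ingredient is the splitting mechanism of Lemma~\ref{lem:shift happens}, and the abstract substitute you identify (two linked towers of heights $h_j$ and $h_j+1$ of total measure about $\frac12$ each, giving for any $a,b$ an $m$ with $T^m\approx T^a$ on one half and $T^m\approx T^b$ on the other) is precisely the paper's special linked approximation, with condition~\eqref{cond:special} playing the role of your ``columns crossing both towers.'' So your argument closes, but the paper's static-property formulation is cleaner and removes the adaptivity worry that your version must handle by hand; one small caution is that, as you note in passing, you should also record that the limiting joining is not the product measure (its disintegrations stay near $\frac12(\delta_{\bx}+\delta_{T\bx})$ by Condition~\ref{cond:close to bary}), since ``not a distal extension'' alone does not contradict quasi-simplicity.
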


If $(h_j)_{j\in\N}$ is a sequence of positive integers, 
we say a system $(X,T,\mu)$ \emph{admits special linked approximation of type $(h_j,h_j+1)$}
 if for each $j\in\N$, there exist sets $A_j, C_j\subset X$ satisfying the following five conditions: 
\begin{enumerate}
\item $\underset{j \to \infty}{\lim}\, \mu(\bigcup_{i=0}^{h_j-1}T^iA_j)=\frac 1 2 =\underset{j \to \infty}{\lim}\, \mu(\bigcup_{i=0}^{h_j}T^iC_j)$; 
\item The sets $A_j,\ldots,T^{h_j-1}A_j,C_j, \ldots,T^{h_j}C_j$ are pairwise  
disjoint; 
\item $\underset{j \to \infty}{\lim}\, \frac{\mu(T^{h_j}A_j \cap A_j)}{\mu(A_j)}= 1  =\underset{j \to \infty}{\lim}\, \frac{\mu(T^{h_j+1}C_j\cap C_j)}{\mu(C_j)}$; 
\item \label{cond:special} 
Defining $$\mathcal{R}_A^{(j)}=\bigsqcup_{i=0}^{h_j-1}T^i A_j \text{ and } \mathcal{R}_C^{(j)}=\bigsqcup_{i=0}^{h_j}T^i C_j, $$ 
there exist measurable sets $J_j \subset A_j$  and $a,b \in \mathbb{N}$ such that $J_j,\ldots, T^{a+b-1}J_j$ are all pairwise disjoint, $T^iJ_j \subset \mathcal{R}_A^{(j)}$ for all $0\leq i\leq a-1$, and $T^iJ_j \subset  \mathcal{R}_C^{(j)}$ for all $a\leq i\leq a+b-1$ and $\underset{j \to \infty}{\lim}\, \mu(\bigcup_{i=0}^{a+b-1}T^iJ_j)=1$; \item For all $\varepsilon>0$, there exist  measurable sets $B^{(j)}_0,\ldots,B^{(j)}_{h_j-1}$ and $\hat{B}^{(j)}_0,\ldots,\hat{B}^{(j)}_{h_j}\in X$ of diameter at most $\varepsilon$ such that 
$$\underset{j \to \infty}{\lim}\, \sum_{i=0}^{h_j-1}\mu(T^iA_j\setminus B^{(j)}_i)=0=\underset{j \to \infty}{\lim}\, \sum_{i=0}^{h_j}\mu(T^iC_j\setminus \hat{B}^{(j)}_i).$$ 
\end{enumerate}

Condition~\eqref{cond:special} distinguishes
this from usual linked approximation,  
and is needed to carry out the arguments of Section~\ref{sec:not qs}. 
This property is a residual property in the space of measure preserving transformations. 
Indeed, it is conjugacy invariant, and nonempty. Halmos~\cite[Theorem 1]{hal mix} showed that the conjugacy class of any aperiodic measure preserving transformation is dense. Our conditions (i)-(v) are the intersection of a countable number of open conditions and so the property holds on a $G_\delta$ set.
Thus it is a dense $G_\delta$,  that is residual,  property. 

We say a system $(X,T,\mu)$ is {\em rigid rank 1} if there exist numbers $n_j$ and sets $I_j$ such that \begin{enumerate}
\item $\underset{j \to \infty}{\lim}\, \mu(\bigcup_{i=0}^{n_j-1}T^iI_j)=1$;
\item The sets $I_j,\ldots,T^{n_j-1}I_j$ are  pairwise disjoint;
\item $\underset{j \to \infty}{\lim}\, \frac{\mu(T^{n_j}I_j \cap I_j)}{\mu(I_j)}= 1$;
\item For all $\varepsilon>0$, there exist measurable sets $B^{(j)}_0,\ldots, B^{(j)}_{n_j-1}\in X$ of diameter at most $\varepsilon$ such that  
$$\underset{j \to \infty}{\lim}\, \sum_{i=0}^{n_j-1}\mu(T^iI_j\setminus B^{(j)}_i)=0.$$
\end{enumerate}
Note that this property is stronger than being both rigid and rank 1. 
Similarly to  the property of admitting a special linked approximation, 
rigid rank 1 is also a residual property in the space of measure preserving transformations.

 Any transformation that both admits a special 
 linked approximation of type $(h_j,h_{j+1})$ and is rigid rank 1 has a self-joining that is not a distal extension of 
$(X,T,\mu)$. 
Indeed, these transformations have the following property: 
for any pair of integers $a,b\in\N$ and $\varepsilon>0$, 
there exists $m\in\N$ and a pair of sets $C,D$ with measure at least $\frac 1 2-\varepsilon $ so that 
$$\mu(\{x\in C\colon d(T^ax,T^mx)>\varepsilon)\}<\varepsilon \text { and } \mu(\{x\in D\colon d(T^bx,T^mx)>\varepsilon)\}<\varepsilon.$$
Using this property, rank 1 rigidity, and the ergodicity of $\mu$, 
our construction of the joining that is not a distal extension of $(X,T,\mu)$ proceeds  similarly to Sections~\ref{sec:not qs} and~\ref{sec:big join proof}. More precisely, for 
sufficiently large $j$, we can choose $C=\bigcup_{i=0}^{h_j-1}T^iA_j$, $D= \bigcup_{i=0}^{h_j}T^iB_j$, and $m= a+(a-b)h_j$. The inductive construction of $\ell_1,\ldots,\ell_r$ proceeds verbatim. Similarly for $\ell_{r+1},\ldots,\ell_{r+2\lceil \frac r {16} \rceil}$ is almost verbatim (the described set in Lemma~\ref{lem:shift happens} is less explicit) and 
the construction of $\ell_{r+2\lceil \frac r {16}\rceil+1},\ldots,\ell_{2r}$ is verbatim (making use of the property that our transformation is rigid). 
\begin{rem}Analogously, Proposition~\ref{prop:paulsen} can be generalized for any rigid rank 1 transformation that admits special linked approximation of type $(h_j,h_j+1)$. 
Using this, it follows that there is a residual set of measure preserving transformations 
such that their self-joinings form a Poulsen simplex.
\end{rem}

\section{Coding and results}\label{sec:coding}
Sections~\ref{sec:coding} and~\ref{sec:end} are interrelated and technical, 
and these contain the arguments that rule out an arbitrary, non-trivial factor. 
We do this by studying the Markov operators. 
As our system $(Y,\nu,T)$ has many self-joinings, it also has many Markov operators on $L^2(\nu)$. 
The crux of our argument is that none of these Markov operators can be projections, 
other than those corresponding to trivial factors.  
A challenge, which offers some justification for the technical nature of our proof, is that our arguments need to take into account that there are two projections, with qualitatively different behaviors from each other, arising  from the two different types of trivial factors: integration against $\nu$, which arises via the factor map to the one point system, 
and the identity map,  which arises via a factor map that is an isomorphism.
These arguments are carried out in Section~\ref{sec:end}, which unfortunately is difficult to summarize at this point, as it rules out non-trivial Markov operators that are not projections by treating three possible cases. 
The rough idea of Section~\ref{sec:end} is that if $T$ has a non-trivial factor $P$ with Markov operator $F$, 
then there exists a measurable set $A$ such that $\langle F\one_A,F\one_{T^{-1}A}\rangle =0$ 
 and (the contrapositive to) Lemma~\ref{lem:easy friend} shows that this Markov operator corresponds to a factor to the one point system (and so the factor was trivial).  
The idea of  verifying the negation of Equation~\eqref{eq:small friends}  in Lemma~\ref{lem:easy friend}
 is that if $F'$ is the Markov operator corresponding to a non-trivial self-joining of $T$, 
then for any measurable set  $A$ of positive measure, there exists some iterate $M$ of the operator such that 
\begin{equation}\label{eq:idea}
\langle F'^M\one_A,F'^M\one_{T^{-1}A}\rangle>0.
\end{equation} To do this, we relate $F'$ to $\sum \alpha_i U_{T^i}$ (Theorem~\ref{thm:CE factor} and Corollary~\ref{cor:CE factor}) and define a notion called $i$-friends adapted to this property in Section~\ref{sec:friends}, 
showing that there is some small iterate $i$ of the transformation $T$ such that $\langle F'^M\one_A,F'^M\one_{T^{-i}A}\rangle>0$. 
To study this quantity, we relate $\sum \alpha_i U_{T^i}$ to $\sum_{i=-r_{N}}^{r_N}\beta_iU_{T^i}$ (this is the idea of Section~\ref{sec:recoding})  for fixed $N$ depending on $A$. 
We show that there exists some $M$, which depends on the choice of $N$,  such that  
$$\langle\sum_{i=-r_{N}}^{r_N}\beta_iU_{T^i}^M\one_A,(\sum_{i=-r_{N}}^{r_N}\beta_iU_{T^i})^M\one_{T^{-1}A}\rangle>0.$$
This argument covers the first case of Section~\ref{sec:end}. 
Now, though, we can (and do!) choose $\alpha_i$ such that 
$(\sum \alpha_i U_{T^i})^M$ is close to $F'^M$,  we can not conclude that
 $\sum_{i=-r_{N}}^{r_N}\beta_iU_{T^i}^M$ is close to $\sum \alpha_i U_{T^i}$; 
 in particular, the closeness of  $\sum_{i=-r_{N}}^{r_N}\beta_iU_{T^i}$ to $\sum \alpha_i U_{T^i}$ depends on $N$, but $M$ also depends on $N$.  However, we can show that if these two quantities are not close then~\eqref{eq:idea} still holds  (this corresponds to cases 2 and 3 of Section~\ref{sec:end}). 
 Section~\ref{sec:coding} sets up the machinery for Section~\ref{sec:end} and is perhaps even more opaque, though it is motivated by explanations within that section. As we are only concerned with factors, our results are all stated for Markov operators corresponding to factors. (There are two simplifications in the above description: In reality, we can not just consider $\langle F'^M\one_A,F'^M\one_{T^{-1}A}\rangle,$ 
 and instead we must consider 
 $\langle F'^M\one_A,F'^M\one_{T^{-i}A}\rangle$ for $i\in \{1,2,3\}$. Additionally we approximate  $\sum \alpha_i U_{T^i}$ by $\sum \beta_i\one_{B_i}U_{T^i}$ where $B_i\subset Y$ are cylinders.)  
 
Before starting, we also give a short overview of Section~\ref{sec:coding}. In Section~\ref{sec:friends} we introduce a key definition \emph{i-friends} 
 and Lemma~\ref{lem:easy friend} whose contrapositive is used to show our system is prime. The application of this lemma 
  makes use of an elaborate inductive definition (see Definition~\ref{def:reduce}), 
characterizing the relation between the transformation $T$ (or some of its small powers) and large powers of $T$.  
Informally, we call this reducing or the reduction of the power, as it gives us a procedure by which to replace higher powers of $T$ by lower ones.
We study this procedure in Lemmas~\ref{lem:reduction close} and~\ref{lem:cylinders containing bad} and throughout Section~\ref{sec:getting friends}, showing how it is connected to the notion of $n$-friends. 
This leads to a criterion for our process to be prime, developed in Section~\ref{sec:restrict factors}.  
Namely, using Propositions~\ref{prop:no friends} and~\ref{prop:norig}, we show that if 
$T$ has a non-trivial factor, then the inductive procedure given in Definition~\ref{def:reduce} only can produce small errors. We provide additional motivation throughout this section.
 
\subsection{The mechanism for showing $(Y, \nu,T)$ is prime}
Throughout this section, we continue to assume that $(X, \mu, S)$ and $(Y, \nu, T)$ are the systems defined in Section~\ref{sec:construction}, maintaining all of the notation introduced in that section. 
The proof that $(Y,\nu,T)$ is prime is based on showing that a factor map is either an isomorphism or a map to the one point system. 
The first step is relating factor maps to linear combinations of powers of $T$ which holds for any rigid rank $1$ transformation: 
\begin{thm}\label{thm:CE factor}(\cite[Theorem 2.2]{CE}) 
If $P$ is a factor map of $T$  and $F$ is the corresponding Markov operator, 
then $F$ is the limit in the strong operator topology 
of linear combinations of powers of $U_{T}$ with non-negative coefficients. 
\end{thm}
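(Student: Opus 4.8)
The plan is to reduce the statement to Theorem~\ref{th:CE} (the Chaika--Eskin machinery) together with King's theorem on flat-stack joinings, exploiting that $(Y,\nu,T)$ is rigid rank one by cylinders (Lemma~\ref{lem:srank 1}). First I would recall the standard correspondence between self-joinings and Markov operators: if $\tau$ is a self-joining of $(Y,\nu,T)$, the operator $P\colon L^2(\nu)\to L^2(\nu)$ obtained by disintegrating $\tau$ over the first coordinate satisfies Properties~\eqref{it:one1}--\eqref{it:three3}, and conversely every Markov operator commuting with $U_T$ arises this way; moreover the map $\tau\mapsto P$ is an affine homeomorphism from $\mathcal{J}_\nu$ (with the weak$^*$-topology) onto the set of such Markov operators (with the strong operator topology), see Glasner~\cite{glasner-book}. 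Under this correspondence the off-diagonal joining $\J(n)$ corresponds exactly to the unitary $U_T^n$, and a convex combination $\sum_i \alpha_i \J(n_i)$ corresponds to $\sum_i \alpha_i U_T^{n_i}$, a linear combination of powers of $U_T$ with non-negative coefficients summing to $1$.

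Next I would invoke King's theorem (the ``EJCL Theorem'' of~\cite{king}, quoted in the excerpt, with the alternative proof in~\cite[Corollary 2.3]{CE}): since $(Y,\nu,T)$ is rigid rank one, every ergodic self-joining lies in the weak$^*$-closure of the off-diagonal joinings $\{\J(n)\colon n\geq 0\}$; more precisely, for a rigid rank one $T$ every self-joining $\tau$ is a weak$^*$-limit of convex combinations $\sum_i \alpha_i^{(k)}\J(i)$ with $\alpha_i^{(k)}\geq 0$. (One first handles ergodic $\tau$ and then passes to general $\tau$ by ergodic decomposition, using that the barycentre map is affine and continuous, and that a convex combination of off-diagonal-approximable joinings is again off-diagonal-approximable via Corollary~\ref{cor:getting first r}.) Translating through the operator correspondence, $P$ is then the strong-operator-topology limit of the operators $\sum_i \alpha_i^{(k)} U_T^i$, which are precisely linear combinations of powers of $U_T$ with non-negative coefficients. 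Since weak$^*$-convergence of joinings to a joining is equivalent to strong-operator convergence of the associated Markov operators (the joinings all project to $\nu$ on both coordinates, so the relevant operators are uniformly bounded and act on a fixed $L^2(\nu)$, and testing against indicator functions of cylinders, which span a dense subspace, gives the upgrade from weak$^*$ to SOT), the conclusion follows.

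The main obstacle, and the only place requiring genuine input beyond bookkeeping, is the passage from ``weak$^*$-limit of off-diagonal joinings'' to ``strong-operator-topology limit of linear combinations of $U_T^i$'': one must check that the topologies match up under the joining/operator dictionary, which is where rank one (equivalently, the flat-stack property) is used — the cylinder sets $R^iI_k$ furnish a dense algebra of functions on which weak$^*$-convergence can be tested coordinatewise, and flatness of the stacks is what lets King's argument produce the off-diagonal approximants in the first place. Everything else — the affineness of the correspondence, the reduction to the ergodic case, the identification of $\J(n)$ with $U_T^n$ — is formal. Since this is precisely~\cite[Theorem 2.2]{CE}, I would in practice simply cite it, noting that its hypotheses are met because $(Y,\nu,T)$ is rigid rank one by Lemma~\ref{lem:srank 1}.
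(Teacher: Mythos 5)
The paper offers no proof of this statement at all: it is quoted directly from \cite[Theorem 2.2]{CE}, followed only by the remark that one may normalize the coefficients to sum to $1$ by considering $P\one_Y$. So your closing fallback --- cite the result and observe that its hypotheses hold because $(Y,\nu,T)$ is rigid rank one by Lemma~\ref{lem:srank 1} --- is exactly what the paper does, and as a citation your answer is fine.

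However, the sketch you give of \emph{why} the theorem holds has a genuine gap at its central step. The dictionary $\tau\mapsto P_\tau$ carries weak*-convergence of joinings to convergence of the Markov operators in the \emph{weak} operator topology, not the strong one: testing against products $f\otimes g$ with $f,g$ ranging over a dense family (indicators of cylinders, say) only controls the matrix coefficients $\langle P_k f,g\rangle$, and says nothing about the norms $\|P_kf\|_2$. The upgrade from WOT to SOT is not formal and is false in general: since the $T$ at hand is weakly mixing, there are sequences $n_k$ with $U_{T}^{n_k}\to\Pi$ (the projection onto constants, i.e.\ the Markov operator of $\nu\times\nu$) weakly but certainly not strongly, because $\|U_{T}^{n_k}f\|_2=\|f\|_2$ for every $k$. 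Put differently, King's theorem (quoted in the paper from \cite{king}) already yields that self-joinings of rigid rank one maps are weak*-limits of convex combinations of the off-diagonals $\J(i)$, which is exactly the WOT statement for convex combinations of powers of $U_T$; the whole content of \cite[Theorem 2.2]{CE} (compare the title of \cite{note}) is the strengthening from WOT to SOT, and this is where the rigid rank one structure must be used in the approximation argument itself, not merely to invoke King. So the portion of your argument you describe as formal bookkeeping (``the topologies match up under the joining/operator dictionary'') is precisely the non-formal part; as a standalone proof your sketch would fail, even though the citation route you end with is the one the paper takes.
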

This theorem is stated in~\cite{CE} for any Markov operator corresponding to any self-joining of any rigid rank $1$ transformation.
\begin{cor}\label{cor:CE factor}If $P$ is a factor map of $T$  and $F$ is the corresponding Markov operator, then there exists a sequence of convex combinations $\sum_{i \in\mathbb{Z}}\alpha_i^{(k)}$ satisfying 
$\sum_{i \in\mathbb{Z}}\alpha_i^{(k)} =1$ and 
 such that $ \sum_{i\in \mathbb{Z}}\alpha_i^{(k)}U_{T^i} \rightarrow F$ in the strong operator topology as $k\to\infty$. 
\end{cor}
\begin{proof}
The existence of the sequence of $\alpha_i^{(k)}$ without the 
extra hypothesis that $\sum \alpha_i^{(k)}=1$ for each $k$ follows from Theorem~\ref{thm:CE factor}. For this last assumption, observe that 
$F\one_Y=\one_Y$, and so $\underset{k \to \infty}{\lim}\,\sum_i\alpha_i^{(k)}U_{T^i}\one_Y=\one_Y$  and we may assume the (non-negative) coefficients add up to 1. Indeed, because $\alpha_i^{(k)}$ are all positive, $\|\sum_i\alpha_i^{(k)}U_{T^i}\one_Y\|=\sum_i\alpha_i^{(k)}$ and we see that we may assume the $\sum_{i}\alpha_i^{(k)}$ is a convex combination.
\end{proof}

\subsection{A condition for a factor to be the one point system}\label{sec:friends}
Recall that $Z_k$ and $W_k$ are defined in~\eqref{def:Zk} and~\eqref{def:Wk}.  
Given $n \in \mathbb{N}$, 
we say that $\bx,\by \in Y$ are \emph{n-\friend} if 
$$\sum_{j=0}^n\one_{Z_k}(S^j\bx)=\sum_{j=0}^n\one_{Z_k}(S^j\by)$$ 
for all but one $k\in\mathbb{N}$,  
$$\Bigl|\sum_{j=0}^n\one_{Z_{k}}(S^j\bx)-\sum_{j=0}^n\one_{Z_{k}}(S^j\by)\Bigr|=1$$ for exactly one $k\in\mathbb{N}$, and 
$$\sum_{j=0}^n\one_{W_\ell}(S^j\bx)=\sum_{j=0}^n\one_{W_\ell}(S^j \by)$$   
 for all $\ell\in\mathbb{N}$.

\begin{lem}\label{lem:friends to offset}
If $\bx$ and $\by$ are $\zeta_\bx(n)$-\friend, then $0<|\zeta_\by(n)-\zeta_\bx(n)|\leq 3$.
\end{lem}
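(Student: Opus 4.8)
The plan is to unwind the definitions of $\zeta_\bx$, $\zeta_\by$, and of being $n$-\friend{} to reduce the claim to a counting statement about how many indices $j$ in the relevant range satisfy $S^j\bx \notin Y$ versus $S^j\by \notin Y$. Recall that $\zeta_\bx(n)$ is the unique $m$ with $S^m\bx = T^n\bx$, so $\zeta_\bx(n)$ counts the number of $S$-steps needed to make $n$ first-return steps of $T$; equivalently $\zeta_\bx(n) = n + \#\{0 \le j < \zeta_\bx(n) : S^j\bx \notin Y\}$, and $X \setminus Y$ is exactly the disjoint union $\bigcup_\ell Z_\ell \cup \bigcup_k W_k$ (over the appropriate index sets) by~\eqref{eq:def-Y}. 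So $\zeta_\bx(n) - n = \sum_\ell \sum_{j=0}^{\zeta_\bx(n)-1}\one_{Z_\ell}(S^j\bx) + \sum_k \sum_{j=0}^{\zeta_\bx(n)-1}\one_{W_k}(S^j\bx)$, and similarly for $\by$.

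The hypothesis that $\bx$ and $\by$ are $\zeta_\bx(n)$-\friend{} controls precisely these sums over the range $0 \le j \le \zeta_\bx(n)$: the $Z_k$-counts agree for all $k$ except one, at which they differ by exactly $1$; and the $W_\ell$-counts, measured along $T$-orbits, agree. First I would translate the $W_\ell$ condition (stated for $T^j$) into a statement about $S$-orbits — since $W_\ell \subset X \setminus Y$ has its defining index $10^{2\ell}$ and points of $W_\ell$ are themselves visited by $S$ but skipped by $T$, one has to be a little careful, but the content is that over the matching range the number of $W$-visits is the same for $\bx$ and $\by$, up to the small discrepancy coming from the fact that the two $S$-ranges $[0,\zeta_\bx(n))$ and $[0,\zeta_\by(n))$ may differ in length. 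Granting this bookkeeping, subtracting the expression for $\by$ from that for $\bx$ gives $\zeta_\bx(n) - \zeta_\by(n) = \pm 1 + (\text{error terms})$, where the error terms account for the at most $|\zeta_\bx(n)-\zeta_\by(n)|$ extra indices $j$ lying between $\zeta_\bx(n)$ and $\zeta_\by(n)$, each contributing at most $1$ to a count. A short self-consistent estimate — if $|\zeta_\bx(n)-\zeta_\by(n)| = D$ then the error is at most $D$ from $Z$-visits plus a bounded amount from $W$-visits (which are very sparse: at most one $W_\ell$-visit can occur in any short window since consecutive $W_\ell$'s for different $\ell$ are far apart, and $W_\ell$-visits for a fixed $\ell$ are separated by $q_{10^{2\ell}+1}$ steps) — then pins $D$ down to be at most $3$. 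The nonvanishing $D > 0$ is immediate: since the $Z_k$-counts genuinely differ at one index, $T^n\bx$ and $T^n\by$ are reached after genuinely different numbers of $S$-steps, so $\zeta_\bx(n) \ne \zeta_\by(n)$.

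Concretely, the steps in order: (1) write $\zeta_\bx(n) - n$ and $\zeta_\by(n) - n$ as sums of $Z$- and $W$-visit counts using~\eqref{eq:def-Y}; (2) align the two ranges and express the difference of the two $W$-count sums and the two $Z$-count sums over the common range plus a correction over the symmetric-difference range of size $D := |\zeta_\bx(n)-\zeta_\by(n)|$; (3) apply the $n$-\friend{} hypotheses to see the common-range $Z$-difference is $\pm 1$ and the common-range $W$-difference is $0$ (after converting the $T$-indexed $W$-condition to an $S$-indexed one); (4) bound the correction terms: the $Z$-correction is at most $D$ in absolute value, and the $W$-correction is bounded by an absolute constant using the sparsity of $W$-visits; (5) conclude $D \le 1 + D'$ where $D'$ is small, forcing $D \le 3$, and note $D \ge 1$ because the genuine discrepancy in $Z$-counts cannot be cancelled.

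The main obstacle I expect is step (2)–(4): carefully handling the boundary range between $\zeta_\bx(n)$ and $\zeta_\by(n)$, since the $n$-\friend{} condition is stated for the range $[0, \zeta_\bx(n)]$ but the identity for $\by$ naturally involves $[0, \zeta_\by(n)]$, and one must be sure that the visit counts along this boundary interval really are bounded by a small constant — this is where the specific arithmetic of the construction (the positions $10^k$, $10^{2\ell}$, the sizes $q_{10^{2\ell}+1}$, and the disjointness built into the definitions of $Z_k$ and $W_\ell$) enters, and it is what forces the bound to be $3$ rather than some other small number. The conversion between $T$-indexed and $S$-indexed counting for the $W_\ell$ sets is the subtle part of this, since $W_\ell \subset X \setminus Y$ and a $T$-orbit visit to a translate $S^iW_\ell$ of such a set has to be tracked through the first-return map.
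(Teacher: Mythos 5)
Your opening reduction is the same as the paper's: using $Y=X\setminus(\bigcup_{\ell\notin E}Z_\ell\cup\bigcup_k W_k)$, the \friend\ hypothesis says that the number of visits to $Y$ along $S^j\bx$ and $S^j\by$, $0\le j\le \zeta_\bx(n)$, differ by exactly one, which gives $\zeta_\by(n)\neq\zeta_\bx(n)$; and your remarks about converting the $T$-indexed $W_\ell$ condition to $S$-indexed counting and about $D>0$ are fine. The gap is in your steps (4)--(5), which is where the actual content of the lemma lies. Bounding the correction over the window between $\zeta_\bx(n)$ and $\zeta_\by(n)$ by ``at most $D$ from $Z$-visits plus a bounded amount from $W$-visits'' only yields $D\le 1+D+O(1)$, which is vacuous and pins down nothing; and sparsity of the $W_\ell$ is not the relevant mechanism in any case, since the frequent obstruction is $Z_1=\{x_1=7\}$, which the $S$-orbit meets every $8$ steps, and visits to $X\setminus Y$ can occur at two consecutive times.

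The missing idea is the first-coordinate structure of $X\setminus Y$ together with an exact identification of the window. Every removed set except $Z_1$ (that is, every $Z_k$ with $k\ge 2$ and every $W_\ell$) forces $x_1=a_1-2=6$, while $Z_1$ forces $x_1=7$; hence if $S^\ell\bz$ and $S^{\ell+1}\bz$ both lie outside $Y$, one is in $Z_1$ and the other in $\bigcup_{k\ge 2}D_k$, so $(S^{\ell+2}\bz)_1\notin\{6,7\}$ and $S^{\ell+2}\bz\in Y$: no three consecutive $S$-iterates avoid $Y$. Moreover, in the case where the $\by$-count of $Y$-visits over $[0,\zeta_\bx(n)]$ is one short, $\zeta_\by(n)$ is precisely the first time after $\zeta_\bx(n)$ at which the $S$-orbit of $\by$ re-enters $Y$ (not merely some time whose non-$Y$ visits you must count), so the bound $|\zeta_\by(n)-\zeta_\bx(n)|\le 3$ is exactly this waiting-time bound. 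Without the ``at most two consecutive non-$Y$ iterates'' fact, and without recognizing the window as a run of consecutive non-$Y$ iterates terminated by the needed return, your self-consistent estimate cannot force $D\le 3$, so as written the argument does not close.
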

\begin{proof}
Since $Y = X \setminus  \bigl(\bigcup_{\ell \notin \{10^k\colon k\geq 2\}}Z_\ell \cup \bigcup_{k=1}^{\infty} W_k\bigr)$ and
since $\bx$ and $\by$ are $n$-\friend, it follows that
$$\Big|\sum_{j=0}^{\zeta_{\bx}(n)}\one_Y(S^j\bx)-\sum_{j=0}^{\zeta_{\bx}(n)}\one_Y(S^j\by)\Big|=1.
$$
Assume $\sum_{j=0}^{\zeta_{\bx}(n)}\one_Y(S^j\by)=\sum_{j=0}^{\zeta_{\bx}(n)}\one_Y(S^j\bx)-1$, and so $\zeta_{\by}(n)=\zeta_{\bx}(n)+m$ where $m$ is the least integer such that  
$$\sum_{j=1}^m \one_Y(S^jS^{\zeta_{\bx}(n)}\by)=1.$$ 
To prove the statement, we are left with showing that $m\leq 3$. If $\bz \in X$, $\ell\in \mathbb{Z}$, and 
$S^\ell\bz,S^{\ell+1}\bz \notin Y$, then one of the two iterates lies in $Z_1$ (the only $D_\ell$ with 1st index not $6$) and the other lies in $\bigcup_{\ell=2}^\infty D_\ell$, and so $(S^{\ell+2}\bz)_1\notin \{6,7\}$ which means it lies in  $Y$. 
 It follows that $m\leq 3$, completing the proof. 
\end{proof}
We record part of the proof for future reference:
\begin{cor}\label{cor:friends to offset}For every $\bx\in Y$, $n \in \mathbb{Z}$ we have $|\zeta_\bx(n)|\leq 3|n|$. 
\end{cor}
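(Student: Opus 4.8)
The plan is to read the bound straight off the combinatorial fact already isolated inside the proof of Lemma~\ref{lem:friends to offset}: for every $\bz\in X$ and every $\ell$, the three consecutive iterates $S^\ell\bz$, $S^{\ell+1}\bz$, $S^{\ell+2}\bz$ cannot all lie in $X\setminus Y$. Recall why this holds: $X\setminus Y=\bigcup_{k\ge1}D_k$, and if $S^\ell\bz$ and $S^{\ell+1}\bz$ both lie in $X\setminus Y$, then one of them lies in $Z_1=D_1$ (which forces the first coordinate to equal $7$) and the other in $\bigcup_{k\ge2}D_k$ (each nonempty set of which forces the first coordinate to equal $a_1-2=6$); the odometer's carrying rule then forces the coordinate-$6$ iterate to be $S^\ell\bz$ and the coordinate-$7$ iterate to be $S^{\ell+1}\bz$, so that $(S^{\ell+2}\bz)_1=0$ and hence $S^{\ell+2}\bz\in Y$. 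Consequently the forward first-return time of $S$ to $Y$ from any point of $Y$ lies in $\{1,2,3\}$, and since the displayed impossibility makes no reference to a direction of time, the backward first-return time lies in $\{-3,-2,-1\}$.

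Given this, the corollary follows by telescoping. For $n\ge1$ write $\zeta_\bx(n)=\sum_{j=0}^{n-1}\big(\zeta_\bx(j+1)-\zeta_\bx(j)\big)$; each summand is exactly the forward first-return time of $S$ to $Y$ started at $T^j\bx\in Y$, hence lies in $\{1,2,3\}$, so $\zeta_\bx(n)\le 3n$. For $n\le-1$ the same telescoping with backward return times in $\{-3,-2,-1\}$ gives $-3|n|\le\zeta_\bx(n)<0$, and the case $n=0$ is trivial; in all cases $|\zeta_\bx(n)|\le3|n|$.

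The only delicate point is the negative range, since $\zeta_\bx$ was introduced as a map $\N\to\N$ and the carrying computation in Lemma~\ref{lem:friends to offset} is phrased for forward iterates of $S$; but the statement ``no three consecutive points of an $S$-orbit lie outside $Y$'' is symmetric in the three iterates, so it bounds backward return times exactly as it bounds forward ones, and no new case analysis is needed. I do not expect any genuine obstacle here — the corollary is precisely the bounded-return-time phenomenon already proved, repackaged as a linear bound on $\zeta_\bx$.
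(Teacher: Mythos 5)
Your proof is correct and is essentially the paper's argument: the corollary is exactly the bounded-return-time fact ($S$-return times to $Y$ lie in $\{1,2,3\}$) extracted from the proof of Lemma~\ref{lem:friends to offset}, and your telescoping over the $n$ consecutive return times (with the symmetric treatment of negative $n$) is the intended repackaging.
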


\begin{notation}\label{notation:friends}
We introduce notation (namely $\CH_{n,\varepsilon}$) that is crucial for establishing that $T$ is prime, and is used extensively starting in Section~\ref{sec:getting friends}. To motivate its meaning, sets with $k$-friends play a key role in the proof, 
being used in Lemma~\ref{lem:easy friend} to establish a criterion that rules factors not being to the one point system. To do this, we invoke Lusin's Theorem and use that many pairs of friends share their initial entries. We keep this in mind and make the definition precise. 

Given $j \in \mathbb{Z}$, we say that $2$ disjoint measurable sets $\mathcal{A}_j, \mathcal{B}_j \subset Y$ of equal measure and a measure preserving map 
$G_j\colon \mathcal{A}_j \to \mathcal{B}_j$ are an \emph{ $(n, \varepsilon)$-triple for $j$} if $\nu(\mathcal{A}_j)=\nu(\mathcal{B}_j)>\varepsilon$, $\bx$ and $G_j(\bx)$ are $\zeta_{\bx}(j)$-friends, and 
$\bx_k=G(\bx)_k$ for all $k\leq n$. (Note this terminology  is local and is only used in this definition.) 
We define: 
\begin{equation}
\label{def:Cneps}
\CH_{n,\varepsilon}=\{j\colon  \text{ there exists  an }(n,\varepsilon) \text{-triple for }j.\}
\end{equation} 
\end{notation}

The next lemma is not used until Section~\ref{sec:restrict factors}, but as we aim to prove numbers are in $ \CH_{n,\varepsilon}$ in Section~\ref{sec:getting friends}, and we set 
up useful definitions to do this in Section~\ref{sec:recoding},  it is placed here for motivation.
In the next lemma we approximate a non-explicit measurable set by cylinders.

\begin{lem} \label{lem:easy friend} 
Assume that $(Y, \nu, T)$  has a non-trivial factor $(Z, \rho, R)$ with associated factor map $P\colon Y\to Z$.  Let 
$F\colon L^2(\nu)\to L^2(\nu)$ be  the Markov operator defined by 
 $P$ and 
further assume that $F$ is the limit (as $k\to\infty$) of $\sum \alpha_i^{(k)}U_{T^i}$, in the strong operator topology where $\alpha_i^{(k)}\geq 0$ for all $i,k$ and $\sum_i \alpha_i^{(k)}=1$ for all $k$. Then  for all small enough  ${\varepsilon}>0$   there exists 
 $N_0 = N_0(F, {\varepsilon})$ 
such that for all $N\geq N_0$ and sufficiently large $m$, 
\begin{equation}\label{eq:small friends}\sum_{j\in \CH_{N,\hat\varepsilon}}\alpha_j^{(m)}<{\varepsilon}.
\end{equation}
\end{lem}
Note that the proof only uses that the factor is not to a 1 point system, but is phrased this way for consistency with the results in Section~\ref{sec:restrict factors}.
\begin{proof}
Since $T$ is weakly mixing, $R$ is aperiodic and by  Rokhlin's Lemma, 
for any $\delta>0$,  there exists $V \subset Z$ such that $\rho(V)>\frac 1 4 -\delta$ 
 and such that  $V,RV,R^2V,R^3V$ are pairwise disjoint.
 Define $g\colon Z\to \mathbb{C}$ by setting 
 $$g=\one_V+\sqrt{-1} \one_{RV}-\one_{R^2V}-\sqrt{-1}\one_{R^3V}
 $$ and define $f\colon Y \to \mathbb{C}$ by $f=g\circ P$ to be the pullback of $g$ to $Y$. 
Choose $\tilde{f}$, taking values in $\{\sqrt{-1}^j\}_{j=0}^3$,  to be a 
finite  linear combination of characteristic functions of cylinder sets such 
 that $\nu(\{\bx\colon\tilde{f}(\bx)\neq f(\bx)\})<\delta$, and let $k$ be the largest defining index out of all of these cylinder sets. 
 
We claim that if $N>{k+1}$ and $n \in \CH_{N,\hat{\varepsilon}}$, then
$$\nu(\{\bx\colon|U_{T^n}\tilde{f}(\bx) -\tilde{f}(\bx)|>\frac 1 {\sqrt{2}}\})>\hat\varepsilon-11\delta.$$

To prove the claim, assume that 
$G_n\colon\mathcal{A}_n\to \mathcal{B}_n$ is the measure preserving 
bijection given  in the definition of $\CH_{n,\varepsilon}$ 
and define 
$$G(\bx)=\begin{cases} G_n(\bx)& \text{ if }\bx\in \mathcal{A}_n\\G_n^{-1}(\bx)& \text{ if }\bx \in \mathcal{B}_n \\ \bx & \text{ otherwise.}\end{cases}$$ 
 We restrict our attention to the set of points $\by$ of measure at least $\varepsilon-10\delta$ that satisfy the following properties: 
\begin{enumerate}
\item the points lie in $\mathcal{A}_n$ 
\item the points satisfy $\tilde{f}(\by)=f(\by)$ and $\tilde{f}(G(\by))=f(G(\by))$. 
\item the points are such that $P({T}^n\by)$ and $P(G({T}^n\by))$ lie in $V\cup RV\cup R^2V\cup R^3V$. 
\end{enumerate}
Then for any such point $\by$, we have that $\tilde{f}(\by)=\tilde{f}(G(\by))$ 
(because $y_i=G(y)_i$ for every $i$ in the defining indices of the cylinders defining $\tilde{f}$) and furthermore for some $1\leq m\leq 3$ (which may depend on $\by$)  we have
 $$\tilde{f}(T^n\by)=f(T^n\by)=\sqrt{-1}^m f(T^nG(\by))=\sqrt{-1}^m \tilde{f}(T^nG(\by))$$ (the second equality follows from Lemma~\ref{lem:friends to offset}). 
 Thus either $\tilde{f}(\by) \neq \tilde{f}(T^n \by)$ or $\tilde{f}(G(\by))\neq \tilde{f}(T^nG(\by))$. 
 Since $\tilde{f}$ takes values in $\{\sqrt{-1}^j\}_{j=0}^3$, if $\tilde{f}(\bx) \neq \tilde{f}(\bz)$ then
 $|\tilde{f}(\bx)-\tilde{f}(\bz)|\geq {\sqrt{2}}$ and the claim follows.

 By construction $\|F(\tilde{f})-\tilde{f}\|_2<4\delta$. However, if 
 $$\|U_{T^{n_j}}\tilde{f}-\tilde{f}\|_2>c,  \text{ for some } c> 0 \text{ and }\ \gamma_j\geq 0 \text{ satisying } 
 \sum \gamma_j\leq 1, $$
 then by taking a convex combination it follows that 
\begin{equation}
 \label{eq:convex far}
 \|\sum \gamma_j U_{T^{n_j}}\tilde{f}-\tilde{f}\|_2>Cc^2
 \end{equation}
for some constant $C> 0$. 
Indeed, using that $\|U_T\tilde{f}\|_2=\|\tilde{f}\|_2=1$ we see that $\langle \sum \gamma_j U_{T^{n_j}}\tilde{f},\tilde{f}\rangle\leq \langle f,f\rangle -c^2$. Because $\langle f,g \rangle=\|f\|_2 \cdot \|g\|_2 \cos(\langle(f,g))$, either $\|\sum \gamma_j U_{T^{n_j}}\tilde{f} \|_2 \leq \|f\|_2-\frac {c^2} 2$ or 
 $\cos(\langle \tilde{f}, \sum \gamma U_{T^{n_j}} \tilde{f}  )\geq \frac {c^2}2$ and in either case~\eqref{eq:convex far} follows. 
 Similarly if 
 $$\sum_{\{j\colon\|U_{T^{n_j}}\tilde{f}-\tilde{f}\|_2>c\}} \gamma_j>\varepsilon,
$$ then $\|\sum \gamma_j U_{T^{n_j}}\tilde{f}-\tilde{f}\|_2>C\varepsilon^2 c^2.$
Since $\delta$ is arbitrary, the lemma follows.
\end{proof}

\subsection{Recoding of time scales}\label{sec:recoding}
This section is devoted to relating  
$T$ iterated a large number of times to 
 $T$ iterated a smaller number of times, 
 or perhaps several smaller powers with accompanying subsets of $Y$. 
 
 This procedure, which we call reducing or the reductions, is 
carried out via Definition~\ref{def:reduce}, which also contains a parameter for testing how good this relation is. The defect of it is related to the notion of $n$-friends in the next section. To carry out the reduction, the next definition is a mechanism for computing the ``order of magnitude" of the relevant power of $T$. Note that this order of magnitude is used to identify cylinders where friends are contained. 

\begin{notation*}
Let 
\begin{equation}
\label{def:sigma}
\sigma_n=\max\{i\colon d_i(n)\neq 0\}, 
\end{equation}
where $d_i$ is defined as in~\eqref{def:di}.

Set \begin{equation}
\label{def:E} 
E=\{10^k\colon  k\geq 2 \}.
\end{equation}
\end{notation*}

Now if $\sigma_m \notin E$ it is relatively easy to see that $m \in \mathcal{H}_{\sigma_m-1,\varepsilon}$ (see Lemma~\ref{lem:no E friends}, whose proof uses $\sigma_m$ to identify the explicit cylinders which can be chosen to be the domain and codomain of $G_m$).  
If $\sigma_m \in E$, we seek to obtain
$m'$ where $T^m$ is ``close" to $T^{m'}$ and  $\sigma_{m'}<\sigma_m$. In this way, if $\sigma_m \in E$ and $\sigma_{m'}\notin E$ we can still show that $m \in \mathcal{H}_{\sigma_{m'}-1,\varepsilon}$: first using Lemma~\ref{lem:no E friends}, this time applied to $m'$, and second using the closeness of $T^m$ to $T^{m'}$, which is made precise in Lemma~\ref{lem:reduction close},  to show these same cylinders 
contain the domain and codomain of $G_j$; see Lemma~\ref{lem:reduce no E friends}. 
(While this specific example motivates reducing, we note that this is not the only way reductions are used,  and in particular it is used in Cases 1 and 2 in Section~\ref{sec:end}.)  We now consider two motivating examples: 
$T^{r_{10^{2k+1}}+2}\approx T^2$, because off of a small measure set $(T^{r_{10^{2k+1}}+2}\bx)_j =(T^2\bx)_j$ for all $j<10^{2k+1}$. There is a more complicated situation,  $T^{r_{10^{2k}}+2}$ is roughly $T^2$ on $\{\by \in Y\colon  \by_{10^{2k}}<k\}$ (off of a set of small measure) and 
$T^{r_{10^{2k}}+2}$ is roughly $T$ on $\{\by \in Y\colon  \by_{10^{2k}}\geq k\}$ (off of a set of small measure).  Note that $\sigma(r_{10^{2k+1}}+2)=10^{2k+1}$ and similarly for the other powers.  We make this recoding precise below by triples which keep track of the new powers in the first coordinate, the set where this approximation is relevant in the second coordinate and the measure of the set where this approximation fails in the third coordinate. Note, the third coordinate can also be related to friends (Lemma~\ref{lem:rigid friends}). 
The next definition defines an inductive procedure, and the relevant initial conditions are 
deferred until Definition~\ref{def:frak-sets}.

\begin{defin}\label{def:reduce}
Fix $N\in\N$ and $\varepsilon > 0$.  
For $r \geq 1$ and a set of triples $\mathfrak{H}_{r}(N,\varepsilon) \subset  \mathbb{Z}\times 
\mathcal{B}(Y)\times [0,1]$, we inductively  define 
the set of triples $\mathfrak{H}_{r+1}(N,\varepsilon)$ as follows: if $(j, A,\rho) \in \mathfrak{H}_{r}(N,\varepsilon)$  and at least one of the following conditions holds
\begin{enumerate}
\item $\sigma_{j}\notin E$
\item $j=0$
\item $\sigma_j\leq N$
\item $\rho>\varepsilon$, 
\end{enumerate}
then $(j, A,\rho) \in \mathfrak{H}_{r+1}(N,\varepsilon)$. 
Otherwise we modify the triple, depending on the value of $\sigma_j$: 
\begin{enumerate}
\item If  $\sigma_j\in\{10^{2\ell+1}\colon\ell \geq 1\}$,  then 
$$(j-d_{\sigma_j}(j){r}_j, A,\rho+\frac{|d_{\sigma_j}(j)|}{a_{\sigma_j}})\in \mathfrak{H}_{r+1}(N,\varepsilon).
$$
 \item 
   If $\sigma_j\in \{10^{2\ell}\colon\ell \geq 1\}$, then both
$$(j-d_{\sigma_j}(j){r}_j,A\cap \bigcup_{\ell<\frac{a_{\sigma_j}}2}\mathcal{C}_{\sigma_j}(\ell), \rho +\frac{|d_{\sigma_j}(j)|}{a_{\sigma_j}})\in \mathfrak{H}_{r+1}(N,\varepsilon)$$
 and 
 $$(j-d_{\sigma_j}(j)r_j+d_{\sigma_j}(j),
A\cap \bigcup_{\ell\geq \frac{a_{\sigma_j}}2}\mathcal{C}_{\sigma_j}(\ell), \rho +\frac{|d_{\sigma_j}(j)|}{a_{\sigma_j}})\in \mathfrak{H}_{r+1}(N,\varepsilon).$$ 
\end{enumerate}
\end{defin}

\begin{defin}
\label{def:frak-sets}
Fix $N\in\N$ and $\varepsilon > 0$.  

Define $\mathfrak{F}(N,\varepsilon)$ to be the collection of triples $\mathfrak{H}_{r}(N,\varepsilon)$ that stabilize with respect to $r$, 
meaning that 
$$
\mathfrak{F}(N,\varepsilon) = \mathfrak{H}_{r}(N,\varepsilon)  \text{ when } \mathfrak{H}_{r}(N,\varepsilon)=\mathfrak{H}_{r+1}(N,\varepsilon).$$ 
Define $\mathcal{F}(N,\varepsilon)$ to keep track of the measure of the sets in $\mathfrak{F}(N,\varepsilon)$, meaning that $$\mathcal{F}(N,\varepsilon)=\{(n,\nu(A),\rho)\colon (n,A,\rho) \in \mathfrak{F}(N,\varepsilon)\}.$$ 
If $\mathfrak{H}_0(N,\varepsilon)=(i,[0,1],0)$ for some $i\in \N$, we define
$\mathcal{F}_i(N,\varepsilon)$ 
to be the set $\mathcal{F}(N,\varepsilon)$. 

Similarly, define $\mathfrak{H}_{r,i}(N,\varepsilon)$ to be  $\mathfrak{H}_r(N,\varepsilon)$ if $\mathfrak{H}_0(N,\varepsilon)=\{(i,[0,1],0)\}$. 
We similarly define $\mathfrak{F}_i(N,\varepsilon)$ to be $\mathfrak{F}(N,\varepsilon)$  when 
$\mathfrak{H}_0(N,\varepsilon)=\{(i,[0,1],0)\}$. 
\end{defin}

Note that $\sigma_n$ is defined in~\eqref{def:sigma},  $a_i$ are defined in~\eqref{def:X}, and $r_i$ are defined in~\eqref{def:ri}.  We 
state a lemma that motivates the sets given in Definition~\eqref{def:frak-sets}. In particular, it shows how these definitions relate $T$ to a large power to $T$ to a smaller power, or possibly two smaller powers with relevant sets.

\begin{lem} \label{lem:reduction close}
Given $n\in\N$, let $\mathcal{C}$ be a cylinder defined by positions in $E$ that are greater than $\sigma_n$.  
\begin{enumerate}
\item Assume $\sigma_n$ is an odd power of $10$.  Setting $\tilde{n}=n-d_{\sigma_n}(n)r_{\sigma_n}$, we have 
$$\nu(\{x\in \mathcal{C}\colon  (T^{\tilde{n}}\bx)_i\neq T^n(\bx)_i \text{ for some }i< \sigma_{n}\})<4\nu(\mathcal{C}) \frac{|d_{\sigma_n}(n)|}{a_{\sigma_n}}.
$$

\item  Assume $\sigma_n$ is an even power of $10$.  Setting $n'=n-d_{\sigma_n}(n)(r_{\sigma_n}-1)$ and defining 
$A_1=\{\bx\colon x_{\sigma_n}\geq \frac{a_{\sigma_n}}2\}$, 
 we have
 $$\nu(\{\bx\in A_1\colon  (T^{n'}\bx)_i\neq T^n(\bx)_i \text{ for some }i< \sigma_{n'}\})<4\nu(\mathcal{C} \cap A_1) \frac{|d_{\sigma_n}(n)|}{a_{\sigma_n}}.$$
Furthermore, setting 
 $n''=n-d_{\sigma_n}(n)r_{\sigma_n}$, and defining 
 $A_2=\{\bx\colon x_{\sigma_n}<\frac{a_{\sigma_n}}2\}$, we have 
  $$\nu(\{\bx\in A_2\colon  (T^{n''}\bx)_i\neq T^n(\bx)_i \text{ for some }i< \sigma_{n''}\})<4\nu(\mathcal{C} \cap A_2) \frac{|d_{\sigma_n}(n)|}{a_{\sigma_n}}.$$
  \end{enumerate}
 \end{lem}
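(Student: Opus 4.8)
The plan is to obtain both parts as a \emph{deterministic} statement on the ``good'' points of $\mathcal{C}$ -- those whose coordinate $x_{\sigma_n}$ lies well inside the relevant range -- extracted from Lemma~\ref{lem:towers shadow} (and its coordinatewise refinement Lemma~\ref{lemma:bound-cylinders}), and then to bound the measure of the complement inside $\mathcal{C}$ using that, under $\mu$, the single coordinate $x_{\sigma_n}$ is uniform on $\{0,\dots,a_{\sigma_n}-1\}$ and independent of every coordinate that $\mathcal{C}$ constrains, since those sit at positions of $E$ strictly above $\sigma_n$.

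First I would set $d=d_{\sigma_n}(n)$ and reduce to $d>0$; the case $d<0$ is identical after exchanging forward and backward iterates, as in the ``similarly'' halves of Lemma~\ref{lem:towers shadow}. The numbers $\tilde n,n',n''$ are exactly what one gets by stripping the top digit of the base-$q$ expansion: from $\zeta_{\bzero}(n)=\sum_i d_i(n)q_i$ one sees that deleting the term $d\,q_{\sigma_n}$ produces an integer whose largest nonzero digit is below $\sigma_n$, so $\sigma_{\tilde n},\sigma_{n'},\sigma_{n''}<\sigma_n$, and the corresponding $T$-times are those in the statement once one records how a single odometer increment of coordinate $\sigma_n$ is realized under $T$ -- it equals $r_{\sigma_n}$ iterates off the deleted set $D_{\sigma_n}$, with the correction in the even case coming from whether the relevant ``column'' of length $q_{\sigma_n}$ meets $D_{\sigma_n}=W_\ell$ or not. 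Writing $T^n\bx=T^{dr_{\sigma_n}}(T^{\tilde n}\bx)$ in part~(1), and the two half-range analogues in part~(2), the claimed coordinatewise agreement for $i<\sigma_n$ -- hence a fortiori for $i<\sigma_{\tilde n}$, etc. -- is precisely Lemma~\ref{lem:towers shadow} applied to the point $\by=T^{\tilde n}\bx$ (respectively $T^{n'}\bx$, $T^{n''}\bx$) with $\ell=|d|$, exactly as in the proof of Lemma~\ref{lemma:bound-cylinders}, \emph{provided} $\by_{\sigma_n}$ lies in the appropriate range with a margin of size $O(|d|)$ at its endpoints and, in the even case, at the midpoint $a_{\sigma_n}/2$. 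In part~(1), where $\sigma_n$ is an odd power of $10$, we have $D_{\sigma_n}=\emptyset$, so there is no midpoint to separate and the argument is strictly simpler.

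It then remains to bound the exceptional set inside $\mathcal{C}$. Because $\tilde n,n',n''$ all have $\sigma<\sigma_n$, each of $T^{\tilde n},T^{n'},T^{n''}$ changes $x_{\sigma_n}$ by at most an absolute constant, so the exceptional set is contained in $\mathcal{C}$ intersected with the set of $\bx$ for which $x_{\sigma_n}$ lies within $O(|d|)$ of $0$, of $a_{\sigma_n}-2$, or (even case only) of $a_{\sigma_n}/2$ -- the $O(|d|)$ accounting both for this bounded drift and for the $|d|$ subsequent increments of $x_{\sigma_n}$ produced by $T^{dr_{\sigma_n}}$. Moreover, when $x_{\sigma_n}$ is in the bulk the relevant orbit window of length $O(|d|q_{\sigma_n})$ never attains the coordinate value $a_{\sigma_n}-2$, so it automatically avoids every $D_\ell$ with $\ell>\sigma_n$ and there is no further contribution from those deleted cylinders (this can alternatively be verified directly by $\mu$-invariance together with $q_{\sigma_n}\sum_{\ell>\sigma_n}\mu(D_\ell)=O(1/a_{\sigma_n})$, using $\mu(Z_\ell)=q_{\ell+1}^{-1}$ and the negligible $W_\ell$-terms). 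This forbids only $O(|d|)$ of the $a_{\sigma_n}$ equally likely values of $x_{\sigma_n}$, and since $\nu=\mu(Y)^{-1}\mu|_Y$ distorts the product measure $\mu$ only by a bounded factor and by the mild correlation introduced by deleting $\bigcup Z_\ell\cup\bigcup W_k$, a routine tally of the forbidden values keeps the relative $\nu$-measure of the exceptional set inside $\mathcal{C}$ at most $4|d_{\sigma_n}(n)|/a_{\sigma_n}$.

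The step I expect to be the genuine obstacle is the bookkeeping underlying the second paragraph, not any individual estimate: one must pin down which of $r_{\sigma_n}$ or $r_{\sigma_n}\pm1$ realizes one odometer increment of coordinate $\sigma_n$ on each half-range (the sign being governed by whether the column meets $W_\ell$), verify that deleting the top digit of the $q$-expansion genuinely lowers $\sigma$ despite the non-linearity of the greedy choice of the $c_i$, and control the drift of $x_{\sigma_n}$ under $T^{\tilde n},T^{n'},T^{n''}$ uniformly in $\bx$ so that a hypothesis on $x_{\sigma_n}$ of $\bx$ alone suffices to keep the orbit window of length $O(|d|q_{\sigma_n})$ inside the required range -- for which the crude bound $|\zeta_\bx(m)|\le 3|m|$ of Corollary~\ref{cor:friends to offset} is the natural input.
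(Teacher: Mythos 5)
Your proposal follows essentially the same route as the paper: the paper shows the coordinate agreement $(T^n\bx)_j=(T^{\tilde n}\bx)_j$ for $j<\sigma_n$ holds whenever the relevant $S$-orbit segment of length $|d_{\sigma_n}(n)|q_{\sigma_n}$ avoids $\bigcup_{j\geq\sigma_n}D_j$ (via Lemma~\ref{lem:same hit}), and then bounds the $\nu$-measure of the failure set using $\mu(Z_j)=q_{j+1}^{-1}$ and the analogous $W_\ell$ estimate; that avoidance condition is exactly your ``bulk'' set, since hitting any $D_j$ with $j>\sigma_n$ forces the running $\sigma_n$-coordinate to equal $a_{\sigma_n}-2$, and the even case splits into $A_1,A_2$ around $a_{\sigma_n}/2$ because $D_{\sigma_n}=W_\ell$, exactly as you describe. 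The one imprecision worth flagging is your claim that $T^{\tilde n},T^{n'},T^{n''}$ shift $x_{\sigma_n}$ ``by at most an absolute constant'': since $\zeta_{\bzero}(\tilde n)$ can be of order $\frac{a_{\sigma_n}}{2}q_{\sigma_n-1}=\frac{a_{\sigma_n}}{16}q_{\sigma_n}$, the shift of $x_{\sigma_n}$ can be of order $a_{\sigma_n}/a_{\sigma_n-1}$ and is not $O(1)$; however, for a fixed $\tilde n$ that shift is an approximately constant integer (varying over $\bx$ by $O(1)$), so your ``forbidden window of starting values of $x_{\sigma_n}$'' is still of length $O(|d_{\sigma_n}(n)|)$, merely translated, and your final tally survives. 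The paper avoids the issue entirely by stating the avoidance condition on $S^i T^{n'}\bx$ (or $S^i\bx$) and invoking $S$-invariance to estimate its measure, rather than pulling the boundary set back under $T^{\tilde n}$.
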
 

 \begin{proof}  
For convenience, in this proof we assume $d_{\sigma_n}>0$ (the case $d_{\sigma_n}<0$ is similar).  
 Recall that $D_j$ is defined in~\eqref{def:Dk}. 
 Observe that  if $T^n(\bx)=S^{d_{\sigma_n}(n)q_{\sigma_n}}(T^{\tilde{n}}\bx)$, 
 then
  $T^n(\bx)_j=T^{\tilde{n}}(\bx)_j$ for all $j< \sigma_n$, and (by Lemma~\ref{lem:same hit}) this holds if 
\begin{equation}
\label{eq:estimate-D}
\sum_{i=0}^{d_{\sigma_n}(n)q_{\sigma_n}-1} \one_{\bigcup_{j=\sigma_n}^{\infty}D_j}(S^i\bx)=0.
\end{equation}  
 First we consider the case of $\sigma_n=10^j$ for $j$ odd. 

Since $\sigma_n \in \{10^{2k+1}\colon k\geq 1\}$, we have that 
$D_{\sigma_n}=\emptyset$ and so~\eqref{eq:estimate-D} 
fails
for a set of $\bx$ of $\mu$ measure at most 
\begin{equation}\label{eq:unchanged}
\frac{d_{\sigma_n}(n)}{a_{\sigma_n}}q_{\sigma_n}\mu(\bigcup_{j=\sigma_n+1}^\infty D_j)\leq \frac{d_{\sigma_n}(n)}{a_{\sigma_n}}.
\end{equation} 
Furthermore,
$$ \nu (\{\bx \in Y\colon \sum_{i=0}^{d_{\sigma_n}(n)q_{\sigma_n}-1} \one_{\bigcup_{j=\sigma_n}^{\infty}D_j}(S^i\bx)\neq 0\})\leq   3d_{\sigma_n}(n)q_n\mu( Z_{\sigma_{n+1}})=3 \cdot \frac 1 8 \frac{d_{\sigma_n}(n)}{a_{\sigma_n}}.$$
(Restricting to $\bx\in Y$ and converting from $\mu$ to $\nu$ changes this by a factor of less than 3.)

The next two cases are similar, but a bit more 
complicated as $D_{\sigma_n}$ is not empty, but 
is equal to $W_\ell$ for some $\ell$. 
If $\bx \in A_1$, then the conclusion holds if 
$$\sum_{i=0}^{d_{\sigma_n}(n)q_{\sigma_n}-1}\one_{\bigcup_{j=\sigma_n}^\infty D_j }(S^{i}T^{n'}\bx)=0.$$
Indeed, if  $\sum_{i=0}^{q_{\sigma_n}-1}\one_{\bigcup_{j=\sigma_n}^\infty D_j }(S^{i}\by)=0$, 
then  this follows from Lemma~\ref{lem:same hit} and the fact that ${\sum_{i=0}^{q_{\sigma_n}-1}\one_{\bigcup_{j=\sigma_n}^\infty D_j}(S^i\bzero)=1}$. So, 
$$T^{r_{\sigma_n}}(\by)_j=\begin{cases}(T^{-1}\by)_j& \text{ for }j \neq \sigma_n\\ (T^{-1}\by)_j+1 & \text{ for }j<\sigma_n.\end{cases}$$
Thus this case follows analogously to~\eqref{eq:unchanged} above after estimating 
$$\mu(\{\by\in A_1\colon S^i\by \in A_1\setminus (\cup_{j=\sigma_n}^\infty D_j )=A_1\setminus (\cup_{j=\sigma_n+1}^\infty D_j ) \text{ for all }i\leq d_{\sigma_n}(n)q_{\sigma_n}\}).$$ 
This is at most $\frac{d_{\sigma_n}(n)}{a_{\sigma_n}}$.

The third case is similar: 
if $\bx \in A_2$, then the conclusion holds if 
$$\sum_{i=0}^{d_{\sigma_n}(n)q_{\sigma_n}-1} \one_{\bigcup_{j=\sigma_n+1}^\infty D_j }(S^i\bx)=0
$$ and 
$$\sum_{i=0}^{d_{\sigma_n}(n)q_{\sigma_n}-1} \one_{D_{\sigma_n}}(S^i\bx)=d_{\sigma_n}(n),$$
where $10^{2\ell}=\sigma_n$. The remainder of the proof is analogous to the first case. 
 \end{proof}
Motivated by the sets in Lemma~\ref{lem:reduction close}, we make 
a few more definitions. 
If $(n,A,\rho) \in  \mathfrak{H}_{r, i }(N,\varepsilon)$,  let  
 $$P_r(n,A,\rho)=\{\bx\in A\colon (T^n\bx)_j\neq (T^i\bx)_j \text{ for some }j\leq\sigma_n \}$$ and 
 $$Q_r(n,A,\rho)= \{\bx\in A\colon (T^n\bx)_j= (T^i\bx)_j \text{ for all }j\leq \sigma_n\}.$$

 Define 
 \begin{equation} 
\label{eq:bad} 
\BR_r= \underset{(n,A,\rho)\in  \mathfrak{H}_{r}(N,\varepsilon)}{\bigcup} P_r(n,A,\rho)
\end{equation}
and 
\begin{equation}
\label{eq:good}
\GR_r=\underset{(n,A,\rho)\in \mathfrak{H}_r(N,\varepsilon)}{\bigcup}Q_r(n,A,\rho).
\end{equation}

\begin{lem}\label{lem:cylinders containing bad} 
Assume $\sigma_i\in E$ and let $A$ be a cylinder with all defining indices at least $\sigma_i$.  
Let $\mathfrak{H}_0(N,\varepsilon)=\{(i,[0,1],0)\}$. There exist cylinders $C_1,\ldots,C_\ell$ defined in positions greater than or equal to $\sigma_i$ such that the following hold: 
\begin{enumerate}
\item $A \cap \BR_{1} \subset \bigcup_{j=1}^\ell C_j$.  
\item $99 \nu(A \cap \BR_{1})>\nu(\bigcup_{j=1}^\ell C_j).$ 
\end{enumerate} 
\end{lem}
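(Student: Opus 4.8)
The plan is to unwind the single reduction step of Definition~\ref{def:reduce} applied to $\mathfrak H_0(N,\varepsilon)=\{(i,[0,1],0)\}$ and then describe $A\cap\BR_1$ explicitly. If $i=0$ or $\sigma_i\le N$, then $\mathfrak H_1(N,\varepsilon)=\mathfrak H_0(N,\varepsilon)$, so $\BR_1=P_1(i,[0,1],0)=\emptyset$ and the lemma holds with $\ell=0$. Otherwise, since $\sigma_i\in E$ and the third coordinate is $0\le\varepsilon$, the triple is modified: when $\sigma_i$ is an odd power of $10$ we obtain the single triple $(i-d_{\sigma_i}(i)r_{\sigma_i},[0,1],\rho_1)$, and when $\sigma_i$ is an even power of $10$ we obtain two triples in which $[0,1]$ is replaced by the cylinder $\{x_{\sigma_i}<a_{\sigma_i}/2\}$ or $\{x_{\sigma_i}\ge a_{\sigma_i}/2\}$ (with sole defining index $\sigma_i$) and the first coordinate is $i-d_{\sigma_i}(i)r_{\sigma_i}$ or $i-d_{\sigma_i}(i)(r_{\sigma_i}-1)$. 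In every case each set appearing in a triple of $\mathfrak H_1(N,\varepsilon)$ is $[0,1]$ or a cylinder whose only defining index is $\sigma_i$, and each first coordinate $n$ satisfies $\sigma_n<\sigma_i$ (the top term of the base-$q$ expansion of $\zeta_\bzero(n)$ has been removed and the lower terms cannot carry back up to position $\sigma_i$). It therefore suffices to fix one triple $(n,A_\star,\rho_1)\in\mathfrak H_1(N,\varepsilon)$ and cover
\[
A\cap P_1(n,A_\star,\rho_1)=\{\bx\in A\cap A_\star\colon (T^{n}\bx)_j\ne(T^{i}\bx)_j\text{ for some }j\le\sigma_n\}
\]
by cylinders with defining indices $\ge\sigma_i$.

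The key input is the proof of Lemma~\ref{lem:reduction close}, read as an ``if and only if'': a point $\bx\in A_\star$ lies in $P_1(n,A_\star,\rho_1)$ exactly when the orbit segment $S^{\zeta_\bx(n)}\bx,\dots,S^{\zeta_\bx(i)}\bx$ meets $\bigcup_{j\ge\sigma_i}D_j$ a number of times different from what the corresponding segment of the base point $\bzero$ does (the even/odd split and the ``$r_{\sigma_i}$ versus $r_{\sigma_i}-1$'' in Definition~\ref{def:reduce} are exactly what normalize this base count). Indeed, any such discrepancy desynchronizes $S$ and $T$ and so, by Lemma~\ref{lem:friends to offset}, alters a coordinate below $\sigma_i$; conversely, if the counts agree then $T^i\bx$ and $T^n\bx$ agree on every coordinate below $\sigma_i$, hence on every coordinate $\le\sigma_n$. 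By Corollary~\ref{cor:friends to offset} the segment has length $|\zeta_\bx(i)-\zeta_\bx(n)|\le 3|i-n|\le 3|d_{\sigma_i}(i)|r_{\sigma_i}$, a bounded multiple of $q_{\sigma_i}$; so it meets a bounded number of the blocks $[tq_{\sigma_i},(t+1)q_{\sigma_i})$, along which the coordinate in position $\sigma_i$ is constant while the coordinates below $\sigma_i$ run through every configuration once, and it meets each $D_j$ with $j>\sigma_i$ boundedly often and, once $j$ exceeds a bounded range, at most once.

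Given this, the cylinders $C_1,\dots,C_\ell$ are assembled as follows. In the even case, $A\cap P_1$ splits into the ``wrap'' piece $A\cap\{\,x_{\sigma_i}+d_{\sigma_i}(i)\text{ crosses }a_{\sigma_i}/2\,\}$, which is already a cylinder with defining index $\sigma_i$, and the points visiting some $D_j$, $j>\sigma_i$, anomalously (in the odd case only the latter occurs). For each relevant $j$, the set $D_j$ is a cylinder, and meeting a translate $S^{-m}D_j$ with $m$ in the segment forces the tail pattern of $D_j$ on positions strictly above $\sigma_i$ — a genuine cylinder condition on positions $\ge\sigma_i$ — and, since $A$ fixes the coordinate in position $\sigma_i$ (or restricts it to an interval of length $\sim a_{\sigma_i}/2$), pins down the block index $t$, hence one cylinder (forgetting the low-coordinate requirement) per admissible $t$; as the conditions ``$x_k=a_k-2$ for $\sigma_i<k<j$'' are nested in $j$, only the finitely many $j$ below the cutoff of $A$ (plus the smallest one above it) need be recorded, so finitely many cylinders suffice and $A\cap\BR_1\subset\bigcup_{k=1}^\ell C_k$. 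For the measure bound, the wrap piece has $\nu$-measure $\asymp\nu(A)|d_{\sigma_i}(i)|/a_{\sigma_i}$, which also dominates $\nu(A\cap\BR_1)$ from below (the wrap piece and the anomalous-visit set both genuinely desynchronize, hence lie in $\BR_1$ off a negligible carry-coincidence set), while the cylinders recorded for the $D_j$-visits have total $\nu$-measure $O(\nu(A)/a_{\sigma_i})$, geometrically decaying in $j$. Hence $\nu(\bigcup_{k=1}^\ell C_k)$ is a bounded multiple of $\nu(A\cap\BR_1)$, and the slack in the constant $99$ absorbs the $\mu$-to-$\nu$ conversion (a factor at most $3$ by Corollary~\ref{cor:friends to offset}), the at-most-$3$ ambiguity of Lemma~\ref{lem:friends to offset}, and the overlaps and bounded multiplicities above.

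The step I expect to be the main obstacle is this last bookkeeping: verifying quantitatively that, \emph{inside} the cylinder $A\cap A_\star$, the anomalous-visit set and the wrap cylinder really are caught by a finite union of cylinders in positions $\ge\sigma_i$ whose total measure stays within the factor $99$ of $\nu(A\cap\BR_1)$ — rather than inflating by the factor $q_{\sigma_i}$ that naively dropping a low-coordinate condition would cost. This rests on the observation that the constraint ``position $\sigma_i$ of $S^m\bx$ matches the $D_j$-pattern'' confines the offset $m$ to a bounded set of blocks, together with exact control of the base counts of $\bzero$ (where the greedy choice of coefficients and the even/odd split of Definition~\ref{def:reduce} enter) and of the at-most-$3$ slack in $S$-versus-$T$ time. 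Once these are in place, both conclusions of the lemma follow at once.
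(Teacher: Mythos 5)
Your proposal follows essentially the same route as the paper's proof: unwind the single reduction step, use (the proof of) Lemma~\ref{lem:reduction close} to reduce membership in $\BR_1$ to anomalous visits of the orbit segment to $\bigcup_{j\geq\sigma_i}D_j$, cover that event by finitely many cylinders pinned down by the coordinate at position $\sigma_i$ (and the adjacent tail pattern), and lower-bound $\nu(A\cap\BR_1)$ by the measure of the genuinely desynchronizing visit/wrap events, with the constant $99$ absorbing all losses. The only slips are minor and do not change the argument: the covering cylinders for the $D_j$-visits have total measure of order $|d_{\sigma_i}(i)|\nu(A)/a_{\sigma_i}$ rather than $O(\nu(A)/a_{\sigma_i})$ (harmless, since the lower bound scales the same way, exactly as in the paper's comparison of at most $|d_{\sigma_i}(i)|+1$ cylinders against a bad set of measure about $\tfrac18|d_{\sigma_i}(i)|/a_{\sigma_i}$), and $A$ need not fix the coordinate at position $\sigma_i$, in which case one simply uses the at most $|d_{\sigma_i}(i)|+1$ admissible values of $x_{\sigma_i}$ as in the paper.
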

\begin{proof} 
 We treat $i$ with $\sigma_i \in \{10^{2k+1}\}$. 
Consider the set of $\by \in Y$ such that~\eqref{eq:estimate-D} fails.  We cover this set
by cylinders and show that $\nu(\BR)$ is proportional to the union of these cylinders. 
 The set $D_\ell$  requires that $x_j=a_{j-2}$ for all $j <\ell$, and so $S^{-d_{\sigma_i}(i)q_{\sigma_i}}(\bigcup_{\ell=\sigma_i+1}^\infty D_\ell)$ is contained in at most $d_{\sigma_i}(\sigma_i)+1$ cylinders defined 
 by the position $\sigma_i$. Furthermore,  
 $$\mathcal{P}\supset \{\by\in Y\colon \sum_{j=0}^{d_{\sigma_i}(i)q_{\sigma_i}-1}\one_{D_{\sigma_i+1}}(S^j\by)=1 \text{ and } \sum_{j=0}^{n_{\by}-1}\one_{D_{\sigma_i+1}}(S^j\by)=0\}$$ where $n_{\by}$ is the first coordinate of $(n_{\by},B,\rho)\in \mathfrak{H}_1(N,\varepsilon)$ and $\by \in B$.   This set has measure at least 
 $(\frac 1 {8}- \frac 1 {10^3})\frac{d_{\sigma_i}(i)}{a_{\sigma_i}}$. 
 
 The argument for $i$ with $\sigma_i \in \{10^{2k}\}$ is similar, but slightly complicated analogously to the proof of Lemma~\ref{lem:reduction close}, because $D_{\sigma_i}=W_{\frac 1 2 a_{\sigma_i}}$.
\end{proof}

\subsection{Obtaining friends}\label{sec:getting friends}
This section illustrates how the imperfections in the reduction process and the termination of the reduction process are related to the presence of $n$-friends. Indeed, we show the parameter $\rho$ in Definition~\ref{def:reduce} is proportional to the measure of a set of points that have friends and if $\sigma_n\notin E$ then $n \in \CH_{\sigma_n-1,\frac 1 {99}}$.

The proofs are technical and so we outline the strategy and complications. The idea is similar to the proof of Proposition~\ref{prop:wmix}.  As in that proof, we identify a sequence of particular sets (in the proof of Proposition~\ref{prop:wmix} this is $Z_{10^k-k}$) and produce $n$-friends by choosing pairs of points where one 
lands in the set and the other does not. 
(In the proof of Proposition 3.4, the points in $B_i$ hit this $D_j$ and the points in $A_i$ do not.)  
There are two complications in the proofs of this section that do not arise in the proof of Proposition \ref{prop:wmix}. 
%
%
%
%
The first issue is that in Proposition~\ref{prop:wmix}, we can choose the iterates, but 
Theorem~\ref{thm:CE factor} does not have this freedom because we can not pick which coefficients $\alpha_i^{(k)}$ in Corollary \ref{cor:CE factor} are non-negative. 
This forces us to analyze various cases, depending on whether $\sigma_i\notin E$ (Lemma~\ref{lem:no E friends}) or $\sigma_i \in E$ (Lemma~\ref{lem:rigid friends}), where $E$ is the set defined in~\eqref{def:E}. 
Furthermore, when $\sigma_i\notin E$, there are further cases to consider, 
depending whether either of $\sigma_{i+1}$ or $\sigma_{i-1}$ are in  $E$ (see the proof of Lemma~\ref{lem:no E friends}). Additionally, we need to use a more ``pointwise approach." Rather than having two sets, $A_i$, $B_i$ such that $\xi_{\bz}(i)$ is constant on each set and $\xi_{\by}(i)-\xi_{\bx}(i)=1$ for all $(\by,\bx)\in B_i\times A_i$, as we did in Proposition \ref{prop:wmix}, we define a set $A$ and a map $G$ such that $\bx$ and $G(\bx)$ are $\zeta_{\bx}(m)$-friends for all $\bx \in A$. In particular, we do not claim $\zeta_{\bz}(m)$ is well behaved as $\bz$ varies in $A$.  (Recall that $\xi$ and $\zeta$ are morally ``inverses" of each other and while $\xi$ was more convenient for the proof of Proposition~\ref{prop:wmix}, $\zeta$ is more convenient here and in the remainder of the proof.) 


The second issue is that we have to take care that our arguments 
work with the recoding procedure introduced in Section~\ref{sec:recoding}.  
This is carried out in Lemmas~\ref{lem:repeat rigid friends} and~\ref{lem:reduce no E friends}, which are versions of Lemma \ref{lem:no E friends} and \ref{lem:rigid friends} adapted to the recoding procedure. 
These complications are already reflected in Lemmas~\ref{lem:no E friends} and~\ref{lem:rigid friends}, 
as it no longer suffices to produce cylinders where a  definite proportion of their points  that can be paired to be $i$-friends, but rather  we require entire cylinders that can be paired in this manner. 

\begin{lem}\label{lem:no E friends}  
If $\sigma_m\notin E$, 
then $m\in \CH_{\sigma_m-1,\frac 1 {99}}$. 
Furthermore,  if $G\colon\mathcal{A}_m\to \mathcal{B}_m$ is the
measure preserving bijection 
associated to $\CH_{\sigma_m-1,\frac 1 {99}}$ as defined in~\eqref{def:Cneps}, then 
 $\mathcal{A}_m$ and $G(\mathcal{A}_m)$ can be chosen to be 
  a
 union of cylinders whose defining indices are a subset of 
 $\sigma_m-1,\, \sigma_m$, and $\sigma_m+1$.  
\end{lem}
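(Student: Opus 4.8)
The plan is to make rigorous the following picture. Since $\sigma:=\sigma_m\notin E$, we have $a_\sigma=8$ and $D_\sigma=Z_\sigma$, and the only coordinate of $\bx$ that feels the leading term $d\,q_\sigma$ of the odometer displacement (with $d:=d_\sigma(m)\ne 0$, which by symmetry I may take positive) is the one in position $\sigma$; one can therefore manufacture a single extra or missing visit to $Z_\sigma$ (or, in the edge cases, to $Z_{\sigma+1}$) along the orbit segment $\bx,\dots,S^{\zeta_\bx(m)}\bx$ by nudging that coordinate, while keeping all other $Z_k$- and $W_\ell$-counts fixed using only the coordinates in positions $\sigma-1,\sigma,\sigma+1$. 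Accordingly $\mathcal A_m$ and $\mathcal B_m=G_m(\mathcal A_m)$ will be chosen as unions of cylinders defined by those coordinates; since $a_\sigma=8$ and only $O(1)$ coordinates get constrained, the residual measure stays above $\tfrac1{99}$.

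I would begin with the easy half of the friend relation. Odometer carries propagate only upward, so $(S^j\bx)_i$ depends on $\bx$ only through $\bx_1,\dots,\bx_i$ (and on $j$); hence for every fixed $n$ and every $k<\sigma$ the count $\sum_{j=0}^n\one_{Z_k}(S^j\bx)$ is a function of $\bx_1,\dots,\bx_k$ alone, so it is automatically unchanged when $\bx$ is replaced by any $\by$ that agrees with $\bx$ below position $\sigma$, keeping $n=\zeta_\bx(m)$ fixed; the condition involving $W_\ell$ at $T$-iterates is automatically satisfied since $W_\ell\cap Y=\emptyset$ while $T^j\bx\in Y$. For positions $\ge\sigma+1$ I would pin the value of coordinate $\sigma+1$ (or, when $\sigma+1\in E$ and $d$ is large, a positive-proportion range of values) so that along the whole segment that coordinate never reaches $a_{\sigma+1}-2$, never equals $7$, and stays on the correct side of $a_{\sigma+1}/2$; this is feasible because the segment has length $\asymp d\,q_\sigma$, so coordinate $\sigma+1$ moves by at most $\lceil d/8\rceil+O(1)$, and it forces every $Z_k$ ($k\ge\sigma+2$) and every $W_\ell$ ($10^{2\ell}\ge\sigma+1$) to have count $0$ along the segment for both $\bx$ and $\by$, and $Z_{\sigma+1}$ likewise except when we deliberately exploit it. For $10^{2\ell}<\sigma$ the relevant $W_\ell$-counts of the segment are governed by coordinates below $\sigma$ (Lemma~\ref{lem:same hit}), hence unchanged by $G_m$; translating between $T^m$ and the odometer, and bounding $\zeta_\bx(m)$ against $\zeta_{\bzero}(m)=\sum_i d_i(m)q_i$, uses Lemmas~\ref{lem:same hit} and~\ref{lem:towers shadow} and Corollary~\ref{cor:friends to offset}.

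It remains to create exactly one discrepancy. In the segment the coordinates below $\sigma$ return to the pattern $(a_i-2)_{i<\sigma}$ exactly once every $q_\sigma$ steps, the first time $j_0(\bx)$ lying in $[0,q_\sigma)$, and at those times coordinate $\sigma$ runs through consecutive residues mod $8$; a visit to $Z_\sigma$ occurs exactly when that residue is $7$. When coordinate $\sigma+1$ is essentially constant along the segment (the "small $d$" regime, which includes all cases with $\sigma+1\notin E$), I would fix $\bx_\sigma$ so that residue $7$ occurs exactly once inside the segment --- robustly over the bounded ambiguity in $j_0(\bx)$, in the carry into position $\sigma$ at time $j_0$, and in the window length --- and define $G_m$ to slide $\bx_\sigma$ off that value; to guarantee the visit falls strictly inside the segment I would also restrict $\bx_{\sigma-1}$ to an appropriate half of its alphabet (this controls $j_0(\bx)$ and that carry), which is why position $\sigma-1$ appears among the defining indices. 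When $\sigma+1\in E$ and coordinate $\sigma+1$ genuinely varies along the segment, the count of $Z_\sigma$-visits is $\Theta(d/8)$ and one instead creates the unit discrepancy at $Z_{\sigma+1}$: the coordinate-$\sigma+1$ values at the $\Theta(d/8)$ times where the coordinates through $\sigma$ all equal $a_i-2$ form a consecutive block, a visit to $Z_{\sigma+1}$ happening when that value is $7$, and one slides $\bx_{\sigma+1}$ within a range of length $\asymp d/8$ (which, since $a_{\sigma+1}$ is large, still has relative measure bounded below) to add or remove exactly one such value. In every case Lemma~\ref{lem:friends to offset} then confirms that $\bx$ and $G_m(\bx)$ are $\zeta_\bx(m)$-friend with a single discrepancy, and $G_m$ has only moved coordinates $\sigma$ and $\sigma+1$.

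The bulk of the work --- and the main obstacle --- is the previous paragraph: carrying out the finite case split (the sign of $d$; which of $\sigma-1,\sigma,\sigma+1$ lies in $E$; small versus large $|d|$ relative to $a_{\sigma+1}$) and checking in each case both that \emph{exactly one} $Z_k$-count changes and by \emph{exactly} $1$, and that after all the restrictions $\mathcal A_m$ still has measure exceeding $\tfrac1{99}$. The delicate point is the regime where $\sigma_m$ is adjacent to an element of $E$ and $d_{\sigma_m}(m)$ is neither very small nor comparable to $a_{\sigma_m+1}$: there neither a single residue of coordinate $\sigma_m$ nor a wide block of coordinate $\sigma_m+1$ is available cheaply, and one must exploit the structure of $\zeta_{\bzero}(m)$ more carefully, or adjust coordinates $\sigma_m$ and $\sigma_m+1$ jointly, to keep the surviving set of measure at least $\tfrac1{99}$.
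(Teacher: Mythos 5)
In the regime you call ``small $d$'' your plan is essentially the paper's proof: the paper restricts the digits in positions $\sigma_m-1,\sigma_m,\sigma_m+1$ (e.g.\ $x_{\sigma_m-1}=5$, $x_{\sigma_m}$ small, $x_{\sigma_m+1}=4$, or a range of values when a neighbor lies in $E$), lets $G$ change only the digit at $\sigma_m$ to $7$, and checks that the only count that moves along the segment is $\sum_j\one_{Z_{\sigma_m}}(S^j\cdot)$, and that it moves by exactly $1$: counts below $\sigma_m$ agree because the two points agree there, and counts above vanish because digit $\sigma_m+1$ avoids the critical values. Your ``slide $\bx_{\sigma_m}$ off the value giving one visit'' is the same mechanism, mirrored. (One shared soft spot: pinning three digits to single values gives measure only about $1/512<1/99$, so one really must use ranges of digit values to clear the threshold; you assert this without checking it, and the paper is equally terse.)

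The genuine gap is the case you flag at the end together with the fix you propose for it. When $\sigma_m+1=10^j\in E$, the coefficient $d_{\sigma_m}(m)$ can be as large as $a_{\sigma_m+1}/2=j/2$, and your remedy --- creating the unit discrepancy at $Z_{\sigma_m+1}$ by sliding digit $\sigma_m+1$ --- does not work as stated. If $\sigma_m+1=10^{2\ell}$ and $a_{\sigma_m+1}>14$, then $7<a_{\sigma_m+1}/2$, so adding or removing a visit to $Z_{\sigma_m+1}$ at an alignment time (where all lower digits equal $a_i-2$) simultaneously adds or removes a visit to $W_\ell$; the $W$-counts then disagree and the pair is not a friend pair at all. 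Even when $\sigma_m+1$ is an odd power of $10$, a discrepancy located at $Z_k$ with $k\in E$ is useless for the purpose friendship serves: $Z_k\not\subset Y^c$ for $k\in E$, so the $\one_Y$-counts of the two orbit segments coincide, Lemma~\ref{lem:friends to offset} yields offset $0$ rather than a nonzero offset, and the argument of Lemma~\ref{lem:easy friend} no longer produces a contradiction --- which is why every friendship constructed in the paper (here and in Lemma~\ref{lem:rigid friends}) places the single discrepancy at an index outside $E$. On top of this, you explicitly leave the intermediate regime ($d_{\sigma_m}(m)$ neither very small nor comparable to $a_{\sigma_m+1}$) unresolved, so the proposal is not a complete proof. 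In fairness, the paper's written proof also only treats $d_{\sigma_m}(m)\in\{\pm1,\dots,\pm4\}$ and does not spell out the larger coefficients permitted when $\sigma_m+1\in E$; but that case cannot be discharged by moving the discrepancy to an index in $E$, so your extra case analysis, as proposed, would have to be redone with the discrepancy kept at $Z_{\sigma_m}$ (or at some other index outside $E$).
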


\begin{proof}
Assume $\sigma_m\notin E$ and set $k=\sigma_m$.  Recall that $d_k = d_k(m)$ is defined in~\eqref{def:di}.  
Assume that $d_k\in \{1,2,3,4\}$ (the case that $d_k \in \{-1,-2,-3,-4\}$ is analogous). 
 Set $x_k=0$, $x_{k-1}=5$, and $x_{k+1}=4$ for whichever of $k-1$ and $k+1$ do not lie in $E$.  
 Whenever
  $k-1$ or $k+1$ lies in $E$, 
  we  stipulate that 
   $x_{k-1}$ or $x_{k+1} \in (\frac{a_{k-1}}2,a_{k-1}-3)$. 
  Set  $y_k=7$ and $x_j=y_j$ for all other $j$. 
We claim that if $\bf{x},\bf{y}\in Y$ are as above, then they are $\zeta_{\bx} = \zeta_{\bx}(m)$-\friend.
We first check that 
 $$\sum_{j=0}^{\zeta_\bx}\one_{Z_\ell}(S^j\by)=\sum_{j=0}^{\zeta_\bx}\one_{Z_\ell}(S^j\bx)$$ 
 for all $\ell< k$. 
 To see this, note that the inclusion $S^j\bz \in Z_\ell$
 depends only on $z_1,\ldots,z_\ell$ and we have that 
 $x_j=y_j$ for all $j<k$. Likewise if $10^{2\ell}<k$, 
 then $\sum_{j=0}^{\zeta_\bx}\one_{W_\ell}(S^j\by)=\sum_{j=0}^{\zeta_\bx}\one_{W_\ell}(S^j\bx)$. 
 Also note that  because 
  $S^j(\bx)_{k+1}, S^j(\by)_{k+1}\neq a_{k+1}-2$ for all $j\leq \zeta_\bx$,
  we have
 $$\sum_{j=0}^{\zeta_\bx}\one_{Z_\ell}(S^j\by)=\sum_{j=0}^{\zeta_\bx}\one_{Z_\ell}(S^j\bx)=0$$ 
 for all $\ell> k+1$  and 
 $$\sum_{j=0}^{\zeta_\bx}\one_{W_\ell}(S^j\by)=\sum_{j=0}^{\zeta_\bx}\one_{W_\ell}(S^j\bx)=0$$
for all  $10^{2\ell}>k+1$. 
  Now, if $k+1\notin E$, then since $(S^j\bx)_{k+1}, (S^j\by)_{k+1}\neq a_{k+1}-1$ 
 we have 
 $$\sum_{j=0}^{\zeta_\bx}\one_{Z_{k+1}}(S^j\by)=\sum_{j=0}^{\zeta_\bx}\one_{Z_{k+1}}(S^j\bx)=0.$$
If $k+1\in E$, then since 
 $y_{k+1} = x_{k+1} >\frac{a_{k+1}}2$,  
we have 
 $$\sum_{j=0}^{\zeta_\bx}\one_{W_{\ell}}(S^j\by)=\sum_{j=0}^{\zeta_\bx}\one_{W_{\ell}}(S^j\bx)=0,$$
where $10^{\ell}=k+1$.

Lastly, since $\zeta_\bx>\frac 5 8 a_kq_{k-1}$, 
we have that by the condition on the digits $k$ and $k-1$ of $\by$, 
$$\sum_{j=0}^{\zeta_\bx}\one_{Z_k}(S^j\by)=1.$$  
But since $\zeta_\bx<5 q_k$, using that $x_k=1$ we have that  
$(S^j\bx)_k<7$ for all $0\leq j\leq \zeta_\bx$ and so $\sum_{j=1}^{\zeta_\bx}\one_{Z_k}(S^j\bx)=0$. 
  This proves the claim that $\bx$ and $\by$ are $\zeta_{\bx} = \zeta_{\bx}(m)$-\friend\   and $G$ is the 
  bijection taking $\bx$ to $\by$. (That is, changing the $k^{\text{th}}$ entry from 0 to 7.)
  \end{proof}
  
\begin{lem}\label{lem:rigid friends}
If $\sigma_m \in E$, then there exist cylinder sets $K_1,\ldots,K_r$  defined on 
the entries $\sigma_m+1$, $\sigma_m$, and $\sigma_m-1$ 
such that 
\begin{equation}\label{eq:rigid bound}
\nu(\bigcup_{j=1}^r K_j) > \frac 1 2\cdot \frac 1 3\cdot \frac{|d_{\sigma_m}(m)|}{a_{\sigma_m}}\cdot \frac 1 {64}
\end{equation} 
and there exists a measure preserving map  
$G\colon\bigcup_{j=1}^r K_j \to Y \setminus \bigcup_{j=1}^r  K_j$  defined by changing the $\sigma_{m}+1$ entry   such that 
 if $\bx \in \bigcup_{j=1}^r  K_j$, 
then $\bx$ and $G(\bx)$ are $\zeta_{\bx}(m)$-\friend. Moreover, $K_1,\ldots K_r,G(K_1),\ldots G(K_r)$ are disjoint cylinders. 
\end{lem}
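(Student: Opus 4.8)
The plan is to follow the proof of Lemma~\ref{lem:no E friends}, with the roles adjusted to the fact that $\sigma_m\in E$: the single $Z$-count discrepancy will be placed at the position $\sigma_m+1$ (note that $\sigma_m+1\notin E$, so $D_{\sigma_m+1}=Z_{\sigma_m+1}$ is the first nonempty $D_\ell$ with $\ell>\sigma_m$), and the measure preserving map $G$ will act by changing only the coordinate in position $\sigma_m+1$. Writing $k=\sigma_m$ and $d=d_{\sigma_m}(m)$, I would first record, from the greedy definition of the $d_i$ in~\eqref{def:di}, that $1\le d\le a_{\sigma_m+1}/2=4$ and that $\zeta_{\bx}(m)$ differs from $d\,q_k$ by less than a fixed multiple of $q_k$ (uniformly in $\bx$); consequently, along $0\le j\le\zeta_{\bx}(m)$ the coordinate of $S^j\bx$ in position $k$ increases from its initial value by a bounded amount and overflows only boundedly often, the first overflow occurring only after position $k+1$ has read $7$ throughout a full $q_k$-period.

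First I would pin down the cylinders: take $K_1,\dots,K_r$ to be exactly those cylinders with defining indices among $\{k-1,k,k+1\}$ on which the coordinate in position $k+1$ equals $7$, the coordinate in position $k-1$ lies in $\{0,\dots,5\}$, and the coordinate in position $k$ lies in a nonempty set $R\subseteq\{a_k-2-d,\dots,a_k-2\}$ chosen so that, for every choice of the remaining coordinates, the orbit segment $\{S^j\bx:0\le j\le\zeta_{\bx}(m)\}$ passes exactly once through a point whose coordinate in position $k$ is $a_k-2$ and all of whose coordinates in positions $<k$ are $a_i-2$, and does so before the first overflow of position $k$. Pinning position $k-1$ into $\{0,\dots,5\}$ both forces $\bx\notin Z_{k-1}\cup Z_k\cup Z_{k+1}$ at time $0$ and keeps the measure bookkeeping clean. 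That such $K_j$ exist with $\nu(\bigcup_j K_j)$ at least the asserted bound is essentially Lemma~\ref{lem:cylinders containing bad}: that lemma shows the set of $\bx$ for which the orbit segment meets $\bigcup_{\ell\ge k+1}D_\ell$ --- which it can only do through $D_{k+1}=Z_{k+1}$ --- is contained in, and has $\nu$-measure at least $\tfrac1{99}$ of, a union of cylinders in positions $\ge k$; intersecting that cover with the two extra requirements on positions $k+1$ and $k-1$, passing from $\mu$ to $\nu$, and discarding the part lying in $X\setminus Y$ (negligible on each cylinder since $\mu(X\setminus Y)$ is small) leaves a set of $\nu$-measure at least $\tfrac12\cdot\tfrac13\cdot\tfrac{|d_{\sigma_m}(m)|}{a_{\sigma_m}}\cdot\tfrac1{64}$.

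Next I would define $G$ on $\bigcup_j K_j$ by replacing the coordinate in position $\sigma_m+1$ by $0$ and leaving all other coordinates fixed; this is an injective, $\nu$-preserving map whose image is disjoint from $\bigcup_j K_j$ (those cylinders demand a $7$ in position $\sigma_m+1$) and lies in $Y$: for $\bx\in Y$, the point $G(\bx)$ agrees with $\bx$ in all positions $\le\sigma_m$ and hence avoids the same $Z_\ell$ with $\ell\le\sigma_m$ and $W_\ell$ with $10^{2\ell}\le\sigma_m$ as $\bx$, while the value $0$ in position $\sigma_m+1$ excludes it from $Z_{\sigma_m+1}$ and from every $Z_\ell$ with $\ell>\sigma_m+1$ and every $W_\ell$ with $10^{2\ell}>\sigma_m$ (all of which require a $6$ in that position). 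I would then verify that $\bx$ and $G(\bx)$ are $m$-\friend\ (understood, as in the proof of Lemma~\ref{lem:no E friends}, with respect to the index $\zeta_{\bx}(m)$) by running both $S$-orbits along $0\le j\le\zeta_{\bx}(m)$: since $\bx$ and $G(\bx)$ agree in all positions $\le\sigma_m$ and the odometer acts on each position using only the lower positions, $S^j\bx$ and $S^jG(\bx)$ agree in all positions $\le\sigma_m$ for every $j$, so their $Z_\ell$-counts agree for $\ell\le\sigma_m$ and their $W_\ell$-counts agree for $10^{2\ell}\le\sigma_m$; along the orbit the coordinate of $S^j\bx$ in position $\sigma_m+1$ equals $7$ up to the first overflow of position $k$ and lies in $\{0,1,2\}$ afterwards (hence is never $6$), and that of $S^jG(\bx)$ stays in $\{0,1,2,3\}$ (hence is never $6$ or $7$), so neither orbit meets any $Z_\ell$ with $\ell>\sigma_m+1$ or any $W_\ell$ with $10^{2\ell}>\sigma_m$; and $G(\bx)$ never meets $Z_{\sigma_m+1}$, whereas $\bx$ meets it exactly once by the choice of $K_j$. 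Hence all $W_\ell$-counts agree, the $Z_\ell$-counts agree for all $\ell\ne\sigma_m+1$, and they differ by exactly $1$ at $\ell=\sigma_m+1$, which is the definition of $m$-\friend.

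The hard part, which I expect to occupy most of the argument, is justifying the choice of $K_j$: fixing a set $R$ of position-$k$ values so that, uniformly over the unpinned coordinates (not just for a positive fraction of $\bx$), the orbit segment meets $Z_{\sigma_m+1}$ exactly once and before the first overflow of position $k$. This demands control of $\zeta_{\bx}(m)$ sharp enough to play against the position-$k$ dynamics, and $\zeta_{\bx}(m)$ --- unlike over a full $q_k$-period, where Lemma~\ref{lem:same hit} pins down the excursion count into $X\setminus Y$ --- can vary with $\bx$ by an amount comparable to $q_k$ over a partial period. The relevant estimates are, however, exactly those already packaged in Lemmas~\ref{lem:same hit}, \ref{lem:reduction close}, and~\ref{lem:cylinders containing bad}, so the remaining work is to assemble them and to choose $R$ (shrinking it, or handling the extreme values of $d$ by a more delicate version of the argument as in the earlier lemmas) conservatively enough that the uniform statement holds; the loss this incurs is what the factor $\tfrac12$ in the stated bound is there to absorb.
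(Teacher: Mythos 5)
Your architecture is the paper's: flip the $(\sigma_m+1)$-entry between $0$ and $7$, constrain the entries at $\sigma_m$ and $\sigma_m-1$, observe that the two orbits agree in every coordinate $\le \sigma_m$ and that neither orbit ever shows a $6$ (nor, for the point carrying the $0$, a $7$) at position $\sigma_m+1$, so only the $Z_{\sigma_m+1}$-counts can differ. But the step you defer --- that for \emph{every} point of the chosen cylinders the $S$-orbit up to time $\zeta_{\bx}(m)$ meets $Z_{\sigma_m+1}$ exactly once --- is the whole content of the lemma, and the repair you sketch cannot close it. The first visit to $Z_{\sigma_m+1}$ occurs at $S$-time $(a_{\sigma_m}-2-x_{\sigma_m})q_{\sigma_m}+(z-u)$, where $z=\sum_{i<\sigma_m}(a_i-2)q_i\approx\tfrac 67 q_{\sigma_m}$ and $u$ is the value of the block of coordinates below $\sigma_m$, while $\zeta_{\bx}(m)$ can undershoot $|d_{\sigma_m}(m)|\,q_{\sigma_m}$ by roughly $\tfrac 47 q_{\sigma_m}$ (the lower-order digits of $m$ may all be negative). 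With the $(\sigma_m-1)$-entry only required to lie in $\{0,\dots,5\}$, $u$ can be near $0$, so $z-u$ can be close to $q_{\sigma_m}$; already for $|d_{\sigma_m}(m)|=1$ no subset $R\subseteq\{a_{\sigma_m}-2-d,\dots,a_{\sigma_m}-2\}$ then makes the visit happen before time $\zeta_{\bx}(m)$ uniformly over the cylinder, and shrinking $R$ empties it, killing \eqref{eq:rigid bound} as well. The constraint you treat as bookkeeping is exactly the paper's fix: pin the $(\sigma_m-1)$-entry to the single value $5$, forcing $z-u\le 2q_{\sigma_m-1}=\tfrac14 q_{\sigma_m}$, and take $x_{\sigma_m}\in\{a_{\sigma_m}-2,\dots,a_{\sigma_m}-1-\min\{|d_{\sigma_m}(m)|,\tfrac13 a_{\sigma_m}\}\}$; then the unique visit occurs by time $(|d_{\sigma_m}(m)|-\tfrac34)q_{\sigma_m}$ up to negligible errors, which is below $\zeta_{\bx}(m)$ for every point of the cylinder, while a second visit would need about $a_{\sigma_m}$ further increments of the $\sigma_m$-entry and so cannot occur since $|d_{\sigma_m}(m)|\le a_{\sigma_m}/2$.

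Two further corrections. First, $1\le d\le a_{\sigma_m+1}/2=4$ is not the right digit bound: $d_{\sigma_m}(m)$ is the digit attached to $q_{\sigma_m}$ and ranges up to about $a_{\sigma_m}/2$ (the printed range in \eqref{eq:ci} is an index slip); this is why the lemma's bound scales like $|d_{\sigma_m}(m)|/a_{\sigma_m}$ and why the paper truncates the range of $x_{\sigma_m}$ at $\tfrac13 a_{\sigma_m}$. Your argument survives this numerically (the $\sigma_m$-entry still overflows at most once), but the quantitative picture needed for the timing step is the one above, not $d\le 4$. Second, Lemma~\ref{lem:cylinders containing bad} is not the tool for \eqref{eq:rigid bound}: it is a covering statement for the failure set $\BR_1$ of the reduction (cylinders containing it, of measure at most $99$ times its measure), and it gives neither a lower bound for, nor uniformity over, the set you need here. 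The measure bound is read off directly from the explicit cylinder description, with the factor $\tfrac12$ spent on restricting to $\bx,G(\bx)\in Y$ and converting $\mu$ to $\nu$, not on shrinking $R$.
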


\begin{proof}
Let $10^k=\sigma_m$ and assume that $d_{10^k}(m)>0$ (the case that 
$d_{10^k}(m)<0$ is similar). Let  $x_{10^{k}+1}=0$ and $y_{10^k+1}=7$. Let 
$$y_{10^k}\in \bigl\{k-2,k-3,\ldots,k-1-\min\{{d_{10^k}(m)},\frac 1 3 a_k\}\bigr\}$$ and set $y_{10^k-1}=5$. 
Furthermore, set $x_\ell=y_\ell$ for all $\ell \neq 10^k+1$. 
It is straightforward that $\sum^{\zeta_\bx(m)}_{j=0}\one_{Z_{10^{k}+1}}(S^j\bx)=0$ and $\sum^{\zeta_\bx(m)}_{j=0}\one_{Z_{10^{k}+1}}(S^j{\by})=1$. 

 We claim that  
$\one_V(S^j\bx)=\one_V(S^j\by)$ for all $|j|\leq \zeta_\bx(m)$, 
where $V$ is either $Z_\ell$ for $\ell \neq 10^k+1$ or $V$ is 
any $W_\ell$.  To see this,
 for $Z_\ell$ with $\ell<10^k$ and $W_\ell$ with $\ell \leq k$, this 
holds since $\by$ and $\bx$ agree in the relevant entries. 
Furthermore, 
$(S^j\bx)_{10^{k}+1}$ and $(S^j\by)_{10^k+1}$ are not 6 in this range and so we never land in $Z_\ell$ for $\ell>10^k+1$ or in $W_\ell$ for $\ell>k$, 
proving the claim.  
 Assuming $K_j$ is a cylinder set as in the statement, define $G(\bx)$ to change the $10^{k}+1$ entry from $0$ to $7$ (leaving all the other entries unchanged).  Thus $G$ satisfies all of the announced properties. 
 
 Now 
 \begin{multline*}
 \mu(\{\bx\colon x_{10^k}\in\bigl\{k-2,k-3,\ldots,k-1-\min\{{d_{10^k}(m)},\frac 1 3 a_k\}\bigr\}, \\
  x_{10^k+1}=0, \, \text{ and }x_{10^k-1}=5\})
 \geq \frac 1 3 \cdot \frac{|d_{\sigma_m}(m)|}{k} \cdot\frac 1 {64}.
 \end{multline*}
  Considering the set of such $\bx\in Y$ so that $\by\in Y$ as well with $x_i=y_i$ for all $i\neq \sigma_{m}+1=10^k+1$ and $y_{10^k+1}=7$ and (trivially) converting to $\nu$ establishes~\eqref{eq:rigid bound}. 
\end{proof}

\begin{lem}\label{lem:repeat rigid friends}
Assume $r\geq1$, $\varepsilon<\frac 1 {8 \cdot 99}$,  $\mathfrak{H}_0(N,\varepsilon)=\{(i,[0,1],0)\}$, $(n,A,\rho)\in \mathfrak{H}_{r}(N,\varepsilon)$  and $\sigma_n\in E$. 
We can choose $B_1,\ldots,B_\ell \subset A$ to be cylinders whose defining indices are at least $\sigma_{n-1}$ such that $\nu(\bigcup_{j=1}^{\ell} B_j)> \frac 1 {6 \cdot 64} \cdot  \frac{|d_{\sigma_n}(n)|}{a_{\sigma_n}}$ and such that there exists $\tilde{B}\subset \bigcup_{j=1}^{\ell} B_j$ with 
\begin{equation}\label{eq:most together}\nu(\tilde{B})\geq (1-4\cdot 99\rho)\nu(\bigcup_{j=1}^{\ell} B_j), 
\end{equation}
$G\colon\tilde{B} \to Y$ a measure preserving injection, defined by changing the $\sigma_n+1$ position, and thus $G(\tilde{B})\subset A$, such that $\bx$ and $G(\bx)$ are $\zeta_{\bx}(i)$-\friend \, and $x_{j}=G(\bx)_{j}$ for all ${j}<\sigma_n$.
\end{lem}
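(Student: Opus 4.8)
The plan is to obtain the friends at the single scale $\sigma_n$ directly from Lemma~\ref{lem:rigid friends}, and then to promote the relation ``being $n$-\friend'' to ``being $\zeta_\bx(i)$-\friend'' by walking back down the chain of reductions from Definition~\ref{def:reduce} that produced $(n,A,\rho)$ out of $(i,[0,1],0)$, using Lemma~\ref{lem:reduction close} together with Lemma~\ref{lem:cylinders containing bad} to control the measure of the set where this recoding breaks down.

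First I would apply Lemma~\ref{lem:rigid friends} with $m=n$ (legitimate since $\sigma_n\in E$). This produces cylinder sets $K_1,\dots,K_{\ell_0}$ defined only on the entries $\sigma_n-1,\sigma_n,\sigma_n+1$, with
$$\nu\Bigl(\bigcup_{j=1}^{\ell_0}K_j\Bigr)>\tfrac12\cdot\tfrac13\cdot\tfrac{|d_{\sigma_n}(n)|}{a_{\sigma_n}}\cdot\tfrac1{64}=\tfrac1{6\cdot64}\cdot\tfrac{|d_{\sigma_n}(n)|}{a_{\sigma_n}},$$
together with a measure preserving map $G_0$ changing only the $\sigma_n+1$ entry for which $\bx$ and $G_0(\bx)$ are $n$-\friend. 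Since each reduction in Definition~\ref{def:reduce} strictly lowers the top scale, every defining index of $A$ is a power of $10$ strictly larger than $\sigma_n+1$, so $A$ and each $K_j$ constrain disjoint coordinates; I may therefore refine each $K_j$ by the cylinder constraints defining $A$, obtaining finitely many cylinders $B_1,\dots,B_\ell\subset A$ whose defining indices are those stipulated (namely among $\sigma_n-1,\sigma_n,\sigma_n+1$ and the indices of $A$), with $\bigcup_jB_j$ still of the required measure, and with $G_0$ still mapping $\bigcup_jB_j$ into $Y$ by changing the $\sigma_n+1$ entry.

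The substantive step is to replace ``$n$-\friend'' by ``$\zeta_\bx(i)$-\friend'' on a large subset. Write the chain of modifications $(i,[0,1],0)=(j_0,A_0,\rho_0)\to(j_1,A_1,\rho_1)\to\cdots\to(n,A,\rho)$, occurring at strictly decreasing top scales lying in $E$, with accumulated error $\rho=\sum_s\tfrac{|d_{\sigma_{j_s}}(j_s)|}{a_{\sigma_{j_s}}}$. Applying Lemma~\ref{lem:reduction close} at the $s$-th step (to the cylinders carrying the previous step's exceptional set, which by Lemma~\ref{lem:cylinders containing bad} has measure within a factor $99$ of a union of cylinders with indices $\ge\sigma_{j_s}$) and composing these estimates, I obtain a subset $\tilde B\subset\bigcup_jB_j$ with $\nu(\tilde B)\ge(1-4\cdot99\rho)\,\nu(\bigcup_jB_j)$ on which $(T^n\bx)_k=(T^i\bx)_k$ for every coordinate $k<\sigma_n$. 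On $\tilde B$, along the orbit segment up to the time corresponding to $T^i\bx$, the visits of $\bx$ to each $Z_k$ with $k<\sigma_n$ and to each $W_\ell$ agree with the counts up to the time corresponding to $T^n\bx$, since these counts are governed by coordinates $\le\sigma_n$; because $G_0$ alters only coordinate $\sigma_n+1$ and leaves all coordinates $\le\sigma_n$ fixed, the corresponding count vectors of $\bx$ and $G_0(\bx)$ still differ in exactly the one coordinate ($\sigma_n+1$) and by exactly $1$, while all $W_\ell$-counts agree. This is precisely the assertion that $\bx$ and $G_0(\bx)$ are $\zeta_\bx(i)$-\friend, and it also yields $x_k=G_0(\bx)_k$ for all $k<\sigma_n$; taking $G$ to be $G_0|_{\tilde B}$ finishes the proof. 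The hypothesis $\varepsilon<\frac1{8\cdot99}$ guarantees that when $\rho\le\varepsilon$ the factor $1-4\cdot99\rho$ exceeds $\tfrac12$, so the conclusion is non-vacuous.

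I expect the main obstacle to be the bookkeeping in the third paragraph: Lemma~\ref{lem:reduction close} is a single-step statement about cylinders defined at positions above the current top scale, and iterating it along the whole chain requires verifying at each step that the exceptional set remains a union of cylinders at high indices (so that Lemma~\ref{lem:cylinders containing bad} applies and the factor-$99$ comparisons compose cleanly) and that the accumulated error is \emph{exactly} $\rho$ rather than merely $O(\rho)$, so that the constant $4\cdot99$ emerges as stated. A secondary point is checking that the even-power splittings of Definition~\ref{def:reduce}, which intersect $A$ with cylinders at coordinates $>\sigma_n+1$, truly do not interfere with the friends relation, which only ever involves the coordinates $\sigma_n-1,\sigma_n,\sigma_n+1$.
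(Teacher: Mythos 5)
Your first two steps (apply Lemma~\ref{lem:rigid friends} with $m=n$, intersect the resulting cylinders with $A$ to get $B_1,\ldots,B_\ell$) coincide with the paper's proof, but the substantive step — promoting the friendship from the scale of $n$ to the scale $\zeta_\bx(i)$ — is where your argument breaks. Your justification only treats $Z_k$ with $k\le\sigma_n$ and $W_\ell$ with $10^{2\ell}\le\sigma_n$, i.e.\ the sets governed by coordinates that $G_0$ leaves untouched, and the assertion that the visit counts of $\bx$ up to time $\zeta_\bx(i)$ ``agree with'' the counts up to time $\zeta_\bx(n)$ is simply false: these are orbit segments of very different lengths. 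The genuine difficulty is the sets $Z_k$ with $k>\sigma_n$ and $W_\ell$ with $10^{2\ell}>\sigma_n$. Over the long horizon $\zeta_\bx(i)$ the coordinate $\sigma_n+1$, where $\bx$ and $G_0(\bx)$ differ, turns over many times, both orbits enter $Z_{\sigma_n+1}$ and the higher-index $Z$'s and $W$'s repeatedly, and one must show that the \emph{differences} of these counts between $\bx$ and $G_0(\bx)$ are the same at horizon $\zeta_\bx(i)$ as at the short horizon, i.e.\ that still exactly one $Z$-count differs by exactly one and all $W$-counts agree. This is precisely the content of the identities \eqref{eq:n same} and \eqref{eq:n same2} in the paper, proved there by a floor/ceiling counting argument split into the cases $k<n'$ and $k>n+1$, where $(n',A,\rho')\in\mathfrak{H}_{r-1}(N,\varepsilon)$ is the predecessor triple and $\sigma_{n'}>\sigma_n+1$; nothing in your write-up plays this role. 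Note also that this argument needs \emph{both} $\bx\in\GR_r$ and $G_0(\bx)\in\GR_r$ (the paper takes $\tilde B=\bigcup_j B_j\cap \GR_r\cap G^{-1}(\GR_r)$), whereas your $\tilde B$ only imposes the coding condition on $\bx$ itself.

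A secondary, smaller gap is the measure estimate \eqref{eq:most together}: it is a bound relative to the very small set $\bigcup_j B_j$ (of measure comparable to $|d_{\sigma_n}(n)|/a_{\sigma_n}$), so one cannot simply subtract the global measure of the exceptional set $\BR_r$, which could a priori be concentrated on $\bigcup_j B_j$. The paper handles this by showing $\BR_r$ can be covered, at the cost of the factor $99$ from Lemma~\ref{lem:cylinders containing bad}, by cylinders whose defining positions are at least $10^{\log_{10}(\sigma_n)+1}$, and then invoking the approximate independence \eqref{eq:indep} of such cylinders from the cylinders $B_j$, which are defined only at positions $\sigma_n-1,\sigma_n,\sigma_n+1$. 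You cite Lemma~\ref{lem:cylinders containing bad} but never invoke this independence, and without it the claimed proportional bound $(1-4\cdot 99\rho)\nu(\bigcup_j B_j)$ does not follow.
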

\begin{proof}
Let $\hat{B}_1,\ldots,\hat{B}_{\ell}$ be the cylinders and $\hat{G}$ be the function given by Lemma~\ref{lem:rigid friends}  
applied with $m=n$. 
Set $B_i=\hat{B}_i\cap A$ and let $\tilde{B}$ be the set of points in $\bigcup_{j=1}^{\ell} B_j \cap \GR_r\cap G^{-1}(\GR_r)$. 
Let $(n',A,\rho')\in \mathfrak{H}_{r-1}(N,\varepsilon)$ be the predecessor of $(n,A,\rho)$.
We claim that because $\sigma_{n'}>\sigma_n+1$, if $\bx,\by \in \GR_r $  then for all $k$. 
  \begin{equation}\label{eq:n same}
  \sum_{j=0}^{\zeta_{\bx}(n)}\one_{Z_k}(S^j\bx)-\sum_{j=0}^{\zeta_{\bx}(n)}\one_{Z_{k}}(S^j\by)=\sum_{j=0}^{\zeta_{\bx}(i)}\one_{Z_k}(S^j\bx)-\sum_{j=0}^{\zeta_{\bx}(i)}\one_{Z_{k}}(S^j\by).
  \end{equation} 
  We first 
consider the case of $k< \sigma_{n'}$. The sums on the left hand side of~\eqref{eq:n same} are either $\lfloor \frac n {q_k}\rfloor$ or $\lceil \frac n {q_k} \rceil$, 
while 
on the right hand side they are either 
$\lfloor \frac i {q_k}\rfloor$ or $\lceil \frac i {q_k} \rceil$; 
the choice of $\lfloor\cdot\rfloor$ or $\lfloor\cdot\rfloor+1=\lceil\cdot\rceil$
depends on comparing $x_j$ and $(S^n\bx)_j$ for 
the left sums on each side,  and similarly $y_j$ and $(S^n\by)_j$ for the right sums on each side, for $j\leq k$. By our assumption that $\bx, \, \by\in \GR_r$, we have that $(S^n\bx)_j$ is the same as $(S^i\bx)_j$ for all $j\leq k< n'$ and so whether the first sum on the left hand side is the floor function or one more is the same for the first sum on the right hand side. The case of $\by$ is identical. 

Next consider the case of $k>\sigma_{n+1}$ (since $\sigma_n+1<\sigma_{n'}$ this covers $k\geq \sigma_{n'}$). 
We have that
$\sum_{j=0}^{\zeta_{\bx}(n)}\one_{Z_k}(S^j\bx)=\lfloor \frac{\zeta_\bx(n)}{q_k}\rfloor$ and $\sum_{j=0}^{\zeta_{\bx}(n)}\one_{Z_{k}}(S^j\by)=\lfloor \frac{\zeta_{\by}(n)}{q_k}\rfloor$ 
by the argument in 
Lemma~\ref{lem:rigid friends}.  (This argument only depends on the  cylinders with defining entries in  positions $\sigma_{n}-1$,  $\sigma_n$, and $\sigma_{n}+1$ that define the cylinders in the proof of Lemma~\ref{lem:rigid friends} and these entries are the same for $\hat{B}_i$.) 
For the right hand side, as above $(S^n\bx)_j=(S^i\by)_j$ for $j\in \{n,n+1\}$, 
so whether we take the floor or ceiling in the summands on the right hand side depends on $j>n+1$. These are the same for $\bx$ and $\by$ by construction, 
giving~\eqref{eq:n same}. So in the left hand side both summands take the floor and for the right hand side they either both take the floor or both take the ceiling, establishing~\eqref{eq:n same}. 

A similar computation yields
  \begin{equation}\label{eq:n same2}
  \sum_{j=0}^{\zeta_{\bx}(n)}\one_{W_\ell}(S^j\bx)-\sum_{j=0}^{\zeta_{\bx}(n)}\one_{W_{\ell}}(S^j\by)=\sum_{j=0}^{\zeta_{\bx}(i)}\one_{W_\ell}(S^j\bx)-\sum_{j=0}^{\zeta_{\bx}(i)}\one_{W_{\ell}}(S^j\by).
  \end{equation} 
To complete the lemma we 
are left with establishing~\eqref{eq:most together}. To check this, 
we claim that it suffices to show that $\GR_r$ 
 can be chosen to be unions of cylinders defined by entries with positions at least $10^{\log_{10}(\sigma_n)+1}$.   
This follows from the following:

\noindent
\textbf{Claim.} For all $\delta>0$, there exists $k\in\N$ such that if $C_1,C_2$ are cylinders with the smallest entry defining $C_2$ at least $k$ larger than the largest entry defining $C_1$, 
then 
   \begin{equation}\label{eq:indep}
   \frac{\nu(C_1\cap C_2)}{\nu(C_1)\nu(C_2)}\in [1-\delta,1+\delta].
   \end{equation}
  
\noindent
\textit{Proof of Claim:} To check that the claim holds, let $L$ be the smallest entry defining $C_2$. 
Let $U_1,\ldots.,U_m$ be the cylinders given by proscribing the first $L-1$ terms that intersect $Y$.   All but one of these cylinders are also a cylinders in $X$, and so they all have the same $\nu$ measure.  If  $U_i$ is the one cylinder set in $Y$ that is not also a cylinder set in $X$, then  $U_i$ has smaller $\nu$ measure than the other $m-1$ cylinders.  Assume $U_1,\ldots,U_{m'}$ are those cylinders that are contained in $C_1$. If $i \in \{1,\ldots,m'\}$, then $\nu(C_1\cap C_2)<\nu(C_1)\nu(C_2)$, but it is at least 
   $\frac{m'-1}{m'}\nu(C_1)\nu(C_2)$. Similarly, if $i \notin \{1,\ldots,m'\}$, then 
   $\nu(C_1\cap C_2)<\nu(C_1)\nu(C_2)$, but it is at most $\frac{m}{m-1}\nu(C_1)\nu(C_2)$.
 Since $\tilde{B}$ is a union of the sets $B_i$ that pairwise satisfy this 
 condition, the claim follows. \hfill $\qed$ 
 
 Finally we check that the sufficient condition, meaning that $\GR_r$ can be chosen to be unions of cylinders defined by entries with positions at least $10^{\log_{10}(\sigma_n)+1}$,  holds.  Namely, 
   by iterating  Lemma~\ref{lem:cylinders containing bad} and  using the assumption that $\varepsilon<\frac 1 {8\cdot 99}$, we obtain the complement of the cylinders that cover $\mathcal{P}$  and so the $\nu$ measure of these cylinders contained in $\mathcal{Q}_r$ is at least $\frac 1 2 $;
   indeed, by Lemmas~\ref{lem:reduction close} and~\ref{lem:cylinders containing bad} the $\nu$ measure of the cylinders that cover $\mathcal{P}$ are at most $4 \cdot 99\rho$. 
    \end{proof}

\begin{lem} \label{lem:reduce no E friends}
Assume  $r\geq 1$,  $\varepsilon<\frac 1 {8\cdot 99}$,  $\mathfrak{H}_0(N,\varepsilon)=\{(i,[0,1],0)\}$,   $(n,A,\rho)\in \mathfrak{H}_r(N,\varepsilon)$,
 and $\sigma_n \notin E$. Let $(n',A',\rho') \in \mathfrak{H}_{r-1}(N,\varepsilon)$ so that $A\subset A'$. 
 Then there exist cylinders $B_1,\ldots,B_\ell \subset A'$ 
 defined by positions whose entries are at least $\sigma_n-2$, 
 such that there exists
 $\tilde{B}\subset \bigcup_{j=1}^\ell  B_j$ with $\nu(\tilde{B})> \frac 1 {999}\nu(A) \geq \frac 1 2\cdot \frac 1 {999}\nu(A') $
   and a map $G\colon\tilde{B}\to Y$
   such that  
  $\bx$ and $G(\bx)$ are $\zeta_{\bx}(i)$-\friend \, for all $\bx \in \bigcup_{j=1}^\ell B_j$. Moreover, $x_j=G(\bx)_j$ for all $j<\sigma_n$. Thus $G(\tilde{B})\subset A'$ as well. 
 \end{lem}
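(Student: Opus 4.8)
The plan is to regard this as the ``$\sigma_n\notin E$'' counterpart of Lemma~\ref{lem:repeat rigid friends}. First I would use the explicit construction of Lemma~\ref{lem:no E friends}, applied with $m=n$, to manufacture a measure-preserving bijection realizing the $\zeta_{\bx}(n)$-\friend\ relation on a large union of cylinders; then I would cut down to the ``good'' set $\GR_r$ and upgrade the $\zeta_{\bx}(n)$-\friend\ relation to a $\zeta_{\bx}(i)$-\friend\ relation, exactly as in the passage through~\eqref{eq:n same} and~\eqref{eq:n same2} in the proof of Lemma~\ref{lem:repeat rigid friends}.

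Concretely, Lemma~\ref{lem:no E friends} (with $m=n$, using $\sigma_n\notin E$) supplies cylinders $\hat{B}_1,\dots,\hat{B}_\ell$ whose defining indices lie in $\{\sigma_n-1,\sigma_n,\sigma_n+1\}$, together with a measure-preserving bijection $\hat{G}$ of $\hat{\mathcal{A}}_n:=\bigcup_j\hat{B}_j$ onto its image, changing only the coordinate in position $\sigma_n$, with $\mu(\hat{\mathcal{A}}_n)>\tfrac1{99}$, such that $x_j=\hat{G}(\bx)_j$ for every $j<\sigma_n$ and $\bx$ and $\hat{G}(\bx)$ are $\zeta_{\bx}(n)$-\friend. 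Because $A\subset A'$ and $A'$ is cut out of $Y$ only by prescribing coordinates in positions of $E$ lying well above $\sigma_n+1$, the sets $B_j:=\hat{B}_j\cap A'$ are cylinders contained in $A'$ with all defining indices at least $\sigma_n-2$, and a routine independence estimate (Claim~\eqref{eq:indep} from the proof of Lemma~\ref{lem:repeat rigid friends}) gives $\nu(\bigcup_jB_j)\gtrsim\tfrac1{99}\nu(A')$. Finally I would put $\tilde{B}$ equal to the set of $\bx\in\bigcup_jB_j$ such that both $\bx$ and $\hat{G}(\bx)$ lie in the component $Q_r(n,A,\rho)$ of $\GR_r$, and take $G:=\hat{G}|_{\tilde{B}}$.

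The heart of the matter is the transfer of the \friend\ relation from time $n$ to time $i$. For $\bx\in\tilde{B}$ and $\by:=G(\bx)$ we have $\bx,\by\in\GR_r$ through the triple $(n,A,\rho)$, so $(T^n\bx)_j=(T^i\bx)_j$ and $(T^n\by)_j=(T^i\by)_j$ for all $j\le\sigma_n$. Arguing exactly as for~\eqref{eq:n same} and~\eqref{eq:n same2}, one obtains, for every $k$,
$$\sum_{j=0}^{\zeta_{\bx}(n)}\one_{Z_k}(S^j\bx)-\sum_{j=0}^{\zeta_{\bx}(n)}\one_{Z_{k}}(S^j\by)=\sum_{j=0}^{\zeta_{\bx}(i)}\one_{Z_k}(S^j\bx)-\sum_{j=0}^{\zeta_{\bx}(i)}\one_{Z_{k}}(S^j\by),$$
together with the analogous identity with $W_\ell$ in place of $Z_k$: the point is that each partial count of $Z_k$ (or $W_\ell$) along $S$-orbits is, by Lemma~\ref{lem:same hit}, determined by the first $k$ coordinates of $\bx$ (resp.\ $\by$) together with those of $S^n\bx$ (resp.\ $S^n\by$), which on $\GR_r$ coincide with the first $k$ coordinates of $S^i\bx$ (resp.\ $S^i\by$) once $k\le\sigma_n$, and this is carried out with the standard case split on the position $k$ relative to $\sigma_n$ (where $\hat{G}$ acts) and to the index of the predecessor triple (below which $A$ and $A'$ already agree). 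Plugging these identities into the definition of \friend\ turns the $\zeta_{\bx}(n)$-\friend\ relation into a $\zeta_{\bx}(i)$-\friend\ relation, and the coordinate agreement $x_j=G(\bx)_j$ for $j<\sigma_n$ is inherited verbatim from Lemma~\ref{lem:no E friends}.

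It then remains to estimate $\nu(\tilde{B})$. The only portion of $\bigcup_jB_j$ removed in forming $\tilde{B}$ is its intersection with $\BR_r$ and with $\hat{G}^{-1}(\BR_r)$; since $\varepsilon<\tfrac1{8\cdot99}$ we have $\rho\le\varepsilon$, so Lemmas~\ref{lem:reduction close} and~\ref{lem:cylinders containing bad}, applied as at the end of the proof of Lemma~\ref{lem:repeat rigid friends} (in the form~\eqref{eq:most together}), show that $Q_r(n,A,\rho)$ has measure at least $(1-4\cdot99\,\rho)\,\nu(A)\ge\tfrac12\nu(A)$, and the same bound survives after pulling back by the measure-preserving $\hat{G}$; combining with $\nu(\bigcup_jB_j)\gtrsim\tfrac1{99}\nu(A')\ge\tfrac1{99}\nu(A)$ yields $\nu(\tilde{B})>\tfrac1{999}\nu(A)$. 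The step I expect to be the real obstacle is this \friend\ transfer: one must verify, block of coordinates by block of coordinates, that replacing the time $n$ by the time $i$ shifts each count $\sum\one_{Z_k}(S^j\,\cdot)$ and $\sum\one_{W_\ell}(S^j\,\cdot)$ by a quantity that is the same whether evaluated at $\bx$ or at $\by$; the remaining ingredients — the explicit choice of the $B_j$, the near-disjointness of the relevant defining positions, and the measure bookkeeping — are routine reworkings of computations already done for Lemmas~\ref{lem:repeat rigid friends} and~\ref{lem:no E friends}.
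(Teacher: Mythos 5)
Your construction follows the paper's proof in the generic case, but it misses exactly the case that the lemma's statement (with $B_1,\ldots,B_\ell\subset A'$ rather than $\subset A$) is designed to accommodate, and there your argument fails. Suppose the reduction producing $(n,A,\rho)$ from its predecessor $(n',A',\rho')$ took place at $\sigma_{n'}=\sigma_n+1=10^{2k}$, so that $A=A'\cap\bigcup_{\ell<a_{10^{2k}}/2}\mathcal{C}_{10^{2k}}(\ell)$, i.e.\ $A$ imposes $x_{\sigma_n+1}<\frac{a_{\sigma_n+1}}2$. The cylinders $\hat B_1,\ldots,\hat B_\ell$ produced by Lemma~\ref{lem:no E friends} with $m=n$ have defining indices $\sigma_n-1,\sigma_n,\sigma_n+1$, and because $\sigma_n+1\in E$ they stipulate $x_{\sigma_n+1}\in(\frac{a_{\sigma_n+1}}2,a_{\sigma_n+1}-3)$. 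Hence $\hat B_j\cap A=\emptyset$ for every $j$. Since your $\tilde B$ is contained in $Q_r(n,A,\rho)\subset A$, it is empty in this situation; in particular your accounting that ``the only portion of $\bigcup_j B_j$ removed in forming $\tilde B$ is its intersection with $\BR_r$ and with $\hat G^{-1}(\BR_r)$'' is incorrect, because requiring membership in $Q_r(n,A,\rho)$ also imposes the constraint of $A$ at position $\sigma_n+1$, which here clashes with the constraint of the $\hat B_j$ at the same position. The independence estimate~\eqref{eq:indep} is unavailable precisely because the defining indices overlap, and the bound $\nu(\tilde B)>\frac1{999}\nu(A)$ cannot be extracted from your construction in this configuration.

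The paper treats this overlapping case separately: it works inside $A'$ (whose defining indices all exceed $\sigma_n+1$) and uses the \emph{other} descendant $(\tilde n,\tilde A,\tilde\rho)$ of $(n',A',\rho')$, the one cut out by $x_{10^{2k}}\geq\frac{a_{10^{2k}}}2$, which does meet the cylinders $\hat B_1,\ldots,\hat B_\ell$ in proportional measure; the transfer of the $\zeta_{\bx}(i)$-\friend\ relation is then run through the corresponding good set for $(\tilde n,\tilde A,\tilde\rho)$ by the same computation as~\eqref{eq:n same} and~\eqref{eq:n same2}. When $\sigma_{n'}>\sigma_n+1$ your argument (Lemma~\ref{lem:no E friends} with $m=n$, intersection with $A$, removal of $\BR_r\cup\hat G^{-1}(\BR_r)$ controlled by Lemma~\ref{lem:cylinders containing bad}, and the friend transfer via $\GR_r$) coincides with the paper's first case and is fine; to complete the proof you must add the case split on $\sigma_n=\sigma_{n'}-1$ and run the construction through the sibling triple there.
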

 \begin{proof} 
We first prove the statement under the assumption that $\sigma_n\neq \sigma_{n'}-1$.  In this case, 
 let $\hat{B}_1,\ldots,\hat{B}_\ell$ be the cylinders and let $\hat{G}\colon \hat{A}\to\hat{B} $ be the map given by Lemma~\ref{lem:no E friends} for $m=n$. 
 Let $B_j=\hat{B}_j \cap A$ and 
 $\tilde{B}=\bigcup_{j=1}^\ell  B_j \cap Q_r \cap \hat{G}^{-1}Q_r$. 

 Repeating the proof used to derive~\eqref{eq:indep}, we obtain cylinders defined in entries at most $\sigma_{n-2}$, with the entry before the last place defining the cylinders in $B_j$ (and also smaller than the cylinders defining $Q_r$ and $A$). 
As in that proof, one of these cylinders differs from the cylinder with the same defining entries in $X$. On all the other cylinders, $B_j$ intersects $A \cap Q_r$ as expected and the lemma follows using an argument 
analogous to the proof of~\eqref{eq:indep}.  

 Now we treat the remaining case, $\sigma_{n}=\sigma_{n'}-1$ and the largest entry of the cylinders defining $B_j$ overlaps with the smallest entry of the cylinders defining $A$. In this case, 
we consider $A'$ whose defining entries are all larger than $\sigma_n$ (they are at least $10^{k+1}$ where the smallest entry defining $A$ is $10^k$). Then $(n',A',\rho')$ has two descendants in $\CH_r(N,\varepsilon)$, $(n,A,\rho)$ and $(\tilde{n},\tilde{A},\tilde{\rho})$. One of these is $A \cap \{\bx\colon x_{10^k}\geq \frac k 2\}$ and so by the definition of $B_j$ in Lemma~\ref{lem:no E friends} has nonempty intersection with the cylinders $\hat{B}_1,\ldots, \hat{B}_\ell$. 
The proof then follows as above, via the same arguments used to conclude the proof of Lemma~\ref{lem:repeat rigid friends}.\end{proof}

\subsection{Restricting factors}\label{sec:restrict factors} In this section, 
we develop our main criteria to rule out factors, Propositions~\ref{prop:no friends} and~\ref{prop:norig} and Corollary~\ref{cor:to quote}. Morally, Proposition~\ref{prop:no friends} and Corollary~\ref{cor:to quote} 
rely on the assumption that the factor map is not to the 1 point system, while Proposition~\ref{prop:norig} relies on the assumption that the factor map is not an isomorphism, though for technical reasons it is helpful not to disambiguate these situations (in particular the proof of Proposition~\ref{prop:norig} uses Proposition~\ref{prop:no friends}). 

For approximating, we make use of a metric giving rise to the strong operator topology on $L^2(\nu)$. 
While any such metric suffices for our purposes, it is convenient 
to choose one that simplifies the computations:
\begin{notation*}  Let $\mathcal B = \mathcal B(L^2(\nu))$ denote the set of continuous linear operators on $L^2(\nu)$ and 
let $\{f_i\}_{i=1}^{\infty}$ be an orthonormal basis for $L^2(\nu)$ such 
that $\|f_i\|_{\infty}=1$ for all $i$.  
Set $D\colon \mathcal{B}\times \mathcal{B}
\to [0,\infty)$ 
to be the metric defined by 
\begin{equation}
\label{eq:def-D}
D(U,V)=\sum_{i=1}^\infty 2^{-i}\|Uf_i-Vf_i\|_2.
\end{equation}
\end{notation*}
Note that restricting $D$ to the set (the choice of $10$ is any arbitrary positive real)
$$\{(U,V)\in \mathcal{B}\times\mathcal{B} \colon \|U\|_{\op}+\|V\|_{\op}\leq 10\}
$$
endows this set 
with the strong operator topology.

Recall that the notation $\mathcal{F}_i(N,\varepsilon)$ is introduced in Definition~\ref{def:frak-sets}.
\begin{prop} \label{prop:no friends} 
Assume that $(Y, \nu,T)$  has a non-trivial factor with associated factor map $P\colon Y\to Z$ and let  
$F\colon L^2(\nu)\to L^2(\nu)$ be the Markov operator defined by
 $P$ and 
$\underset{k \to \infty}{\lim}\, D(\sum_i \alpha_i^{(k)} U_{T^i} ,F)=0$ where $\alpha_i^{(k)}\geq0$ for all $i,k$ and $\sum_i \alpha_i^{(k)}=1$ for all $k$.  
Then for all $\varepsilon>0$, 
there exists $N_0$ such that for all $N>N_0$ and all large enough $k$, 
\begin{equation}\label{eq:reduce}
 \sum_{i}\alpha_i^{(k)} \sum_{\{(n,\beta,\rho)\in \mathcal{F}_{i}(N,\varepsilon)\colon \sigma_n>N\}}\beta<\varepsilon. 
\end{equation}
\end{prop}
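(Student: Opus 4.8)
The plan is to combine Lemma~\ref{lem:easy friend} with the recoding machinery of Lemmas~\ref{lem:reduction close}, \ref{lem:repeat rigid friends}, and~\ref{lem:reduce no E friends} to show that whenever a power $T^n$ with $\sigma_n>N$ contributes non-negligibly to the convex combination representing $F$, one can extract a ``friend'' bijection of definite measure, contradicting Lemma~\ref{lem:easy friend}. Concretely, fix $\varepsilon>0$, and let $\hat\varepsilon$ be the constant produced by Lemma~\ref{lem:easy friend} applied to the factor map $P$; choose $N_0$ at least as large as the $N_0(F)$ of that lemma and large enough that various error terms below are controlled. I would then argue by contradiction: suppose~\eqref{eq:reduce} fails for some $N>N_0$ along a subsequence of $k$, so that $\sum_i \alpha_i^{(k)}\sum_{\{(n,\beta,\rho)\in\mathcal F_i(N,\varepsilon):\sigma_n>N\}}\beta\geq\varepsilon$. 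The first step is to unwind the definition of $\mathcal F_i(N,\varepsilon)$: by Definition~\ref{def:reduce} and Definition~\ref{def:frak-sets}, each index $i$ with $\sigma_i>N$ generates, via the reduction process, a finite collection of triples $(n,A,\rho)\in\mathfrak F_i(N,\varepsilon)$, where $n$ is obtained from $i$ by successively subtracting off the terms $d_{\sigma_j}(j)r_{\sigma_j}$ (with a $+d_{\sigma_j}(j)$ correction on the even branch), $A$ is the surviving cylinder set, and $\rho$ accumulates the error $\sum |d_{\sigma_j}(j)|/a_{\sigma_j}$. Since the reduction only stops when $\sigma_n\notin E$, or $n=0$, or $\sigma_n\le N$, or $\rho>\varepsilon$, the hypothesis $\sigma_n>N$ together with $\rho\le\varepsilon$ for a non-negligible mass forces a large fraction of this mass onto terminal triples with $\sigma_n\notin E$ and $\rho\le\varepsilon$.

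The second step is to convert each such terminal triple into a friend bijection. By Lemma~\ref{lem:reduction close}, the reduction step at each stage changes $T^{\cdot}\bx$ only on a set of relative measure $4|d_{\sigma_j}(j)|/a_{\sigma_j}$ (within the relevant cylinder), so by a union bound, off a set of measure at most $4\rho\cdot\nu(A)\le 4\varepsilon\nu(A)$ one has $(T^i\bx)_j=(T^n\bx)_j$ for all $j<\sigma_n$; this is exactly the statement that $A\cap\mathcal Q_r$ captures most of $A$, and it is what Lemmas~\ref{lem:repeat rigid friends} and~\ref{lem:reduce no E friends} are designed to exploit. Since $\sigma_n\notin E$, Lemma~\ref{lem:no E friends} gives the basic friend bijection $G_n\colon\mathcal A_n\to\mathcal B_n$ with $\mathcal A_n$ a union of cylinders defined on $\sigma_n-1,\sigma_n,\sigma_n+1$ and $\mu(\mathcal A_n)>\tfrac1{99}$; intersecting with $A$ and with $\mathcal Q_r\cap\hat G^{-1}\mathcal Q_r$, and using the asymptotic independence of far-separated cylinders (the Claim in the proof of Lemma~\ref{lem:repeat rigid friends}, equation~\eqref{eq:indep}), Lemma~\ref{lem:reduce no E friends} produces a set $\tilde B$ of measure $>\tfrac1{999}\nu(A)$ and a measure-preserving map $G\colon\tilde B\to Y$ such that $\bx$ and $G(\bx)$ are $\zeta_\bx(i)$-\friend\ and agree on all coordinates $<\sigma_n\ge$ (in particular on all coordinates $\le N$). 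In other words, $i\in\mathcal H_{N,\varepsilon'}$ for a suitable $\varepsilon'$ proportional to $\tfrac1{999}$ times the mass that reduces to a terminal $\sigma_n\notin E$ triple, uniformly over the relevant $i$.

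The third step is bookkeeping: the amount of $\alpha^{(k)}$-mass sitting on indices $i$ for which this extraction succeeds is, by the contradiction hypothesis and the first step, at least some fixed positive fraction of $\varepsilon$; hence $\sum_{j\in\mathcal H_{N,\varepsilon'}}\alpha_j^{(k)}$ is bounded below by a constant depending only on $\varepsilon$ (and on the fixed lower bounds $\tfrac1{99}$, $\tfrac1{999}$), for all large $k$. Choosing $\varepsilon$ at the outset small enough (relative to $\hat\varepsilon$) that this constant exceeds $\hat\varepsilon$ contradicts the conclusion of Lemma~\ref{lem:easy friend}. Therefore~\eqref{eq:reduce} holds. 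The main obstacle, and where the bulk of the care is needed, is the uniformity in the first and third steps: one must check that the friend bijections extracted from \emph{different} terminal triples $(n,A,\rho)$ arising from the \emph{same} $i$ (the even branch of Definition~\ref{def:reduce} splits one triple into two) can be glued, or at least that their domains are disjoint and their total measure adds up, so that the single bijection $G$ required in the definition of $\mathcal H_{N,\varepsilon'}$ genuinely has measure $>\varepsilon'$; this is handled by the disjointness built into the splitting $A\cap\bigcup_{\ell<a_{\sigma_j}/2}\mathcal C_{\sigma_j}(\ell)$ versus $A\cap\bigcup_{\ell\ge a_{\sigma_j}/2}\mathcal C_{\sigma_j}(\ell)$, together with the fact that the reduction tree for a fixed $i$ has boundedly many leaves contributing at a given scale. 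A secondary technical point is ensuring the cylinder approximations in Lemma~\ref{lem:easy friend} (the constant $\hat\varepsilon$ and the index $N_0$) are chosen before $N$, so that the logical order of quantifiers — first $\varepsilon$, then $\hat\varepsilon=\hat\varepsilon(P)$ and $N_0(F)$, then any $N>N_0$, then all large $k$ — matches the statement being proved.
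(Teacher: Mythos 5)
Your overall strategy is the paper's: argue by contradiction, isolate the indices $i$ carrying non-negligible mass on terminal triples with $\sigma_n>N$, convert each such $i$ into a membership $i\in\CH_{N,\varepsilon'}$ by producing \friend, and contradict Lemma~\ref{lem:easy friend}. But there is a genuine gap in your case analysis. A triple $(n,A,\rho)$ can be terminal in $\mathfrak{F}_i(N,\varepsilon)$ with $\sigma_n>N$ for \emph{two} reasons: either $\sigma_n\notin E$, or $\rho>\varepsilon$ while $\sigma_n\in E$. Your first step asserts that the mass is ``forced onto terminal triples with $\sigma_n\notin E$ and $\rho\le\varepsilon$,'' and your second step then only invokes Lemma~\ref{lem:no E friends}/Lemma~\ref{lem:reduce no E friends}, which require $\sigma_n\notin E$. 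Nothing in the contradiction hypothesis rules out that essentially all of the mass $\sum_i\alpha_i^{(k)}\sum\beta$ sits on triples that halted because $\rho$ exceeded $\varepsilon$ with $\sigma_n$ still an element of $E$ and still larger than $N$: for instance, if the expansion of $i$ has a large coefficient $d_{10^{2\ell}}(i)$ (of order $a_{10^{2\ell}}/2$) the very first reduction step already pushes $\rho$ past $\varepsilon$, and the surviving first coordinate can have $\sigma$ equal to another even power of $10$ above $N$. For such triples your extraction of \friend\ simply does not apply, and the deduction $i\in\CH_{N,\varepsilon'}$ fails. The paper's proof handles exactly this second case by applying Lemma~\ref{lem:repeat rigid friends} (friends produced by changing the $\sigma_n+1$ entry, on a set of measure at least of order $\varepsilon\nu(A)$, using that $\rho>\varepsilon$), and only the combination of the two cases yields the threshold $\frac{c\delta}{9999}$ in $\CH_{N,\cdot}$. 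You name Lemma~\ref{lem:repeat rigid friends} in your preamble but never actually use it where it is needed, so as written the argument does not go through.

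A secondary point: in your final step the friend-measure threshold $\varepsilon'$ and the coefficient mass you extract are both proportional to $\varepsilon$, so ``choosing $\varepsilon$ at the outset small enough that this constant exceeds $\hat\varepsilon$'' goes in the wrong direction — shrinking $\varepsilon$ shrinks the extracted mass. What one actually needs (and what the proof of Lemma~\ref{lem:easy friend} delivers, since its $\delta$ is arbitrary) is that for \emph{any} fixed friend-threshold and any fixed mass level, having that much mass on $\CH_{N,\cdot}$ for all large $N$ and $k$ is impossible; the contradiction should be phrased that way rather than by tuning $\varepsilon$ against a fixed $\hat\varepsilon$.
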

We record an immediate corollary for later use:
\begin{cor}\label{cor:to quote} Assume that $(Y, \nu,T)$  has a non-trivial factor with associated factor map $P\colon Y\to Z$ and let  
$F\colon L^2(\nu)\to L^2(\nu)$ be the
 Markov operator defined by 
$P$, then for all small enough $\varepsilon>0$ there exists $N_0$ and $\delta>0$ such that if $a_i\geq 0,$ $\sum a_i=1$ and 
$$D(\sum_i a_i U_{T^i},F)<\delta$$ 
then for all $N\geq N_0$
$$
 \sum_{i}a_i \sum_{\{(n,\beta,\rho)\in \mathcal{F}_{i}(N,\varepsilon)\colon \sigma_n>N\}}\beta<\varepsilon. 
$$
\end{cor}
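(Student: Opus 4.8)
The plan is to establish Corollary~\ref{cor:to quote} as the effective form of Proposition~\ref{prop:no friends}, so I first describe how I would prove the proposition. Fix $\varepsilon>0$. We may assume $\varepsilon<\tfrac1{8\cdot99}$: the inequality is trivial when $\varepsilon\ge1$, since the terminal triples of any reduction have sets partitioning $Y$ and so the inner sum is at most $1$; and for $\varepsilon$ in the intermediate range the result follows from the case of a smaller parameter, because decreasing the reduction parameter only causes each point's reduction branch to stop at a $\sigma_n$ that is not smaller, so $\nu$ of the union of the terminal sets with $\sigma_n>N$ is monotone non-increasing in the parameter. Examining the proof of Lemma~\ref{lem:easy friend}, the constant $\hat\varepsilon$ there may be taken as small as we wish, at the cost of enlarging $N_0=N_0(F,\hat\varepsilon)$, because the contradiction it produces only needs the auxiliary $\delta$ to be small compared with a fixed power of $\hat\varepsilon$; so choose $\hat\varepsilon$ small enough that its product with the universal constants appearing below is less than $\varepsilon$, and fix the corresponding $N_0$, so that $\sum_{j\in\CH_{N,\hat\varepsilon}}\alpha_j^{(k)}<\hat\varepsilon$ for all $N\ge N_0$ and all large $k$. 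The proposition then follows from the claim: for $N\ge N_0$ and every $i$, if $\sum_{\{(n,\beta,\rho)\in\mathcal F_i(N,\varepsilon)\colon\sigma_n>N\}}\beta$ exceeds a suitable universal multiple of $\hat\varepsilon$, then $i\in\CH_{N,\hat\varepsilon}$; indeed, splitting $\sum_i\alpha_i^{(k)}\bigl(\sum_{\{(n,\beta,\rho)\in\mathcal F_i(N,\varepsilon)\colon\sigma_n>N\}}\beta\bigr)$ according to whether $i\in\CH_{N,\hat\varepsilon}$ bounds it by $\hat\varepsilon+(\text{universal constant})\cdot\hat\varepsilon<\varepsilon$ for large $k$.

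To prove the claim, fix $i$, take $\mathfrak H_0(N,\varepsilon)=\{(i,[0,1],0)\}$, and split the terminal triples $(n,A,\rho)\in\mathfrak F_i(N,\varepsilon)$ with $\sigma_n>N$ into \emph{Type~I} ($\sigma_n\notin E$) and \emph{Type~II} ($\sigma_n\in E$, which forces $\rho>\varepsilon$ since every other stopping condition fails). For Type~I, Lemma~\ref{lem:reduce no E friends} attaches to each such triple a set $\tilde B$ with $\nu(\tilde B)>\tfrac1{999}\nu(A)$ and a measure preserving map $G$ with $\bx$ and $G(\bx)$ being $\zeta_\bx(i)$-\friend{} and agreeing in coordinates $<\sigma_n$, hence in the first $N$ coordinates. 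Since the reduction partitions $[0,1]$, distinct Type~I terminal triples lie on distinct branches, so—using the digit restrictions built into Lemma~\ref{lem:no E friends}—the corresponding sets $\tilde B$ are essentially pairwise disjoint; and because the friend relation is defined pairwise (the offset coordinate may vary with $\bx$) the maps glue into one measure preserving bijection, witnessing $i\in\CH_{N,\eta}$ for every $\eta<\tfrac1{999}\sum_{\mathrm I}\nu(A)$. Thus if the Type~I contribution exceeds $999\,\hat\varepsilon$, then $i\in\CH_{N,\hat\varepsilon}$.

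The Type~II estimate is, I expect, the main obstacle. Along the reduction path of a Type~II terminal triple $(n,A,\rho)$ every non-terminal ancestor $(m,A_m,\rho_m)$ has $\sigma_m\in E$, $\sigma_m>N$ and $\rho_m\le\varepsilon<\tfrac1{8\cdot99}$, so Lemma~\ref{lem:repeat rigid friends} applies at each such $m$ and produces, for the \emph{root} time $i$, a friend set $\tilde B_m\subseteq A_m$ of measure $\gtrsim\nu(A_m)\tfrac{|d_{\sigma_m}(m)|}{a_{\sigma_m}}$ (using that the cylinders it produces sit at positions far above those defining $A_m$, so intersecting with $A_m$ costs only a factor $\approx\nu(A_m)$, and that $1-4\cdot99\rho_m\ge\tfrac12$), agreeing in the first $N$ coordinates and with offset coordinate $\sigma_m+1$. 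The increments $\tfrac{|d_{\sigma_m}(m)|}{a_{\sigma_m}}$ along a path sum to $\rho>\varepsilon$; summing over \emph{all} non-terminal nodes $m$ of the reduction tree of $i$ and exchanging the order of summation—the terminal descendants of a node have disjoint sets inside $A_m$—yields $\sum_m\nu(\tilde B_m)\gtrsim\varepsilon\,\Phi$, where $\Phi:=\sum_{\mathrm{II}}\nu(A)$ is the Type~II contribution. Each $\tilde B_m$ constrains digits only near $\sigma_m$, a power of $10$, so for nodes at different positions these sets are nearly independent for $\nu$ (this is the near-independence of far-apart cylinders proved inside Lemma~\ref{lem:repeat rigid friends}; $\mu$ is in fact a product measure and the restriction to $Y$ is a controlled correction, so the iteration is legitimate), while for nodes at a common position they are disjoint; iterating gives $\nu\bigl(\bigcup_m\tilde B_m\bigr)\gtrsim\min(\varepsilon\Phi,\,1)$. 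Assigning to each point of $\bigcup_m\tilde B_m$ the friend from the first $\tilde B_m$ that contains it produces a single friend map for $i$ of domain of measure $\gtrsim\varepsilon\Phi$ agreeing in the first $N$ coordinates, so once $\varepsilon\Phi$ exceeds a suitable multiple of $\hat\varepsilon$ we obtain $i\in\CH_{N,\hat\varepsilon}$. (If the estimate of Lemma~\ref{lem:repeat rigid friends} is in fact absolute rather than proportional to $\nu(A_m)$, the argument only simplifies: a single path already gives a friend set of measure $\gtrsim\varepsilon$ whenever a Type~II terminal triple exists.) This establishes the claim and hence the proposition.

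Finally, Corollary~\ref{cor:to quote} is the effective restatement and needs no new idea: the hypothesis $\lim_k D(\sum_i\alpha_i^{(k)}U_{T^i},F)=0$ enters the argument only through the approximation $\|(\sum_i\alpha_i^{(k)}U_{T^i})\tilde f-F\tilde f\|_2\to0$ for the finitely many cylinder functions $\tilde f$ appearing in the proof of Lemma~\ref{lem:easy friend}, and this approximation holds as soon as $D(\sum_ia_iU_{T^i},F)<\delta$ for some $\delta>0$ chosen small in terms of $\varepsilon$ and $N_0$; running the same argument with this $\delta$ yields the stated $N_0$ and $\delta$.
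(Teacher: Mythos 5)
Your proposal is correct and follows essentially the same route as the paper: the corollary is treated as the effective (quantitative) form of Proposition~\ref{prop:no friends}, whose proof runs exactly as you describe — indices whose reduction retains mass at scales above $N$ are shown to lie in $\CH_{N,\cdot}$ by splitting the terminal triples according to the stopping reason ($\sigma_n\notin E$ handled by Lemma~\ref{lem:reduce no E friends}, $\rho>\varepsilon$ by Lemma~\ref{lem:repeat rigid friends}), and this is then played against the quantitative content of Lemma~\ref{lem:easy friend}, which only uses $D$-closeness of the convex combination to $F$ through its action on finitely many cylinder functions. Your more careful aggregation of friend sets over the reduction tree in the $\rho>\varepsilon$ case, and your making the threshold in Lemma~\ref{lem:easy friend} adjustable (small relative to $\varepsilon$), are refinements of steps the paper states tersely rather than a different argument.
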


\begin{proof}[Proof of Proposition~\ref{prop:no friends}]
Let $\varepsilon>0$ be small enough such that Lemma~\ref{lem:easy friend} is satisfied. We proceed by contradiction, and show that if 
$$
 \sum_{i  }\alpha_i^{(k)} \sum_{\{(n,\beta,\rho)\in \mathcal{F}_{i}(N, \varepsilon )  \colon \sigma_n>N\}}\beta>10c, 
$$ 
then 
\begin{equation}\label{eq:shifts}
\sum_{i\in \CH_{N,\frac {c \varepsilon }{9999}}}\alpha_i^{(k)}>c.
\end{equation}
Then by taking $N$ sufficiently large, we obtain a contradiction via Lemma~\ref{lem:easy friend}.

  Let  $J\subset \mathbb{N}$ be the  set of indices $i$  such 
  that $$\sum_{\{(n,\beta,\rho)\in \mathcal{F}_i(N, \varepsilon ) \colon \sigma_n>N\}}\beta>c.
  $$ 
Since 
$\sum_{i\notin J}\alpha_i^{(k)}\sum_{\{(n,\beta,\rho)\in \mathcal{F}_i(N, \varepsilon ) \colon  \sigma_n>N\}}\beta\leq c$, 
it follows that  $\sum_{i\in J}\alpha_i^{(k)}>c$.

\begin{claim}  
\label{claim:i-in-J}
For any $i \in J$, we have $i\in \CH_{N,\frac {c \varepsilon }{9999}}$. 
\end{claim}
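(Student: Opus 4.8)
The plan is to unwind the definitions of $\mathcal{F}_i(N,\delta)$ and the reduction process of Definition~\ref{def:reduce}, tracking what it means for a weight $\beta$ to survive the stabilization. Fix $i \in J$, so that the triples $(n,\beta,\rho) \in \mathcal{F}_i(N,\delta)$ with $\sigma_n > N$ carry total weight exceeding $c$. Recall that each such triple arises from $(i,[0,1],0)$ by iterating the reduction, where at each step we either stop (because $\sigma_j \notin E$, or $j=0$, or $\sigma_j \leq N$, or $\rho > \delta$) or we peel off the largest nonzero coefficient in the $q$-adic expansion, decreasing $\sigma$ and increasing $\rho$ by $|d_{\sigma_j}(j)|/a_{\sigma_j}$, splitting the set $A$ into cylinder pieces. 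The key observation is that since $\rho$ only ever increases and stays at most $\delta$ along any branch that is still being reduced, the total ``defect'' accumulated is at most $\delta$; so the triples $(n,\beta,\rho)$ with $\sigma_n > N$ must have $\rho \le \delta$, and moreover $\sigma_n > N$ forces $\sigma_n \in E$ (otherwise the first stopping criterion would have fired). In other words, these surviving triples correspond exactly to terminal nodes where the reduction was still ``trying'' to reduce but got stuck because $\sigma_n \in E$ is an even/odd power of $10$ and one of the other stopping conditions intervened.

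First I would set up, for the given $i\in J$, an explicit friend bijection on a set of measure $> \tfrac{c\delta}{9999}$, with the agreement property $x_k = G(\bx)_k$ for all $k \le N$. The idea is to iterate Lemmas~\ref{lem:repeat rigid friends} and~\ref{lem:reduce no E friends} down the reduction tree: at each reduction step of type $\sigma_j \in E$ we invoke Lemma~\ref{lem:rigid friends}/\ref{lem:repeat rigid friends} to produce a measure-preserving map changing the $\sigma_j+1$ entry that makes $\bx$ and $G(\bx)$ be $\zeta_\bx(i)$-\friend\ on a piece of measure $\gtrsim \tfrac{|d_{\sigma_j}(j)|}{a_{\sigma_j}} \nu(A)$ (using the proportionality between $\nu(A\cap \BR)$ and the cylinder cover from Lemma~\ref{lem:cylinders containing bad}), and at each step with $\sigma_j \notin E$ we use Lemma~\ref{lem:reduce no E friends}. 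Crucially, all of these maps change only coordinates in $E$-positions strictly above $N$ (since $\sigma_j > N$ throughout the branch, and $N$ may be assumed large), so the composite $G$ fixes all coordinates $\le N$; and because the bijections at distinct steps act on well-separated coordinates, the ``near-independence'' estimate~\eqref{eq:indep} lets us multiply the proportional measure bounds without catastrophic loss.

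The bookkeeping step is then: the total surviving $\beta$-mass $> c$ over triples with $\sigma_n > N$ is, by Lemma~\ref{lem:reduction close} and the cylinder-cover estimates of Lemma~\ref{lem:cylinders containing bad}, comparable (within a universal constant, here absorbed into the $9999$) to $\nu$ of the union of the sets $A$ appearing in those triples, intersected with the good set $\GR$ where the reductions genuinely track $T^i$. On that intersection the composite friend map $G$ is defined, measure-preserving onto its image, exhibits $\bx$ and $G(\bx)$ as $\zeta_\bx(i)$-\friend, and fixes coordinates $\le N$. Hence $i \in \CH_{N, c\delta/9999}$ by the very definition~\eqref{def:Cneps}. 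Summing~\eqref{eq:shifts} over $i \in J$ and using $\sum_{i\in J}\alpha_i^{(k)} > c$ gives $\sum_{i \in \CH_{N,c\delta/9999}}\alpha_i^{(k)} > c$, which for $N$ large enough contradicts Lemma~\ref{lem:easy friend} (applied with $\hat\varepsilon$ forcing $\tfrac{c\delta}{9999} < \hat\varepsilon$, choosing $\delta$ accordingly), completing the proof of Claim~\ref{claim:i-in-J} and hence the proposition.

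I expect the main obstacle to be the uniformity of the measure bounds as the reduction tree branches: each reduction step both loses a constant factor (from $\BR$-to-cylinder-cover proportionality and from the near-independence estimate) and multiplies the number of branches, so one must verify that the surviving $\beta$-mass bound $> c$ translates into a \emph{single} lower bound $> \tfrac{c\delta}{9999}$ on the measure of the domain of $G$ rather than being diluted across exponentially many pieces. The resolution is that~\eqref{eq:indep} is not a per-step loss of a fixed ratio but a product $\prod(1\pm\delta_k)$ with $\sum \delta_k$ controllable by choosing the separation constant $k$ in the Claim inside Lemma~\ref{lem:repeat rigid friends} large; and the $\rho \le \delta$ constraint caps the number of reduction steps along any branch (each contributes at least $\tfrac{1}{a_{\sigma_j}}\ge \tfrac{1}{k}$... more carefully, the relevant sum of $|d|/a$ is bounded by $\delta$), so the accumulated multiplicative error is bounded by a universal constant. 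Care is also needed in the boundary case $\sigma_n = \sigma_{n'}-1$ of Lemma~\ref{lem:reduce no E friends}, but that is already handled there.
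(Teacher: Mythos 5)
There is a genuine gap, and it starts with a misreading of Definition~\ref{def:reduce}. A triple $(n,A,\rho)$ survives into $\mathfrak{F}_i(N,\delta)$ precisely when one of the stopping conditions holds, so a stabilized triple with $\sigma_n>N$ must have $\sigma_n\notin E$ \emph{or} $\rho>\delta$ — your claim that such triples have $\rho\le\delta$ and $\sigma_n\in E$ describes exactly the situation in which the reduction would \emph{continue}, so no terminal triple looks like that. This dichotomy is not a side remark: it is the whole content of the paper's argument, because it tells you which of the two friend-producing lemmas to apply to each terminal triple. In the case $\sigma_n\notin E$, Lemma~\ref{lem:reduce no E friends} is applied \emph{once} and directly produces, inside $A$, a set of measure at least $\frac1{999}\nu(A)$ on which points have $\zeta_\bx(i)$-\friend{}s agreeing with them in all coordinates below $\sigma_n>N$; in the case $\rho>\delta$, Lemma~\ref{lem:repeat rigid friends} does the same with measure at least $\frac{1}{6\cdot64}\,\delta\,(1-4\cdot 99\,\delta)\,\nu(A)$ (this is where the factor $\delta$ in $\frac{c\delta}{9999}$ comes from, which your write-up never accounts for). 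Summing these lower bounds against the total mass $\sum\beta>c$ of the terminal triples with $\sigma_n>N$ gives $i\in\CH_{N,\frac{c\delta}{9999}}$ immediately.

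Your alternative plan — iterating the two lemmas down the reduction tree and composing the resulting bijections into a single map $G$ — does not work and is also unnecessary. Structurally, intermediate (non-terminal) nodes of the reduction always have $\sigma_j\in E$ and $\rho\le\delta$, so there is no intermediate step at which Lemma~\ref{lem:reduce no E friends} could be invoked; both lemmas are stated for a triple of $\mathfrak{H}_r(N,\varepsilon)$ with $\mathfrak{H}_0=\{(i,[0,1],0)\}$ and already perform the passage from the reduced time $n$ back to the original time $i$ internally (via the sets $\GR_r$, $\BR_r$ and the identities~\eqref{eq:n same},~\eqref{eq:n same2}), so there is nothing left to compose. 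More seriously, composing several friend-producing bijections would in general destroy the friend property: being $\zeta_\bx(i)$-\friend{} requires the $\one_{Z_k}$-counts to differ by exactly $1$ at exactly one index $k$ (and to agree for every $W_\ell$), and a composition of maps each introducing its own unit discrepancy typically produces discrepancies at several indices or cancellations, so the composite $\bx\mapsto G(\bx)$ need not be a pair of friends at all. The worry you raise at the end about exponential dilution of measure across branches is likewise an artifact of this composition scheme; in the actual argument each terminal triple contributes a friend set of measure bounded below by a fixed multiple of $\delta\nu(A)$, and one simply sums over the (essentially disjoint) terminal sets $A$, so no product of per-step losses ever appears.
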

To check this, the triples defined in Definition~\ref{def:reduce} give the two possible reasons for $(n,A,\rho) \in \mathfrak{F}({N, \varepsilon})$ with $\sigma_n>N$.
The first is that $\sigma_n \notin E$, in which case Lemmas~\ref{lem:no E friends} and~\ref{lem:reduce no E friends} give a set of points, contained in $A$, which have $i$-\friend\, of  measure at least $\frac 1 {999}\nu(A)$ 
 and a map $G$ defined on these symbols, identifying the point with its friend so that $G(\bx)_j=x_j$ for all $j< \sigma_n$. 
The second is that $\rho> \varepsilon $, in which case Lemmas~\ref{lem:rigid friends} and~\ref{lem:repeat rigid friends} similarly give cylinders with measure at least $\frac 1 {6 \cdot 64}  \varepsilon \cdot (1-4\cdot 99 \varepsilon )\nu(A)$.  So if $i$ is such that $\sum_{\{(n,\beta,\rho)\in \mathcal{F}_i(N, \varepsilon )\colon \sigma_n>N\}}\beta>c$, then since the $\beta$ are the measure of the sets  $A$ mentioned in the previous 2 sentences, we have 
$i \in \CH_{N,\frac{c \varepsilon }{9999}}$.  
\end{proof}

The next two lemmas (Lemmas~\ref{lem:reduction close 2}  and~\ref{lem:B}) are only used in the proof of 
Proposition~\ref{prop:norig}.

\begin{lem}\label{lem:reduction close 2} 
Assume that $(Y, \nu,T)$  has a non-trivial factor with associated factor map $P\colon Y\to Z$ and assume that 
$F\colon L^2(\nu)\to L^2(\nu)$ is  the Markov operator defined by 
 $P$.  
Then for all $c, \varepsilon >0$, there exists $N_0$ such that for all $N>N_0$ we have that 
\begin{equation}\label{eq:most not in B}
\sum_{i \in B_{c,\varepsilon}} \alpha_i^{(k)}<\sqrt{\varepsilon}, 
\end{equation}
where $$B_{c,\varepsilon}=\Big\{i\colon D\big(U_{T^i}, \sum_{(n,A,\beta) \in \mathfrak{F}_i(N,\varepsilon)}\one_A \cdot U_{T^n}\big)>10\sqrt{\varepsilon}+c\Big\}.$$ 
\end{lem}
This notation $B_{c,\varepsilon}$ is local and only used until the end of this section. 
\begin{proof} We claim that for all $c>0$ and small enough $\varepsilon>0$, there exists $N$ such that if $S_1,\,S_2$ are measure preserving transformations such that $\nu(\{x\colon( S_1\bx)_j=(S_2 \bx)_j \text{ for all }j<N\})>1-\varepsilon$ then 
$D(U_{S_1},U_{S_2})<c+3\sqrt{\varepsilon}$. 

To prove the claim, given $N\in\N$, set 
$$A(N) = \{\bx\in X\colon  (S_1\bx)_j=(S_2\bx)_j \text{ for } j<N\}.  $$
By Lusin's Theorem and uniform integrability, 
for any $f \in L^2(\nu)$,
there exists $N\in\N$ such that if $A = A(N) $, then
$\|(f\circ S_1) \one_A-(f\circ S_2)\one_A\|_2<c$. 
 As in the definition of $D$,  let $\{f_i\}_{i=1}^\infty$ be an orthonormal basis of 
 $L^2(\nu)$ with $\Vert{f_i}\Vert_\infty = 1$ for all $i\in\N$.
Given $\frac 1 4 >\varepsilon > 0$, choose $k$ such that $2^{-k}<\varepsilon$ and pick $N$ sufficiently large 
such that the associated set $A = A(N)$ ensures that 
$\|(f_i\circ S_1) \one_A-(f_i\circ S_2)\one_A\|_2<c$ for $i=1, \ldots, k$. 
Then $\nu(A) > 1-\varepsilon$ for some $\varepsilon> 0$
 and so (the definition of the metric $D$ is given in~\eqref{eq:def-D}) 
\begin{multline*}
D(U_{S_1},U_{S_2})\leq c+\sum_{i=1}^\infty 2^{-i}\bigl(\int_{\one_{A^c}}|f_i\circ S_1-f_i\circ f_2|^2d\nu\bigr)^{1/2}{+\sum_{i=k+1}^{\infty}\|f_i\circ S_1-f_i\circ S_2\|_2}\\
\leq c+2\sqrt{\varepsilon}{+2\varepsilon\leq c+3\sqrt{\varepsilon}}, 
\end{multline*}
proving the claim.  

We now complete the proof by contraposition.
Take $N_{1} = N$ where $N$ is sufficiently large such that the above 
claim holds. 
Then for any $N > N_{1}$,
if $$D(U_{T^i},\sum_{\{(n,A,\rho) \in \mathfrak{F}_i(N,\varepsilon)\}}
\one_A 
\cdot U_T^n)>c+10\sqrt{\varepsilon},$$
then the claim implies that 
$$\nu(\{\bx\colon  (T^i\bx)_j \neq (T^n\bx)_j \text{ for }j>N \text{ and }\bx \in A \text{ where } (n,A,\rho)\in \mathfrak{F}_i(N,\varepsilon)\})>3 \varepsilon.$$
By~Lemma~\ref{lem:reduction close},  for each $i$ 
such that this claim holds, we have 
$$\sum_{\{(n,\beta,\rho)\in {\mathcal{F}}_i(N,\varepsilon)\}} \rho\beta>\varepsilon.$$ Further observe that if such a 
 triple $(n,\beta,\rho)$ has $\rho>\varepsilon$, then $\sigma_n>N$.  
Assuming the negation of~\eqref{eq:most not in B}, we check that this gives the opposite of the bound in~\eqref{eq:reduce}. 
Taking $N_0$ as in Proposition~\ref{prop:no friends} for $F$ and ${\varepsilon}$, then if $N> N_0$ 
we have that $F$ is given by 
the factor map to the one point system. Taking $N_0$ in the statement of Lemma~\ref{lem:reduction close 2} 
  to be the maximum of $N_1$ and the $N_0$ from Proposition~\ref{prop:no friends}, 
the statement follows. \end{proof}

\begin{lem} 
\label{lem:B}
Let $([0,1],R,\tau)$ be an ergodic measure preserving system and $(Z,R',\tau')$ be a factor with factor map $P$ that is not an isomorphism and let $F$ be the Markov operator defined by $P$.  Then 
there exists a set $B\subset[0,1)$ such that  $\tau(B)\geq \frac 1 3$ and $( F\one_B)(x)<\frac 1 2 $ for all $x\in B$. 
\end{lem}

\begin{proof}
By disintegration of measures, there exist probability measures $\tau_{P(x)}$ carried on $P^{-1}(P(x))$ such that 
$\tau=\int_Z \tau_zd\tau'(z)$.  
Let $m_x= {\rm ess}\,\inf\{y\colon\tau_{P(x)}([0,y])> \frac 1 2 \}$.  
We claim that 
\begin{equation}\label{eq:atom bound}
\frac 1 3 \leq \tau_{P(x)}([0,m_x))\leq \frac 1 2.
\end{equation} Indeed, if $\tau_{P(x)}$ is non-atomic, then  $\tau_{P(x)}([0,m_x))=\frac 1 2$. 
If  $\tau_{P(x)}$ is atomic, then by assumption there are at least two atoms and by ergodicity the atoms 
are of equal size.   Because the largest atom of $\tau_{P(x)}$ has measure at most $\frac 1 2$, and there are a sum of atoms with measure between $\frac 1 3$ and $\frac 1 2$ inclusive, we have~\eqref{eq:atom bound}.

We claim that $x\mapsto m_x$ is a Borel measurable function. First recall that by disintegration of measures, the map $x \mapsto \tau_{P(x)}$ is Borel. 
Next, for every interval $[0,a]$, we have that the map from Borel measures to real numbers, $\mathfrak{p} \mapsto \mathfrak{p}([0,a])$ is a Borel map. 
It follows that the function $h(x,y)=\tau_{P(x)}[0,y]$ is Borel 
and so $h^{-1}\big((\frac 1 2 ,1]\big)$ is Borel and for each $x$, $m_x=\text{ess inf}\{y:(x,y)\in h^{-1}\big((\frac 1 2 ,1]\big)\}$. Note that since $\tau_{P(x)}$ is a probability measure this is the same as 
$$\min\{\text{ess inf}\{y:(x,y)\in h^{-1}\big((\frac 1 2 ,1]\big)\},1\}.$$
Now if $A \subset [0,1]^2$ is Borel, then $x\mapsto \min\{{\rm ess}\,\inf\{y\colon (x,y) \in A\},1\}=m_x$ is Borel 
measurable. Indeed, the set of such $A$ is a monotone class containing the algebra generated by rectangles. 
To see this, for countable nested unions,
$$\max\{{\rm ess}\,\inf\{y\colon (x,y) \in \cup_{i=1}^\infty A_i\},1\}=\underset{n \to \infty}{ \lim}\, \min \{{\rm ess}\,\inf\ \{y\colon (x,y) \in \cup_{i=1}^n A_i\},1\}.$$ 
For countable nested intersections, we have that 
$\min\{{\rm ess}\,\inf\{y\colon(x,y) \in \cap_{i=1}^\infty A_i\},1\}$ can be defined piecewise as 
\begin{equation*}
\begin{cases}1 & \text{ if }\{y:(x,y) \in \cap_{i=1}^\infty A_i\} \text{ is a zero set}\\
\underset{n \to \infty}{\lim}\,  \min\{{\rm ess}\,\inf\{y\colon(x,y) \in \cap_{i=1}^n A_i\},1\} & \text{ otherwise.}
\end{cases}
\end{equation*}
 By the Monotone Class Theorem it is defined on the smallest $\sigma$-algebra containing rectangles, 
 which is the Borel $\sigma$-algebra. 
We set $B=\bigcup_{x\in[0,1]}[0,m_x)$.   Then for almost every $x$, $\tau_{P(x)}([0,m_x))\leq \frac 1 2 $ and so $F(\one_B)(x)\leq \frac 1 2 \one_B(x)$ for all $x\in B$. 
\end{proof}
Note that this lemma holds for any Lebesgue space and in particular to $(Y,\nu,T)$.

\begin{prop}\label{prop:norig}
Assume that $(Y, \nu,T)$  has a non-trivial factor with associated factor map $P\colon Y\to Z$ and let $F$ be the Markov operator defined a factor map $P$. Let 
$\underset{k \to \infty}{\lim}\, D(\sum_i \alpha_i^{(k)} U_{T^i} ,F)=0$ where $\alpha_i^{(k)}\geq0$ for all $i,k$ and $\sum_i \alpha_i^{(k)}=1$ for all $k$.  
  Then for all $\frac 1 {99}>\varepsilon>0$, there exists $N_0$ such that for any $N>N_0$ and all sufficiently large enough  $k$, we have 
\begin{equation}\label{eq:norig}
 \sum_{i }\alpha_i^{(k)} \sum_{\{(n,\beta,\rho)\in \mathcal{F}_{i}(N,\varepsilon)\colon  {n \neq} 0\}}\beta>{\frac 1 3}\cdot(\frac 1 2 -3\varepsilon).
\end{equation}
\end{prop}
\begin{proof}
Because $F \neq \Id$, 
 for almost every 
$\bx\in Y$
 we have $P^{-1}(P\bx)$ is at least two points, and so there exists a set $B$ as in Lemma~\ref{lem:B}.
By  assumption and Lemma~\ref{lem:reduction close 2}, there exist $k_1,N_1$ such that for all $k\geq k_1$
 and $N > N_1$ 
 we have 
\begin{equation}\label{eq:prop norig}
\nu(\{\bx:|(F\one_B)(\bx)-(\sum_i\alpha^{(k)}_i\sum_{(n,A,\rho)\in \mathfrak{F}_i(N,\varepsilon)}\one_A(\bx)(U_{T^n}\one_B)(\bx)|>\frac \varepsilon 9\})<\frac \varepsilon 9.
\end{equation} 

By the non-negativity of the $\alpha_i^{(k)}$, for almost every 
$\bx\in Y$
we have 
$$ (\sum_i \alpha_i^{(k)}\sum_{(n,A,\rho) \in \mathfrak{F}_i(N,\varepsilon)}\one_AU_{T^n}\one_B)(\bx)\geq \sum_i \alpha_i^{(k)}
\sum_{\{(n,A,\rho) \in \mathfrak{F}_i(N,\varepsilon)\colon n=0\text{ and }\bx \in A\}}
 \one_B(\bx).$$ 
If~\eqref{eq:norig} does not hold, 
then there is a set of $\bx$ of measure at least $\frac 2 3 +\varepsilon$
such that 
$$ \sum_{i }\alpha_i^{(k)} (\sum_{\{(n,A,\rho)\in \mathfrak{F}_{i}(N,\varepsilon)\colon  n= 0 \text{ and }\bx \in A\}}\beta)>\frac 1 2 +\varepsilon. $$
But then there is a set of $\bx\in B$ of measure at least $\varepsilon$ 
such that for each $\bx$ in this set,  $$\Big(\sum_i \alpha_i^{(k)}\sum_{(n,A,\rho) \in \mathfrak{F}_i(N,\varepsilon)}\one_A(x)U_{T^n}\one_B\Big)(\bx)>\frac 1 2 + \varepsilon \geq (F\one_B)(\bx)+\frac\varepsilon2, $$ a contradiction of~\eqref{eq:prop norig}.
\end{proof}

\section{The behavior of a projection}\label{sec:end}

\subsection{Overview of the proof that $(Y,\nu,T)$ is prime}
In this section, we show that our constructed system is prime: 
\begin{thm}
\label{thm:prime}
The system $(Y, \nu,T)$ is prime.
\end{thm}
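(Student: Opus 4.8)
The plan is to combine the two main threads of the paper: the combinatorial analysis of the time scales from Section~\ref{sec:coding} with a case analysis on the Markov operator $F$ associated to an arbitrary factor map $P\colon (Y,\nu,T)\to (Z,\rho,R)$. By Theorem~\ref{thm:CE factor}, $F$ is a strong operator topology limit of convex combinations $\sum_i \alpha_i^{(k)}U_{T^i}$ with non-negative coefficients summing to $1$. The goal is to show that $F$ must be either the identity (so $P$ is an isomorphism) or the projection onto constants (so $Z$ is the one point system). Suppose for contradiction that $F$ is neither. Then Proposition~\ref{prop:norig} applies: for all $\delta,\varepsilon>0$ there exists $N_0$ such that for all $N>N_0$ and all large $k$,
$$
\sum_{i}\alpha_i^{(k)}\sum_{\{(n,\beta,\rho)\in\mathcal{F}_i(N,\varepsilon)\colon\sigma_n>0\}}\beta>\frac12-10\sqrt{\varepsilon}-\delta.
$$
On the other hand, Proposition~\ref{prop:no friends} says that the contribution from triples with $\sigma_n>N$ is at most $\varepsilon$. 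So most of the mass in the above sum must come from triples $(n,\beta,\rho)\in\mathcal{F}_i(N,\varepsilon)$ with $0<\sigma_n\le N$.

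The heart of the argument — and the step I expect to be the main obstacle — is to derive a contradiction from the existence of this substantial mass concentrated on triples with bounded $\sigma_n$. The idea is that when $\sigma_n\le N$ is bounded, the reduction process of Definition~\ref{def:reduce} has terminated at a "small" power: on a set $A$ of definite measure, $T^n$ agrees with $T^i$ in all coordinates below $\sigma_n\le N$, and $\sigma_n>0$ forces $n$ to genuinely move points in a controlled, bounded-index way. This is precisely the asymmetry built into the system by the exceptional times $10^k$: because $T^n$ restricted to such a set behaves like a fixed nonzero power $T^{m}$ with $|m|$ bounded, one shows that $F$, when tested against suitable finite linear combinations of cylinder indicator functions, cannot be close to a genuine Markov operator arising from a nontrivial factor unless the mass on these triples is negligible. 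Concretely, I would push through Section~\ref{sec: end} ("The behavior of a projection"): using Lemma~\ref{lem:reduction close 2}, $U_{T^i}$ is approximated (in the metric $D$) by $\sum_{(n,A,\beta)\in\mathcal{F}_i(N,\varepsilon)}\one_A\cdot U_{T^n}$, so $F$ itself is approximated by an operator supported on bounded powers $T^n$, $0\le|\sigma_n|\le N$; but an operator of that form that is an honest factor Markov operator and yet has definite mass off the identity and off the constants contradicts weak mixing (Proposition~\ref{prop:wmix}) together with the rigidity structure — one extracts a nontrivial eigenfunction-like obstruction, or directly a contradiction with the ergodic-theoretic behavior of the $T^n$.

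In outline, the steps in order would be: (1) fix a nontrivial factor $P$ with Markov operator $F$, and assume $F$ is neither $\Id$ nor the projection to constants; (2) invoke Theorem~\ref{thm:CE factor} to write $F$ as a limit of convex combinations of $U_{T^i}$; (3) apply Proposition~\ref{prop:norig} to get a lower bound $\frac12-o(1)$ on the mass of reduction triples with $\sigma_n>0$; (4) apply Proposition~\ref{prop:no friends} to discard the mass with $\sigma_n>N$, leaving definite mass on $0<\sigma_n\le N$; (5) use Lemma~\ref{lem:reduction close 2} to replace $F$ (up to $D$-distance $10\sqrt{\varepsilon}+c$) by an operator of the form $\sum_i\alpha_i^{(k)}\sum_{(n,A,\beta)}\one_A U_{T^n}$ with all $\sigma_n\le N$; (6) analyze this operator directly — the bounded index $\sigma_n$ means the associated transformations $T^n$ differ from bounded powers $T^m$ only on sets controlled by $\sigma_n$, and the weak mixing of $T$ together with the asymmetry of the construction (the fact that the $D_k$ with first coordinate not equal to $6$ versus the others behave differently, as exploited throughout Section~\ref{sec:coding}) forbids such an operator from being a nontrivial factor Markov operator; (7) conclude that $F\in\{\Id,\text{proj to constants}\}$, hence $P$ is an isomorphism or $Z$ is trivial, so $(Y,\nu,T)$ is prime.

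The main obstacle is step (6): closing the loop requires showing that an averaged operator built from bounded-index powers $T^n$ (which are \emph{not} literally powers of $T$, only approximately so on large subsets) cannot represent a nontrivial factor. This is where the combinatorics of $\mathfrak{F}_i(N,\varepsilon)$ and the "friends" machinery (Lemmas~\ref{lem:friends to offset}, \ref{lem:rigid friends}, \ref{lem:repeat rigid friends}, \ref{lem:reduce no E friends}) must be leveraged one final time: the presence of definite mass on triples with $0<\sigma_n\le N$ produces, via these lemmas, genuine "friend" pairs that $F$ would have to separate, contradicting that $F$ comes from a well-defined measurable factor map (equivalently, contradicting Lemma~\ref{lem:easy friend} applied at the bounded scale $N$). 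I would expect the actual proof of Theorem~\ref{thm:prime} to be short once Section~\ref{sec: end} supplies the needed quantitative statement, essentially just assembling Propositions~\ref{prop:no friends}, \ref{prop:norig}, and the culminating result of Section~\ref{sec: end} and observing the numerics are incompatible.
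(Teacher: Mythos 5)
There is a genuine gap, and it sits exactly where you flagged it: step (6). Your steps (1)--(5) reproduce the paper's setup (Theorem~\ref{thm:CE factor}, Proposition~\ref{prop:no friends}, Proposition~\ref{prop:norig}, Lemma~\ref{lem:reduction close 2}), and they correctly leave you with definite coefficient mass on reduced triples with $0<\sigma_n\leq N$. But the tools you propose to kill that mass do not apply at that scale. The friends produced by Lemmas~\ref{lem:no E friends}, \ref{lem:rigid friends}, \ref{lem:repeat rigid friends}, \ref{lem:reduce no E friends} agree only in coordinates below $\sigma_n$, so a triple with $\sigma_n\leq N$ only yields membership in $\CH_{\sigma_n-1,\varepsilon}$, whereas Lemma~\ref{lem:easy friend} gives a contradiction only for $\CH_{N',\varepsilon}$ with $N'\geq N_0(F)$, a threshold dictated by the cylinder approximation of the (unknown) factor; ``applying Lemma~\ref{lem:easy friend} at the bounded scale $N$'' is therefore not available, and this is precisely why Proposition~\ref{prop:no friends} only controls the mass with $\sigma_n>N$ and says nothing about $0<\sigma_n\leq N$. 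Nor does weak mixing rule out such an operator: a convex combination of powers $U_{T^i}$ with substantial mass away from the identity is a perfectly legitimate limit object; what is impossible is for it to be \emph{idempotent}, and your argument never uses the fact that $F$ is a projection.

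That idempotency is the missing idea and the engine of the paper's actual proof. The paper writes $F=F^M$ for a large $M$ tied to the scale $N$ via \eqref{eq:N and M}, approximates $F^M$ by $(\sum_i\alpha_i^{(k)}U_{T^i})^M$, expands this as the convolution \eqref{eq:composition coeff} over $M$-tuples $(i_1,\dots,i_M)$, and then runs a three-case analysis on the composed sums $\sum_m i_m$: in Case 1 a Chebyshev/central-limit spreading argument (Proposition~\ref{prop:iterate-a-lot}, fed by Lemma~\ref{lem:not rigid} to guarantee the coefficient distribution is not concentrated at $0$) forces a definite fraction of the sums to have $\sigma$ above the large threshold; in Case 2 the failure of the reduction of Definition~\ref{def:reduce} to commute with summation produces friends at large scale (Lemma~\ref{lem:carrying friends}); in Case 3 the good/bad-for-reduction combinatorics (Lemmas~\ref{lem:still pretty bad}, \ref{lem:partition}, \ref{lem:one bad}, Proposition~\ref{prop:2b friends}, Lemma~\ref{lemma:adjacent comparable}) does the same. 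Each case then contradicts Corollary~\ref{cor:to quote}. Without some substitute for this ``iterate the projection and spread the exponents'' mechanism, the mass on $0<\sigma_n\leq N$ that your outline isolates cannot be ruled out, so the proposed proof does not close.
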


We start with an overview of the proof and then proceed to study different cases. 
We assume that $(Y, \nu, T)$ has a non-trivial factor $Z$ with factor map $P\colon Y\to Z$ 
and assume that  $F\colon L^2(\nu)\to L^2(\nu)$ is the 
 Markov operator defined by   $P$.  
We further assume 
that $F$ is the limit, as $k\to\infty$, of $\sum \alpha_i^{(k)}U_{T^i}$ in the strong operator topology 
(Corollary~\ref{cor:CE factor}). 
Given $\varepsilon > 0$, by Proposition~\ref{prop:no friends} we can assume that there exists $N_0\in\N$ such that for all $N> N_0$ and sufficiently large $k$, we have that 
$$ \sum_i \alpha_i^{(k)}\sum_{(n,A,\rho)\in \mathfrak{F}_i}\one_AU_{T^n}$$
 gives a good approximation to $\sum \alpha_i^{(k)}U_{T^i}$ (which in turn leads to a good approximation for $F$).
 The general idea in the proof of Theorem~\ref{thm:prime} is that we rule out the 
possibility that $\sum \alpha_i^{(k)}U_{T^i}$ is close to a non-trivial projection. 
The key facts used are that the composition of projections is still a projection and by properties of the strong operator topology, we may assume that for any fixed $M$, for all large enough $k$  
\begin{equation}\label{eq:composition}
\underset{M \ {\rm times}}{\underbrace{(\sum \alpha_i^{(k)}U_{T^i}) \circ (\sum \alpha_i^{(k)}U_{T^i})\ldots \circ (\sum \alpha_i^{(k)}U_{T^i}}})
\end{equation}
 is close to $F^M=F$, and this is also close to   $\sum \alpha_i^{(k)}U_{T^i}$. 
We then use the fact that~\eqref{eq:composition} is
\begin{equation}\label{eq:composition coeff}\sum_{(i_1,\dots,i_M)}(\prod_{m=1}^M\
  \alpha_{i_m}^{(k)}) U_{T^{\sum i_m}}, 
 \end{equation}
  and apply Definition~\ref{def:reduce} to~\eqref{eq:composition coeff}.  That is, we study 
  $$ \sum_{(i_1,\dots,i_M)}(\prod_{m=1}^M\
  \alpha_{i_m}^{(k)}) \sum_{(n,A,\rho)\in \mathfrak{F}_{\sum_{j=1}^M i_j}(N,\frac \varepsilon 2)}\one_{A}U_{T^n}.$$
  Treating 3 different cases, this allows us to produce friends and obtain a contradiction via Lemma~\ref{lem:easy friend}. 
  We now make this precise.  

\subsection{Set up for the proof of Theorem~\ref{thm:prime}} \label{sec:setup}
We begin a proof by contradiction, assuming that there is a Markov operator $F$ coming from a non-trivial factor map. By Theorem~\ref{thm:CE factor},  there exists $\alpha_i^{(k)}\geq 0$ with $\sum_i \alpha_i^{(k)}=1$ for all $i$  such that $\sum_i \alpha_i^{(k)}U_{T^i}$ converges in the strong operator topology to $F$. 

We assume that  $\varepsilon>0$ is sufficiently small such that all of the Lemmas and Propositions in Section~\ref{sec:coding} hold.  That is, $\varepsilon<\min\{\frac 1 {8 \cdot 99 }, \frac 1 {9999}\}$  and small enough such that Lemma~\ref{lem:easy friend} holds. We also assume that 
\begin{equation}\label{eq:later epsilon bound} \varepsilon<\frac 1 {10^5 \cdot 9999},
\end{equation} which is to be used in 
Lemma~\ref{lem:still pretty bad}.  Furthermore, we choose $N_1>6$  (this choice is made to simplify the analysis in the third case we consider) to be sufficiently large such that Lemma~\ref{lem:easy friend} holds for $\frac {N_1} 2$ and $\varepsilon^8$ and such that 
\begin{equation} \label{eq:another bound} 2^{-\frac {N_1} 2}<\varepsilon^4.\end{equation} 

Setting 
$$
G_N=\{n\colon \sum_{\{(i ,c,\gamma)\in  \mathcal{F}_{n}(\frac N 2 ,\varepsilon^2)\colon \sigma_i> \frac N 2\}}c<\varepsilon^4\},$$
 our choices imply that for all sufficiently large $k$, 
\begin{equation}\label{eq:plenty good} 
\sum_{n  \in G_N}\alpha_{n}^{(k)}>\frac 3 4.  
\end{equation}
 Indeed, by Claim~\ref{claim:i-in-J} if $n\notin G_N$ then $n \in \mathcal{H}_{N,\frac{\varepsilon^2 \varepsilon^4}{9999}}$. Since $\frac{\varepsilon^6 }{9999}>\varepsilon^8$, by Lemma~\ref{lem:easy friend} this contradicts that our factor map is not to the 1 point system. 
We set $s =\min\{10^j\colon  10^{j-1}\geq N\}$, set $s'=\min\{10^\ell\colon 10^\ell\geq N\}$, and 
recall that $r_i$ is defined in~\eqref{def:ri}.  
Define 
\begin{equation}\label{eq:N and M}
M = M_N = \frac{r_{s- 2}}{2r_N}.
\end{equation}
Although $M = M_N$ depends on $N$, as $N$ is fixed at this point, we omit it from the notation, except at one step in the proof of Proposition~\ref{prop:iterate-a-lot}.  
The motivation behind this definition of $M$ is given by the following lemma (this plays a role in the proof of Proposition~\ref{prop:iterate-a-lot}): 
 \begin{lem} 
 \label{lem:M-is-large}
 For all sufficiently large $N$, we have that $M^{\frac 1 8} >r_{s'+1}$, where $M$ is defined as in~\eqref{eq:N and M}. Moreover, for any $\varepsilon>0$, for all sufficiently large $N$ we have $8^{(1-\varepsilon)10^k} < r_{10^k-1} < 8^{(1+\varepsilon)10^k}$.  
 \end{lem}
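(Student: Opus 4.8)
The plan is to reduce everything to the single estimate $q_i = 8^{(1+o(1))i}$ for the odometer quantities $q_i = \prod_{j=1}^{i-1} a_j$, together with the fact that $r_i$ is comparable to $q_i$ up to a bounded multiplicative factor. First I would establish the second (cleaner) assertion. By definition $q_{10^k-1} = \prod_{j=1}^{10^k-2} a_j$; since $a_j = 8$ except on the indices $j = 10^m$ with $m \geq 2$, of which there are exactly $k-2$ below $10^k - 1$ (namely $10^2,\dots,10^{k-1}$), this gives $q_{10^k-1} = 8^{\,10^k - k}\,(k-1)!$. Because $1 \le (k-1)! \le k^k = 8^{k\log_8 k}$ and $k\log_8 k = o(10^k)$, we get $8^{(1-\varepsilon)10^k} < q_{10^k-1} < 8^{(1+\varepsilon)10^k}$ once $k$ is large. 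For $r_i$, recall $r_i = \xi_{\bzero}(q_i) = q_i - \#\{0 \le j < q_i : S^j\bzero \notin Y\}$. By Lemma~\ref{lem:same hit}, each count $\#\{0 \le j < q_i : S^j\bzero \in Z_\ell\}$ with $\ell < i$ is independent of the point and therefore equals $q_i\,\mu(Z_\ell)$, and no index $\ell \ge i$ contributes more than an $O(1)$ boundary term; likewise for the $W_k$. Since $\mu(X \setminus Y) = \sum_{\ell \notin E}\mu(Z_\ell) + \sum_k \mu(W_k) < \tfrac12$ (here $\mu(Z_\ell) \le 8^{-\ell}$ because $a_i = 8$ for most $i$, and the $\mu(W_k)$ are doubly‑exponentially small), we conclude $\tfrac12 q_i \le r_i \le q_i$ for all large $i$. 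Hence $r_{10^k-1}$ obeys the same two‑sided bound as $q_{10^k-1}$ after shrinking $\varepsilon$ slightly, proving the second assertion; and, with the number of $E$‑indices below $i-1$ being only $O(\log i)$, the same reasoning gives the uniform estimate $8^{(1-\varepsilon)i} < r_i < 8^{(1+\varepsilon)i}$ for all large $i$, which is the form I will use.

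Next I would prove $M^{1/8} > r_{s'+1}$. Unwinding the definitions, if $\ell_0$ is minimal with $10^{\ell_0} \ge N$, then $s' = 10^{\ell_0}$ and $s = 10^{\ell_0+1} = 10\,s'$, and $N \le s' < 10N$; in particular $s-2$, $s'+1$, and $N$ all tend to infinity with $N$ and $s - 2 = 10s' - 2 > s' + 1$. Fix $\varepsilon = \tfrac1{100}$ and apply the estimate above:
\[
M^{1/8} = \Bigl(\frac{r_{s-2}}{r_N}\Bigr)^{1/8} > 8^{\left[(1-\varepsilon)(s-2)-(1+\varepsilon)N\right]/8}, \qquad r_{s'+1} < 8^{(1+\varepsilon)(s'+1)} ,
\]
so it suffices to verify $(1-\varepsilon)(s-2) - (1+\varepsilon)N > 8(1+\varepsilon)(s'+1)$. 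Substituting $s = 10s'$ and $N \le s'$, the left side is at least $(9 - 11\varepsilon)s' - O(1)$ and the right side equals $(8 + 8\varepsilon)s' + O(1)$, so the difference is $(1 - 19\varepsilon)s' - O(1) \ge \tfrac45 s' - O(1)$, which is positive for all sufficiently large $N$. This completes the proof.

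The argument is mostly bookkeeping, and the one place that requires genuine care — the main (if minor) obstacle — is pinning down the comparison between $r_i$ and $q_i$: one must check that the "holes" $X \setminus Y$ encountered along $S^0\bzero,\dots,S^{q_i-1}\bzero$ are tallied correctly by Lemma~\ref{lem:same hit}, that indices $\ell \ge i$ contribute only boundary terms, and that $\mu(X \setminus Y)$ is genuinely bounded away from $1$ (this is exactly where the structure of the alphabet — $a_i = 8$ for most $i$ — is used). Once that is in hand, the inequality reduces to comparing the exponents $\tfrac18(s - 2 - N)$ and $s' + 1$, and the factor $\tfrac{9}{8} > 1$ coming from $s = 10 s'$ and $N \le s'$ makes it go through with room to spare.
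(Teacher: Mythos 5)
Your proof is correct, and it reaches the two estimates by a slightly different route than the paper. For the asymptotics of $r_i$, the paper never passes through $q_i$: it runs a recursion directly on the return-time heights, using $r_{\ell+1}=a_{\ell+1}r_\ell-1$ for $\ell\notin E$ to get $r_{\ell+1}\geq 8^{1-\varepsilon/2}r_\ell$ (lower bound) and $r_{\ell+1}<8r_\ell$ off $E$ together with a crude bound on the contribution of the sparse $E$-indices (upper bound). You instead compute $q_{10^k-1}=8^{10^k-k}(k-1)!$ explicitly and transfer to $r_i$ via the exact identity $r_i=q_i-\#\{0\leq j<q_i\colon S^j\bzero\notin Y\}$, Lemma~\ref{lem:same hit} for the scales $\ell<i$, the vanishing of the contributions from scales $\geq i$ along the orbit of $\bzero$, and the bound $\mu(X\setminus Y)<\tfrac12$; this yields $\tfrac12 q_i\leq r_i\leq q_i$ and hence the same $8^{(1\pm\varepsilon)i}$ estimate, valid for all large $i$ and not just $i=10^k-1$ (which you correctly note is the form actually needed, since $s-2$ and $s'+1$ are not of the special form). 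Your comparison argument is a bit more machinery (one must check the hole-counting carefully, as you do), but it buys an explicit formula and a uniform statement; the paper's recursion is more self-contained on the $r_i$ themselves. For the first assertion, both arguments are essentially the same bookkeeping: with $s=10s'$ and $N\leq s'<10N$, the exponent comparison $\tfrac18\bigl[(1-\varepsilon)(s-2)-(1+\varepsilon)N\bigr]>(1+\varepsilon)(s'+1)$ goes through with the same margin (roughly $\tfrac98>1$) that drives the paper's chain of inequalities. The only blemishes are cosmetic: $\mu(Z_\ell)$ equals $1/q_{\ell+1}$, which slightly exceeds $8^{-\ell}$ once indices in $E$ appear below $\ell$, but your hedge (``$a_i=8$ for most $i$'') and the conclusion $\mu(X\setminus Y)<\tfrac12$ are unaffected.
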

 \begin{proof} 
 We first claim that for all $\varepsilon>0$, there exists $k$ such 
 that
 \begin{equation}\label{eq: bound-on-r}8^{(1-\varepsilon)10^k} < r_{10^k-1} < 8^{(1+\varepsilon)10^k}.  
 \end{equation}
 For all $\ell \notin E$,  we have that $r_{\ell+1}=a_{\ell+1}r_\ell-1$.  Thus there exists $\ell_0$ with $r_{\ell+1}\geq 8^{1-\frac \varepsilon 2}r_\ell$ for all $\ell\geq \ell_0$ and the lower bound follows. 
For the upper bound, we have $r_k\leq q_k\leq 8^k \prod_{j=1}^{\lfloor \log_{10} k\rfloor}j$. It is straightforward that for all $\varepsilon>0$, there exists $k_0$ such that $8^k \prod_{j=1}^{\lfloor \log_{10} k\rfloor}j<8^{(1+\varepsilon)k}$ for all $k\geq k_0$.  

  For all large enough $N$, by~\eqref{eq: bound-on-r} 
 we have
 \begin{equation*}
 M=M_N = \frac{r_{s-1}}{2r_N}>\frac {\frac 1 8r_s}{2r_{s'}}>\frac{8^{(1-\varepsilon)10^k}}{8^{(1+\varepsilon)10^{k-1}}}>8\cdot8^{8.5(1+\varepsilon)10^{k-1}}>r_{s'+1}^{8},
 \end{equation*} 
 where the second to last inequality holds for all sufficiently large $N$. 
 \end{proof}

\subsection{The three cases:}
This leads us to consider three possibilities for the behavior of the projection $F$ on $L^2(\mu)$ (recall that $D$ is the metric defined in~\eqref{eq:def-D}): 

\noindent
\textbf{Case 1:} 
 \begin{equation}\label{eq:case 1} 
 D\Bigl(\sum_{(i_1,\dots,i_M)}\prod_{m=1}^M\
  \alpha_{i_m}^{(k)} U_{T^{\sum i_m}}, 
( \sum_{i} \alpha_i^{(k)}\sum_{\{j\colon 
 (j,A,\rho)  \in \mathfrak{F}_{i}(N,\varepsilon):\sigma_j<N\}} U_{T^j}\one_A\bigr)^M\Bigr)< \varepsilon^{\frac{1}{4}}.  
 \end{equation}
 
 \noindent
 \textbf{Case 2:} Case 1 does not hold and 
 \begin{equation}\label{eq:case 2}
 \sum_n\alpha_n^{(k)} \sum_{\{(j,\beta,\rho)\in \mathcal{F}_{n}(N,\varepsilon^2)\colon \sigma_j >N\}}\beta<\frac {\varepsilon^2} M.
 \end{equation}

\noindent
\textbf{Case 3:} Case 1 does not hold and 
\begin{equation}\label{eq:case 3}
 \sum_n\alpha_n^{(k)} \sum_{\{(j,\beta,\rho)\in \mathcal{F}_{n}(N,\varepsilon^2)\colon\sigma_j >N\}}\beta\geq \frac {\varepsilon^2} M.
 \end{equation}
 
 We analyze each of these cases separately.  

\subsection{Case 1:}
Fix $\varepsilon > 0$ and assume that~\eqref{eq:case 1} holds. 
 Roughly speaking,  the assumption means that when we iterate the approximation of 
 our transformation given by Definition~\ref{def:reduce} up to $M$ times,  we remain close to the original map.

 \begin{prop} 
 \label{prop:iterate-a-lot}
 There exists $N_{3}$ such that if $N>N_{3}$ and $\gamma$ is a probability measure  supported  on $\{-2r_N,-2r_N+1,\ldots,-1,0,1,\ldots,2r_N\}$ with $\gamma(\{0\})< \frac 1 7$, 
  and $M$ corresponds to $N$ as in~\eqref{eq:N and M}, 
  then  $$\gamma^M\bigl(\{(i_1,\ldots,i_M)\colon r_{s-2}> |\sum_{j=1}^Mi_j|>r_{s'+1}\}\bigr)>\frac 1 9.$$
 \end{prop}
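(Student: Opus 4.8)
The plan is to treat the sum $\sum_{j=1}^M i_j$ as a random walk with $M$ i.i.d. steps distributed according to $\gamma$, where $\gamma$ is supported on $\{-2r_N,\ldots,2r_N\}$ and gives mass less than $\tfrac 5 9$ to $0$. The first step is to split each step $i_j$ into its ``moving'' part and ``staying'' part: since $\gamma(\{0\})<\tfrac 5 9$, the number $K$ of indices $j$ with $i_j\neq 0$ is, by the law of large numbers (or a crude Chebyshev bound), at least $\tfrac{2}{5}M$ with probability close to $1$ for large $N$ (hence large $M$, using Lemma~\ref{lem:M-is-large}). So I would reduce to estimating $|\sum i_j|$ conditioned on the event that $K$ is of order $M$, with the nonzero steps being i.i.d. from $\gamma$ conditioned to be nonzero (each of absolute value between $1$ and $2r_N$).

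**Proving the walk is typically in the right window.**
With $K\asymp M$ nonzero steps, each bounded by $2r_N$ in absolute value, the sum $\sum i_j$ is automatically bounded by $2r_N M = 2 r_{s-2}$, so the upper constraint $|\sum i_j|<r_{s-2}$ needs only a factor-$2$ improvement, which I would get from a symmetry/anticoncentration argument: with probability bounded below (independently of $N$), the partial sum stays in $[-r_{s-2},r_{s-2}]$ — in fact one can arrange this deterministically by noting $\gamma$ is supported in $[-2r_N,2r_N]$ and $r_{s-2}=Mr_N$, so if even one step is ``small'' the bound is strict; more cleanly, replace $M$ by $\lfloor M/2\rfloor$ steps at the cost of adjusting constants. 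The substantive estimate is the \emph{lower} bound $|\sum i_j|>r_{s'+1}$. Here I use that $r_{s'+1}\le M^{1/8}$ for large $N$ by Lemma~\ref{lem:M-is-large}, so $r_{s'+1}$ is \emph{tiny} compared to the natural scale of the walk. Since each nonzero step has absolute value $\ge 1$ and there are $\asymp M$ of them, the standard deviation of $\sum i_j$ is at least of order $\sqrt{M}\gg M^{1/8}$, and a Paley–Zygmund / Berry–Esseen type anticoncentration bound gives that $\mathbb P(|\sum i_j|\le M^{1/8})\to 0$. Thus the walk exits the ball of radius $r_{s'+1}$ with probability $\to 1$.

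**Assembling the bound.**
Combining: the event $\{K\ge \tfrac 2 5 M\}$ has probability $\to 1$; conditioned on it, $\{|\sum i_j|>r_{s'+1}\}$ has probability $\to 1$ (anticoncentration at scale $M^{1/8}$); and $\{|\sum i_j|<r_{s-2}\}$ holds with probability bounded below (say $\ge \tfrac 1 2$, or deterministically after the $M\mapsto\lfloor M/2\rfloor$ reduction). Intersecting, for all $N$ large enough the probability of $\{r_{s-2}>|\sum i_j|>r_{s'+1}\}$ exceeds $\tfrac 1 9$, which is the claim. I would phrase the anticoncentration step via a second-moment bound: conditioned on the set $J$ of nonzero-step indices and on all the signs being resampled symmetrically, $\mathbb E[(\sum_{j\in J} i_j)^2]\ge |J|\ge \tfrac 2 5 M$, while a fourth-moment or Hoeffding bound controls the upper tail, so Paley–Zygmund yields $\mathbb P(|\sum i_j|\ge \tfrac 1 2 \sqrt{M/5}) \ge \text{const}$; since $\tfrac 1 2\sqrt{M/5} > r_{s'+1}$ for large $N$, we are done.

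**Main obstacle.**
The delicate point is that $\gamma$ is an \emph{arbitrary} measure with $\gamma(\{0\})<\tfrac 5 9$ — its nonzero mass could be concentrated on a single atom $\pm t$, making $\sum i_j$ a rescaled simple random walk, or it could be spread out. The argument must be robust to all such $\gamma$, so I cannot use a local CLT; instead the anticoncentration must come purely from the second-moment lower bound $\mathbb E[(\sum i_j)^2]\ge \operatorname{Var}\ge cM$ together with a matching upper bound on the fourth moment (using that each step is bounded by $2r_N$, so the fourth moment of a single step is at most $(2r_N)^2$ times its second moment — but $r_N$ itself grows with $N$, so one must be careful that $M$ grows much faster than any power of $r_N$, which is exactly the content of $M^{1/8}>r_{s'+1}>r_N$ from Lemma~\ref{lem:M-is-large}). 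Making the quantifiers line up — $N\to\infty$ forces $M\to\infty$ fast enough that all error terms involving $r_N$ are negligible — is the crux, but Lemma~\ref{lem:M-is-large} is precisely designed to supply it.
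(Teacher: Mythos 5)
Your overall strategy --- viewing the $i_j$ as $M$ i.i.d.\ steps with law $\gamma$ and exploiting the scale separation $r_{s'+1}<M^{1/8}\ll\sqrt M\ll M$ supplied by Lemma~\ref{lem:M-is-large} --- is the same as the paper's, but your central anticoncentration step has a genuine gap. You assert that the standard deviation of $\sum_j i_j$ is of order $\sqrt M$, i.e.\ that $\operatorname{Var}\bigl(\sum_j i_j\bigr)=M\operatorname{Var}(F_1)\geq cM$. This is false for general admissible $\gamma$: the hypothesis $\gamma(\{0\})<\frac 5 9$ bounds the second moment from below, $\mathbb{E}F_1^2\geq \frac 4 9$, but it does not bound the variance. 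For example $\gamma(\{1\})=0.99$, $\gamma(\{2\})=0.01$ (or even a point mass at a single nonzero integer) is admissible and has per-step variance near $0$; in such cases the conclusion holds only because the drift $M|\mathbb{E}F_1|\geq cM$ carries the sum far past $r_{s'+1}$, not because of fluctuations, and neither Paley--Zygmund nor Berry--Esseen, as you invoke them, sees this. Your attempted repair --- conditioning ``on all the signs being resampled symmetrically'' --- is not a legitimate operation: the nonzero part of $\gamma$ need not be symmetric, and symmetrizing changes the law. The paper's proof is organized around precisely the dichotomy you are missing: if $|\mathbb{E}(F_1)|>\frac 1{99}$, the drift of size at least $\frac M{99}$ dominates and Chebyshev with the crude bound $\mathbb{E}Z^2\leq M\cdot M^{1/4}$ (steps bounded by $2r_N<M^{1/8}$, where $Z=\sum F_i-M\mathbb{E}(F_1)$) suffices; if $|\mathbb{E}(F_1)|\leq\frac 1{99}$, then --- and only then --- the variance is bounded below by $\frac 1 8$ (mass at least $\frac 4 9$ sits at distance at least $1-\frac1{99}$ from the mean), and a characteristic-function/L\'evy-continuity CLT gives $|Z|>\frac{\sqrt M}{16}>r_{s'+1}$ in the favorable direction with probability exceeding $\frac 1 9$.

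Two further points. First, you cannot ``replace $M$ by $\lfloor M/2\rfloor$'': $M$ is prescribed by~\eqref{eq:N and M}, so the upper constraint must be handled for this $M$ (the paper treats it as immediate from the choice of $M$). Second, if your upper-bound event only carries probability $\geq\frac12$ while the lower-bound event carries a constant probability of the size Paley--Zygmund typically yields, the intersection bound $\mathbb{P}(A\cap B)\geq\mathbb{P}(A)+\mathbb{P}(B)-1$ can drop below $\frac 1 9$; to clear the explicit threshold you need the lower-bound event to have probability close to $1$, which again forces the drift/variance dichotomy. A corrected version of your idea does exist --- apply Paley--Zygmund directly to $S_M^2$ using $\mathbb{E}S_M^2=M\sigma^2+M^2\mu^2\geq\frac 4 9 M$ (which is true, unlike the variance bound) together with a uniform estimate $\mathbb{E}S_M^4\leq(3+o(1))(\mathbb{E}S_M^2)^2$ obtained from $r_N^2\leq M^{1/4}$ --- but that is not what you wrote, and verifying that fourth-moment ratio uniformly in $\gamma$ is exactly where the case analysis you skipped would reappear.
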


 \begin{proof}
Let $(\Omega,\PP)$ be a probability space and let $F_1,\ldots,F_{M}\colon (\Omega,\PP)
\to \mathbb{Z}$ be a sequence of independent $\gamma$ distributed random variables, let 
$Z=\sum_{i=1}^{M}F_i-M\mathbb{E}_{\PP}(F_1)$, and let $\sigma$ be the variance of $F_1$.
Then the variance of $Z$ is $M\sigma$.  

By our choice of $M=M_N$, we have that $|\sum_{j=1}^Mi_j|{\leq 2r_NM}<r_{s-2}$ for all $i_j$ in the support of $\gamma$ and the remainder of the proof is devoted to showing the lower bound. 

By Lemma~\ref{lem:M-is-large}, for sufficiently large $N$, we have that $M^{\frac{1}{8}}$ is bounded from below. 
The proof splits into two cases.  In the first, $|\mathbb{E}_{\PP}(F_1)|$ is not too small and we 
make use of Chebyshev's inequality (as in the proof of the weak law of large numbers).  In the second case, $|\mathbb{E}_{\PP}(F_1)|$ is small, and using the central limit theorem we show that for many $\omega$, $|Z(\omega)|\sim \sqrt{M\sigma}$.

First assume that $\frac 1 {99}<|\mathbb{E}_{\PP}(F_1)|$. 
We compute $\mathbb{E}_{\PP}(Z^2)$ and apply Chebyshev's inequality.  By independence of the $F_i$, we have that 
$$\mathbb{E}_{\PP}(Z^2) = M\mathbb{E}_{\PP} \bigl((F_1^2-2F_1\mathbb{E}_{\PP}(F_1)+\mathbb{E}_{\PP}(F_1)^2)\bigr).$$
 Applying H\"older's Inequality,  we bound $\|F_i^2\|$ by $\|F_i\|_\infty \cdot \|F_i\|$. 
 By Lemma~\ref{lem:M-is-large}, we have that $\|F_i\|\leq 2r_N<M^{\frac 1 8}$ and similarly, $\|F_i\|_\infty<M^{\frac18}$.  Then  $\mathbb{E}_{\PP}(Z^2)\leq MM^{\frac 1 4 }$
 and so it follows from Chebyshev's Inequality that $\mathbb{P}(\{x\colon |f(x)|>\varepsilon \int f^2d\PP\})|<\frac 1{\varepsilon^{2} \int f^2 d\PP}.$ 
Therefore,  $$\mathbb E_{\PP}(\{\omega\colon |Z(\omega)|<4M^{\frac 3 4}\})>\frac 1 2 .$$ 
 Since $\sum F_i=M\mathbb{E}_{\PP}(F_1)+Z$ and $|M\mathbb{E}_{\PP}(F_1)|>\frac M {99}$, 
 we have that if $|Z(\omega)|<4M^{\frac 34}$,   then
  $$|\sum_{i=1}^M
   F_i(\omega)|>\frac{M}{99}-4M^{\frac 34}>M^{\frac 1 2}.$$

Now we consider the case that $\frac 1 {99}\geq |\mathbb{E}_{\PP}(F_1)|$. Under this assumption, 
because $\gamma(\{0\})<\frac 1 7$ and is supported on integers,  we have that 
$$|\mathbb{E}_{\PP}(F_1-\mathbb{E}_{\PP}(F_1)^2)|\leq 
\frac 67(1-\frac 1 {99})^2,$$ which implies that the variance $\sigma$ is at least $\frac 1 8$.

Let $\varepsilon > 0$.  
There exists $N_0$ such that for all $N\geq N_0$, and 
any probability measure $\mathfrak{p}$ on $[-r_N,r_N]$, we have 
$$\frac{1}{(M_{N})^{\frac k 2}}{\int |t|^k d\mathfrak{p}(t)}<\frac \varepsilon {2^k}$$ 
 for all $k\geq 3$.
It follows  that $\phi_t=\mathbb{E}e^{\frac{itZ}{\sqrt{M\sigma}}}$ is bounded by  
$-\frac{t^2}{2}+c(t)$, where $|c(t)|<\varepsilon$. 

 We use L\'evy's Continuity Theorem to complete the proof. 
 Namely, we  
 repeat this process for a sequence of $N_j$ tending to infinity 
 and obtain $Z_j, M_j$, and $ \sigma_j$ such 
 that $\phi^{(j)}_t=\mathbb{E}_{\PP}e^{\frac{itZ_j}{\sqrt{M_j\sigma_j}}} \to e^{-\frac{t^2}{2}}$ pointwise. Since $e^{-{t^2}/2}$ is continuous at 0, it follows that that $\frac{Z_j}{\sqrt{M_j\sigma_j}}$ converges in distribution. 
 Thus  for  sufficiently large $j$, 
 it follows that the probability that   $|Z_j|>\frac{\sqrt{M_j}}{16}$ is at 
 least $1/9$. If the expectation of $F_1$ is nonnegative, 
 this implies that  
 $$ \gamma^M \Bigl(\{(i_1,\ldots,i_M)\colon \sum_{j=1}^Mi_j>r_{s'+1}\}\Bigr)>\frac 1 9.$$ 
 Otherwise, 
 $$\gamma^M\Bigl(\{(i_1,\ldots,i_M)\colon \sum_{j=1}^Mi_j<-r_{s'+1}\}\Bigr)>\frac 1 9,$$
 and the result follows.  
 \end{proof}

 \noindent
 \textbf{Notation:}
Let $\tilde{\varepsilon}$  be as in Section~\ref{sec:setup}, that is less than $\frac 1 {10^5 \cdot 9999}$  and small enough such that Lemma~\ref{lem:easy friend} holds.  Let $N_2$ be chosen according to Corollary~\ref{cor:to quote} for $\tilde{\varepsilon}^{2}$. 
Let $\tilde{N}\geq \max\{N_1,N_2,N_3\}$ and  $M=M_{\tilde{N}}$. 
 Let $\tilde{k}$ be chosen large enough such that 
\begin{enumerate}[label=(\Alph*)]
\item\label{constants:delta} $D((\sum_i \alpha_i^{{(}\tilde{k}{)}}U_{T^i})^M,F)<\delta$ where $\delta$ is as in Corollary~\ref{cor:to quote} for  $\varepsilon=\tilde{\varepsilon}$ and $N_0=\tilde{N}$
\item~\eqref{eq:another bound} with $G_{\tilde{N}}$ and~\eqref{eq:plenty good} hold. 
\end{enumerate}

\subsubsection*{Concluding the proof of Theorem~\ref{thm:prime} in case 1:} 
Let   $$A=\{(i_1,\ldots,i_M):\sigma_{\sum i_j}>\tilde{N} \text{ and } \sigma_{\sum i_j} \notin E\}.$$
Let $\gamma$ be the probability measure on $\mathbb{Z}$ given by $\gamma(i)=\alpha_i^{(\tilde k)}$.  
By Proposition~\ref{prop:norig}, we have $\gamma(0)<\frac 17$.
 By Proposition~\ref{prop:iterate-a-lot}, applied to $\gamma^M$ we have that $\sum_{A}\prod_{j=1}^M \alpha_{i_j}^{(\tilde{k})}>\frac 1 2 $. 
Indeed if $\sum_{j=1}^M i_j \in (r_{s'}, r_{s-1})$, then $\sigma_{\sum i_j} \notin E$. 
 By~\eqref{eq:composition coeff} and our choice of $\tilde{k}$,  we obtain a contradiction of Corollary~\ref{cor:to quote}. Thus this case can not occur. 

\subsection{Case 2:} 
In the absence of the first case,  
we are left with showing that for at least $\tilde{\varepsilon}^2$ of the sums $\sum_{j=1}^Mi_j$,  for at least $\tilde{\varepsilon}^2$ points we have $\sum_{j=1}^Mi_j$-friends. 
Roughly speaking, the idea is that  $\prod_{j=1}^MU_T^{i_j}$ under iteration does not stay close to $\prod_{\mathfrak{F}_{i_j}(\tilde{N},\tilde{\varepsilon})} \sum_{(\ell,A,\rho) \in \mathfrak{F}_{i_j}(\tilde{N},\tilde{\varepsilon})} \one_A U_T^\ell$,  
 and 
 so the sum
 $$\sum_{(i_1,\ldots,i_M)\in \mathbb{Z}^M} (\prod_{j=1}^M\alpha_{i_j}^{(\tilde{k})} )
 \sum_{\{(j,\beta,\rho)\in \mathcal{F}_{\sum_{\ell=1}^M i_\ell}(\tilde{N},\tilde{\varepsilon} )\colon\sigma_j>\tilde{N}\}}\beta $$
  becomes significant.   
 To make this precise, we deal with two cases separately, depending on the sizes of the sums in~\eqref{eq:case 2} and~\eqref{eq:case 3}. 

We start with case 2. That is we are assuming:
$$\sum_n\alpha_n^{(\tilde{k})} \sum_{\{(j,\beta,\rho)\in \mathcal{F}_{n}(\tilde{N},\tilde{\varepsilon}^2)\colon \sigma_j  >\tilde{N}\}}\beta<\frac {\tilde{\varepsilon}^2} M.$$
Given $n\in\N$, set $\mathfrak{H}_0=\{(n,[0,1],\rho)\}$ and define the \emph{reduction} 
 $\red_k(n)[\bx]=(m,A,\rho)$,  where 
  \begin{itemize}
 \item $(m,A,\rho)\in  \mathfrak{H}_r(\tilde{N},\tilde{\varepsilon})$ for some (smallest) $r$, 
 \item $ \bx \in A,$ and
 \item $ \sigma_m\leq 10^k$ or $(m,A ,\rho) \in \mathfrak{F}(N,\tilde{\varepsilon})$ (that is, $m \notin E$ or $\rho>\tilde{\varepsilon})$. 
  \end{itemize}
 Let ${\tred_k(n)[\bx]}$ denote the first coordinate of $\red_k(n)[\bx]$. 
We say that the sum $\sum_j i_j$ is \emph{treatable} if 
$$ \sum_{\{(k,\beta,\rho)\in \mathcal{F}_{i_j}(N,\varepsilon^2)\colon \sigma_k >N\}}\beta<\tilde{\varepsilon}
$$
for all choices of  $i_j$ 
 and 
the sum $\sum_j i_j$ is \emph{$\bx$-treatable} if for all $i_j$, the elements 
$(n,A,\rho)\in \mathfrak{F}_{i_j}({N},
\tilde{\varepsilon})$ satisfy $\bx \in A$ has  $\sigma_n\leq {N}$. (Recall, $\mathfrak{F}_{i_j}(N,\tilde{\varepsilon})$ is not necessarily a singleton, but each $\bf x\in Y$ is in the second coordinate of exactly one triple in $\mathfrak{F}_{i_j}({N,\tilde{\varepsilon}})$.)

\begin{lem}\label{lem:carrying friends}Assume $\sum i_j$ is \emph{$\bx$-treatable}, $A \subset Y$ so that $(n,A,\rho)=\red_k( \sum_{j=1}^M
i_j)[\bx]$,  
\begin{equation}\label{eq:neq}
\tred_k(\sum_{j=1}^M i_j)[\bx] \neq \sum_{j=1}^M \tred_k(i_j)[\bx]
\end{equation}
 and $k$ is maximal with this property.  Then $\red_{k+1}(\sum_{j=1}^Mi_j)[\bx]$ or $\red_k(\sum_{j=1}^Mi_j)[\bx]$ lies in $\mathfrak{F}_{\sum_{j=1}^Mi_j}(\tilde{N},\tilde{\varepsilon})$.  
\end{lem} 

\begin{proof}
Set 
\begin{equation}\label{eq:k+1 same} m=\sum_{j=1}^M
 \tred_{k+1}(i_j)[\bx] =\tred_{k+1}(\sum_{j=1}^Mi_j)[\bx].
 \end{equation} 

 First,  because $i_j$ is $\bx$-treatable, we have
 \begin{multline*} 
 \tred_{k+1}(i_j)[\bx]-\tred_k(i_j)[\bx]\in
 \{ d_{10^{k+1}}(\tred_{k+1}(i_j)[\bx])r_{10^{k+1}},\\
 d_{10^{k+1}}(\tred_{k+1}(i_j)[\bx])r_{10^{k+1}}+d_{10^{k+1}}(\tred_{k+1}(i_j)[\bx])\}
 \end{multline*} 
  for all $j$. Let $n=\sum_j \big(\tred_{k+1}(i_j)[\bx]-\tred_k(i_j)[\bx]\big)$ and by our assumption that $k$ is maximal satisfying~\eqref{eq:neq},  $n\neq \tred_{k+1}(m)[\bx]-\tred_k(m)[\bx]$.  We now treat a series of cases, each of which is straightforward. If $\sigma_{\tred_{k+1}(m)[\bx]}> 10^{k+1}$, then because $|\tred_{k+1}(m)[\bx]|\leq Mr_{10^{k+1}+1}<r_{10^{k+2}-1}$, $\sigma_{|\tred_{k+1}(m)[\bx]}\notin E$. Thus the reduction algorithm given in Definition~\ref{def:reduce} halts and the lemma follows. If $\sigma_{\tred_{k+1}(m)[\bx]}= 10^{k+1}$, then by choice of $M$ and $N$ we have 
 \begin{equation}\label{eq:small sum}
  |\sum_j\tred_{k}(i_j)([\bx])|<r_{10^{k+1}-2}.
  \end{equation}
  Also, because $\tred_{k+1}(m)[\bx]-\tred_{k}(m)[\bx], \tred_{k+1}(i_j)[\bx]-\tred_{k}(i_j)[\bx]$ are all multiples of either $r_{10^{k+1}}$ or $r_{10^{k+1}}+1$ (depending on $x_{10^{k+1}}$),  we have that $\tred_{k+1}(m)[\bx]-\tred_{k}(m)[\bx]=n+p$ where $|p|\geq r_{10^{k+1}}$. 
 
By the algorithm for representing numbers in terms of $d_i$, 
we have that if $|p|\geq r_{\sigma_\ell+2}$ then $|p|>5|\ell|$ and so
  \begin{equation}\label{eq:where to}  \sigma_{\ell+p}\geq \sigma_p-1.
  \end{equation}
Thus  by Equation~\eqref{eq:small sum}, 
  $$\sigma_{\tred_k(m)[\bx]}=\sigma_{\sum_j \tred_k(i_j)[\bx]-p} \geq {10^{k+1}-1}>10^k.$$ 
 Therefore, by the definition of $\red_k(\cdot)[\bx]$ we must have that $\red_k(m)[\bx]\in \mathfrak(N,\tilde{\varepsilon})$, that is, the algorithm halts in this case as well. In the final case, $\sigma_{\tred_{k+1}(m)[\bx]}< 10^{k+1}$ we have that $n=0$ and either $\sigma_{\tred_{k+1}(m)[\bx]}\leq 10^k$ in which case $\red_{k}(m)[\bx]=\red_{k+1}(m)[\bx]$ and this is 
  $\sum \tred_{k}(i_j)[\bx]$ a contradiction, or $10^k<\sigma_{\tred_{k+1}(m)[\bx]}<10^{k+1}$ and so the algorithm stops. 
    \end{proof}

\begin{proof}[Concluding the proof of Theorem~\ref{thm:prime} in case 2]
Let 
$$I(\bx,M)=\{(i_1,\ldots,i_M)\colon  \sum i_j \text{ is }\bx\text{-treatable and }\sum_{j=1}^M\tred(i_j)[\bx]\neq \tred(\sum_{j=1}^M i_j)[\bx]\}.$$ 
We assume that we are  not in the first case and moreover~\eqref{eq:case 2} holds. 
Thus for a set of $\bx$ of measure at least $\tilde{\varepsilon}$,  we have
$$\sum_{\{(i_1,\ldots,i_M)\colon (i_1,\ldots,i_M)\in I(\bx,M)\}}\ \prod_{j=1}^M\alpha_{i_j}^{(\tilde{k})}\geq \tilde{\varepsilon}.$$
For each such $\bx$, $(i_1,\ldots,i_M)$  there exists $m$, $A \subset [0,1]$, $\rho\geq0$ such that $\bx \in A$ and $(m,A,\rho)\in \mathfrak{F}_{\sum_{j=1}^Mi_j}(\tilde N,\tilde \varepsilon)$ and 
$\sigma_m=k>\tilde N$.  
Thus we have 
$$\sum_{(i_1,\ldots,i_M)\in \mathbb{Z}^M} (\prod_{j=1}^M\alpha_{i_j}^{(\tilde{k})} )
 \sum_{\{(j,\beta,\rho)\in \mathcal{F}_{\sum_{\ell=1}^M i_\ell}(\tilde{N},\tilde{\varepsilon} )\colon\sigma_j>\tilde{N}\}}\beta \geq \tilde{\varepsilon}^2$$
 By Corollary~\ref{cor:to quote}, and  our choices of  $\tilde{N}, \, \tilde{k}$, (see~\ref{constants:delta})  this establishes case 2. 
\end{proof}

\subsection{Case  3 (we assume neither of the conditions in Case 1 or in Case 2 holds)}
We say that $n$ is \emph{good for reduction} if 
$$\sum_{\{(i,c,\gamma)\in  \mathcal{F}_{n}( \frac {\tilde{N}} 2,\tilde{\varepsilon}^{4})\colon \sigma_i>  \frac {\tilde{N}} 2\}}c<\tilde{\varepsilon}^4$$ 
and we say $n$ is \emph{bad for reduction} if 
$$\sum_{\{(i,c,\gamma)\in
\mathcal{F}_{n}(\tilde{N},\tilde{\varepsilon}^2)\colon \sigma_i> \tilde{N}\}}c>\tilde{\varepsilon}.$$

 By our assumption on $\tilde{k}$ and the estimate for sufficiently large $\tilde{k}$ given in~\eqref{eq:plenty good}, we have: 
\begin{lem}\label{lem:plenty nice} Let 
$G=\{\vec{i}\in \mathbb{Z}^M\colon 
i_j \text{ is good  for reduction for at least }\frac M 2 \text{ choices of }  j\}$, 
then $\sum_{\vec{i}\in G} \prod_{j=1}^M \alpha_{i_j}^{(\tilde{k})}\geq \frac 1 2$.
\end{lem}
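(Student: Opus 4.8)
The plan is to read the statement off as an elementary concentration estimate for the $M$-fold product of the probability vector $\alpha^{(\tilde k)}=(\alpha^{(\tilde k)}_n)_{n\in\mathbb Z}$. First I would isolate the only input actually needed: by~\eqref{eq:plenty good} together with Claim~\ref{claim:i-in-J} and Lemma~\ref{lem:easy friend} (which, as noted just before the lemma, is what ``our assumption on $\tilde k$'' provides), the set
$$
\mathcal G=\{\,n\in\mathbb Z\colon n\text{ is good for reduction}\,\}
$$
satisfies $p:=\sum_{n\in\mathcal G}\alpha^{(\tilde k)}_n>\tfrac34$. The one point to check here is that membership of $n$ in the set $G_{\tilde N}$ from Section~\ref{sec:setup} forces $n$ to be good for reduction; this amounts to comparing $\mathcal F_n(\tfrac{\tilde N}2,\tilde\varepsilon^{2})$ with $\mathcal F_n(\tfrac{\tilde N}2,\tilde\varepsilon^{4})$ and absorbing the discrepancy into the choice of $\tilde N$, exactly as in the parenthetical remark preceding the lemma.

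Next, equip $\mathbb Z^{M}$ with the product measure $\mathbf P:=\bigotimes_{j=1}^{M}\alpha^{(\tilde k)}$, and let $B(\vec i)=\#\{\,1\le j\le M\colon i_j\notin\mathcal G\,\}$ be the number of coordinates of $\vec i$ that are bad for reduction. Since the coordinates are $\mathbf P$-independent, $B$ is a sum of $M$ independent Bernoulli random variables of mean $1-p<\tfrac14$, so $\mathbf E[B]=M(1-p)<M/4$. A vector $\vec i$ fails to lie in $G$ exactly when fewer than $M/2$ of its coordinates are good for reduction, i.e.\ when $B(\vec i)\ge M/2$ (this characterization holds for either parity of $M$, using $M-\lfloor M/2\rfloor\ge M/2$). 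Markov's inequality then yields
$$
\mathbf P\bigl(\mathbb Z^{M}\setminus G\bigr)\le \mathbf P\bigl(B\ge \tfrac M2\bigr)\le\frac{\mathbf E[B]}{M/2}=2(1-p)<\tfrac12 .
$$
Hence $\sum_{\vec i\in G}\prod_{j=1}^{M}\alpha^{(\tilde k)}_{i_j}=\mathbf P(G)=1-\mathbf P(\mathbb Z^{M}\setminus G)>\tfrac12$, which in particular gives the asserted bound $\ge\tfrac12$.

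I do not expect a genuine obstacle: this is a one-line Markov estimate (one could instead invoke a Chernoff/Hoeffding bound to get an exponentially small upper bound on $\mathbf P(\mathbb Z^{M}\setminus G)$, but this is not needed here). The only mild subtleties are the bookkeeping identifying ``good for reduction'' with membership in $G_{\tilde N}$ up to adjusting constants, and the harmless parity issue in the threshold $M/2$; neither affects the conclusion.
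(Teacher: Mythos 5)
Your argument is correct and matches the paper's intent: the paper states this lemma as an immediate consequence of the choice of $\tilde{k}$ and the bound~\eqref{eq:plenty good}, and your Markov-inequality count of bad coordinates under the product measure (expected number $<M/4$, hence probability of at least $M/2$ bad coordinates $<1/2$) is exactly the implicit argument, with the identification of $G_{\tilde N}$ with the good-for-reduction indices handled up to the same parameter bookkeeping the paper itself glosses over.
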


\begin{lem}\label{lem:still pretty bad} If $j $  is bad for reduction and  $k$ and $m$ are good for reduction, then 
$$j-k+m \in \mathcal{H}_{\tilde{N},\tilde{\varepsilon}^4}.$$
Similarly, if $j$ and $m$ are good for reduction and $k$ is bad for reduction, then 
$$j-k+m \in \mathcal{H}_{\tilde{N},\tilde{\varepsilon}^4}.$$
\end{lem}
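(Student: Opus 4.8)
The plan is to reduce the bad index $j$ on its own---producing a friend whose single altered coordinate lies at a position $>\tilde N$---and then to show that composing with the two good indices $k$ and $m$ disturbs only coordinates far below $\tilde N$, so this structure survives for $j-k+m$. It suffices to treat the first assertion: the second follows by the same argument with the bad index now in the subtracted slot, using that $\sigma_{-n}=\sigma_n$ and that the reduction of Definition~\ref{def:reduce} applied to $-n$ mirrors the one applied to $n$.

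\emph{Reducing $j$.} Since $j$ is bad for reduction, $\sum c>\tilde\varepsilon$ over the terminal triples $(n_j,A_j,\rho_j)\in\mathfrak{F}_j(\tilde N,\tilde\varepsilon^2)$ with $\sigma_{n_j}>\tilde N$; fix one with $\nu(A_j)$ comparable to $\tilde\varepsilon$. Being terminal with $\sigma_{n_j}>\tilde N$, it satisfies $\sigma_{n_j}\notin E$ or $\rho_j>\tilde\varepsilon^2$, and since every reduction increment $|d_\sigma(\cdot)|/a_\sigma$ taken at an index $\sigma\in E$ with $\sigma>\tilde N$ is of order $1/\tilde N$, we also have $\rho_j<2\tilde\varepsilon^2$. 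Iterating Lemma~\ref{lem:reduction close} then shows that $T^j$ agrees with $T^{n_j}$ in all coordinates $<\sigma_{n_j}$ off a subset of $A_j$ of measure at most a constant times $\tilde\varepsilon^2\nu(A_j)$.

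\emph{The good indices act with low complexity.} Applying Lemma~\ref{lem:reduction close 2} with the defining $\varepsilon$ of $\mathcal{F}(\tilde N/2,\tilde\varepsilon^4)$ and using that $k$ and $m$ are good for reduction, we obtain (after taking the approximating convex combinations sufficiently close in the metric $D$) integers $n_k,n_m$ with $\sigma_{n_k},\sigma_{n_m}\le\tilde N/2$ such that, off a set of measure at most a constant times $\tilde\varepsilon^4$, $T^k$ agrees with $T^{n_k}$ and $T^m$ with $T^{n_m}$ in every coordinate $<\sigma_{n_j}$. By the carrying analysis in the proof of Lemma~\ref{lem:towers shadow} (effective because $a_i\ge 8$), off a further set of arbitrarily small measure both $T^{n_k}$ and $T^{n_m}$ fix every coordinate at a position $>\tilde N/2+C_0$ for an absolute constant $C_0$, hence fix all coordinates in $[\tilde N,\sigma_{n_j}]$; moreover $|n_k|,|n_m|\le r_{\tilde N/2+1}$, which by the growth estimate in Lemma~\ref{lem:M-is-large} is far smaller than $r_{\tilde N}\le|n_j|$. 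Therefore $\sigma_{n_j-n_k+n_m}=\sigma_{n_j}>\tilde N$ and $d_{\sigma_{n_j}}(n_j-n_k+n_m)=d_{\sigma_{n_j}}(n_j)$.

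\emph{Assembling the friends, and the main obstacle.} Let $\tilde A$ be the intersection of the large sets above, so $\nu(\tilde A)$ is still comparable to $\tilde\varepsilon$. Since powers of $T$ commute, $T^{j-k+m}=T^{n_m}T^{-n_k}T^j$ agrees on $\tilde A$ with $T^{n_j-n_k+n_m}$ in all coordinates $<\sigma_{n_j}$. Running the reduction of Definition~\ref{def:reduce} from $\mathfrak{H}_0=\{(j-k+m,[0,1],0)\}$ and combining the reduction of $j$ with the low-complexity descriptions of $k$ and $m$---exactly as the reduction of a sum is analyzed in the proof of Lemma~\ref{lem:carrying friends}---one checks that on $\tilde A$ this reduction reaches a terminal triple whose top index equals $\sigma_{n_j}>\tilde N$. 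Applying the construction in the proof of Lemma~\ref{lem:reduce no E friends} when $\sigma_{n_j}\notin E$, and of Lemma~\ref{lem:repeat rigid friends} when $\sigma_{n_j}\in E$, to that terminal triple yields a measure-preserving map $G$ on a subset of $\tilde A$ of measure proportional to $\nu(\tilde A)$ with $\bx$ and $G(\bx)$ being $\zeta_\bx(j-k+m)$-\friend\ and agreeing in every coordinate $<\sigma_{n_j}$, in particular in the first $\tilde N$; by our standing bound $\tilde\varepsilon<\tfrac1{10^5\cdot 9999}$ this measure exceeds $\tilde\varepsilon^4$, giving $j-k+m\in\CH_{\tilde N,\tilde\varepsilon^4}$. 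The delicate point is the third paragraph: one must verify that the two good indices touch no coordinate in the gap between $\tilde N/2$ and $\sigma_{n_j}$ and introduce no second discrepancy in the $Z_\ell$- or $W_\ell$-counts along the relevant $S$- and $T$-orbit segments, so that $\bx$ and $G(\bx)$ differ in exactly one $Z$-count and by exactly one. This is precisely why two scales are used and why the quantitative separation $r_{\tilde N/2+1}\ll r_{\tilde N}$ from Lemma~\ref{lem:M-is-large} is needed.
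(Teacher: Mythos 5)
Your proposal does not follow the paper's argument, and the step you yourself flag as ``the delicate point'' is exactly where the proof has to happen, so as written there is a genuine gap. The paper never re-runs the reduction algorithm on the integer $j-k+m$: since $j$ is bad for reduction, Claim~\ref{claim:i-in-J} already gives $j\in\mathcal{H}_{\tilde N,\tilde\varepsilon^3/9999}$, i.e.\ an existing friend map $G_j$ on a set $\mathcal{A}_j$ of measure at least $\tilde\varepsilon^3/9999$; by Lemma~\ref{lem:friends to offset} and Corollary~\ref{cor:friends to offset} the pair $(T^j\bx,T^jG_j\bx)$ agrees up to a shift $0<|\ell|\le 3$ in the first $\tilde N$ coordinates, and the whole point is to show this \emph{same} pairing survives as $\zeta_\bx(j-k+m)$-friends. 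That survival is what goodness of $k$ and $m$ buys: off the sets $\BR_r$ (whose measure is bounded by $40\tilde\varepsilon^4$ by iterating Lemma~\ref{lem:cylinders containing bad}) the extra powers act like powers $T^n$ with $\sigma_n\le\tilde N/2$, and such powers can only disturb the first $\tilde N$ coordinates or the $D_a$-counts with $a>\tilde N$ on a set of measure at most $2^{-\tilde N/2}$; the leftover measure $\tilde\varepsilon^3/9999-2(41\tilde\varepsilon^4+2^{-\tilde N/2})$ exceeds $\tilde\varepsilon^4$ by \eqref{eq:later epsilon bound} and \eqref{eq:another bound}. Your plan instead tries to manufacture new friends for $j-k+m$ by running Definition~\ref{def:reduce} on $j-k+m$ itself and feeding a terminal triple into Lemmas~\ref{lem:repeat rigid friends} or~\ref{lem:reduce no E friends}; but the reduction of $j-k+m$ has its coefficients, branching sets and halting behaviour determined intrinsically by the integer $j-k+m$, not by your decomposition into $j$, $-k$, $m$, and the assertion that on $\tilde A$ it terminates at a triple with top index $\sigma_{n_j}$ and with enough measure is only claimed ``exactly as in Lemma~\ref{lem:carrying friends}'' --- a lemma that treats a different situation (sums of $\bx$-treatable terms whose reductions fail to commute). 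Filling that in would essentially be redoing the Case~2 analysis, which is precisely what the paper's transport-of-existing-friends argument avoids.

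There are also concrete quantitative problems in the supporting steps. You ``fix one terminal triple with $\nu(A_j)$ comparable to $\tilde\varepsilon$'': badness only says the \emph{sum} of the measures of triples with $\sigma>\tilde N$ exceeds $\tilde\varepsilon$, and since the reduction branches at every even power of ten, that mass may be spread over many triples each of negligible measure. Goodness of $k$ does not produce a single integer $n_k$ with $\sigma_{n_k}\le\tilde N/2$; it produces a family of powers, one on each piece of a partition, and the exceptional sets must be controlled triple by triple (this is what $\BR_r$ and Lemma~\ref{lem:cylinders containing bad} are for). The bound $\rho_j<2\tilde\varepsilon^2$ does not follow from the stated choices: an increment at a position $10^s\in E$ above $\tilde N$ is $|d|/a_{10^s}\le 4/s\approx 4/\log_{10}\tilde N$, which is of order $1/\log\tilde N$, not $1/\tilde N$, and nothing in Section~\ref{sec:setup} forces $\log\tilde N\gg\tilde\varepsilon^{-2}$. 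Finally, $\sigma_{n_j-n_k+n_m}=\sigma_{n_j}$ with the same leading digit can fail by boundary effects of the greedy expansion (compare \eqref{eq:where to}, which only guarantees the top index drops by at most one). Each of these could perhaps be repaired, but together with the unproved termination claim they leave the proposal short of a proof; the paper's argument, by starting from the friends that $j$ already possesses, sidesteps all of them.
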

 Note that we separate the roles of the terms $k$ and $m$ to make it easier to apply the lemma (see Corollary~\ref{cor:good to bad}).  
\begin{proof}
We establish the first claim, as the second is similar.
First, if $j$ is bad for reduction, then by Claim~\ref{claim:i-in-J} we have that $j \in \mathcal{H}_{\tilde{N},\frac{\tilde{\varepsilon}^3}{9999}}$.
Taking $\mathcal{A}_j$ and $G_j$ as in Notation~\ref{notation:friends}, we have that when $\bx\in \mathcal{A}_j$ there exists $0< |\ell|\leq 3$ such that $(T^\ell T^j\bx)_i=(T^j(G_j\bx))_i$ for all $i\leq \tilde{N}$. 
Recall that the sets $D_a$ are defined in~\eqref{def:Dk}. 
Now if $(T^{n+\ell+j} \bx )_i \neq (T^{n+j} (G_j\bx))_i$ for some $i\leq \tilde{N}$, then there exists $a>\tilde{N}$ such that either $S^bT^{\ell+j} \bx\in D_a$ for $|b|\leq 3|n|\leq |\zeta_{T^{\ell+j}\bx}(b)|$ or $S^b T^j G_j\bx\in D_a$ for $|b|\leq 3|n|\leq |\zeta_{T^{j}G_j\bx}(b)|$. This uses Corollary~\ref{cor:friends to offset}.  If $\sigma_n\leq \frac {\tilde{N}} 2$, then  the measure of such points is at most 
$$4 \cdot 3n \sum_{a>\tilde{N}}\mu(D_a)<2^{-\frac {\tilde{N}} 2 }.$$ 

Let $d\in \mathbb{Z}$, $(n,A,\rho) \in \mathfrak{F}_d(\frac {\tilde{N}} 2,\tilde{\varepsilon}^4)$, and $\bx\in \mathcal{A}_j \cap A$ (which implies that $G_j(\bx) \in A$). If 
  $(T^{\ell+j} \bx )_i = (T^{j} (G_j\bx))_i$ for all $i\leq \tilde{N}$ but $(T^dT^\ell T^j \bx )_i \neq 
   (T^d T^{j} (G_j\bx))_i$ 
  for some $i\leq \tilde{N}$, then $\bx  \in \BR_r$ for some $r$. 
  Since $k$ and $m$ are good for reduction, by iterating Lemma~\ref{lem:cylinders containing bad} when $d=k$ or $m$,  we have that the measure  of the set of such points is at most $40\tilde{\varepsilon}^4$. 
Combining these two estimates and considering  $(n,A,\rho)\in \mathfrak{F}_{d}(\frac {\tilde{N}} 2 ,\tilde{\varepsilon}^4)$ with $\sigma_n>\frac {\tilde{N}} 2$, we obtain that 
$$j-k+m \in \mathcal{H}_{\tilde{N},\frac{\tilde{\varepsilon}^3}{9999}-2\cdot (\tilde{\varepsilon}^4+40\tilde{\varepsilon}^4+2^{-\frac {\tilde{N}} 2})}.$$ 
By the assumptions~\eqref{eq:later epsilon bound} and~\eqref{eq:another bound}  on $\tilde{N}$ and $\tilde{\varepsilon}$, 
the lemma follows. 
\end{proof}

\begin{cor}\label{cor:good to bad}
Assume $\sum_{\ell=1}^Mi_\ell$ is  good  for reduction and $j_\ell$ is such that $j_\ell=i_\ell$ except at one place where $i_\ell$ is good for reduction and $j_\ell$ is bad for reduction.  
Then
$\sum_{\ell=1}^Mj_\ell\in\mathcal{H}_{N,\varepsilon^4 }$.  Similarly if $j_\ell=i_\ell$ except at one place where $i_\ell$ is bad for reduction and $j_\ell$ is good for reduction, then  $\sum_{\ell=1}^Mj_\ell\in\mathcal{H}_{\tilde{N},\tilde{\varepsilon}^4}$.
\end{cor}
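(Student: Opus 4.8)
The plan is to obtain this directly from Lemma~\ref{lem:still pretty bad} by re-bracketing the sum. I would fix the unique index $\ell_0$ at which $j_{\ell_0}\neq i_{\ell_0}$ and write $s=\sum_{\ell\neq \ell_0} i_\ell$, so that $\sum_{\ell=1}^M i_\ell = s+i_{\ell_0}$ and hence $\sum_{\ell=1}^M j_\ell = s+j_{\ell_0} = j_{\ell_0}-i_{\ell_0}+\bigl(\sum_{\ell=1}^M i_\ell\bigr)$. The hypotheses of the corollary furnish precisely the good/bad data that Lemma~\ref{lem:still pretty bad} needs: $\sum_{\ell=1}^M i_\ell$ is good for reduction (the standing assumption of the corollary), and at $\ell_0$ one of $i_{\ell_0},j_{\ell_0}$ is good for reduction and the other is bad for reduction.

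For the first assertion I would apply the first claim of Lemma~\ref{lem:still pretty bad} with bad index $j:=j_{\ell_0}$ and good indices $k:=i_{\ell_0}$ and $m:=\sum_{\ell=1}^M i_\ell$; then $j-k+m=\sum_{\ell=1}^M j_\ell$ lies in $\mathcal{H}_{\tilde N,\tilde\varepsilon^4}$ (the $\mathcal{H}_{N,\varepsilon^4}$ appearing in the displayed conclusion should be read as $\mathcal{H}_{\tilde N,\tilde\varepsilon^4}$, matching the notation fixed in Section~\ref{sec:setup}). For the second assertion $i_{\ell_0}$ is bad for reduction and $j_{\ell_0}$ is good, while $\sum_{\ell=1}^M i_\ell$ is still good, so I would instead invoke the second claim of Lemma~\ref{lem:still pretty bad} with good indices $j:=j_{\ell_0}$ and $m:=\sum_{\ell=1}^M i_\ell$ and bad index $k:=i_{\ell_0}$, giving again $j-k+m=\sum_{\ell=1}^M j_\ell\in\mathcal{H}_{\tilde N,\tilde\varepsilon^4}$.

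I do not expect a genuine obstacle here, since all the quantitative work is already packaged inside Lemma~\ref{lem:still pretty bad}; the only care needed is bookkeeping — confirming that the expression to be handled is literally of the form $j-k+m$ with $j$, $k$, $m$ each individually carrying a good-for-reduction or bad-for-reduction label (as opposed to, say, a sum involving two bad terms, which the lemma does not cover), and that it is the full sum $\sum_{\ell=1}^M i_\ell$, rather than merely the altered coordinates, that is assumed good. Both points are immediate from the statement. This is exactly the form in which the conclusion gets fed into the treatment of the multi-index sums coming from~\eqref{eq:composition coeff}.
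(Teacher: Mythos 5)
Your proposal is correct and is essentially the paper's own argument: the paper likewise writes $\sum_{\ell=1}^M j_\ell = j_1 - i_1 + \sum_{\ell=1}^M i_\ell$ (taking $\ell_0=1$ for concreteness) and invokes Lemma~\ref{lem:still pretty bad} with exactly the good/bad assignments you specify, treating the second case as symmetric. Your remark that the displayed $\mathcal{H}_{N,\varepsilon^4}$ should read $\mathcal{H}_{\tilde N,\tilde\varepsilon^4}$ is also consistent with the notation fixed in Section~\ref{sec:setup}.
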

\begin{proof} We prove the first case and the second is similar. For concreteness we assume that $j_1\neq i_1$. 
So $\sum_{\ell=1}^M j_\ell=j_1-i_1+\sum_{\ell=1}^Mi_\ell$ satisfies the assumptions of Lemma~\ref{lem:still pretty bad},  completing the proof. 
\end{proof}

Set $A=\{i\colon i \text{ is good for reduction}\}$, set $B=\{i\colon  i \text{ is bad for reduction}\}$ 
and set $C=\mathbb{Z}\setminus (A\cup B)$.  
We define two closely related partitions $\mathcal{P}$ and $\mathcal{P'}$ of $\mathbb{Z}^M$. 
We index the partition  elements of both $\mathcal{P}$ and $\mathcal{P}'$ by elements of $A^a\times B^b\times C^c$, 
where $a,b,c\geq 0$ and $a+b+c=M-1$.
Given some triple $(\vec x,\vec y,\vec z)\in A^a\times B^b\times C^c$, let
$P_{(\vec{x},\vec{y},\vec{z})}$ be
the set of all 
$M$-tuples $(i_1,\ldots,i_M)$ such that there exists $e_1<\ldots<e_a$, $f_1<\ldots<f_b$, $g_1<\ldots<g_c$ 
satisfying that for each $\ell$ in the allowed ranges,  $i_{e_\ell}=x_\ell$, $i_{f_\ell}=y_\ell$ and $i_{g_\ell}=z_\ell$. Moreover if $j$ is the unique element of $\{1,\ldots,M\}\setminus \{e_1,\ldots,e_a,f_1,\ldots, f_b,g_1,\ldots, g_c\}$ then $j>e_a$ and $i_j \in A$.  Set
$$\mathcal{P}=\{P_{(\vec{x},\vec y,\vec z)}:(\vec x,\vec y,\vec z)\in A^a\times B^b\times C^c \text{ where }0\leq a,b,c \text{ and }a+b+c=M-1\}.$$

We give another way to describe this.  
The partition  elements  are 
 subsets of $\mathbb{Z}^M$  so that each $M$-tuple has $a+1$ terms in $A$, $b$ terms in $B$ and $c$ terms in $C$. 
  We fix all the terms that are in 
   $B$ and the order they come in relative to the other terms that are in $B$ (but not relative to the terms that are in $A$ and $C$), and similarly for $C$. For $A$, 
we fix all but the last term that is in $A$ that appears and their order relative to the other terms of $A$ 
(but not relative to the terms of $B$ and $C$). The last term that is in $A$ which appears is allowed to be any element of $A$. 

We now define $\mathcal{P}'$ by switching the roles of $A$ and $B$.
That is, we define 
$P'_{(\vec{x},\vec{y},\vec{z})}$, 
 to be 
the set of all $(i_1,\ldots,i_M)$ such that there exists $e_1<\ldots<e_a$, $f_1<\ldots<f_b$, $g_1<\ldots<g_c$ such that for each $\ell$ in the allowed ranges,  $i_{e_\ell}=x_\ell$, $i_{f_\ell}=y_\ell$, and $i_{g_\ell}=z_\ell$. 
Moreover if $j$ is the unique element of $\{1,\ldots,M\}\setminus \{e_1,\ldots,e_a,f_1,\ldots,f_b,g_1,\ldots,g_c\}$, then $j>f_b$ and $i_j \in B$.  Set
$$\mathcal{P}'=\{P'_{(\vec{x},\vec y,\vec z)}:(\vec x,\vec y,\vec z)\in A^a\times B^b\times C^c \text{ where }0\leq a,b,c \text{ and }a+b+c=M-1\}.$$

\begin{lem}\label{lem:one bad} 
For any $a+b+c=M-1$, $(\vec{x},\vec{y},\vec{z}) \in A^{a}\times B^b\times C^c$, 
if any element of $P_{(\vec x,\vec y, \vec z)}$ is  good   for reduction, then no element of $P'_{(\vec x,\vec y, \vec z)}$ is  good   for reduction, 
 and the analogous statement holds when the roles of $P_{(\vec x,\vec y, \vec z)}$ and $P_{(\vec x,\vec y, \vec z)}'$ are exchanged.  
\end{lem}

\begin{proof} Let $(i_1,\ldots,i_M), (i_1',\ldots,i_M') \in P_{(\vec x,\vec y, \vec z)}\cup P_{(\vec x,\vec y, \vec z)}'$. There exists a permutation $\pi$ such that $i_j=i_{\pi(j)}'$ except for at most one $j$.  Moreover, if $k\in P_{(\vec x,\vec y, \vec z)}$ and $\ell \in P'_{(\vec x,\vec y, \vec z)}$, then this is a change from a good for reduction element to a bad for reduction element. 
By Corollary~\ref{cor:good to bad} if $\sum_{j=1}^M i_j$ is good for reduction, then $\sum_{j=1}^Mi_j'$ is not   good   for reduction. Thus if there exists one element in $P_{(\vec x,\vec y, \vec z)}$ that was  good   for reduction, then this argument shows every $\ell \in P_{(\vec x,\vec y, \vec z)}'$ is not  good   for reduction and similarly vice versa.
\end{proof}

We define one more partition $\mathcal{N}$ of $\mathbb{Z}^M$. If $u,v,w\geq 0$ and $u+v+w=M$, let  
$$\mathcal{N}_{u,v,w}=\bigl\{(i_1,\ldots,i_M)\colon |\{j\colon i_j\in A\}|=u, \, |\{j\colon i_j\in B\}|=v \text{ and }|\{j\colon i_j \in C\}|=w\bigr\}.$$

\begin{prop}\label{prop:2b friends}
If the conditional probability that $\sum_{j=1}^M i_j$ 
 is good for reduction given that $(i_1,\ldots,i_M)\in \mathcal{N}_{(u,v,w)}$ is greater than $\frac 1 2$, then the conditional probability that 
$\sum_{j=1}^Mi_j$ is not  good   for reduction given that $(i_1,\ldots,i_M)\in  \mathcal{N}_{(u+1,v-1,w)}$ is at least $\frac 1 2$
\end{prop}

Observe that both $\mathcal{P}$ and $\mathcal{P}'$ are finer partitions than $\mathcal{N}$.

\begin{lem}\label{lem:partition} 
For $(\vec{x},\vec{y},\vec{z}) \in A^{a}\times B^b\times C^c$, 
the conditional probability of an element in $\mathcal{N}_{(a+1,b,c)}$ is in $\mathcal{P}_{(\vec x,\vec y, \vec z)}$ is the same as the conditional probability of an element in 
$\mathcal{N}_{(a,b+1,c)}$ being in $\mathcal{P}'_{(\vec x,\vec y, \vec z)}$.  
\end{lem}
\begin{proof} Let  $u_1=\sum_{i \in A}\alpha_i^{(\tilde{k})}$, $u_2=\sum_{i \in B}\alpha_i^{(\tilde{k})}$, and $u_3=1-(a+b)=\sum_{i \in C}\alpha_i^{(\tilde{k})}$.  
The conditional probability of being in a particular $P_{(\vec x,\vec y, \vec z)}$ with $(\vec{x},\vec y ,\vec z)\in A^{a}\times B^b\times C^c$ 
 given that one lies in $\mathcal{N}_{(a+1,b,c)}$ is
\begin{equation}\label{eq:conditional prob}
\prod_{i=1}^{a} \frac{v_i}{u_1}\prod_{i=1}^b \frac{w_i}{u_2} \prod_{i=1}^c \frac{x_i}{u_3}.
\end{equation}
This is also the conditional probability of being in $P_{(\vec x,\vec y, \vec z)}'$ given that one is in $\mathcal{N}_{(a,b+1,c)}$. 
\end{proof}

Combining Lemmas~\ref{lem:partition} and~\ref{lem:one bad},  Proposition~\ref{prop:2b friends} follows.

If $(\Omega,\mathbb{P})$ is a probability space and $H\colon \Omega\to \{0,1,\ldots\}$ is $\mathbb{P}$ measurable, we say $i$ is \emph{$(H,\delta)$-spread} if 
$$\max\{\mathbb{P}(H^{-1}(i+1)),\mathbb{P}(H^{-1}(i-1))\}>\delta \mathbb{P}(H^{-1}(i)).$$ 
We say $H$ is \emph{$\delta$-spread} if $\mathbb{P}(\bigcup_{i, \,(H,\delta)\text{-spread}}H^{-1}(i))>\delta$. 

\begin{lem}\label{lemma:adjacent comparable} There exists $C$ such that
 if $F_i\colon (\Omega,\mathbb{P})\to \{0,1\}$ are independent, identically $\mathbb{P}$ distributed  random variables 
 satisfying $\frac{\delta}K\leq \mathbb{P}(F_i^{-1}(0))\leq 1-\frac\delta K$, 
then  
 $H(\omega)=\sum_{i=1}^KF_i(\omega)$ is $\min\{\frac{\delta^2}C,\frac 1 C\}$-spread. 
 \end{lem}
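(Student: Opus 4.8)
The statement is a completely standard binomial-distribution estimate, so the plan is to unwind the definitions and reduce everything to explicit ratios of binomial probabilities. Write $p=\mu(F_1^{-1}(1))$, so $\frac{\delta}{K}\le 1-p\le 1-\frac{\delta}{K}$, i.e. $\frac{\delta}{K}\le p\le 1-\frac{\delta}{K}$. Then $H=\sum_{i=1}^K F_i$ is $\mathrm{Binomial}(K,p)$, and for $0\le i\le K-1$ the ratio of consecutive point masses is
$$
\frac{\mu(H^{-1}(i+1))}{\mu(H^{-1}(i))}=\frac{K-i}{i+1}\cdot\frac{p}{1-p},
\qquad
\frac{\mu(H^{-1}(i-1))}{\mu(H^{-1}(i))}=\frac{i}{K-i+1}\cdot\frac{1-p}{p}.
$$
The first step is the elementary observation that at least one of these two ratios is $\ge$ some explicit constant whenever $\mu(H^{-1}(i))>0$: indeed, if $i+1\le Kp$ then the first ratio is $\ge \frac{K-i}{i+1}\cdot\frac{p}{1-p}\ge 1$ (using $i+1\le Kp$ so $K-i\ge K(1-p)$ and $\frac{K-i}{i+1}\ge\frac{1-p}{p}$), while if $i\ge Kp$ the symmetric computation gives the second ratio $\ge 1$. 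So in fact every $i$ in the support of $H$ with $1\le i\le K-1$ is $(H,1)$-spread, hence $(H,\delta')$-spread for any $\delta'\le 1$; the only possibly-non-spread values are the two endpoints $i=0$ and $i=K$.

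The second step is to bound $\mu(\bigcup_{i\ (H,\delta')\text{-spread}}H^{-1}(i))$ from below by $1-\mu(H=0)-\mu(H=K)$ and to check this is $\ge\min\{\delta^2/C,1/C\}$ for a universal $C$. We have $\mu(H=0)=(1-p)^K$ and $\mu(H=K)=p^K$. Since $\frac{\delta}{K}\le p\le 1-\frac{\delta}{K}$, we get $(1-p)^K\le(1-\tfrac{\delta}{K})^K\le e^{-\delta}$ and likewise $p^K\le e^{-\delta}$, so $1-\mu(H=0)-\mu(H=K)\ge 1-2e^{-\delta}$. If $\delta\ge 2$, say, this is bounded below by an absolute positive constant, handling the $\frac1C$ part. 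If $\delta$ is small, one needs the complementary bound $1-2e^{-\delta}\gtrsim\delta$, which is \emph{false} for small $\delta$ (it is negative), so here one instead argues directly: $\mu(1\le H\le K-1)=1-(1-p)^K-p^K$, and since $p\ge\delta/K$ we have $\mu(H\ge 1)=1-(1-p)^K\ge 1-e^{-\delta}\ge \delta/2$ for $\delta\le 1$, and similarly (using $p\le 1-\delta/K$) $\mu(H\le K-1)\ge\delta/2$; combining, $\mu(1\le H\le K-1)\ge \delta/2-\text{(overlap correction)}$. The clean way is: $\mu(1\le H\le K-1)= \mu(H\ge1)-\mu(H=K)\ge(1-e^{-\delta})-e^{-\delta}\cdot\mathbf{1}[\ldots]$, but to get the $\delta^2$ rather than $\delta$ one simply observes that the lemma only asks for $\min\{\delta^2/C,1/C\}$, which is weaker than $\delta/C$ for $\delta\le 1$, so the bound $\mu(1\le H\le K-1)\ge \delta/(2C_0)\ge \delta^2/(2C_0)$ (for $\delta\le 1$) suffices. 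Thus with $C$ chosen large enough (absorbing the constants $2$, $C_0$, and the threshold between the two regimes $\delta\le 1$ and $\delta>1$), $H$ is $\min\{\delta^2/C,1/C\}$-spread.

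The only mild subtlety — and the single place any care is needed — is matching the two regimes $\delta\le 1$ and $\delta>1$ against the two terms $\delta^2/C$ and $1/C$ in the conclusion, and confirming that the two endpoint atoms $H=0,H=K$ are genuinely the only obstruction (which follows from the first step, where we showed all interior support points are $(H,1)$-spread). I expect this endpoint bookkeeping to be the main obstacle, in the sense that it is the only non-automatic part; the binomial ratio computation itself is routine. I would present the argument in the two steps above: first the ratio computation showing interior points are spread, then the endpoint-mass estimate $(1-p)^K,p^K\le e^{-\min\{\delta,\ldots\}}$ combined with the case split on whether $\delta\le 1$.
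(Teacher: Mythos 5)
Your strategy (show the interior support points are spread, then bound the mass of the two endpoint atoms) is genuinely different from the paper's, which instead reduces to $Kp\ge 9$ (treating $\delta<9/K$ separately), works only in the Chebyshev window $[\frac13Kp,\frac53Kp]$ where consecutive binomial ratios are bounded below, and uses Chebyshev to show that window carries at least half the mass. As written, though, your argument has two problems. The minor one is Step 1: it is not true that every interior support point is $(H,1)$-spread, and your case analysis is off — the down-ratio $\frac{i}{K-i+1}\cdot\frac{1-p}{p}$ is $\ge 1$ only when $i\ge (K+1)p$, not when $i\ge Kp$; at the mode both neighbour ratios can be below $1$ (for $\mathrm{Binomial}(2,\tfrac12)$ both equal $\tfrac12$ at $i=1$). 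This is harmless because you only need $(H,\delta')$-spread with $\delta'=\min\{\delta^2/C,1/C\}$, and that does hold for all $1\le i\le K-1$: writing $r_i=\mu(H^{-1}(i+1))/\mu(H^{-1}(i))$, one has $r_{i-1}\le\bigl(1+\frac1{K-i}\bigr)\bigl(1+\frac1i\bigr)r_i\le 4r_i$, so if $r_i\le\delta'$ then the down-ratio $1/r_{i-1}\ge 1/(4\delta')>\delta'$ once $C\ge 3$. But this repair is yours to make, not something your text contains.

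The genuine gap is Step 2 in the regime of small $\delta$, which you yourself identify as the only non-automatic part and then do not close. You need $\mu(1\le H\le K-1)\ge \delta/(2C_0)$, but the inclusion–exclusion attempt only yields $1-2e^{-\delta}$, negative for small $\delta$, and the ``clean way'' $\mu(H\ge1)-\mu(H=K)\ge(1-e^{-\delta})-\cdots$ is left unfinished: from your stated hypotheses the only control on $\mu(H=K)=p^K$ is $p^K\le e^{-\delta}$, which is close to $1$, so that difference is useless; the bound $\mu(1\le H\le K-1)\ge\delta/(2C_0)$ is then simply asserted. The inequality is true, but it needs a reduction you never make, e.g.\ WLOG $p\le\frac12$ (swap the roles of $0$ and $1$), so that for $K\ge2$ one has $p^K\le\frac12 p\le\frac{Kp}4$ while $1-(1-p)^K\ge\min\{\frac{Kp}2,\,1-e^{-1}\}$ and $Kp\ge\delta$ (the case $K=1$ being checked by hand). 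In fact the endpoint estimate can be avoided entirely: the hypothesis $\frac\delta K\le p\le 1-\frac\delta K$ gives $\mu(H=1)/\mu(H=0)\ge Kp\ge\delta>\delta'$ and $\mu(H=K-1)/\mu(H=K)\ge K(1-p)\ge\delta>\delta'$, so $i=0$ and $i=K$ are themselves $(H,\delta')$-spread and the union of spread fibres has full measure, which both repairs and shortens your proof and dispenses with the endpoint bookkeeping altogether.
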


 \begin{proof}
 If $C> {2^{18}}$ and $\delta \leq 9 $, $0$ is $(H, \min\{\frac 1 C, \frac {\delta^2} C\})$-spread and  has a definite probability of occurring, which proves the lemma. 
Thus we assume $\delta> 9$. 
 Let $p=\mathbb{P}(F_i^{-1}(0))$.  
  Due to the symmetry, we can assume that $p\leq \frac{1}{2}$, 
 and by the assumption on $\delta$, we can assume that $p > \frac{9}{K}$.  
  Thus \begin{align*}
  \frac{\mathbb{P}(\{\omega\colon\sum_{i=1}^KF_i(\omega)=n+1\})}{\mathbb{P}(\{\omega\colon\sum_{i=1}^KF_i(\omega)=n\})}  & = 
\frac{\binom{K}{n+1}(p)^{n+1}(1-p)^{K-n-1}}{\binom{K}{n}(p)^{n}(1-p)^{K-n}}
{\binom{K}{n}(p/K)^{n}(1-(p/K))^{K-n}} \\ &= \frac{K-n-1}{n-1}  p  (1- p )^{-1}.
\end{align*}
If $n\in [\frac 1 3 Kp,\frac 5 3Kp]$, then this is greater than 
$\min\{\frac 1 {99},\frac 1 {99}Kp\}$.  Since $\frac 1 {99}Kp\geq \frac 1 {99} \delta$ the result follows if at  least half of the $\omega$ lie in this range. 
To check this, note that we have 
$$\int_\Omega (\sum_{i=1}^K F_i(\omega)-p)^2\,d \mathbb{P} = \int _{\Omega}\sum F_i(\omega)^2 d\mathbb{P}=K\Big((1-p)^2(p)+(-p)^2(1-p)\Big)< \frac 3 2 p K.$$
Note that the first inequality uses that $F_i(\omega)-p$ and $F_j(\omega)-p$ are independent and have integral 0 for all $i\neq j$.
Thus by Chebyshev's inequality, $\mathbb{P}(\{\omega\colon |\sum_{i=1}^K F_i(\omega)-Kp|>2\sqrt{pK}\})<\frac 1 2.$ Since $pK\geq9$ we have that $2\sqrt{pK}\leq \frac 2 3 Kp$ establishing the necessary condition. 
 \end{proof}

  \begin{proof}[Concluding the proof of Theorem~\ref{thm:prime} in Case 3]
 In this proof only, we introduce some terminology for clarity: we say $i$ is \emph{decisive} if it is either good or bad for reduction. 
 Let $\gamma^{(\tilde{k})}$ be the probability measure on $\{0,1\}$ defined by 
 $$\gamma^{(\tilde{k})}(\{0\})=\frac{\sum_{i \text{ is bad for reduction}}\alpha_i^{(\tilde{k})}}{\sum_{i \text{ is decisive}}\alpha_i^{(\tilde{k})}}$$ 
 and $$\gamma^{(\tilde{k})}(\{1\})=\frac{\sum_{i \text{ is good for reduction}}\alpha_i^{(\tilde{k})}}{\sum_{i \text{ is \text{decisive}}}\alpha_i^{(\tilde{k})}}.$$ 
Note that $\gamma^{(\tilde{k})}(\{0\})$ is the conditional probability that 
$i$ is \emph{bad} for reduction given that it is decisive and 
 $\gamma^{(\tilde{k})}(\{1\})$ is the conditional probability that $i$ is \emph{good} for reduction given that it is decisive. 
As we are not in Case 2, it follows that 
$\gamma^{(\tilde{k})}(\{0\})>\frac{\tilde{\varepsilon}^2}M$. 
Thus by Lemma~\ref{lemma:adjacent comparable}, $\sum_{j=0}^{n}\gamma^{(\tilde{k})}$ is at least $\frac{\tilde{\varepsilon}^4}{64}$ spread. 
 We partition $\mathbb{Z}^M$ into sets $\mathcal{N}_{(a,b,c)} \cup \mathcal{N}_{(a+1,b-1,c)}$ where $a$ is even. 
 This gives rise to partitions of these elements into $P_{(\vec x,\vec y, \vec z)}$ and $P'_{(\vec x , \vec y , \vec z)}$. 
  By Lemma~\ref{lem:one bad}, for each  $(\vec x,\vec y, \vec z)$ 
  one of  $P_{(\vec x,\vec y, \vec z)}$ or $P'_{(\vec x , \vec y , \vec z)}$  
  contain no   good for reduction elements. 
  By Lemma~\ref{lem:partition} and the fact that $\sum_{i=0}^{M-c-1}\gamma^{(\tilde{k})}$ is $\tilde{\varepsilon}^4$ spread (so long as $c<\frac M 8$)
  it follows from Lemma~\ref{lem:plenty nice} that at least  $\frac 1 2 \tilde{\varepsilon}^8$ of the points in $\mathcal{N}_{(a,b,c)} \cup \mathcal{N}_{(a+1,b-1,c)}$ are not good  for reduction.  Once again this contradicts Corollary~\ref{cor:to quote} and our choices.  \end{proof}

\end{document}